\documentclass[pdflatex]{sn-jnl}% Math and Physical Sciences Numbered Reference Style
\usepackage{graphicx}%
\usepackage{multirow}%
\usepackage{amsmath,amssymb,amsfonts}%
\usepackage{amsthm}%
\usepackage{mathrsfs}%
\usepackage[title]{appendix}%
\usepackage{xcolor}%
\usepackage{textcomp}%
\usepackage{manyfoot}%
\usepackage{booktabs}%
\usepackage{algorithm}%
\usepackage{algorithmicx}%
\usepackage{algpseudocode}%
\usepackage{listings}%
\usepackage{bm}%
\usepackage[T1]{fontenc}  % 欧文フォントの割当を最新の規格に変更
\usepackage{lmodern}      % Latin Modern フォントを適用
\numberwithin{equation}{section}

\usepackage{mathtools}
\theoremstyle{thmstyleone}%
\newtheorem{theorem}{Theorem}% meant for sectionwise numbers
\newtheorem{proposition}{Proposition}% 
\newtheorem{lemma}{Lemma}[section]% 

\theoremstyle{thmstyletwo}%
\newtheorem{remark}{Remark}[section]%

\theoremstyle{thmstylethree}%
\newtheorem{definition}{Definition}[section]%

\begin{document}
	
\title[Article Title]{Twisted Gradient Flow Approach to Vegetation-Rainfall-Bushfire Interactions Model}
	
%%=============================================================%%
%% GivenName	-> \fnm{Joergen W.}
%% Particle	-> \spfx{van der} -> surname prefix
%% FamilyName	-> \sur{Ploeg}
%% Suffix	-> \sfx{IV}
%% \author*[1,2]{\fnm{Joergen W.} \spfx{van der} \sur{Ploeg} 
%%  \sfx{IV}}\email{iauthor@gmail.com}
%%=============================================================%%
	
\author*[1]{\fnm{Tahir} \sur{Boudjeriou}}\email{t.boudjeriou@univ-boumerdes.dz}

\author[2]{\fnm{Akio} \sur{Ito}}\email{ito.akio.2015@gmail.com}
\equalcont{These authors contributed equally to this work.}
	
%	\author[1,2]{\fnm{Third} \sur{Author}}\email{iiiauthor@gmail.com}
%	\equalcont{These authors contributed equally to this work.}
	
\affil*[1]{\orgdiv{Institute of Electrical and Electronic Engineering}, \orgname{University of Boumerdes}, \orgaddress{\city{Boumerdes}, \postcode{35000}, 
\country{Algeria}}}
	
\affil[2]{\orgdiv{Indepedent Researcher}, \orgaddress{\street{11678-1, Node}, \city{Sosashi}, \postcode{289-3181}, \state{Chiba}, \country{Japan}}}
	
%	\affil[3]{\orgdiv{Department}, \orgname{Organization}, \orgaddress{\street{Street}, \city{City}, \postcode{610101}, \state{State}, \country{Country}}}
	
%%==================================%%
%% Sample for unstructured abstract %%
%%==================================%%
	
\abstract{We consider a vegetation-rainfall-bushfire interaction model consisting of two partial differential equations describing the evolutions of bushfire intensity 
and water availability, together with an ordinary differential equation governing the vegetation density.
The main feature of the model is that the ODE contains a nonlocal coefficient multiplying the vegetation density, which makes the mathematical analysis highly nontrivial.
In this paper, we regard this nonlocal coefficient as the generator of a deformation of the metric structure of the underlying real Hilbert space.
This interpretation enables us to formulate the ODE as a twisted gradient flow on state-dependent Hilbert spaces and to establish the existence 
of strong solutions to the associated initial-boundary value problem.}
	
\keywords{vegetation-rainfall-bushfire interaction model, ecohydrological-fire dynamics, nonlocal term, twisted gradient flow, twisted Hilbert spaces}
	
\pacs[MSC Classification]{35Q92, 35A01, 35A15, 47J35, 35D35}
%35Q92 PDEs in connection with biology, chemistry and other natural sciences
%35A01 Existence problems for PDEs: global existence, local existence, non-existence
%35A15 Variational methods applied to PDEs
%47J35 Nonlinear evolution equations
%35D35 Strong solutions to PDEs
\maketitle
%%%%%%%%%%%%%%%%%%%%%%%%%%%%%%%%%%%%%%%%%%%%%%%%%%%%%%%%%%%%%%%%%%%%%%%%%%%%%%%%%%%%%%%%%%%%%%%%%%%%%%%%%%%%%%%%%%%%%%%%%%%%%%%%%%%%%%%%
%%%%%%%%%%%%%%%%%%%%%%%%%%%%%%%%%%%%%%%%%%%%%%%%%%%%%%%%%%%%%%%%%%%%%%%%%%%%%%%%%%%%%%%%%%%%%%%%%%%%%%%%%%%%%%%%%%%%%%%%%%%%%%%%%%%%%%%%
%%%%%%%%%%%%%%%%%%%%%%%%%%%%%%%%%%%%%%%%%%%%%%%%%%%%%%%%%%%%%%%%%%%%%%%%%%%%%%%%%%%%%%%%%%%%%%%%%%%%%%%%%%%%%%%%%%%%%%%%%%%%%%%%%%%%%%%%
%%%%%%%%%%%%%%%%%%%%%%%%%%%%%%%%%%%%%%%%%%%%%%%%%%%%%%%%%%%%%%%%%%%%%%%%%%%%%%%%%%%%%%%%%%%%%%%%%%%%%%%%%%%%%%%%%%%%%%%%%%%%%%%%%%%%%%%%
%%%%%%%%%%%%%%%%%%%%%%%%%%%%%%%%%%%%%%%%%%%%%%%%%%%%%%%%%%%%%%%%%%%%%%%%%%%%%%%%%%%%%%%%%%%%%%%%%%%%%%%%%%%%%%%%%%%%%%%%%%%%%%%%%%%%%%%%
\section{Introduction}\label{Section-1}
%%%%%%%%%%%%%%%%%%%%%%%%%%%%%%%%%%%%%%%%%%%%%%%%%%%%%%%%%%%%%%%%%%%%%%%%%%%%%%%%%%%%%%%%%%%%%%%%%%%%%%%%%%%%%%%%%%%%%%%%%%%%%%%%%%%%%%%%
%%%%%%%%%%%%%%%%%%%%%%%%%%%%%%%%%%%%%%%%%%%%%%%%%%%%%%%%%%%%%%%%%%%%%%%%%%%%%%%%%%%%%%%%%%%%%%%%%%%%%%%%%%%%%%%%%%%%%%%%%%%%%%%%%%%%%%%%
%%%%%%%%%%%%%%%%%%%%%%%%%%%%%%%%%%%%%%%%%%%%%%%%%%%%%%%%%%%%%%%%%%%%%%%%%%%%%%%%%%%%%%%%%%%%%%%%%%%%%%%%%%%%%%%%%%%%%%%%%%%%%%%%%%%%%%%%
%%%%%%%%%%%%%%%%%%%%%%%%%%%%%%%%%%%%%%%%%%%%%%%%%%%%%%%%%%%%%%%%%%%%%%%%%%%%%%%%%%%%%%%%%%%%%%%%%%%%%%%%%%%%%%%%%%%%%%%%%%%%%%%%%%%%%%%%
%%%%%%%%%%%%%%%%%%%%%%%%%%%%%%%%%%%%%%%%%%%%%%%%%%%%%%%%%%%%%%%%%%%%%%%%%%%%%%%%%%%%%%%%%%%%%%%%%%%%%%%%%%%%%%%%%%%%%%%%%%%%%%%%%%%%%%%%
In this paper, we consider the following initial-boundary value problem, denoted by (P):=\{\eqref{Equation-1-1}--\eqref{Equation-1-6}\} throughout this paper, 
associated with the vegetation-rainfall-bushfire interaction (VRB for short) model introduced by S. Dipierro and E. Valdinoci in \cite{DV-2026} as 
one of the mathematical models describing ecohydrological-fire dynamics:
\begin{gather}
\label{Equation-1-1}
u_t-D_1\Delta u=u(c_1 v-c_2 w),\quad \text{a.e. in} \quad Q:=\Omega \times (0,\infty),\\
\label{Equation-1-2}
v_t+\left(\int_\Omega K(x,y)v(y,t)dy\right) v=v(c_3 w-c_4 u),\quad \text{a.e. in} \quad Q,\\
\label{Equation-1-3}
w_t-D_2 \Delta w=-c_5w+c_6-c_7 vw,\quad \text{a.e. in} \quad Q,\\
\label{Equation-1-4}
\nabla u \cdot \bm{n}=0,\quad \text{a.e. on} \quad \Sigma:=\Gamma \times (0,\infty),\\
\label{Equation-1-5}
\nabla w \cdot \bm{n}=0,\quad \text{a.e. on} \quad \Sigma,\\
\label{Equation-1-6}
(u(0),v(0),w(0))=(u_0,v_0,w_0),\quad \text{a.e. in} \quad \Omega,
\end{gather}
where $\Omega$ is a bounded domain in $\mathbb{R}^N~(N=1,2,3)$ with a smooth boundary $\Gamma:=\partial \Omega$\,;
$\bm{n}$ is an outer unit normal vector on $\Gamma$\,;
$D_i > 0~(i=1,2)$ and $c_i > 0~(i=1,2,\ldots,7)$ are positive constants.\\
\indent
We next explain the modeling background of the VRB system.
The VRB model was introduced to describe an ecosystem involving water reservoirs, flammable vegetation, and wildfires.
The unknown functions $u$, $v$ and $w$ describe the bushfire intensity, the vegetation density and water availability, respectively.\\
\indent
Firstly, the evolution of $u$ is determined by the equation \eqref{Equation-1-1} with boundary condition \eqref{Equation-1-4}.
The bushfire intensity $u$ diffuses uniformly with diffusion rate $D_1>0$, increases proportionally to the product $uv$ with proportionality constant $c_1>0$, 
and decreases proportionally to the product $uw$ with proportionality constant $c_2>0$.\\
\indent
Secondly, the evolution of $v$ is governed by \eqref{Equation-1-2}. Since \eqref{Equation-1-2} is an ordinary differential equation, the vegetation density $v$ 
does not diffuse in the spatial domain $\Omega$. It increases proportionally to the product $vw$ with proportionality constant $c_3>0$, and decreases proportionally 
to the product $uv$ with proportionality constant $c_4>0$.
The equation \eqref{Equation-1-2} also contains the nonlocal term
\begin{equation*}
\int_\Omega K(x,y) v(y)dy.
\end{equation*}
This term models the spatial competition among plants for resources such as water and nutrients.
The competition suppresses vegetation growth, and its intensity at the point $x$ depends on the distribution of vegetation over the entire domain $\Omega$.
Accordingly, the nonlocal quantity above acts as a spatially dependent coefficient in the self-limiting term
\begin{equation*}
\left(\int_\Omega K(x,y) v(y)dy\right)v.
\end{equation*}
\indent
Thirdly, the evolution of $w$ is governed by \eqref{Equation-1-3} with boundary condition \eqref{Equation-1-5}.
The water availability $w$ diffuses uniformly throughout the domain $\Omega$ with diffusion rate $D_2>0$, undergoes evaporation at the rate $c_5 w$, and is also absorbed 
by vegetation, which is modeled by the term $-c_7 vw$.
Moreover, water is replenished by rainfall, whose supply is modeled by the positive constant $c_6$.\\
\indent
Throughout this paper, we seek nonnegative solutions, since the unknown functions represent physical quantities.
In analyzing the VRB system mathematically, the main difficulty arises from the nonlocal term in \eqref{Equation-1-2} since the vegetation does not diffuse, that is, 
the smoothness of the vegetation density in the domain $\Omega$ is lost.
In order to overcome this difficulty, we incorporate the effect of the nonlocal term as a deformation of a usual metric $\|\cdot\|_{L^2(\Omega)}$ on the 
``\,{\sf flat}\,'' Hilbert space $L^2(\Omega)$.
More precisely, we deform $\|\cdot\|_{L^2(\Omega)}$ into the following one by the nonlocal coefficient:
\begin{equation*}
\int_\Omega \left(\int_\Omega K(x,y)v(y)dy\right) |\eta (x)|^2dx,\quad \forall \eta \in L^2(\Omega),
\end{equation*}
whose fundamental properties are established in Lemmas \ref{Lemma-2-1}--\ref{Lemma-2-3}.
This observation allows us to reformulate the nonlocal interaction as a deformation of the underlying Hilbert-space metric.
As a consequence, the VRB system can be analyzed within the framework of ``\,{\sf twisted gradient flows}\,'', which is treated as evolution inclusions 
on real Hilbert spaces with quasi-variational inner products in \cite{ito-2019,ito-2022} and explicitly named in \cite{ito-2026-1,ito-2027,ito-2026-2}.
To implement this reformulation, throughout this paper we assume that the following conditions are satisfied.
\begin{enumerate}\leftskip12pt
\item[(A1)] A nonnegative function $K \in C^1(\Omega \times \Omega)$ satisfies the following conditions:
\begin{enumerate}
\item[(a)] There exists a constant $K_1>0$ such that
\begin{equation*}
K_1 \le \inf_{x\,\in\,\Omega} \int_\Omega K(x,y)dy.
\end{equation*}
\item[(b)] There exists a constant $K_2>0$ such that 
\begin{equation*}
\sup_{x\,\in\,\Omega} \left(\int_\Omega |K(x,y)|^2dy\right)^{\frac{1}{2}} \le K_2.
\end{equation*}
\item[(c)] There exists a constant $K_3>0$ such that 
\begin{equation*}
\sup_{x\,\in\,\Omega} \left( \int_\Omega |\nabla_x K(x,y)|^2dy \right)^{\frac{1}{2}} \le K_3. 
\end{equation*}
Throughout this paper, we simply write $\nabla$ for $\nabla_x$.\\[-0.2cm]
\end{enumerate}
\noindent
As one of the typical examples of $K$, in \cite{LBCL-2009} it is modeled by the Gaussian function
\begin{equation*}
K(x,y) =\hat{\sigma} \exp(-\sigma |x-y|^2) \quad \text{for fixed constants}~\hat{\sigma}>0,\,\sigma >0.
\end{equation*}
\item[(A2)] An initial data $(u_0,v_0,w_0) \in (H^1(\Omega))^3$ satisfies
\begin{gather*}
u_0 \ge 0, \quad v_0 \ge 0,\quad w_0 \ge 0, \quad \text{a.e. in} \quad \Omega.
\end{gather*}
\end{enumerate}
We are now in a position to state the main result of this paper.
To the best of our knowledge, no global existence result of strong solutions has been established for the VRB system introduced in \cite{DV-2026}.
At first, we give the definition of strong global solution to the initial-boundary value problem (P) on $[0,\infty)$.
%%%%%
%%%%%
%%%%%
\begin{definition}\label{Definition-1-1}
A quadruple $(u,v,w)$ is called a strong global solution to (P) if and only if for every finite time $T>0$ the following properties are satisfied, that is, $(u,v,w)$ is a
strong solution to (P) on $[0,T]$:
\begin{enumerate}\leftskip10pt
\item[(s1)] $u \in W^{1,2}(0,T\,;L^2(\Omega)) \cap L^\infty (0,T\,;H^1(\Omega)) \cap L^2(0,T\,;H^2(\Omega))$.\\[-0.3cm]
\item[(s2)] $v \in W^{1,\infty}(0,T\,;L^2(\Omega)) \cap L^\infty (0,T\,;H^1(\Omega))$.\\[-0.3cm]
\item[(s3)] $w \in W^{1,2}(0,T\,;L^2(\Omega)) \cap L^\infty (0,T\,;H^1(\Omega)) \cap L^2(0,T\,;H^2(\Omega))$.\\[-0.3cm]
\item[(s4)] The equation (\ref{Equation-1-1}) with (\ref{Equation-1-4}) is satisfied in the following variational sense:
\begin{equation*}
\qquad u'(t)-D_1 \Delta_N u(t)=u(t) \{c_1 v(t)-c_2 w(t)\} \quad \text{in} \quad L^2(\Omega),\quad \text{a.e.}~t \in (0,T),
\end{equation*}
where $-\Delta_N : D(-\Delta_N) \rightarrow L^2(\Omega)$ is the Laplacian with homogeneous Neumann boundary condition whose domain is given by 
\begin{equation*}
\qquad D(-\Delta_N):=\left\{ \eta \in H^1(\Omega)\,;\,\Delta \eta \in L^2(\Omega)~~\text{with}~~\nabla \eta \cdot \bm{n}=0~~\text{for a.e. on}~~\Gamma \right\}.
\end{equation*}
\item[(s5)] The equation (\ref{Equation-1-2}) is satisfied in the following variational sense:
\begin{gather*}
\qquad v'(t)+\left( \int_\Omega K(x,y) v(y,t)dy\right) v(t)=v(t) \{c_3 w(t)-c_4 u(t)\}\\
\qquad \text{in} \quad L^2(\Omega),\quad \text{a.e.}~t \in (0,T),
\end{gather*}
\item[(s6)] The equation (\ref{Equation-1-3}) with (\ref{Equation-1-5}) is satisfied in the following variational sense:
\begin{equation*}
\qquad w'(t)-D_2 \Delta_N w(t)+c_5 w(t)=c_6 \cdot 1-c_7 v(t) w(t) \quad \text{in} \quad L^2(\Omega),\quad \text{a.e.}~t \in (0,T),
\end{equation*}
where we denote by $1$ the constant function equal to $1$ on $\Omega \times [0,T]$.\\[-0.3cm]
\item[(s7)] $(u(0),v(0),w(0))=(u_0,v_0,w_0) \quad \text{in} \quad (L^2(\Omega))^3$.
\end{enumerate}
Throughout this paper, the notation $f'$ denotes the time derivative of $f$ in $L^2(\Omega)$.
Whenever no confusion arises, we use the same notation $f_t=f'$ in the initial-boundary value problems throughout this paper.
\end{definition}
%%%%%
%%%%%
%%%%%
Now we give the main theorem in this paper.
%%%%%
%%%%%
%%%%%
\begin{theorem}\label{Theorem-1-1}
Assume that (A1) and (A2) hold. 
Then, the initial-boundary value problem (P) admits a nonnegative strong global solution $(u,v,w)$.
Here, global existence is understood in the sense of Definition \ref{Definition-1-1} for every finite $T>0$.
\end{theorem}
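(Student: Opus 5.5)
We construct a local strong solution by a fixed-point or approximation argument, prove nonnegativity, derive a priori estimates that stay bounded on every finite interval $[0,T]$, and extend the local solution to $[0,T]$.

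\emph{Local solution and nonnegativity.} We truncate the nonlinearities: replace $v$, $u$, $w$ in the reaction terms by $[v]_M^+$, $[u]_M^+$, $[w]_M^+$, where $[s]_M^+=\min\{\max\{s,0\},M\}$. Given $\bar v\in C([0,T_0];L^2(\Omega))$ with $\bar v\ge0$, the $w$-equation is then a linear Neumann heat equation with bounded potential $c_5+c_7[\bar v]_M^+$ and source $c_6$, so maximal regularity gives $w\in W^{1,2}(0,T_0;L^2)\cap L^\infty(0,T_0;H^1)\cap L^2(0,T_0;H^2)$. The comparison principle gives $0\le w\le \max\{\|w_0\|_\infty,c_6/c_5\}$ whenever $w_0\in L^\infty$; in general we only use $w\ge0$ together with $H^1$ bounds. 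The $u$-equation is handled the same way, and it has $u\ge0$. The vegetation equation is the twisted gradient flow
\begin{equation*}
v'+a(\bar v)v=v(c_3w-c_4u),\qquad a(\bar v)(x)=\int_\Omega K(x,y)\bar v(y)\,dy.
\end{equation*}
By (A1)(b),(c) we have $|a(\bar v)|_{W^{1,\infty}}\le (K_2+K_3)\|\bar v\|_{L^2}$. Since this is an ODE pointwise in $x$, it has the explicit solution
\begin{equation*}
v(x,t)=v_0(x)\exp\Bigl(\int_0^t(c_3w-c_4u-a(\bar v))\,ds\Bigr)\ge0.
\end{equation*}
Using Lemmas \ref{Lemma-2-1}--\ref{Lemma-2-3} on the deformed metric $\int_\Omega a(\bar v)|\eta|^2dx$, together with a Banach or Schauder fixed point for $\bar v\mapsto v$ on a short interval, we obtain a local solution. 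Nonnegativity of all three components is preserved, so the truncations $\max\{\cdot,0\}$ are inactive.

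\emph{Global a priori estimates.} First, $w\ge0$ gives $-c_7vw\le0$, so $w\le\max\{\|w_0\|_\infty,c_6/c_5\}$ if the data are bounded. Without that assumption we use the $L^2$ and $H^1$ energy for $w$, which are controlled because the term $-c_7vw^2$ has a good sign.

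Second, the key estimate for $v$ is an $L^1$ bound. Integrating the $v$-equation and adding $c_4/c_1$ times the integral of the $u$-equation cancels the $uv$ terms:
\begin{equation*}
\frac{d}{dt}\int_\Omega\Bigl(v+\tfrac{c_4}{c_1}u\Bigr)+\int_\Omega a(v)v\le c_3\int_\Omega vw.
\end{equation*}
Here the $-c_2uw$ term is dropped since it is nonpositive. Since $\int a(v)v\ge0$ and $K\ge0$, we need $w\in L^\infty$ to close this with Gronwall.

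Third, an $L^\infty$ bound for $v$ comes from $v'\le c_3wv$, which gives $\|v(t)\|_\infty\le\|v_0\|_\infty e^{c_3\int\|w\|_\infty}$. Because $v_0$ is only in $H^1$, I would instead bound $\|v\|_{L^p}$ via $v'\le c_3wv$ and an $L^p$ estimate of $w$.

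Fourth, we need $H^1$ bounds. For $u$ and $w$ we test with $-\Delta_N u$ and $-\Delta_N w$, using $N\le3$ and $H^1\subset L^6$. For $v$ we differentiate the equation in $x$:
\begin{equation*}
(\nabla v)'+a(v)\nabla v+\nabla a(v)\,v=\nabla v\,(c_3w-c_4u)+v\,(c_3\nabla w-c_4\nabla u).
\end{equation*}
Testing with $\nabla v$, we use $\int a(v)|\nabla v|^2\ge0$, which is exactly the twisted-metric coercivity from Lemmas \ref{Lemma-2-1}--\ref{Lemma-2-3}, and $|\nabla a(v)|\le K_3\|v\|_{L^2}$.

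\emph{Main obstacle.} The hardest step is the term $\int|\nabla v|^2(c_3w-c_4u)$ and the term $\int v\nabla w\cdot\nabla v$. Both require $w\in L^\infty(Q_T)$, and the second also requires $v\in L^\infty$ or $\nabla w\in L^4$. I would first try to obtain $w\in L^\infty$ by the comparison principle, approximating $w_0\in H^1$ by bounded data. If that fails, I would use parabolic smoothing: $w\in L^2(H^2)\subset L^2(L^\infty)$ for $N\le3$, which is enough for Gronwall with the weight $\|w(t)\|_\infty\in L^1(0,T)$. The term $v\nabla u$ is handled similarly, using $\nabla u\in L^2(0,T;L^6)$ together with $v\in L^\infty(L^3)$. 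Once these bounds hold, Gronwall gives $v\in L^\infty(0,T;H^1)$. Then $v'\in L^\infty(L^2)$ follows from the equation, and the estimates for $u$ and $w$ close, so the local solution extends to every finite $T$.
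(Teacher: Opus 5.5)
Your outline has the right shape, and your later steps (testing the $u$- and $w$-equations with $-\Delta_N u$ and $-\Delta_N w$, and estimating $\nabla v$ through the explicit ODE formula) are essentially the paper's Lemmas~\ref{Lemma-4-6}--\ref{Lemma-4-8}. \textbf{But the step on which everything rests is missing.} You need $\int_0^T\|w(t)\|_{L^\infty(\Omega)}\,dt\le C$ \emph{independently of $v$}. You use it twice: to turn $v\le v_0\exp\bigl(c_3\int_0^t w\,ds\bigr)$ into an $L^p$ bound for $v$, and as the Gronwall weight in the $\nabla v$ estimate. Neither of your routes gives it.

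\begin{itemize}
\item Comparison with bounded approximations $w_0^n$ only yields $w\le\max\{\|w_0^n\|_{L^\infty(\Omega)},c_6/c_5\}$. This is not uniform for $w_0\in H^1(\Omega)$ when $N=2,3$.
\item The fallback $w\in L^2(0,T;H^2(\Omega))\hookrightarrow L^2(0,T;L^\infty(\Omega))$ is circular. Testing the $w$-equation with $-\Delta_N w$ produces $c_7\int_\Omega vw\,\Delta_N w$, which has no sign. (So your claim that the $H^1$ energy of $w$ is controlled by the sign of the absorption term is also wrong.) Hence the $H^2$ bound for $w$ needs $\|v\|_{L^p(\Omega)}$ with $p\ge3$. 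That bound in turn costs $\exp\bigl(c_3\int_0^t\|w\|_{L^\infty(\Omega)}ds\bigr)$, and this exponential feedback cannot be closed by Gronwall on an arbitrary $[0,T]$.
\item Likewise, an $L^p$ bound on $w$ does not control $\exp\bigl(c_3\int_0^t w\,ds\bigr)$.
\end{itemize}

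\textbf{The missing idea is a supersolution for $w$ that does not see $v$.} Since $v,w\ge0$ give $-c_7vw\le0$, comparison yields $0\le w\le\bar w$, where $\bar w'-D_2\Delta_N\bar w+c_5\bar w=c_6$ and $\bar w(0)=w_0$. Then $\int_0^T\|\bar w(t)\|_{L^\infty(\Omega)}dt\le C(T,\|w_0\|_{H^1(\Omega)})$. You can get this either from the $L^2$--$L^\infty$ smoothing of the Neumann heat semigroup, whose singularity $t^{-N/4}$ is integrable for $N\le 3$ (this is the paper's Lemma~\ref{Lemma-4-1}), or simply from linear maximal regularity, i.e.\ your parabolic-smoothing argument applied to $\bar w$ rather than to $w$. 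This gives $\sup_{t}\|v(t)\|_{L^6(\Omega)}\le C\|v_0\|_{L^6(\Omega)}$, after which your energy estimates for $u$, $w$ and $\nabla v$ are linear and close.

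Two smaller points:
\begin{itemize}
\item Nonnegativity removes only the lower cut-off in $[\cdot]_M^+$. The cap at $M$ stays active, because $H^1$ data give no $L^\infty$ bound for $N=2,3$. You must either prove the above bounds uniformly in $M$ and pass $M\to\infty$ by compactness (as the paper does for $\delta\uparrow\infty$), or build the local theory without the cap.
\item Lemmas~\ref{Lemma-2-1}--\ref{Lemma-2-3} do not apply to $a(\bar v)$, which can vanish, since they rely on $\alpha_\delta\ge\delta^{-1}$. You do not need them: the explicit ODE formula already handles $v$. That is what makes your direct route a genuine simplification of the paper's $\nu$- and $\varepsilon$-regularizations and its twisted-gradient-flow machinery.
\end{itemize}
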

%%%%%
%%%%%
%%%%%
Since uniqueness is not guaranteed, we only construct a strong solution on any given finite time interval $[0,T]$ rather than a unique global trajectory on $[0,\infty)$.
In fact, Theorem \ref{Theorem-1-1} guarantees the existence of the non-blow-up strong global solutions to (P) for any physically meaningful time horizon.\\
\indent
As the key inequalities throughout this paper, we repeatedly use the continuous embedding $H^2(\Omega) \hookrightarrow L^\infty(\Omega)$: 
there exists a constant $K_4>0$ such that 
\begin{equation}\label{Equation-1-7}
\|\eta\|_{L^\infty (\Omega)} \le K_4 \|\eta\|_{H^2(\Omega)},\quad \forall \eta \in H^2(\Omega),
\end{equation}
and the Neumann elliptic regularity (see \cite{Barbu-1976}): there exists a constant $K_5>0$ such that
\begin{equation}\label{Equation-1-8}
\|\eta\|_{H^2(\Omega)} \le K_5 \left( \|\Delta_N \eta\|_{L^2(\Omega)}+\|\eta\|_{L^2(\Omega)} \right),\quad \forall \eta \in D(-\Delta_N).
\end{equation}
Besides, we use the Cauchy-Schwartz inequality and the Young inequality, repeatedly.
\begin{enumerate}\leftskip12pt
\item[(CS)] We have 
\begin{equation*}
|(\eta_1,\eta_2)_{L^2(\Omega)}| \le \|\eta_1\|_{L^2(\Omega)} \|\eta_2\|_{L^2(\Omega)},\quad \forall \eta_1,\eta_2 \in L^2(\Omega).
\end{equation*}
\item[(Y)] For any $p \in (1,\infty)$ and $\epsilon >0$ the following inequality holds:
\begin{equation*}
ab \le \frac{\epsilon}{p} a^p+\frac{1}{q \epsilon^{\frac{q}{p}}} b^q, \quad \forall a \ge 0,\,\forall b \ge 0,\quad \text{where} \quad \frac{1}{p}+\frac{1}{q}=1.
\end{equation*}
\end{enumerate}
Throughout this paper, we will use these well-known standard inequalities without further mention, and the following notation:\\
\begin{equation*}
\|\nabla \eta\|_{\bm{L}^2(\Omega)}:=\left( \int_\Omega |\nabla \eta (x)|^2dx\right)^{\frac{1}{2}},\quad \forall \eta \in H^1(\Omega).
\end{equation*}
\indent
At the end of this section, we briefly outline the strategy for proving Theorem \ref{Theorem-1-1} by describing the organization of this paper.
In Section \ref{Section-2}, we construct approximate problems for (P) that enable us not only to apply the general theory of ``\,{\sf twisted gradient flow}\,'' 
established in \cite{ito-2019,ito-2022} but also to justify the passage to the limit.
In fact, in Subsection \ref{Subsection-2-1}, we construct twisted spaces incorporating the nonlocal term into the underlying real Hilbert space $L^2(\Omega)$ by
introducing an auxiliary variable, called ``\,{\sf a twisted metric generator}\,'' in Subsection \ref{Subsection-2-3}, which serves to decouple the effect of the nonlocal term 
from the dynamics of $v$.
In Subsections \ref{Subsection-2-2}--\ref{Subsection-2-4}, we consider subsystems arising from the approximate problems and establish uniform estimates
for solutions to the subsystems which play important roles in the following sections.
In Section \ref{Section-3}, we show the existence and uniqueness of strong global-in-time solutions to the initial-boundary value problems for the approximate systems
with some uniform bounds of them.
Finally, in Section \ref{Section-4}, we establish the existence of strong solutions to (P) on any finite time interval by passing to the limit.
%%%%%%%%%%%%%%%%%%%%%%%%%%%%%%%%%%%%%%%%%%%%%%%%%%%%%%%%%%%%%%%%%%%%%%%%%%%%%%%%%%%%%%%%%%%%%%%%%%%%%%%%%%%%%%%%%%%%%%%%%%%%%%%%%
%%%%%%%%%%%%%%%%%%%%%%%%%%%%%%%%%%%%%%%%%%%%%%%%%%%%%%%%%%%%%%%%%%%%%%%%%%%%%%%%%%%%%%%%%%%%%%%%%%%%%%%%%%%%%%%%%%%%%%%%%%%%%%%%%
%%%%%%%%%%%%%%%%%%%%%%%%%%%%%%%%%%%%%%%%%%%%%%%%%%%%%%%%%%%%%%%%%%%%%%%%%%%%%%%%%%%%%%%%%%%%%%%%%%%%%%%%%%%%%%%%%%%%%%%%%%%%%%%%%
%%%%%%%%%%%%%%%%%%%%%%%%%%%%%%%%%%%%%%%%%%%%%%%%%%%%%%%%%%%%%%%%%%%%%%%%%%%%%%%%%%%%%%%%%%%%%%%%%%%%%%%%%%%%%%%%%%%%%%%%%%%%%%%%%
%%%%%%%%%%%%%%%%%%%%%%%%%%%%%%%%%%%%%%%%%%%%%%%%%%%%%%%%%%%%%%%%%%%%%%%%%%%%%%%%%%%%%%%%%%%%%%%%%%%%%%%%%%%%%%%%%%%%%%%%%%%%%%%%%
\section{Approximate problems and their subsystems}\label{Section-2}
%%%%%%%%%%%%%%%%%%%%%%%%%%%%%%%%%%%%%%%%%%%%%%%%%%%%%%%%%%%%%%%%%%%%%%%%%%%%%%%%%%%%%%%%%%%%%%%%%%%%%%%%%%%%%%%%%%%%%%%%%%%%%%%%%
%%%%%%%%%%%%%%%%%%%%%%%%%%%%%%%%%%%%%%%%%%%%%%%%%%%%%%%%%%%%%%%%%%%%%%%%%%%%%%%%%%%%%%%%%%%%%%%%%%%%%%%%%%%%%%%%%%%%%%%%%%%%%%%%%
%%%%%%%%%%%%%%%%%%%%%%%%%%%%%%%%%%%%%%%%%%%%%%%%%%%%%%%%%%%%%%%%%%%%%%%%%%%%%%%%%%%%%%%%%%%%%%%%%%%%%%%%%%%%%%%%%%%%%%%%%%%%%%%%%
%%%%%%%%%%%%%%%%%%%%%%%%%%%%%%%%%%%%%%%%%%%%%%%%%%%%%%%%%%%%%%%%%%%%%%%%%%%%%%%%%%%%%%%%%%%%%%%%%%%%%%%%%%%%%%%%%%%%%%%%%%%%%%%%%
%%%%%%%%%%%%%%%%%%%%%%%%%%%%%%%%%%%%%%%%%%%%%%%%%%%%%%%%%%%%%%%%%%%%%%%%%%%%%%%%%%%%%%%%%%%%%%%%%%%%%%%%%%%%%%%%%%%%%%%%%%%%%%%%%
In order to set approximate systems for the VRB model, for each $\delta \in (1,\infty)$ we prepare truncation functions 
$\alpha_{\text{{\tiny $\delta$}}} \in C(\mathbb{R})$ defined by 
\begin{equation*}
\alpha_{\text{{\tiny $\delta$}}} (r):=\min \left\{ \max \left\{ \delta^{-1},r\right\},\delta \right\},
\end{equation*}
which satisfies the following properties:
\begin{align}
\label{Equation-2-1}
\bullet~&\text{nondegeneracy and boundedness:} \quad \delta^{-1} \le \alpha_{\text{{\tiny $\delta$}}}(r) \le \delta,\quad \forall r \in \mathbb{R},\\
\label{Equation-2-2}
\bullet~&\text{contraction property:} \quad |\alpha_{\text{{\tiny $\delta$}}}(r_1)-\alpha_{\text{{\tiny $\delta$}}}(r_2)| \le |r_1-r_2|,\quad \forall r_1,r_2 \in \mathbb{R},
\end{align}
and $\beta_{\text{{\tiny $\delta$}}} \in C^1(\mathbb{R})$ defined by 
\begin{equation*}
\beta_{\text{{\tiny $\delta$}}} (r):=\left\{
\begin{array}{ll}
\delta \quad&\text{if} \quad r \in \left(\delta+\delta^{-1},\infty\right),\\[0.2cm]
-\dfrac{\delta}{4}\left(r-\delta-\delta^{-1}\right)^2+\delta \quad&\text{if} \quad r \in \left(\delta-\delta^{-1},\delta+\delta^{-1}\right],\\[0.4cm]
r \quad&\text{if} \quad r \in \left[-\delta+\delta^{-1},\delta-\delta^{-1}\right],\\[0.2cm]
\dfrac{\delta}{4}\left(r+\delta+\delta^{-1}\right)^2-\delta \quad&\text{if} \quad r \in \left[-\delta-\delta^{-1},-\delta+\delta^{-1}\right),\\[0.4cm]
-\delta \quad&\text{if} \quad r \in \left(-\infty,-\delta-\delta^{-1}\right),
\end{array}
\right.
\end{equation*}
which satisfies the following properties:
\begin{align}
\label{Equation-2-3}
\bullet~&\text{boundedness:} \quad -\min\{\delta,|r|\} \le \beta_{\text{{\tiny $\delta$}}}(r) \le \min\{\delta,|r|\}, \quad \forall r \in \mathbb{R},\\
\label{Equation-2-4}
\bullet~&\text{contraction property:} \quad |\beta_{\text{{\tiny $\delta$}}}(r_1)-\beta_{\text{{\tiny $\delta$}}}(r_2)| \le |r_1-r_2|,\quad \forall r_1,r_2 \in \mathbb{R},\\
\label{Equation-2-5}
\bullet~&\text{nonnegativity and boundedness of the derivative:} \quad 0 \le \beta_{\text{{\tiny $\delta$}}}'(r) \le 1,\quad \forall r \in \mathbb{R}.
\end{align}
Using the truncation $\alpha_{\text{{\tiny $\delta$}}}$, for any $\eta \in L^2(\Omega)$ we introduce the approximation 
$\mathcal{K}_{\text{{\tiny $\delta$}}}\eta$ of the nonlocal term in \eqref{Equation-1-2} by 
\begin{equation}\label{Equation-2-6}
(\mathcal{K}_{\text{{\tiny $\delta$}}}\eta)(x):=\int_\Omega K(x,y) \alpha_{\text{{\tiny $\delta$}}}(\eta (y))dy,\quad \text{a.e.}~x \in \Omega.
\end{equation}
As stated in Subsection \ref{Subsection-2-1} below, the truncation $\alpha_{\text{{\tiny $\delta$}}}$ preserves positivity and nondegeneracy of the coefficient, 
whereas $\beta_{\text{{\tiny $\delta$}}}$ controls the reaction terms uniformly in $\delta$.\\
\indent
In Sections \ref{Section-2} and \ref{Section-3}, for each $\delta \in (1,\infty)$, $\varepsilon \in (0,1)$ and $\nu \in (0,1)$ we consider the following approximate 
initial-boundary value problem $\text{(AP)}{}_{\text{{\tiny $\delta,\!\varepsilon,\!\nu$}}}$=\{\eqref{Equation-2-7}--\eqref{Equation-2-12}\}:
\begin{gather}
\label{Equation-2-7}
u'-D_1 \Delta u=\beta_{\text{{\tiny $\delta$}}}(u) (c_1 v-c_2 w),\quad \text{a.e. in} \quad Q,\\
\label{Equation-2-8}
v'+\mathcal{K}_{\text{{\tiny $\delta$}}}z\left(-\nu \Delta v+v\right)=v\left\{ c_3 \beta_{\text{{\tiny $\delta$}}} (w)-c_4 \beta_{\text{{\tiny $\delta$}}} (u) \right\},
\quad \text{a.e. in} \quad Q,\\
\label{Equation-2-9}
w'-D_2 \Delta w+c_5 w=c_6 -c_7 v \beta_{\text{{\tiny $\delta$}}} (w),\quad \text{a.e. in}\quad Q,\\
\label{Equation-2-10}
z'-\varepsilon \Delta z=v' \quad \text{a.e.} \quad Q,\\
\label{Equation-2-11}
\nabla u \cdot \bm{n}=\nabla v \cdot \bm{n}=\nabla z \cdot \bm{n}=\nabla w \cdot \bm{n}=0,\quad \text{a.e. on} \quad \Sigma,\\
\label{Equation-2-12}
(u(0),v(0),w(0),z(0))=(u_0,v_0,w_0,v_0),\quad \text{a.e. in} \quad \Omega.
\end{gather}
In the approximate problems $\text{(AP)}{}_{\text{{\tiny $\delta,\!\varepsilon,\!\nu$}}}$ for (P), we emphasize that the twisted metric generator $z$, serving as 
an auxiliary variable, is introduced so that the state-dependent coefficient $\mathcal{K}_{\text{{\tiny $\delta$}}}z$ in \eqref{Equation-2-8}, which plays the role of
an approximation of the nonlocal term, is generated by an evolution equation \eqref{Equation-2-10} independent of the variational structure of $v$.\\
%%%%%%%%%%%%%%%%%%%%%%%%%%%%%%%%%%%%%%%%%%%%%%%%%%%%%%%%%%%%%%%%%%%%%%%%%%%%%%%%%%%%%%%%%%%%%%%%%%%%%%%%%%%%%%%%%%%%%%%%%%%%%%%%%
%%%%%%%%%%%%%%%%%%%%%%%%%%%%%%%%%%%%%%%%%%%%%%%%%%%%%%%%%%%%%%%%%%%%%%%%%%%%%%%%%%%%%%%%%%%%%%%%%%%%%%%%%%%%%%%%%%%%%%%%%%%%%%%%%
%%%%%%%%%%%%%%%%%%%%%%%%%%%%%%%%%%%%%%%%%%%%%%%%%%%%%%%%%%%%%%%%%%%%%%%%%%%%%%%%%%%%%%%%%%%%%%%%%%%%%%%%%%%%%%%%%%%%%%%%%%%%%%%%%
\subsection{Twisted Hilbert spaces}\label{Subsection-2-1}
%%%%%%%%%%%%%%%%%%%%%%%%%%%%%%%%%%%%%%%%%%%%%%%%%%%%%%%%%%%%%%%%%%%%%%%%%%%%%%%%%%%%%%%%%%%%%%%%%%%%%%%%%%%%%%%%%%%%%%%%%%%%%%%%%
%%%%%%%%%%%%%%%%%%%%%%%%%%%%%%%%%%%%%%%%%%%%%%%%%%%%%%%%%%%%%%%%%%%%%%%%%%%%%%%%%%%%%%%%%%%%%%%%%%%%%%%%%%%%%%%%%%%%%%%%%%%%%%%%%
%%%%%%%%%%%%%%%%%%%%%%%%%%%%%%%%%%%%%%%%%%%%%%%%%%%%%%%%%%%%%%%%%%%%%%%%%%%%%%%%%%%%%%%%%%%%%%%%%%%%%%%%%%%%%%%%%%%%%%%%%%%%%%%%%
The key idea of this paper is to absorb the state-dependent coefficient into a family of nonlocal state-dependent Hilbert metrics, thereby converting the variable
coefficient into a state-dependent geometry.
The resulting family of ``\,{\sf twisted}\,'' Hilbert spaces satisfies the assumptions of the quasi-variational evolution framework developed in \cite{ito-2019}.
This construction enables us to establish an energy inequality despite the presence of the state-dependent coefficient $\mathcal{K}_{\delta}z$.
The crucial ingredient is the metric deformation inequality stated in Lemma \ref{Lemma-2-3}.\\
\indent
In order to deform the ``\,{\sf flat}\,'' space $L^2(\Omega)$ with the usual inner product $(\cdot,\cdot)_{L^2(\Omega)}$, for each approximate parameter 
$\delta \in (1,\infty)$ and each function $\bar{z} \in L^2(\Omega)$ we define an operator $M_{\text{{\tiny $\delta,\!\bar{z}$}}}$ on $L^2(\Omega)$ by 
\begin{equation}\label{Equation-2-13}
\forall v \in L^2(\Omega),\quad \left(M_{\text{{\tiny $\delta,\!\bar{z}$}}}v\right)(x):=
\left\{\left(\mathcal{K}_{\text{{\tiny $\delta$}}}\bar{z}\right)(x)\right\}^{-1}v(x),
\quad \forall x \in \Omega,
\end{equation}
where $\mathcal{K}_{\text{{\tiny $\delta$}}} \bar{z}$ is given by \eqref{Equation-2-6}.
Then, we show Lemmas \ref{Lemma-2-1}--\ref{Lemma-2-3}, which are originally obtained in \cite[Lemma 5.1]{ito-2026-2} and proposed as one of the typical examples
deforming the metric of $L^2(\Omega)$ by nonlocal term.
%%%%%
%%%%%
%%%%%
\begin{lemma}\label{Lemma-2-1}
For each $\delta \in (1,\infty)$ and $\bar{z} \in L^2(\Omega)$ the operator $M_{\text{{\tiny $\delta,\!\bar{z}$}}}$ is well-defined as an operator from 
$L^2(\Omega)$ into itself, which is linear, bounded and bijective.
\end{lemma}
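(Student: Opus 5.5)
The plan is to show that the multiplier $m(x):=\{(\mathcal{K}_{\text{{\tiny $\delta$}}}\bar{z})(x)\}^{-1}$ is a measurable function bounded above and below by positive constants, uniformly in $x \in \Omega$. Once this is known, $M_{\text{{\tiny $\delta,\!\bar{z}$}}}$ is simply multiplication by an $L^\infty$ function bounded away from zero. Linearity, boundedness and bijectivity then follow at once.

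\emph{Step 1: well-definedness and two-sided bounds on $\mathcal{K}_{\text{{\tiny $\delta$}}}\bar{z}$.} Fix $\bar{z}\in L^2(\Omega)$. By \eqref{Equation-2-1}, $\delta^{-1}\le \alpha_{\text{{\tiny $\delta$}}}(\bar{z}(y))\le \delta$ for a.e.\ $y$, so $\alpha_{\text{{\tiny $\delta$}}}(\bar{z})\in L^\infty(\Omega)\subset L^2(\Omega)$ because $\Omega$ is bounded. Since $K\ge 0$, (A1)(a) gives the lower bound
\[
(\mathcal{K}_{\text{{\tiny $\delta$}}}\bar{z})(x)\ge \delta^{-1}\int_\Omega K(x,y)\,dy\ge \delta^{-1}K_1 .
\]
For the upper bound, Cauchy--Schwarz in $y$ together with (A1)(b) gives
\[
(\mathcal{K}_{\text{{\tiny $\delta$}}}\bar{z})(x)\le \delta\,|\Omega|^{1/2}K_2 .
\]
Both bounds hold for every $x\in\Omega$, so the integral \eqref{Equation-2-6} is finite everywhere. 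Measurability, indeed continuity, in $x$ follows from $K\in C^1(\Omega\times\Omega)$ together with the $L^2$ bound (A1)(c) on $\nabla_x K$. If one wants to avoid the regularity of $K$, Fubini--Tonelli alone gives measurability.

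\emph{Step 2: properties of $M_{\text{{\tiny $\delta,\!\bar{z}$}}}$.} By Step 1,
\[
m_-:=\delta^{-1}|\Omega|^{-1/2}K_2^{-1}\le m(x)\le \delta K_1^{-1}=:m_+ .
\]
Hence $\|M_{\text{{\tiny $\delta,\!\bar{z}$}}}v\|_{L^2}\le m_+\|v\|_{L^2}$, so $M_{\text{{\tiny $\delta,\!\bar{z}$}}}$ maps $L^2(\Omega)$ into itself and is bounded; linearity is obvious. For bijectivity, injectivity follows from $m\ge m_->0$: if $mv=0$ a.e., then $v=0$ a.e. Surjectivity follows by exhibiting the inverse explicitly: for $f\in L^2(\Omega)$ set $v:=(\mathcal{K}_{\text{{\tiny $\delta$}}}\bar{z})f$. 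Then $\|v\|_{L^2}\le \delta|\Omega|^{1/2}K_2\|f\|_{L^2}$ and $M_{\text{{\tiny $\delta,\!\bar{z}$}}}v=f$. This also shows that the inverse is bounded.

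\emph{Main obstacle.} The only genuinely non-trivial point is the uniform positive lower bound on $\mathcal{K}_{\text{{\tiny $\delta$}}}\bar{z}$, without which $M_{\text{{\tiny $\delta,\!\bar{z}$}}}$ could fail even to be defined. It relies on two ingredients working together: the nondegeneracy of the truncation, $\alpha_{\text{{\tiny $\delta$}}}\ge\delta^{-1}$, and assumption (A1)(a) combined with $K\ge0$. The argument therefore consists of invoking \eqref{Equation-2-1} and (A1)(a)--(b) carefully; the rest is routine.
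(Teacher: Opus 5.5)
Your proof is correct and follows essentially the paper's argument. You derive the two-sided bound $K_1/\delta \le \mathcal{K}_\delta\bar{z} \le \delta K_2|\Omega|^{1/2}$ from \eqref{Equation-2-1}, $K\ge 0$ and (A1)(a)--(b), use the resulting bounds on the multiplier for boundedness and injectivity, and obtain surjectivity from the explicit preimage $(\mathcal{K}_\delta\bar{z})f$. Your remark on the measurability of $x\mapsto(\mathcal{K}_\delta\bar{z})(x)$ fills in a small point the paper leaves implicit.
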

%%%%%
%%%%%
%%%%%
\begin{proof}[Proof.]
Throughout this proof, we fix a parameter $\delta \in (1,\infty)$ and a function $\bar{z} \in L^2(\Omega)$.
Since from (A1) we have the following inequality for a.e. $x \in \Omega$:
\begin{align}
\label{Equation-2-14}
\frac{K_1}{\delta} &\le \frac{1}{\delta} \int_\Omega K(x,y)dy \le \int_\Omega K(x,y) \alpha_{\text{{\tiny $\delta$}}}(\bar{z}(y))dy\\
\nonumber
&=(\mathcal{K}_{\text{{\tiny $\delta$}}}\bar{z})(x) \le \left(\int_\Omega |K(x,y)|^2dy\right)^{\frac{1}{2}} 
\left(\int_\Omega |\alpha_{\text{{\tiny $\delta$}}}(\bar{z}(y))|^2dy\right)^{\frac{1}{2}} \le \delta K_2 |\Omega|^{\frac{1}{2}},
\end{align}
hence,
\begin{equation}\label{Equation-2-15}
\frac{1}{\delta K_2 |\Omega|^{\frac{1}{2}}} \le \left\{ (\mathcal{K}_{\text{{\tiny $\delta$}}}\bar{z})(x) \right\}^{-1} \le \frac{\delta}{K_1},
\end{equation}
it follows that the following inequality holds for all $v \in L^2(\Omega)$:
\begin{equation}\label{Equation-2-16}
\frac{1}{|\Omega|} \left(\frac{1}{\delta K_2} \right)^2 \|v\|_{L^2(\Omega)}^2 \le \int_\Omega \left|(M_{\text{{\tiny $\delta,\!\bar{z}$}}}v)(x)\right|^2dx 
\le \left(\frac{\delta}{K_1}\right)^2 \|v\|_{L^2(\Omega)}^2,
\end{equation}
which implies $M_{\text{{\tiny $\delta,\!\bar{z}$}}}v \in L^2(\Omega)$, hence, that the operator $M_{\text{{\tiny $\delta,\!\bar{z}$}}}$ is well-defined on $L^2(\Omega)$.
Since from \eqref{Equation-2-13} the operator $M_{\text{{\tiny $\delta,\!\bar{z}$}}}$ is linear on $L^2(\Omega)$, we see from \eqref{Equation-2-16} 
that it is bounded and injective on $L^2(\Omega)$.\\
\indent
To complete the proof of this lemma, we show that $M_{\text{{\tiny $\delta,\!\bar{z}$}}}$ is surjective on $L^2(\Omega)$.
Indeed, for any $v \in L^2(\Omega)$ we set 
\begin{equation}\label{Equation-2-17}
\bar{v}(x):=(\mathcal{K}_{\text{{\tiny $\delta$}}}\bar{z})(x) v(x),\quad \forall x \in \Omega.
\end{equation}
Then, from \eqref{Equation-2-14} we deduce 
\begin{equation*}
\int_\Omega |\bar{v}(x)|^2dx \le |\Omega| \left(\delta K_2\right)^2 \|v\|_{L^2(\Omega)}^2,
\end{equation*}
which implies $\bar{v} \in L^2(\Omega)$ and $M_{\text{{\tiny $\delta,\!\bar{z}$}}}\bar{v}=v$ in $L^2(\Omega)$ by using \eqref{Equation-2-13} and \eqref{Equation-2-17}.
\end{proof}
%%%%%
%%%%%
%%%%%
By Lemma \ref{Lemma-2-1}, we define a bilinear form $(\cdot,\cdot)_{\text{{\tiny $\delta,\!\bar{z}$}}}:L^2(\Omega) \times L^2(\Omega) \rightarrow \mathbb{R}$ by 
\begin{align}
\label{Equation-2-18}
(v_1,v_2)_{\text{{\tiny $\delta,\!\bar{z}$}}}:\!&=(M_{\text{{\tiny $\delta,\!\bar{z}$}}}v_1,v_2)_{L^2(\Omega)}\\
\nonumber
&=\int_\Omega \{(\mathcal{K}_{\text{{\tiny $\delta$}}}\bar{z})(x)\}^{-1} v_1(x) v_2 (x)dx,\quad \forall v_1,v_2 \in L^2(\Omega),
\end{align}
namely, the state-dependent coefficient $\mathcal{K}_{\text{{\tiny $\delta$}}}\bar{z}$ is absorbed into a twisted $L^2(\Omega)$-metric as is seen from
Lemma \ref{Lemma-2-3} below.
Before giving Lemma \ref{Lemma-2-3}, we show Lemma \ref{Lemma-2-2}, which gives some properties of 
$\{(\cdot,\cdot)_{\text{{\tiny $\delta,\!\bar{z}$}}}\,;\,\bar{z} \in L^2(\Omega)\}$ for all $\delta \in (1,\infty)$.
Estimate (b) quantifies how the metric changes when the state variable $\bar{z}$ varies.
This property is the key ingredient for the energy inequality of Lemma \ref{Lemma-2-3}. 
%%%%%
%%%%%
%%%%%
\begin{lemma}\label{Lemma-2-2}
For every $\delta \in (1,\infty)$ and $\bar{z} \in L^2(\Omega)$ the bilinear form  
$(\cdot,\cdot)_{\text{{\tiny $\delta,\!\bar{z}$}}}:L^2(\Omega) \times L^2(\Omega) \rightarrow \mathbb{R}$ defined by (\ref{Equation-2-18}) is a quasi-variational inner
product on $L^2(\Omega)$.\\
\indent
Moreover, for each $\delta \in (1,\infty)$ the class of inner products 
$\{(\cdot,\cdot)_{\text{{\tiny $\delta,\!\bar{z}$}}}\,;\,\bar{z} \in L^2(\Omega)\}$ satisfies the following properties:
\begin{enumerate}\leftskip6pt
\item[(1)] There exist constants $C_1>0$ and $C_2>0$ such that
\begin{equation*}
\delta^{-\frac{1}{2}} C_1 \|v\|_{L^2(\Omega)} \le \|v\|_{\text{{\tiny $\delta,\!\bar{z}$}}} \le \delta^{\frac{1}{2}} C_2 \|v\|_{L^2(\Omega)}, 
\quad \forall v \in L^2(\Omega),~\forall \bar{z} \in L^2(\Omega).
\end{equation*}
\item[(2)] There exists a constant $C_3>0$ such that 
\begin{equation*}
\left| \|v\|_{\text{{\tiny $\delta,\!\bar{z}_1$}}}^2-\|v\|_{\text{{\tiny $\delta,\!\bar{z}_2$}}}^2 \right| 
\le \delta^3 C_3 \|\bar{z}_1-\bar{z}_2\|_{L^2(\Omega)} \|v\|_{\text{{\tiny $\delta,\!\bar{z}_1$}}}^2, 
\quad \forall \bar{z}_1,\bar{z}_2 \in L^2(\Omega),\,\forall v \in L^2(\Omega).
\end{equation*}
\end{enumerate}
where for every $\delta \in (1,\infty)$ and $\bar{z} \in L^2(\Omega)$ we set 
$\|\,\cdot\,\|_{\text{{\tiny $\delta,\!\bar{z}$}}}:=\displaystyle{\sqrt{(\,\cdot\,,\,\cdot\,)_{\text{{\tiny $\delta,\!\bar{z}$}}}}}$.
\end{lemma}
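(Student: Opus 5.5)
The proof will rest on the two-sided pointwise bounds \eqref{Equation-2-14}--\eqref{Equation-2-15} for the coefficient $\mathcal{K}_{\text{{\tiny $\delta$}}}\bar{z}$ obtained in the proof of Lemma \ref{Lemma-2-1}. For the quasi-variational inner product property, the bilinear form \eqref{Equation-2-18} is symmetric because it is the integral of a scalar weight times $v_1v_2$. It is positive definite because the weight $\{(\mathcal{K}_{\text{{\tiny $\delta$}}}\bar{z})(x)\}^{-1}$ is bounded below by $(\delta K_2|\Omega|^{1/2})^{-1}>0$. Together with the norm equivalence in (1), this gives that $(L^2(\Omega),(\cdot,\cdot)_{\text{{\tiny $\delta,\!\bar{z}$}}})$ is a Hilbert space with the same topology as $L^2(\Omega)$. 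The structural conditions in the framework of \cite{ito-2019} (uniform equivalence and Lipschitz-type dependence on the parameter) are exactly items (1) and (2), so proving those completes the first claim.

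For (1), I will integrate \eqref{Equation-2-15} against $|v|^2$:
\begin{equation*}
\frac{1}{\delta K_2|\Omega|^{1/2}}\|v\|_{L^2(\Omega)}^2\le \|v\|_{\text{{\tiny $\delta,\!\bar{z}$}}}^2\le \frac{\delta}{K_1}\|v\|_{L^2(\Omega)}^2 .
\end{equation*}
Taking square roots gives (1) with $C_1=(K_2|\Omega|^{1/2})^{-1/2}$ and $C_2=K_1^{-1/2}$.

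For (2), I will write the difference of the squared norms as
\begin{equation*}
\|v\|_{\text{{\tiny $\delta,\!\bar{z}_1$}}}^2-\|v\|_{\text{{\tiny $\delta,\!\bar{z}_2$}}}^2=\int_\Omega \frac{(\mathcal{K}_{\text{{\tiny $\delta$}}}\bar{z}_2)(x)-(\mathcal{K}_{\text{{\tiny $\delta$}}}\bar{z}_1)(x)}{(\mathcal{K}_{\text{{\tiny $\delta$}}}\bar{z}_1)(x)\,(\mathcal{K}_{\text{{\tiny $\delta$}}}\bar{z}_2)(x)}|v(x)|^2dx .
\end{equation*}
The numerator is estimated pointwise by Cauchy--Schwarz, (A1)(b) and the contraction property \eqref{Equation-2-2}, giving $|(\mathcal{K}_{\text{{\tiny $\delta$}}}\bar{z}_2-\mathcal{K}_{\text{{\tiny $\delta$}}}\bar{z}_1)(x)|\le K_2\|\bar{z}_1-\bar{z}_2\|_{L^2(\Omega)}$.

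The step needing care is to convert the remaining weight into the $\bar{z}_1$-norm of $v$, rather than into the flat $L^2$ norm, with the power $\delta^3$. Keep one factor $\{(\mathcal{K}_{\text{{\tiny $\delta$}}}\bar{z}_1)(x)\}^{-1}$ inside the integral, so that it reconstructs $\|v\|_{\text{{\tiny $\delta,\!\bar{z}_1$}}}^2$. Bound the other factor $\{(\mathcal{K}_{\text{{\tiny $\delta$}}}\bar{z}_2)(x)\}^{-1}$ by $\delta/K_1$ from \eqref{Equation-2-15}. This yields the estimate with $\delta K_2/K_1$, which is at most $\delta^3K_2/K_1$ since $\delta>1$. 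Hence $C_3=K_2/K_1$ suffices.

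If the paper's convention instead passes through the flat norm, the same bound still follows. Bounding both inverse weights by $\delta/K_1$ gives a factor $\delta^2K_2/K_1^2$ times $\|v\|_{L^2(\Omega)}^2$. Then (1) gives $\|v\|_{L^2(\Omega)}^2\le \delta C_1^{-2}\|v\|_{\text{{\tiny $\delta,\!\bar{z}_1$}}}^2$, so the total factor is $\delta^3K_2/(K_1^2C_1^2)$. This matches the stated power $\delta^3$ with $C_3=K_2/(K_1^2C_1^2)$.
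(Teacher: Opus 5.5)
Your proposal is correct and follows the paper's proof: (1) is obtained exactly as in the paper by integrating \eqref{Equation-2-15} against $|v|^2$, with the same $C_1,C_2$. Your fallback argument for (2) is precisely the paper's route via \eqref{Equation-2-20} and (1), yielding the same constant $C_3=K_2/(C_1K_1)^2$.

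Your primary argument for (2) is also valid. It keeps the factor $\{(\mathcal{K}_\delta\bar{z}_1)(x)\}^{-1}$ inside the integral and bounds only $\{(\mathcal{K}_\delta\bar{z}_2)(x)\}^{-1}$ by $\delta/K_1$. This gives the sharper factor $\delta K_2/K_1$ in place of $\delta^3K_2/(C_1K_1)^2$, because it avoids the two extra powers of $\delta$ lost when passing through the flat norm.
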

%%%%%
%%%%%
%%%%%
\begin{proof}[Proof.]
From \eqref{Equation-2-13} and \eqref{Equation-2-18} we obtain the symmetry of $(\cdot,\cdot)_{\text{{\tiny $\delta,\!\bar{z}$}}}$:
\begin{equation*}
(v_1,v_2)_{\text{{\tiny $\delta,\!\bar{z}$}}}=(M_{\text{{\tiny $\delta,\!\bar{z}$}}}v_1,v_2)_{L^2(\Omega)}
=(M_{\text{{\tiny $\delta,\!\bar{z}$}}}v_2,v_1)_{L^2(\Omega)}=(v_2,v_1)_{\text{{\tiny $\delta,\!\bar{z}$}}}, \quad \forall v_1,v_2 \in L^2(\Omega).
\end{equation*}
Hence, we see from the nonnegativity of $K$ (cf. (A1)) and Lemma \ref{Lemma-2-1} that for any $\bar{z} \in L^2(\Omega)$ the bilinear form 
$(\cdot,\cdot)_{\text{{\tiny $\delta,\!\bar{z}$}}}$ defines an inner product on $L^2(\Omega)$.
Following \cite{ito-2019}, this is referred to as a ``\,{\sf quasi-variational inner product}\,'' due to its dependence on the state variable $\bar{z}$, 
which will eventually be replaced by the unknown function $z$ in (P).\\
\indent
Next, we show (1).
From \eqref{Equation-2-15} we get
\begin{equation}\label{Equation-2-19}
\frac{1}{\delta K_2 |\Omega|^{\frac{1}{2}}} \cdot \|v\|_{L^2(\Omega)}^2 \le \int_\Omega \left\{ (\mathcal{K}_{\text{{\tiny $\delta$}}}\bar{z})(x)\right\}^{-1} |v(x)|^2dx
\le \frac{\delta}{K_1} \cdot \|v\|_{L^2(\Omega)}^2.
\end{equation}
By defining the constants $C_1>0$ and $C_2>0$ as
\begin{equation*}
C_1:=\left(\frac{1}{K_2 |\Omega|^{\frac{1}{2}}}\right)^{\frac{1}{2}},\quad C_2:=\left( \frac{1}{K_1} \right)^{\frac{1}{2}},
\end{equation*}
we deduce from \eqref{Equation-2-19} that the required inequality holds.\\
\indent
In the rest of this proof, we show (2).
Since we have 
\begin{align}
\label{Equation-2-20}
&\,\left| \left\{(\mathcal{K}_{\text{{\tiny $\delta$}}}\bar{z}_1)(x)\right\}^{-1}-\left\{(\mathcal{K}_{\text{{\tiny $\delta$}}}\bar{z}_2)(x)\right\}^{-1}\right|\\
\nonumber
\le&\, \{(\mathcal{K}_{\text{{\tiny $\delta$}}}\bar{z}_1)(x)\}^{-1} \cdot \{(\mathcal{K}_{\text{{\tiny $\delta$}}}\bar{z}_2)(x)\}^{-1}
\int_\Omega K(x,y) \left| \alpha_{\text{{\tiny $\delta$}}} (\bar{z}_1(y))-\alpha_{\text{{\tiny $\delta$}}} (\bar{z}_2(y)) \right|dy\\
\nonumber
\le&\,\left(\dfrac{\delta}{K_1}\right)^2 \left( \int_\Omega |K(x,y)|^2dy\right)^{\frac{1}{2}} \|\bar{z}_1-\bar{z}_2\|_{L^2(\Omega)}.
\end{align}
From (1) of this lemma and \eqref{Equation-2-20} we obtain the following inequality:
\begin{align*}
\left| \|v\|_{\text{{\tiny $\delta,\!\bar{z}_1$}}}^2-\|v\|_{\text{{\tiny $\delta,\!\bar{z}_2$}}}^2\right| 
&\le K_2 \left(\frac{\delta}{K_1}\right)^2 \|\bar{z}_1-\bar{z}_2\|_{L^2(\Omega)} \|v\|_{L^2(\Omega)}^2\\
&\le \frac{\delta^3 K_2}{(C_1 K_1)^2} \cdot \left\|\bar{z}_1-\bar{z}_2\right\|_{L^2(\Omega)} \|v\|_{\text{{\tiny $\delta,\!\bar{z}_1$}}}^2, \quad \forall v \in L^2(\Omega).
\end{align*}
By defining the constant $C_3 > 0$ as
\begin{equation*}
C_3:=\frac{K_2}{(C_1 K_1)^2},
\end{equation*}
we get the required inequality in (2) of this lemma.
\end{proof}
%%%%%
%%%%%
%%%%%
By Lemma \ref{Lemma-2-2}, for each approximation parameter $\delta \in (1,\infty)$ we construct a family of
``\,{\sf twisted spaces}\,'' $\{L^2(\delta,\bar{z})\,;\,\bar{z} \in L^2(\Omega)\}$, where $L^2(\delta,\bar{z})$ denotes the real Hilbert space $L^2(\Omega)$ with the 
quasi-variational inner product $(\cdot,\cdot)_{\text{{\tiny $\delta,\!\bar{z}$}}}$ for any $\bar{z} \in L^2(\Omega)$ in this paper, 
while $L^2(\Omega)$ with the usual inner product is called ``\,{\sf flat}\,''.
From \eqref{Equation-2-17} in the proof of Lemma \ref{Lemma-2-1}, the inverse $M_{\text{{\tiny $\delta,\!\bar{z}$}}}^{-1}$ of 
$M_{\text{{\tiny $\delta,\!\bar{z}$}}}$ is given by
\begin{equation*}
\forall \tilde{v} \in L^2(\Omega), \quad \left(M_{\text{{\tiny $\delta,\!\bar{z}$}}}^{-1} \tilde{v} \right)(x)
=\left( \mathcal{K}_{\text{{\tiny $\delta$}}} \bar{z} \right)(x) \tilde{v}(x), \quad \forall x \in \Omega.
\end{equation*}
%
%%%%%
%%%%%
%%%%%
\begin{remark}
We see from (1) of Lemma \ref{Lemma-2-2} that one equivalence constant between $\|\cdot\|_{\text{{\tiny $\delta,\!\bar{z}$}}}$ and $\|\cdot\|_{L^2(\Omega)}$
degenerates as $\delta \uparrow \infty$ in the sense of 
\begin{equation*}
\lim_{\delta\,\uparrow\,\infty} \frac{C_1}{\sqrt{\delta}}=0
\end{equation*}
and the other blows up as $\delta \uparrow \infty$ in the sense of 
\begin{equation*}
\lim_{\delta\,\uparrow\,\infty} \sqrt{\delta}C_2=\infty.
\end{equation*}
These facts mean that the uniform equivalence between quasi-variational inner products $\{(\cdot,\cdot)_{\text{{\tiny $\delta,\!\bar{z}$}}}\,;\,\bar{z} \in L^2(\Omega)\}$ 
and the usual one $(\cdot,\cdot)$ collapses if $\delta \uparrow \infty$ occurs.\\
\indent
Moreover, we see from (b) of Lemma \ref{Lemma-2-2} that the Lipschitz estimate for the deformation of the metric is not uniform with respect to $\delta$
in the sense of 
\begin{equation*}
\lim_{\delta\,\uparrow\,\infty} \delta^3 C_3=\infty.
\end{equation*}
This fact means that the deformation rate between the metrics of twisted spaces $L^2(\delta,\bar{z}_1)$ and $L^2(\delta,\bar{z}_2)$ is uncontrollable 
if $\delta \uparrow \infty$ occurs as is seen from Lemma \ref{Lemma-2-3} below.  
\end{remark}
%%%%%
%%%%%
%%%%%
Moreover, we have Lemma \ref{Lemma-2-3} which was originally obtained in \cite{ito-2019}, and omit its proof in this paper.
%%%%%
%%%%%
%%%%%
\begin{lemma}\label{Lemma-2-3}
(\cite[Lemma 2.10]{ito-2019}) For each $v \in W^{1,2}(0,T\,;L^2(\Omega))$ and $\bar{z} \in W^{1,1}(0,T\,;L^2(\Omega))$ the function 
$t \mapsto \|v(t)\|_{\text{{\tiny $\delta,\!\bar{z}(t)$}}}^2$ is absolutely continuous from $[0,T]$ into $\mathbb{R}$, and we have the following energy inequality for the 
state-dependent norm associated with the quasi-variational inner products for a.e. $t \in (0,T)$:
\begin{equation*}
\left| \frac{d}{dt} \|v(t)\|_{\text{{\tiny $\delta,\!\bar{z}(t)$}}}^2-2(v'(t),v(t))_{\text{{\tiny $\delta,\!\bar{z}(t)$}}}\right| \le \delta^3 C_3
\|\bar{z}'(t)\|_{L^2(\Omega)} \|v(t)\|_{\text{{\tiny $\delta,\!\bar{z}(t)$}}}^2,
\end{equation*}
where the constant $C_3>0$ is the same one as in (2) of Lemma \ref{Lemma-2-2}. 
\end{lemma}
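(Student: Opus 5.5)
We prove Lemma \ref{Lemma-2-3} in two steps: first absolute continuity, then the pointwise derivative bound. Throughout we write
\begin{equation*}
m(x,t):=\{(\mathcal{K}_{\text{{\tiny $\delta$}}}\bar{z}(t))(x)\}^{-1},
\end{equation*}
so that $\|v(t)\|_{\text{{\tiny $\delta,\!\bar{z}(t)$}}}^2=\int_\Omega m(x,t)|v(x,t)|^2dx$. By \eqref{Equation-2-15}, $m$ is bounded between two positive constants depending only on $\delta$.

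For absolute continuity, take $s<t$ and split
\begin{equation*}
\|v(t)\|_{\text{{\tiny $\delta,\!\bar{z}(t)$}}}^2-\|v(s)\|_{\text{{\tiny $\delta,\!\bar{z}(s)$}}}^2
=\left(\|v(t)\|_{\text{{\tiny $\delta,\!\bar{z}(t)$}}}^2-\|v(t)\|_{\text{{\tiny $\delta,\!\bar{z}(s)$}}}^2\right)
+\left(\|v(t)\|_{\text{{\tiny $\delta,\!\bar{z}(s)$}}}^2-\|v(s)\|_{\text{{\tiny $\delta,\!\bar{z}(s)$}}}^2\right).
\end{equation*}
\begin{itemize}
\item The first bracket is bounded by (2) of Lemma \ref{Lemma-2-2}. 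Since $v\in C([0,T];L^2(\Omega))$, its modulus is at most
\begin{equation*}
\delta^3C_3\sup_\tau\|v(\tau)\|_{\text{{\tiny $\delta,\!\bar{z}(\tau)$}}}^2\int_s^t\|\bar{z}'\|_{L^2(\Omega)}d\tau .
\end{equation*}
The supremum is finite by (1) of Lemma \ref{Lemma-2-2}.
\item The second bracket equals $(M_{\text{{\tiny $\delta,\!\bar{z}(s)$}}}(v(t)-v(s)),v(t)+v(s))_{L^2(\Omega)}$. By Lemma \ref{Lemma-2-1} it is bounded by $C_\delta\int_s^t\|v'\|_{L^2(\Omega)}d\tau$.
\end{itemize}
Both bounds are integrals of $L^1(0,T)$ functions, and summing them over disjoint intervals gives the absolute continuity of $t\mapsto\|v(t)\|_{\text{{\tiny $\delta,\!\bar{z}(t)$}}}^2$.

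For the derivative bound, fix $t$ at which the following all hold (a.e. $t$ qualifies):
\begin{itemize}
\item the scalar function is differentiable;
\item $v$ is strongly differentiable in $L^2(\Omega)$;
\item $\bar{z}$ is strongly differentiable, which holds a.e. since $L^2(\Omega)$ is Hilbert and hence has the Radon--Nikodym property.
\end{itemize}
Divide the same splitting, taken on $[t,t+h]$, by $h>0$ and let $h\downarrow 0$.

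The second bracket over $h$ converges to $(M_{\text{{\tiny $\delta,\!\bar{z}(t)$}}}v'(t),2v(t))_{L^2(\Omega)}=2(v'(t),v(t))_{\text{{\tiny $\delta,\!\bar{z}(t)$}}}$. This limit holds because, by \eqref{Equation-2-20}, $M_{\text{{\tiny $\delta,\!\bar{z}}}}$ depends continuously on $\bar{z}$ in operator norm.

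The first bracket over $h$ has modulus at most
\begin{equation*}
\delta^3C_3\frac{\|\bar{z}(t+h)-\bar{z}(t)\|_{L^2(\Omega)}}{h}\|v(t+h)\|_{\text{{\tiny $\delta,\!\bar{z}(t+h)$}}}^2 ,
\end{equation*}
applying (2) of Lemma \ref{Lemma-2-2} with $\bar{z}_1=\bar{z}(t+h)$ and $\bar{z}_2=\bar{z}(t)$. As $h\downarrow0$, its $\limsup$ is $\delta^3C_3\|\bar{z}'(t)\|_{L^2(\Omega)}\|v(t)\|_{\text{{\tiny $\delta,\!\bar{z}(t)$}}}^2$, by continuity of the scalar function. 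Combining the two limits yields the stated inequality.

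The main obstacle is the right choice of base point in (2) of Lemma \ref{Lemma-2-2}. That estimate is stated with $\|v\|_{\text{{\tiny $\delta,\!\bar{z}_1$}}}^2$ on the right, so one must either arrange the splitting so that the limit produces the norm at $\bar{z}(t)$, or use continuity of the norm in both arguments, to recover exactly the constant $\delta^3C_3$ without extra factors. Justifying a.e. strong differentiability of $\bar{z}$ is standard for $W^{1,1}$ with values in a Hilbert space.
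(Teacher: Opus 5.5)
Your proof is correct. Note that the paper gives no argument for this lemma: it imports the statement from \cite[Lemma 2.10]{ito-2019}, where it is proved abstractly for a family of inner products satisfying a Lipschitz-type deformation condition like (2) of Lemma~\ref{Lemma-2-2}. Your proof is a concrete, self-contained version of that argument, built on the splitting $f(t+h)-f(t)=\bigl(\|v(t+h)\|^2_{\delta,\bar z(t+h)}-\|v(t+h)\|^2_{\delta,\bar z(t)}\bigr)+\bigl(\|v(t+h)\|^2_{\delta,\bar z(t)}-\|v(t)\|^2_{\delta,\bar z(t)}\bigr)$. What it buys is transparency: it shows that only the boundedness of $M_{\delta,\bar z}$ from Lemma~\ref{Lemma-2-1} and (2) of Lemma~\ref{Lemma-2-2} are needed. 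It also shows why the constant is exactly $\delta^3C_3$: the right-hand side $\|v(t+h)\|^2_{\delta,\bar z(t+h)}=f(t+h)$ tends to $f(t)$ by the continuity you established first. So the ``main obstacle'' you flag is already resolved by your choice of splitting.

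Two justifications are slightly off, though neither affects validity.
\begin{itemize}
\item In the second bracket the operator is the fixed $M_{\delta,\bar z(t)}$, so you never use continuity of $\bar z\mapsto M_{\delta,\bar z}$. The limit follows from strong differentiability of $v$ at $t$ and continuity of $v$ alone.
\item A.e.\ strong differentiability of $\bar z$, with derivative equal to the weak derivative $\bar z'$, follows from the Lebesgue differentiation theorem for the Bochner integral $\bar z(t)=\bar z(0)+\int_0^t\bar z'(\tau)\,d\tau$. That theorem holds in any Banach space, so the Radon--Nikodym property is not needed.
\end{itemize}
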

%%%%%
%%%%%
%%%%%
%%%%%%%%%%%%%%%%%%%%%%%%%%%%%%%%%%%%%%%%%%%%%%%%%%%%%%%%%%%%%%%%%%%%%%%%%%%%%%%%%%%%%%%%%%%%%%%%%%%%%%%%%%%%%%%%%%%%%%%%%%%%%%%%%%%%%%%%
%%%%%%%%%%%%%%%%%%%%%%%%%%%%%%%%%%%%%%%%%%%%%%%%%%%%%%%%%%%%%%%%%%%%%%%%%%%%%%%%%%%%%%%%%%%%%%%%%%%%%%%%%%%%%%%%%%%%%%%%%%%%%%%%%%%%%%%%
%%%%%%%%%%%%%%%%%%%%%%%%%%%%%%%%%%%%%%%%%%%%%%%%%%%%%%%%%%%%%%%%%%%%%%%%%%%%%%%%%%%%%%%%%%%%%%%%%%%%%%%%%%%%%%%%%%%%%%%%%%%%%%%%%%%%%%%%
\subsection{Twisted gradient flow on state-dependent spaces}\label{Subsection-2-2}
%%%%%%%%%%%%%%%%%%%%%%%%%%%%%%%%%%%%%%%%%%%%%%%%%%%%%%%%%%%%%%%%%%%%%%%%%%%%%%%%%%%%%%%%%%%%%%%%%%%%%%%%%%%%%%%%%%%%%%%%%%%%%%%%%%%%%%%%
%%%%%%%%%%%%%%%%%%%%%%%%%%%%%%%%%%%%%%%%%%%%%%%%%%%%%%%%%%%%%%%%%%%%%%%%%%%%%%%%%%%%%%%%%%%%%%%%%%%%%%%%%%%%%%%%%%%%%%%%%%%%%%%%%%%%%%%%
%%%%%%%%%%%%%%%%%%%%%%%%%%%%%%%%%%%%%%%%%%%%%%%%%%%%%%%%%%%%%%%%%%%%%%%%%%%%%%%%%%%%%%%%%%%%%%%%%%%%%%%%%%%%%%%%%%%%%%%%%%%%%%%%%%%%%%%%
For each $\nu \in (0,1)$ we define a proper l.s.c. convex function $\Phi_\nu : L^2(\Omega) \rightarrow \mathbb{R} \cup \{\infty\}$ by
\begin{equation}\label{Equation-2-21}
\Phi_\nu (\bar{v}):=\left\{
\begin{array}{ll}
\displaystyle{\frac{\nu}{2} \int_\Omega |\nabla \bar{v}(x)|^2dx+\frac{1}{2}\int_\Omega |\bar{v}(x)|^2dx},\quad&\text{if} \quad \bar{v} \in H^1(\Omega),\\[0.3cm]
\infty,\quad&\text{if} \quad \bar{v} \in L^2(\Omega) \setminus H^1(\Omega).
\end{array}
\right.
\end{equation}
By (1) of Lemma \ref{Lemma-2-2}, for each $\bar{z} \in L^2(\Omega)$ we are able to regard $\Phi_\nu$ as a proper l.s.c. convex function 
on the twisted Hilbert space $L^2(\delta,\bar{z})$.
Although $\Phi_\nu$ itself is independent of the state variable $z$, the induced subdifferential operator depends on $\bar{z}$ through 
the twisted Hilbert structure $L^2(\delta,\bar{z})$.
The following lemma identifies the subdifferential of $\Phi_\nu$ on the twisted Hilbert space $L^2(\delta,\bar{z})$.
The subsequent analysis relies on the abstract evolution framework developed in \cite{ito-2019,ito-2022}.
Especially, Lemmas \ref{Lemma-2-2} and \ref{Lemma-2-4} verify that the present state-dependent Hilbert structure satisfies the assumptions of the 
general theory established in \cite{ito-2019,ito-2022}.
%%%%%
%%%%%
%%%%%
\begin{lemma}\label{Lemma-2-4}
For each $\bar{z} \in L^2(\Omega)$ the following properties hold:
\begin{gather*}
D\left(\partial_{\text{{\tiny $\delta,\!\bar{z}$}}}\,\Phi_\nu\right)=D(-\Delta_N),\\
\partial_{\text{{\tiny $\delta,\!\bar{z}$}}}\,\Phi_\nu (\bar{v})=\left\{ M_{\text{{\tiny $\delta,\!\bar{z}$}}}^{-1}\left(-\nu \Delta_N \bar{v}+\bar{v}\right)\right\},
\quad \forall \bar{v} \in D\left(\partial_{\text{{\tiny $\delta,\!\bar{z}$}}}\,\Phi_\nu\right),
\end{gather*}
where the operator $-\Delta_N$ is the same as in (s4) in Theorem \ref{Theorem-1-1}.
\end{lemma}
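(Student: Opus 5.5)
The approach is to reduce the identification of $\partial_{\text{{\tiny $\delta,\!\bar{z}$}}}\Phi_\nu$ to the classical one for the flat space $L^2(\Omega)$, using that the twisted inner product is $(v_1,v_2)_{\text{{\tiny $\delta,\!\bar{z}$}}}=(M_{\text{{\tiny $\delta,\!\bar{z}$}}}v_1,v_2)_{L^2(\Omega)}$ with $M_{\text{{\tiny $\delta,\!\bar{z}$}}}$ a bounded bijection (Lemma \ref{Lemma-2-1}). By definition, $\xi\in\partial_{\text{{\tiny $\delta,\!\bar{z}$}}}\Phi_\nu(\bar v)$ means
\begin{equation*}
(\xi,\eta-\bar v)_{\text{{\tiny $\delta,\!\bar{z}$}}}\le \Phi_\nu(\eta)-\Phi_\nu(\bar v),\quad \forall \eta\in L^2(\Omega).
\end{equation*}
Since $(\xi,\eta-\bar v)_{\text{{\tiny $\delta,\!\bar{z}$}}}=(M_{\text{{\tiny $\delta,\!\bar{z}$}}}\xi,\eta-\bar v)_{L^2(\Omega)}$, this holds if and only if $M_{\text{{\tiny $\delta,\!\bar{z}$}}}\xi\in\partial\Phi_\nu(\bar v)$, where $\partial$ denotes the flat $L^2(\Omega)$ subdifferential. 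Hence
\begin{equation*}
\partial_{\text{{\tiny $\delta,\!\bar{z}$}}}\Phi_\nu(\bar v)=M_{\text{{\tiny $\delta,\!\bar{z}$}}}^{-1}\partial\Phi_\nu(\bar v),
\end{equation*}
and the two domains coincide because $M_{\text{{\tiny $\delta,\!\bar{z}$}}}^{-1}$ is defined on all of $L^2(\Omega)$.

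It then remains to invoke the standard characterization on the flat space: $D(\partial\Phi_\nu)=D(-\Delta_N)$ and $\partial\Phi_\nu(\bar v)=\{-\nu\Delta_N\bar v+\bar v\}$. For the inclusion ``$\supset$'', take $\bar v\in D(-\Delta_N)$ and integrate by parts, which is valid by the Neumann condition. This gives $(-\nu\Delta_N\bar v+\bar v,\eta-\bar v)_{L^2(\Omega)}=\nu(\nabla\bar v,\nabla(\eta-\bar v))+(\bar v,\eta-\bar v)$ for $\eta\in H^1(\Omega)$, and convexity of the quadratic form $\Phi_\nu$ then yields the subgradient inequality; for $\eta\notin H^1(\Omega)$ the inequality is trivial since $\Phi_\nu(\eta)=\infty$. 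For ``$\subset$'', take $\xi\in\partial\Phi_\nu(\bar v)$, so that $\bar v\in H^1(\Omega)$. Testing with $\eta=\bar v\pm s\varphi$, $\varphi\in H^1(\Omega)$, dividing by $s$ and letting $s\downarrow0$ gives
\begin{equation*}
\nu(\nabla\bar v,\nabla\varphi)+(\bar v-\xi,\varphi)=0,\quad \forall\varphi\in H^1(\Omega).
\end{equation*}
Thus $\bar v$ is a weak solution of the Neumann problem $-\nu\Delta\bar v=\xi-\bar v\in L^2(\Omega)$. Taking $\varphi\in C_c^\infty(\Omega)$ shows $\Delta\bar v\in L^2(\Omega)$, and then the generalized Green formula, or equivalently the elliptic regularity \eqref{Equation-1-8} from \cite{Barbu-1976}, gives $\bar v\in D(-\Delta_N)$ with $\nabla\bar v\cdot\bm n=0$ and $\xi=-\nu\Delta_N\bar v+\bar v$. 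Single-valuedness follows because $\xi$ is uniquely determined by this identity.

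The only genuinely nontrivial point is recovering the boundary condition and the $H^2$ regularity in the ``$\subset$'' direction. I would handle it by citing the Neumann regularity \eqref{Equation-1-8}, or equivalently the known fact that $\partial\Phi_\nu=-\nu\Delta_N+I$ is maximal monotone (it is $I$ plus the subdifferential of the Dirichlet form). Everything else is algebraic bookkeeping with $M_{\text{{\tiny $\delta,\!\bar{z}$}}}$. Note that the pointwise bounds \eqref{Equation-2-15} on $\mathcal{K}_{\text{{\tiny $\delta$}}}\bar z$ are needed only to ensure that $M_{\text{{\tiny $\delta,\!\bar{z}$}}}^{-1}$ maps $L^2(\Omega)$ into itself, which Lemma \ref{Lemma-2-1} already provides.
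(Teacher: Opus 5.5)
Your proposal is correct and uses the same two-inclusion argument as the paper: integration by parts plus convexity of the quadratic form for $\supset$, and for $\subset$ the directional test functions $\bar v\pm s\varphi$, the resulting weak Neumann problem, and the Neumann characterization/Green formula. The only difference is organizational: you first factor out the twist via $\partial_{\delta,\bar z}\Phi_\nu(\bar v)=M_{\delta,\bar z}^{-1}\partial\Phi_\nu(\bar v)$, using only that $(\cdot,\cdot)_{\delta,\bar z}=(M_{\delta,\bar z}\cdot,\cdot)_{L^2(\Omega)}$ with $M_{\delta,\bar z}$ bijective, and this removes the need for the paper's Riesz-representation and density steps.
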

%%%%%
%%%%%
%%%%%
\begin{proof}[Proof.]
For simplicity, for any $\bar{v} \in D(-\Delta_N)$ we set 
\begin{equation*}
\bar{v}^*:=M_{\text{{\tiny $\delta,\!\bar{z}$}}}^{-1} (-\nu \Delta_N \bar{v}+\bar{v}) \quad \text{in} \quad L^2(\Omega)=L^2(\delta,\bar{z}).
\end{equation*}
First, from \eqref{Equation-2-18} we have the following inequality for all $\tilde{v} \in H^1(\Omega)$:
\begin{align*}
\left(\bar{v}^*,\tilde{v}-\bar{v}\right)_{\text{{\tiny $\delta,\!\bar{z}$}}} &=\left( M_{\text{{\tiny $\delta,\!\bar{z}$}}}\bar{v}^*,\tilde{v}-\bar{v}\right)_{L^2(\Omega)}\\
&=\int_\Omega \{-\nu \Delta_N \bar{v}(x)+\bar{v}(x)\}(\tilde{v}-\bar{v})(x)dx\\
&=\nu \int_\Omega \nabla \bar{v}(x) \cdot \nabla (\tilde{v}-\bar{v})(x)dx+\int_\Omega \bar{v}(x)(\tilde{v}-\bar{v})(x)dx \le \Phi_\nu (\tilde{v})-\Phi_\nu (\bar{v}),
\end{align*}
which implies $\bar{v}^* \in \partial_{\text{{\tiny $\delta,\!\bar{z}$}}} \Phi_\nu (\bar{v})$, that is,
\begin{equation}\label{Equation-2-22}
\bar{v} \in D(\partial_{\text{{\tiny $\delta,\!\bar{z}$}}} \Phi_\nu),\quad 
\left\{M_{\text{{\tiny $\delta,\!\bar{z}$}}}^{-1} \left(-\nu \Delta_N \bar{v}+\bar{v}\right)\right\} \subset \partial_{\text{{\tiny $\delta,\!\bar{z}$}}}\,\Phi_\nu (\bar{v}).
\end{equation}
Since $\bar{v}$ is arbitrary, we obtain
\begin{equation}\label{Equation-2-23}
D(-\Delta_N) \subset D(\partial_{\text{{\tiny $\delta,\!\bar{z}$}}} \Phi_\nu).
\end{equation}
\indent
Conversely, we let $\bar{v}$ be any element of $D(\partial_{\text{{\tiny $\delta,\!\bar{z}$}}}\,\Phi_\nu)$ and consider any 
$\tilde{v}^* \in \partial_{\text{{\tiny $\delta,\!\bar{z}$}}}\,\Phi_\nu (\bar{v})$.
From the definition of $\partial_{\text{{\tiny $\delta,\!\bar{z}$}}}\,\Phi_\nu$ we have $\bar{v} \in H^1(\Omega)$ 
and the following inequality for all $\tilde{v} \in H^1(\Omega)$:
\begin{equation}\label{Equation-2-24}
(\tilde{v}^*,\tilde{v}-\bar{v})_{\text{{\tiny $\delta,\!\bar{z}$}}} \le \Phi_\nu (\tilde{v})-\Phi_\nu (\bar{v}).
\end{equation}
Substituting $\tilde{v}=\bar{v}+\lambda \hat{v} \in H^1(\Omega)$ for all $\hat{v} \in H^1(\Omega)$ and $\lambda \in \mathbb{R}$ into \eqref{Equation-2-24}, 
we see that the following inequalities hold: for $\lambda >0$
\begin{align}
\label{Equation-2-25}
(\tilde{v}^*,\hat{v})_{\text{{\tiny $\delta,\!\bar{z}$}}} &\le \nu \int_\Omega \nabla \bar{v}(x) \cdot \nabla \hat{v}(x)dx
+\frac{\lambda \nu}{2}\int_\Omega |\nabla \hat{v}(x)|^2dx\\
\nonumber
&\hspace*{2cm}+\int_\Omega \bar{v}(x) \hat{v}(x)dx+\frac{\lambda}{2}\int_\Omega |\hat{v}(x)|^2dx
\end{align}
and for $\lambda <0$
\begin{align}
\label{Equation-2-26}
(\tilde{v}^*,\hat{v})_{\text{{\tiny $\delta,\!\bar{z}$}}} &\ge \nu \int_\Omega \nabla \bar{v}(x) \cdot \nabla \hat{v}(x)dx
+\frac{\lambda \nu}{2}\int_\Omega |\nabla \hat{v}(x)|^2dx\\
\nonumber
&\hspace*{2cm}+\int_\Omega \bar{v}(x) \hat{v}(x)dx+\frac{\lambda}{2}\int_\Omega |\hat{v}(x)|^2dx
\end{align}
Taking the limits $\lambda \downarrow 0$ in \eqref{Equation-2-25} and $\lambda \uparrow 0$ in \eqref{Equation-2-26}, it yields
\begin{equation}\label{Equation-2-27}
(\tilde{v}^*,\hat{v})_{\text{{\tiny $\delta,\!\bar{z}$}}}=\nu \int_\Omega \nabla \bar{v}(x) \cdot \nabla \hat{v}(x)dx+\int_\Omega \bar{v}(x) \hat{v}(x)dx,
\quad \forall \hat{v} \in H^1(\Omega).
\end{equation}
Since the function 
\begin{equation*}
\hat{v} \in L^2(\Omega) \longmapsto (\tilde{v}^*,\hat{v})_{\text{{\tiny $\delta,\!\bar{z}$}}}-\int_\Omega \bar{v}(x) \hat{v}(x)dx
\end{equation*}
is continuous and linear on $L^2(\Omega)$, we see that there exists a function $\eta \in L^2(\Omega)$ such that 
\begin{equation*}
(\tilde{v}^*,\hat{v})_{\text{{\tiny $\delta,\!\bar{z}$}}}-\int_\Omega \bar{v}(x) \hat{v}(x)dx=(\eta,\hat{v})_{L^2(\Omega)},\quad \forall \hat{v} \in L^2(\Omega).
\end{equation*}
so, the following equality holds:
\begin{equation*}
(\eta,\hat{v})_{L^2(\Omega)}=\nu \int_\Omega \nabla \bar{v}(x) \cdot \nabla \hat{v}(x)dx,\quad \forall \hat{v} \in H^1(\Omega),
\end{equation*}
By the variational characterization of the Neumann Laplacian (which is equivalent to (s4) in Theorem \ref{Theorem-1-1}), it follows that 
\begin{equation*}
-\Delta_N \bar{v}=\frac{\eta}{\nu} \in L^2(\Omega),\quad \text{hence}, \quad \bar{v} \in D(-\Delta_N).
\end{equation*}
Therefore, by the Green formula for the Neumann Laplacian, the following equality holds in the weak sense:
\begin{gather}
\label{Equation-2-28}
\nu \int_\Omega \nabla \bar{v}(x) \cdot \nabla \hat{v}(x)dx+\int_\Omega \bar{v}(x) \hat{v}(x)dx=
\int_\Omega \left\{ -\nu \Delta_N \bar{v}(x)+\bar{v}(x)\right\} \hat{v}(x)dx,\\
\nonumber
\forall \hat{v} \in H^1(\Omega).
\end{gather}
Since $\bar{v} \in D(\partial_{\text{{\tiny $\delta,\!\bar{z}$}}}\,\Phi_\nu)$ is arbitrary, we obtain 
\begin{equation}\label{Equation-2-29}
D(\partial_{\text{{\tiny $\delta,\!\bar{z}$}}}\,\Phi_\nu) \subset D(-\Delta_N).
\end{equation}
From \eqref{Equation-2-27} and \eqref{Equation-2-28} we deduce the following equality for all $\hat{v} \in H^1(\Omega)$:
\begin{equation}\label{Equation-2-30}
(\tilde{v}^*,\hat{v})_{\text{{\tiny $\delta,\!\bar{z}$}}}=\int_\Omega \left\{ -\nu \Delta_N \bar{v}(x)+\bar{v}(x)\right\} \hat{v}(x)dx.
\end{equation}
Since $H^1(\Omega)$ is dense in $L^2(\Omega)$ and the quasi-variational norm $\|\cdot\|_{\text{{\tiny $\delta,\!\bar{z}$}}}$ is equivalent to the flat one 
$\|\cdot\|_{L^2(\Omega)}$ by (1) of Lemma \ref{Lemma-2-2}, we see that $H^1(\Omega)$ is also dense in $L^2(\delta,\bar{z})$.
Since both sides of \eqref{Equation-2-30} define continuous linear functionals on $H^1(\Omega)$, the equality extends to all $\hat v \in L^2(\delta,\bar{z})$,
which yields  
\begin{equation*}
(\tilde{v}^*,\hat{v})_{\text{{\tiny $\delta,\!\bar{z}$}}}=\left(-\nu \Delta_N \bar{v}+\bar{v},\hat{v}\right)_{L^2(\Omega)}
=\left( M_{\text{{\tiny $\delta,\!\bar{z}$}}}^{-1} \left(-\nu \Delta_N \bar{v}+\bar{v}\right),\hat{v}\right)_{\text{{\tiny $\delta,\!\bar{z}$}}},
\quad \forall \hat{v} \in L^2(\delta,\bar{z}).
\end{equation*}
Hence, we obtain
\begin{equation}\label{Equation-2-31}
\partial_{\text{{\tiny $\delta,\!\bar{z}$}}}\,\Phi_\nu (\bar{v}) \subset \left\{ M_{\text{{\tiny $\delta,\!\bar{z}$}}}^{-1} \left(-\nu \Delta_N \bar{v}+\bar{v}\right) \right\}.
\end{equation}
\indent
As a result of \eqref{Equation-2-22}, \eqref{Equation-2-23}, \eqref{Equation-2-29} and \eqref{Equation-2-31}, the proof of this lemma is complete.
\end{proof}
%%%%%
%%%%%
%%%%%
Now, for each $\tilde{z} \in W^{1,2}(0,T\,;L^2(\Omega))$ and $\tilde{g} \in L^2(0,T\,;L^2(\Omega))$ we consider the following initial-boundary 
value problem $\text{(AP1)}{}_{\text{{\tiny $\delta,\!\nu$}}}$:=\{\eqref{Equation-2-32}--\eqref{Equation-2-34}\} as the first auxiliary problem 
of $\text{(AP)}{}_{\text{{\tiny $\delta,\!\varepsilon,\!\nu$}}}$:
\begin{gather}
\label{Equation-2-32}
\left( \mathcal{K}_{\text{{\tiny $\delta$}}} \tilde{z}\right)^{-1} v'-\nu \Delta v+v=\left( \mathcal{K}_{\text{{\tiny $\delta$}}} \tilde{z}\right)^{-1} \tilde{g},
\quad \text{a.e. in} \quad Q_T,\\
\label{Equation-2-33}
\nabla v \cdot \bm{n}=0,\quad \text{a.e. on} \quad \Sigma_T,\\
\label{Equation-2-34}
v(0)=v_0, \quad \text{a.e. in} \quad \Omega.
\end{gather}
Lemmas \ref{Lemma-2-2} and \ref{Lemma-2-4} verify the assumptions of \cite[Theorem 3.1]{ito-2019}, including the uniform equivalence of norms 
and the characterization of the subdifferential.
Hence, applying \cite[Theorem 3.1]{ito-2019} together with Lemmas \ref{Lemma-2-2} and \ref{Lemma-2-4}, 
we obtain Proposition \ref{Proposition-2-1}, which guarantees the existence and uniqueness of solutions to $\text{(AP1)}{}_{\text{{\tiny $\delta,\!\nu$}}}$.
%%%%%
%%%%%
%%%%%
\begin{proposition}\label{Proposition-2-1}
For each pair $(\tilde{z},\tilde{g}) \in W^{1,2}(0,T;L^2(\Omega)) \times L^2(0,T;L^2(\Omega))$ we consider the following Cauchy problem 
$\text{(TGF)}{}_{\text{{\tiny $\delta,\!\nu$}}}$:=\{(\ref{Equation-2-35})--(\ref{Equation-2-37})\} of a twisted gradient flow with a forcing term on the time-dependent 
twisted spaces $\{L^2(\delta,\bar{z}(t))\,;\,0 \le t \le T\}$:
\begin{gather}
\label{Equation-2-35}
v'(t)+v^*(t)=\tilde{g}(t) \quad \text{in} \quad L^2(\delta,\tilde{z}(t)),\quad \text{a.e.}~t \in (0,T),\\
\label{Equation-2-36}
v^*(t) \in \partial_{\text{{\tiny $\delta,\!\tilde{z}(t)$}}} \Phi_\nu (v(t)),\quad \text{a.e.}~t \in (0,T),\\
\label{Equation-2-37}
v(0)=v_0 \quad \text{in} \quad L^2(\Omega),
\end{gather}
Then, the problem $\text{(TGF)}{}_{\text{{\tiny $\delta,\!\nu$}}}$ admits a unique solution 
\begin{gather}
\label{Equation-2-38}
v_{\text{{\tiny $\delta,\!\nu$}}}(v_0,\tilde{z},\tilde{g}) \in W^{1,2}(0,T\,;L^2(\Omega)) \cap L^\infty (0,T\,;H^1(\Omega))\\
\label{Equation-2-39}
\text{with} \quad v_{\text{{\tiny $\delta,\!\nu$}}}^*(v_0,\tilde{z},\tilde{g}) \in L^2(0,T\,;L^2(\Omega)).
\end{gather}
\indent
Moreover, the unique solution $v_{\text{{\tiny $\delta,\!\nu$}}}(v_0,\tilde{z},\tilde{g})$ is also the unique strong solution to 
$\text{(AP1)}{}_{\text{{\tiny $\delta,\!\nu$}}}$ on $[0,T]$, and has the regularity 
\begin{equation*}
v_{\text{{\tiny $\delta,\!\nu$}}}(v_0,\tilde z,\tilde g) \in L^2(0,T\,;H^2(\Omega)).
\end{equation*}
\end{proposition}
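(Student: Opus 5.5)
Existence and uniqueness for $\text{(TGF)}{}_{\text{{\tiny $\delta,\!\nu$}}}$ will come from applying \cite[Theorem 3.1]{ito-2019} to the family $\{L^2(\delta,\tilde z(t))\,;\,0\le t\le T\}$, the functional $\Phi_\nu$ and the forcing $\tilde g$. The first step is to check the abstract hypotheses of that theorem.
\begin{enumerate}
\item[(i)] Uniform equivalence of the time-dependent norms with the flat one. This is (1) of Lemma \ref{Lemma-2-2}, with constants $\delta^{\mp 1/2}C_{1,2}$ independent of $t$.
\item[(ii)] Absolute continuity of the metric in $t$. By (2) of Lemma \ref{Lemma-2-2} the metric is Lipschitz in $\bar z$, and $t\mapsto \tilde z(t)$ lies in $W^{1,2}(0,T;L^2(\Omega))\subset W^{1,1}(0,T;L^2(\Omega))$. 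Hence the modulus $\delta^3C_3\|\tilde z'(t)\|_{L^2(\Omega)}$ is integrable, which is exactly the setting of Lemma \ref{Lemma-2-3}.
\item[(iii)] $\Phi_\nu$ is proper, l.s.c., convex and independent of $t$. Its sublevel sets are bounded in $H^1(\Omega)$, hence compact in $L^2(\Omega)$.
\item[(iv)] $v_0\in H^1(\Omega)=D(\Phi_\nu)$ and $\tilde g\in L^2(0,T;L^2(\Omega))=L^2(0,T;L^2(\delta,\tilde z(t)))$, where the last equality uses (i).
\end{enumerate}
The abstract theorem then yields a unique $v$ with $v'\in L^2(0,T;L^2(\Omega))$, $\sup_t\Phi_\nu(v(t))<\infty$ and $v^*\in L^2(0,T;L^2(\Omega))$, which are \eqref{Equation-2-38}--\eqref{Equation-2-39}.

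In case the cited theorem needs to be justified for this concrete setting, I would sketch the a priori estimate behind it, which is the main obstacle. Test \eqref{Equation-2-35} with $v'$ in $L^2(\delta,\tilde z(t))$. Because the subdifferential is taken with respect to the twisted inner product, the chain rule gives $(v^*,v')_{\delta,\tilde z(t)}=\frac{d}{dt}\Phi_\nu(v(t))$, so
\begin{equation*}
\|v'\|_{\delta,\tilde z}^2+\frac{d}{dt}\Phi_\nu(v)=(\tilde g,v')_{\delta,\tilde z}\le \tfrac12\|\tilde g\|_{\delta,\tilde z}^2+\tfrac12\|v'\|_{\delta,\tilde z}^2 .
\end{equation*}
Integrating in time and using (i) bounds $v'$ in $L^2(0,T;L^2(\Omega))$ and $v$ in $L^\infty(0,T;H^1(\Omega))$. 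Then $v^*=\tilde g-v'$ is bounded in $L^2(0,T;L^2(\Omega))$.

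Existence follows from these bounds by time discretization, freezing $\tilde z$ at the grid points, followed by a compactness and demiclosedness argument. For uniqueness, take two solutions, subtract, and test with the difference in $L^2(\delta,\tilde z(t))$. Monotonicity of $\partial_{\delta,\tilde z(t)}\Phi_\nu$ together with Lemma \ref{Lemma-2-3} gives
\begin{equation*}
\frac{d}{dt}\|v_1-v_2\|_{\delta,\tilde z}^2\le \delta^3C_3\|\tilde z'\|_{L^2(\Omega)}\|v_1-v_2\|_{\delta,\tilde z}^2,
\end{equation*}
and Gronwall's inequality, with $\|\tilde z'\|_{L^2(\Omega)}\in L^1(0,T)$, gives $v_1=v_2$.

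For the second part, Lemma \ref{Lemma-2-4} identifies $v^*(t)=M_{\delta,\tilde z(t)}^{-1}(-\nu\Delta_N v(t)+v(t))$ with $v(t)\in D(-\Delta_N)$ for a.e.\ $t$. Applying $M_{\delta,\tilde z(t)}$ pointwise, i.e.\ multiplying by $(\mathcal K_\delta\tilde z)^{-1}$, turns \eqref{Equation-2-35} into \eqref{Equation-2-32}. The Neumann condition \eqref{Equation-2-33} is encoded in $D(-\Delta_N)$, and \eqref{Equation-2-34} is \eqref{Equation-2-37}. Conversely, any strong solution of $\text{(AP1)}{}_{\text{{\tiny $\delta,\!\nu$}}}$ solves $\text{(TGF)}{}_{\text{{\tiny $\delta,\!\nu$}}}$, so uniqueness transfers.

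For the regularity, write
\begin{equation*}
\nu\Delta_N v=v-(\mathcal K_\delta\tilde z)^{-1}(\tilde g-v').
\end{equation*}
By \eqref{Equation-2-15}, $(\mathcal K_\delta\tilde z)^{-1}\le\delta/K_1$, so the right-hand side lies in $L^2(0,T;L^2(\Omega))$. The elliptic estimate \eqref{Equation-1-8} then gives $v\in L^2(0,T;H^2(\Omega))$. Measurability in $t$ holds since $t\mapsto\mathcal K_\delta\tilde z(t)$ is continuous into $L^\infty(\Omega)$, by the estimate \eqref{Equation-2-20}.
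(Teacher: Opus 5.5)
Your proposal is correct and follows essentially the same route as the paper: you invoke \cite[Theorem 3.1]{ito-2019} using Lemma \ref{Lemma-2-2}, use Lemma \ref{Lemma-2-4} and multiplication by $M_{\delta,\tilde z(t)}$ to recover \eqref{Equation-2-32}, and then read off $\nu\Delta_N v\in L^2(0,T;L^2(\Omega))$ and apply \eqref{Equation-1-8}. Your supplementary sketch of the a priori bound and of uniqueness is also sound, and it is not needed beyond what the paper cites; in particular, because $M_{\delta,\tilde z(t)}v^*(t)$ lies in the flat subdifferential of $\Phi_\nu$, the chain rule holds exactly, with no $\|\tilde z'\|_{L^2(\Omega)}$ perturbation term.
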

%%%%%
%%%%%
%%%%%
\begin{proof}[Proof.]
First, we apply \cite[Theorem 3.1]{ito-2019} together with Lemma \ref{Lemma-2-2} and the regularities 
\begin{equation*}
\tilde{z} \in W^{1,2}(0,T\,;L^2(\Omega)),\quad v_0 \in H^1(\Omega),\quad \tilde{g} \in L^2(0,T\,;L^2(\Omega)).
\end{equation*}
Then, it follows that $\text{(TGF)}{}_{\text{{\tiny $\delta,\!\nu$}}}$ has a unique solution $v_{\text{{\tiny $\delta,\!\nu$}}}(v_0,\tilde z,\tilde g)$
satisfying \eqref{Equation-2-38} and \eqref{Equation-2-39}.
\indent
In the rest of this proof, we set $\tilde{v}:=v_{\text{{\tiny $\delta,\!\nu$}}}(v_0,\tilde z,\tilde g)$.
By Lemma \ref{Lemma-2-4}, we deduce that for any $t \in [0,T]$ the subdifferential $\partial_{\text{{\tiny $\delta,\!\tilde{z}(t)$}}} \Phi_\nu$ is single-valued and 
\{\eqref{Equation-2-35},\,\eqref{Equation-2-36}\} can be rewritten into the following evolution equation for a.e. $t \in (0,T)$:
\begin{equation*}
\tilde{v}'(t)+M_{\text{{\tiny $\delta,\!\tilde{z}(t)$}}}^{-1} \left(-\nu \Delta_N \tilde{v}(t)+\tilde{v}(t) \right)=\tilde{g}(t) \quad \text{in} \quad L^2(\delta,\tilde{z}(t)),
\end{equation*}
hence, by Lemma \ref{Lemma-2-1}
\begin{equation}\label{Equation-2-40}
M_{\text{{\tiny $\delta,\!\tilde{z}(t)$}}}\tilde{v}'(t)-\nu \Delta_N \tilde{v}(t)+\tilde{v}(t)=M_{\text{{\tiny $\delta,\!\tilde{z}(t)$}}}\tilde{g}(t) \quad \text{in} \quad L^2(\Omega).
\end{equation}
Recalling the definition of $M_{\text{{\tiny $\delta,\!\tilde{z}(t)$}}}$ (cf. \eqref{Equation-2-13}), the equation \eqref{Equation-2-40} is equivalent to 
\eqref{Equation-2-32}, that is, \eqref{Equation-2-32} holds.
Besides, since by (a) of Lemma \ref{Lemma-2-2} the operator $M_{\text{{\tiny $\delta,\!\tilde{z}(t)$}}}$ is bounded in $L^2(\Omega)$ for all $t \in [0,T]$, 
from \eqref{Equation-2-40} we get 
\begin{equation*}
-\nu \Delta_N \tilde{v}=M_{\text{{\tiny $\delta,\!\tilde{z}$}}}\tilde{g}-M_{\text{{\tiny $\delta,\!\tilde{z}$}}}\tilde{v}'-\tilde{v} \in L^2(0,T\,;L^2(\Omega)).
\end{equation*}
Using the Neumann elliptic regularity \eqref{Equation-1-8}, we get the regularity $\tilde{v} \in L^2(0,T\,;H^2(\Omega))$, and complete the proof of this proposition.
\end{proof}
%%%%%
%%%%%
%%%%%
\indent
By Proposition \ref{Proposition-2-1}, we can define a single-valued solution operator 
\begin{equation*}
S_{\text{{\tiny $1,\!\delta,\!\nu$}}}:C([0,T]\,;L^2(\Omega)) \times L^2(0,T\,;L^2(\Omega)) \rightarrow C([0,T]\,;L^2(\Omega))
\end{equation*}
associated with the initial-boundary value problem $\text{(AP1)}{}_{\text{{\tiny $\delta,\!\nu$}}}$ by
\begin{gather*}
D(S_{\text{{\tiny $1,\!\delta,\!\nu$}}})=W^{1,2}(0,T\,;L^2(\Omega)) \times L^2(0,T\,;L^2(\Omega)),\\
R(S_{\text{{\tiny $1,\!\delta,\!\nu$}}}) \subset W^{1,2}(0,T\,;L^2(\Omega)) \cap L^\infty (0,T\,;H^1(\Omega)) \cap L^2(0,T\,;H^2(\Omega)),\\
S_{\text{{\tiny $1,\!\delta,\!\nu$}}} (\tilde{z},\tilde{g}):=v_{\text{{\tiny $\delta,\!\nu$}}}(v_0,\tilde{z},\tilde{g}).
\end{gather*}
We conclude this subsection by proving Lemmas \ref{Lemma-2-5}--\ref{Lemma-2-7}.
First, we show Lemma \ref{Lemma-2-5}.
In particular, this lemma guarantees that for each $\delta \in (1,\infty)$ the solution $v_{\text{{\tiny $\delta,\!\nu$}}}$ is bounded in $L^2(0,T\,;L^2(\Omega))$ uniformly 
with respect to $\nu \in (0,1)$, which comes from \eqref{Equation-2-21}.
%%%%%
%%%%%
%%%%%
\begin{lemma}\label{Lemma-2-5}
There exists a constant $C_4>0$, which depends on the following values:
\begin{equation*}
\delta,\quad T, \quad \|v_0\|_{L^2(\Omega)},\quad \int_0^T \|\tilde{z}'(t)\|_{L^2(\Omega)}dt, \quad \int_0^T \|\tilde{g}(t)\|_{L^2(\Omega)}^2dt,
\end{equation*}
such that the following boundedness estimate holds for all $\nu \in (0,1)$: 
\begin{equation*}
\max_{0 \le t \le T} \|(S_{\text{{\tiny $1,\!\delta,\!\nu$}}}(\tilde{z},\tilde{g}))(t)\|_{L^2(\Omega)}^2
+\int_0^T \Phi_\nu ((S_{\text{{\tiny $1,\!\delta,\!\nu$}}}(\tilde{z},\tilde{g}))(t))dt \le C_4.
\end{equation*}
\end{lemma}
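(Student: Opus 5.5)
We test \eqref{Equation-2-35} against $v(t)$ in the twisted inner product $(\cdot,\cdot)_{\delta,\tilde z(t)}$, convert the time-derivative term with Lemma \ref{Lemma-2-3}, and close with Gronwall. Throughout, write $v:=S_{1,\delta,\nu}(\tilde z,\tilde g)$ and $v^*(t)=M^{-1}_{\delta,\tilde z(t)}(-\nu\Delta_N v(t)+v(t))$, as in Proposition \ref{Proposition-2-1} and Lemma \ref{Lemma-2-4}. By Proposition \ref{Proposition-2-1}, $v\in W^{1,2}(0,T;L^2(\Omega))$, $v(t)\in D(-\Delta_N)$ for a.e.\ $t$, and $\tilde z\in W^{1,2}\subset W^{1,1}(0,T;L^2(\Omega))$, so Lemma \ref{Lemma-2-3} applies.

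First we compute the pairing with $v^*$. By \eqref{Equation-2-18} and the Green formula,
\begin{equation*}
(v^*(t),v(t))_{\delta,\tilde z(t)}=\int_\Omega(-\nu\Delta_N v+v)v\,dx=\nu\|\nabla v(t)\|^2_{\bm L^2(\Omega)}+\|v(t)\|^2_{L^2(\Omega)}=2\Phi_\nu(v(t)).
\end{equation*}
This identity is the point of the twisted metric: the coefficient $\mathcal K_\delta\tilde z$ disappears entirely, with no $\nu$-dependent loss.

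Next we convert the time-derivative term. Lemma \ref{Lemma-2-3} gives, for a.e.\ $t$,
\begin{equation*}
\frac{d}{dt}\|v\|^2_{\delta,\tilde z}\le 2(v',v)_{\delta,\tilde z}+\delta^3C_3\|\tilde z'\|_{L^2(\Omega)}\|v\|^2_{\delta,\tilde z}.
\end{equation*}
Substituting $v'=\tilde g-v^*$ yields
\begin{equation*}
\frac{d}{dt}\|v\|^2_{\delta,\tilde z}+4\Phi_\nu(v)\le 2(\tilde g,v)_{\delta,\tilde z}+\delta^3C_3\|\tilde z'\|_{L^2(\Omega)}\|v\|^2_{\delta,\tilde z}.
\end{equation*}
We bound the forcing term by Cauchy--Schwarz in the twisted inner product and Young's inequality, together with (1) of Lemma \ref{Lemma-2-2}:
\begin{equation*}
2(\tilde g,v)_{\delta,\tilde z}\le\|\tilde g\|^2_{\delta,\tilde z}+\|v\|^2_{\delta,\tilde z}\le\delta C_2^2\|\tilde g\|^2_{L^2(\Omega)}+\|v\|^2_{\delta,\tilde z}.
\end{equation*}
Set $a(t):=1+\delta^3C_3\|\tilde z'(t)\|_{L^2(\Omega)}$, which lies in $L^1(0,T)$. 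We arrive at the differential inequality
\begin{equation*}
\frac{d}{dt}\|v\|^2_{\delta,\tilde z}+4\Phi_\nu(v)\le a(t)\|v\|^2_{\delta,\tilde z}+\delta C_2^2\|\tilde g\|^2_{L^2(\Omega)}.
\end{equation*}

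Gronwall's lemma in integrated form handles this. The function $t\mapsto\|v(t)\|^2_{\delta,\tilde z(t)}$ is absolutely continuous by Lemma \ref{Lemma-2-3}, so we multiply by $\exp(-\int_0^t a)$ and integrate to get
\begin{equation*}
\|v(t)\|^2_{\delta,\tilde z(t)}+4\int_0^t\Phi_\nu(v)\,ds\le e^{\int_0^Ta}\Big(\|v_0\|^2_{\delta,v_0}+\delta C_2^2\int_0^T\|\tilde g\|^2_{L^2(\Omega)}ds\Big).
\end{equation*}
The dissipation term is dropped in the Gronwall step and recovered afterwards. On the right-hand side we use $\|v_0\|^2_{\delta,\tilde z(0)}\le\delta C_2^2\|v_0\|^2_{L^2(\Omega)}$. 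Here the initial metric generator is $\tilde z(0)$; in the application $\tilde z(0)=v_0$, and in any case (1) of Lemma \ref{Lemma-2-2} holds uniformly in $\bar z$. On the left-hand side we use $\|v\|^2_{L^2(\Omega)}\le\delta C_1^{-2}\|v\|^2_{\delta,\tilde z}$. The resulting constant $C_4$ depends only on $\delta$, $T$, $\|v_0\|_{L^2(\Omega)}$, $\int_0^T\|\tilde z'\|_{L^2(\Omega)}dt$ and $\int_0^T\|\tilde g\|^2_{L^2(\Omega)}dt$, and not on $\nu$.

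The main point to verify is that no step uses $\nu$. The only place $\nu$ appears is the nonnegative term $\nu\|\nabla v\|^2$ inside $\Phi_\nu$, which sits on the good side, and the constants $C_1,C_2,C_3$ are independent of $\nu$. The one technical item to check is that Lemma \ref{Lemma-2-3} is applied with $v\in W^{1,2}$ and $\tilde z\in W^{1,1}$, which holds by Proposition \ref{Proposition-2-1}. Continuity of $v$ in $L^2(\Omega)$ then lets us replace the supremum by the maximum over $[0,T]$, which completes the estimate.
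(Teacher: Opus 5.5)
Your proof is correct and follows the paper's own argument: pair the flow with $v$ in the twisted inner product, handle the time derivative with Lemma~\ref{Lemma-2-3}, apply Gronwall, and return to the flat norm using (1) of Lemma~\ref{Lemma-2-2}. The only differences are harmless ones. You evaluate $(v^*,v)_{\delta,\tilde z}=2\Phi_\nu(v)$ exactly via Lemma~\ref{Lemma-2-4}, whereas the paper only uses the subdifferential inequality with $\Phi_\nu(0)=0$ to get $(v^*,v)_{\delta,\tilde z}\ge\Phi_\nu(v)$. You also correctly write the initial term as $\|v_0\|_{\delta,\tilde z(0)}$ and bound it uniformly in $\tilde z(0)$, where the paper writes $\|v_0\|_{\delta,v_0}$.
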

%%%%%
%%%%%
%%%%%  
\begin{proof}[Proof.]
For simplicity we set $\tilde{v}:=S_{\text{{\tiny $1,\!\delta,\!\nu$}}}(\tilde{z},\tilde{g})$.
By Proposition \ref{Proposition-2-1}, we have
\begin{gather}
\label{Equation-2-41}
\tilde{v}'(t)+\tilde{v}^*(t)=\tilde{g}(t) \quad \text{in} \quad L^2(\delta,\tilde{z}(t)),\quad \text{a.e.}~t \in (0,T),\\
\label{Equation-2-42}
\tilde{v}^*(t) \in \partial_{\text{{\tiny $\delta,\!\tilde{z}(t)$}}} \Phi_\nu (\tilde{v}(t)),\quad \text{a.e.}~t \in (0,T).
\end{gather}
From Lemma \ref{Lemma-2-3} we deduce the following energy inequality for a.e. $t \in (0,T)$:
\begin{equation}\label{Equation-2-43}
\frac{d}{dt} \|\tilde{v}(t)\|_{\text{{\tiny $\delta,\!\tilde{z}(t)$}}}^2-2(\tilde{v}'(t),\tilde{v}(t))_{\text{{\tiny $\delta,\!\tilde{z}(t)$}}} 
\le \delta^3 C_3 \|\tilde{z}'(t)\|_{L^2(\Omega)} \|\tilde{v}(t)\|_{\text{{\tiny $\delta,\!\tilde{z}(t)$}}}^2.
\end{equation}
Substituting \eqref{Equation-2-41} into \eqref{Equation-2-43}, we derive the following inequality for a.e. $t \in (0,T)$:
\begin{align}
\label{Equation-2-44}
&\frac{d}{dt} \|\tilde{v}(t)\|_{\text{{\tiny $\delta,\!\tilde{z}(t)$}}}^2+2(\tilde{v}^*(t),\tilde{v}(t))_{\text{{\tiny $\delta,\!\tilde{z}(t)$}}}\\
\nonumber
&\hspace*{1cm}\le \delta^3 C_3 \|\tilde{z}'(t)\|_{L^2(\Omega)} \|\tilde{v}(t)\|_{\text{{\tiny $\delta,\!\tilde{z}(t)$}}}^2
+2(\tilde{g}(t),\tilde{v}(t))_{\text{{\tiny $\delta,\!\tilde{z}(t)$}}}.
\end{align}
Using $\Phi_\nu (0)=0$ and the definition of the subdifferential $\partial_{\text{{\tiny $\delta,\!\tilde{z}(t)$}}} \Phi_\nu$ with \eqref{Equation-2-42}, 
we obtain the following inequality for a.e. $t \in (0,T)$:
\begin{equation*}
-(\tilde{v}^*(t),\tilde{v}(t))_{\text{{\tiny $\delta,\!\tilde{z}(t)$}}}=(\tilde{v}^*(t),0-\tilde{v}(t))_{\text{{\tiny $\delta,\!\tilde{z}(t)$}}} \le 
\Phi_\nu (0)-\Phi_\nu (\tilde{v}(t))=-\Phi_\nu (\tilde{v}(t)),
\end{equation*}
so, we see from \eqref{Equation-2-44} that the following inequality holds for a.e. $t \in (0,T)$:  
\begin{align}
\label{Equation-2-45}
&\frac{d}{dt} \|\tilde{v}(t)\|_{\text{{\tiny $\delta,\!\tilde{z}(t)$}}}^2+2 \Phi_\nu (\tilde{v}(t))\\
\nonumber
&\hspace*{1cm}\le \left\{\delta^3 C_3 \|\tilde{z}'(t)\|_{L^2(\Omega)}+1\right\} \|\tilde{v}(t)\|_{\text{{\tiny $\delta,\!\tilde{z}(t)$}}}^2
+\|\tilde{g}(t)\|_{\text{{\tiny $\delta,\!\tilde{z}(t)$}}}^2.
\end{align}
Applying the Gronwall lemma to \eqref{Equation-2-45}, we derive the following inequality for all $t \in [0,T]$:
\begin{align}
\label{Equation-2-46}
&\,\|\tilde{v}(t)\|_{\text{{\tiny $\delta,\!\tilde{z}(t)$}}}^2+2\int_0^t \Phi_\nu (\tilde{v}(s)) 
\exp \left( \int_s^t \left\{\delta^3 C_3 \|\tilde{z}'(\tau)\|_{L^2(\Omega)}+1\right\}d\tau\right)ds\\
\nonumber
\le&\,\|v_0\|_{\text{{\tiny $\delta,\!v_0$}}}^2\exp \left( \int_0^t \left\{\delta^3 C_3 \|\tilde{z}'(s)\|_{L^2(\Omega)}+1\right\}ds\right)\\
\nonumber
&\,\hspace*{1cm}+\int_0^t \|\tilde{g}(s)\|_{\text{{\tiny $\delta,\!\tilde{z}(s)$}}}^2 
\exp \left( \int_s^t \left\{\delta^3 C_3 \|\tilde{z}'(\tau)\|_{L^2(\Omega)}+1\right\}d\tau\right)ds
\end{align}
Since $\Phi_\nu (\tilde{v}(t))$ is nonnegative for all $t \in [0,T]$, from (a) of Lemma \ref{Lemma-2-2} and \eqref{Equation-2-46} we have
\begin{align}
\label{Equation-2-47}
\bullet~&\left(\max_{0 \le t \le T} \|\tilde{v}(t)\|_{L^2(\Omega)}\right)^2 \le \left( \frac{\delta C_2}{C_1} \right)^2 \left\{\|v_0\|_{L^2(\Omega)}^2+
\int_0^T \|\tilde{g}(t)\|_{L^2(\Omega)}^2dt\right\}\\
\nonumber
&\hspace*{4cm}\times \exp \left( \int_0^T \left\{\delta^3 C_3 \|\tilde{z}'(t)\|_{L^2(\Omega)}+1\right\}dt\right),\\
\label{Equation-2-48}
\bullet~&\int_0^T \Phi_\nu (\tilde{v}(t))dt \le \frac{\delta (C_2)^2}{2} \left\{\|v_0\|_{L^2(\Omega)}^2+\int_0^T \|\tilde{g}(t)\|_{L^2(\Omega)}^2dt\right\}\\
\nonumber
&\hspace*{3cm}\times \exp \left( \int_0^T \left\{\delta^3 C_3 \|\tilde{z}'(t)\|_{L^2(\Omega)}+1\right\}dt\right).
\end{align}
Hence, from \eqref{Equation-2-47} and \eqref{Equation-2-48} the desired estimate in this lemma follows.
\end{proof}
%%%%%
%%%%%
%%%%%
Next, we show Lemma \ref{Lemma-2-6}.
This lemma implies that for each $\delta \in (1,\infty)$ the family of solutions $\{v_{\text{{\tiny $\delta,\!\nu$}}}\,;\,\nu \in (0,1)\}$ is bounded in 
$C([0,T]\,;L^2(\Omega)) \cap W^{1,2}(0,T\,;L^2(\Omega))$.
%%%%%
%%%%%
%%%%%
\begin{lemma}\label{Lemma-2-6}
There exists a constant $C_5>0$, which depends on the following values:
\begin{equation*}
\delta,\quad T, \quad \|v_0\|_{H^1(\Omega)},\quad \int_0^T \|\tilde{z}'(t)\|_{L^2(\Omega)}dt, 
\quad \int_0^T \|\tilde{g}(t)\|_{L^2(\Omega)}^2dt,
\end{equation*}
such that the following boundedness estimates hold for all $\nu \in (0,1)$:
\begin{align*}
&\int_0^T \|(S_{\text{{\tiny $1,\!\delta,\!\nu$}}}(\tilde{z},\tilde{g}))'(t)\|_{L^2(\Omega)}^2dt
+\sup_{0 \le t \le T} \Phi_\nu ((S_{\text{{\tiny $1,\!\delta,\!\nu$}}}(\tilde{z},\tilde{g}))(t))\\
&\hspace*{3cm}+\nu^2 \int_0^T \|(S_{\text{{\tiny $1,\!\delta,\!\nu$}}}(\tilde{z},\tilde{g}))(t)\|_{H^2(\Omega)}^2 dt \le C_5.
\end{align*}
\end{lemma}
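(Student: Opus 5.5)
We test the equation \eqref{Equation-2-32} against $\tilde{v}'$ in the flat space $L^2(\Omega)$, where $\tilde{v}:=S_{\text{{\tiny $1,\!\delta,\!\nu$}}}(\tilde{z},\tilde{g})$. This is the standard second energy estimate for gradient flows: $\Phi_\nu$ does not depend on $\tilde z$, so $t\mapsto\Phi_\nu(\tilde v(t))$ satisfies the chain rule $\frac{d}{dt}\Phi_\nu(\tilde v(t))=(-\nu\Delta_N\tilde v(t)+\tilde v(t),\tilde v'(t))_{L^2(\Omega)}$. This holds because $\tilde v\in W^{1,2}(0,T;L^2(\Omega))\cap L^2(0,T;H^2(\Omega))$ with $\tilde v\in D(-\Delta_N)$ a.e. (Proposition \ref{Proposition-2-1}). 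Written in terms of the twisted structure, this is $(\tilde v^*(t),\tilde v'(t))_{\text{{\tiny $\delta,\!\tilde{z}(t)$}}}$ by Lemma \ref{Lemma-2-4}. Thus we pair \eqref{Equation-2-35} with $\tilde v'(t)$ in $L^2(\delta,\tilde z(t))$ and obtain
\begin{equation*}
\|\tilde v'(t)\|_{\text{{\tiny $\delta,\!\tilde{z}(t)$}}}^2+\frac{d}{dt}\Phi_\nu(\tilde v(t))=(\tilde g(t),\tilde v'(t))_{\text{{\tiny $\delta,\!\tilde{z}(t)$}}}\le \frac12\|\tilde g(t)\|_{\text{{\tiny $\delta,\!\tilde{z}(t)$}}}^2+\frac12\|\tilde v'(t)\|_{\text{{\tiny $\delta,\!\tilde{z}(t)$}}}^2 .
\end{equation*}
An important point is that no derivative of the metric appears here: the time derivative falls only on $\Phi_\nu$. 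Hence Lemma \ref{Lemma-2-3} is not needed, and $\tilde z'$ enters only through the data.

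Integrating over $(0,t)$ and using (1) of Lemma \ref{Lemma-2-2} to pass between the twisted and flat norms, we get
\begin{equation*}
\frac{C_1^2}{2\delta}\int_0^t\|\tilde v'\|_{L^2(\Omega)}^2ds+\Phi_\nu(\tilde v(t))\le \Phi_\nu(v_0)+\frac{\delta C_2^2}{2}\int_0^T\|\tilde g\|_{L^2(\Omega)}^2ds .
\end{equation*}
Since $\nu<1$, we have $\Phi_\nu(v_0)\le\frac12\|v_0\|_{H^1(\Omega)}^2$. This bounds the first two terms of the claim uniformly in $\nu$.

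For the $H^2$ term we read \eqref{Equation-2-40} as
\begin{equation*}
-\nu\Delta_N\tilde v=M_{\text{{\tiny $\delta,\!\tilde{z}$}}}(\tilde g-\tilde v')-\tilde v .
\end{equation*}
By \eqref{Equation-2-16}, this gives
\begin{equation*}
\nu\|\Delta_N\tilde v\|_{L^2(\Omega)}\le \frac{\delta}{K_1}\left(\|\tilde g\|_{L^2(\Omega)}+\|\tilde v'\|_{L^2(\Omega)}\right)+\|\tilde v\|_{L^2(\Omega)} .
\end{equation*}
We then apply \eqref{Equation-1-8}, multiplied by $\nu\le1$:
\begin{equation*}
\nu\|\tilde v\|_{H^2(\Omega)}\le K_5\left(\nu\|\Delta_N\tilde v\|_{L^2(\Omega)}+\|\tilde v\|_{L^2(\Omega)}\right).
\end{equation*}
Squaring and integrating in time, the right-hand side is controlled by $\int_0^T\|\tilde g\|^2$, by the $\tilde v'$ bound just obtained, and by $\max_t\|\tilde v(t)\|_{L^2(\Omega)}^2$ from Lemma \ref{Lemma-2-5}. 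This last term is where the dependence on $\int_0^T\|\tilde z'\|$ enters $C_5$.

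The main obstacle is justifying the chain rule for $\Phi_\nu$ along $\tilde v$ rigorously. Since $\Phi_\nu$ is a fixed convex function on $L^2(\Omega)$ and $-\nu\Delta_N\tilde v+\tilde v\in L^2(0,T;L^2(\Omega))$ is its flat subgradient, the standard Brezis chain-rule lemma applies directly. All other steps are routine.
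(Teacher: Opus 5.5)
Your proof is correct, and it takes a more direct route than the paper's.

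The paper does not test with $\tilde v'$ by hand. It imports the abstract energy inequality \eqref{Equation-2-49} from the twisted-gradient-flow theory. The right-hand side of that inequality contains a perturbation term $C(\lambda)\int_0^t\{\Phi_\nu(\tilde v(s))+1\}\|\tilde z'(s)\|_{L^2(\Omega)}\,ds$ that accounts for the moving metric. The paper then eliminates this term by the Gronwall-type argument with the auxiliary function $\xi$ in \eqref{Equation-2-51}--\eqref{Equation-2-53}, which is how $\int_0^T\|\tilde z'\|_{L^2(\Omega)}dt$ enters $C_5$.

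You instead use two facts: $\Phi_\nu$ does not depend on $\tilde z$, and, by Lemma \ref{Lemma-2-4}, $(\tilde v^*(t),\tilde v'(t))_{\delta,\tilde z(t)}=(-\nu\Delta_N\tilde v(t)+\tilde v(t),\tilde v'(t))_{L^2(\Omega)}$. So Brezis' flat chain rule applies and the metric is never differentiated. This is exactly the computation the paper itself performs later in \eqref{Equation-3-23} for Lemma \ref{Lemma-3-6}.

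Your route is shorter and self-contained. It needs only $\Delta_N\tilde v\in L^2(0,T;L^2(\Omega))$ from Proposition \ref{Proposition-2-1} and the standard chain-rule lemma, rather than an inequality whose proof is omitted. It also gives a sharper result. You do not need Lemma \ref{Lemma-2-5} for the $H^2$ term, because by \eqref{Equation-3-7} $\max_t\|\tilde v(t)\|_{L^2(\Omega)}^2\le 2\sup_t\Phi_\nu(\tilde v(t))$, which your first step already controls. Hence $C_5$ can be taken independent of $\tilde z$ altogether, contrary to your remark that this term is where $\int_0^T\|\tilde z'\|$ enters.

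What the paper's route buys is consistency with the abstract quasi-variational framework. That framework remains usable when the subdifferential on a twisted space cannot be identified this explicitly with a flat, metric-independent one.
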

%%%%%
%%%%%
%%%%%
\begin{proof}[Proof.]
We use the same notations as in the proof of Lemma \ref{Lemma-2-5}.
In order to prove this lemma, we use the following energy inequality, which was derived in \cite[Theorem 3.1]{ito-2019}:
for each $\lambda >0$ there exists a constant $C_{\text{{\tiny $2,\!6,\!1$}}}(\lambda) >0$ such that the following inequality holds for all $t \in [0,T]$:
\begin{gather}
\label{Equation-2-49}
\Phi_\nu (\tilde{v}(t))+\int_0^t (\tilde{v}'(s),\tilde{v}'(s)-\tilde{g}(s))_{\text{{\tiny $\delta,\!\tilde{z}(s)$}}}ds\\
\nonumber
\le \Phi_\nu (v_0)+\lambda \int_0^t \|\tilde{v}'(s)-\tilde{g}(s)\|_{\text{{\tiny $\delta,\!\tilde{z}(s)$}}}^2ds
+C_{\text{{\tiny $2,\!6,\!1$}}}(\lambda) \int_0^t \{\Phi_\nu (\tilde{v}(s))+1\} \|\tilde{z}'(s)\|_{L^2(\Omega)}ds.
\end{gather}
Although the functional $\Phi_\nu$ itself is independent of time, the associated Moreau-Yosida approximation depends on $t$ through the state-dependent 
inner product $(\cdot,\cdot)_{\text{{\tiny $\delta,\!\tilde{z}(t)$}}}$.
As a consequence, the time-dependence of the metric structure generates the additional perturbation term appearing in \eqref{Equation-2-49}.
The derivation of the energy inequality \eqref{Equation-2-49} follows from the abstract perturbation theory developed in \cite{ito-2022}.
More precisely, Lemmas \ref{Lemma-2-2} and \ref{Lemma-2-4} verify the structural assumptions required in the abstract framework, 
including the uniform equivalence of the state-dependent norms and the characterization of the associated subdifferential operators.
Therefore, we omit the proof of \eqref{Equation-2-49}.\\
\indent
Now, we take $\lambda =1/8$ and set $C_{\text{{\tiny $2,\!6,\!1$}}}:=C_{\text{{\tiny $2,\!6,\!1$}}}(1/8)$.
Using the Cauchy-Schwarz inequality and the Young inequality in \eqref{Equation-2-49}, we deduce the following inequality for a.e. $t \in (0,T)$:
\begin{gather}
\label{Equation-2-50}
\Phi_\nu (\tilde{v}(t))+\frac{1}{2} \int_0^t \|\tilde{v}'(s)\|_{\text{{\tiny $\delta,\!\tilde{z}(s)$}}}^2ds 
\le \Phi_\nu (v_0)+\frac{5}{4} \int_0^t \|\tilde{g}(s)\|_{\text{{\tiny $\delta,\!\tilde{z}(s)$}}}^2ds\\
\nonumber
+C_{\text{{\tiny $2,\!6,\!1$}}} \int_0^t \left\{\Phi_\nu (\tilde{v}(s))+1\right\}\|\tilde{z}'(s)\|_{L^2(\Omega)}ds.
\end{gather}
\indent
Now, in order to eliminate the perturbation term
\begin{equation*}
C_{\text{{\tiny $2,\!6,\!1$}}}\int_0^t \left\{ \Phi_\nu(\tilde v(s))+1\right\} \|\tilde z'(s)\|_{L^2(\Omega)}ds
\end{equation*}
we introduce a function $\xi : [0,T] \rightarrow \mathbb{R}$ by
\begin{gather}
\label{Equation-2-51}
\xi (t):=\int_0^t \left\{\Phi_\nu (\tilde{v}(s))+1 \right\}\|\tilde{z}'(s)\|_{L^2(\Omega)}ds \cdot \exp \left(-C_{\text{{\tiny $2,\!6,\!1$}}}\int_0^t \|\tilde{z}'(s)\|_{L^2(\Omega)}ds\right),\\
\nonumber
\forall t \in [0,T].
\end{gather}
By differentiating \eqref{Equation-2-51}, from \eqref{Equation-2-50} we obtain
\begin{align*}
\frac{d}{dt} \xi (t) &=\left[ \Phi_\nu (\tilde{v}(t))+1-C_{\text{{\tiny $2,\!6,\!1$}}}\int_0^t \left\{\Phi_\nu (\tilde{v}(s))+1\right\} \|\tilde{z}'(s)\|_{L^2(\Omega)}ds \right]\\
&\hspace*{2cm}\times \|\tilde{z}'(t)\|_{L^2(\Omega)} \exp \left(-C_{\text{{\tiny $2,\!6,\!1$}}}\int_0^t \|\tilde{z}'(s)\|_{L^2(\Omega)}ds\right)\\
&\le \left\{ \Phi_\nu (v_0)+1+\frac{5}{4} \int_0^t \|\tilde{g}(s)\|_{\text{{\tiny $\delta,\!\tilde{z}(s)$}}}^2ds \right\}\\
&\hspace*{2cm}\times \|\tilde{z}'(t)\|_{L^2(\Omega)} \exp \left(-C_{\text{{\tiny $2,\!6,\!1$}}}\int_0^t \|\tilde{z}'(s)\|_{L^2(\Omega)}ds\right),
\end{align*}
hence, because of $\xi (0)=0$
\begin{align}
\label{Equation-2-52}
\xi (t) &\le \int_0^t \left\{ \Phi_\nu (v_0)+1+\frac{5}{4} \int_0^s \|\tilde{g}(\tau)\|_{\text{{\tiny $\delta,\!\tilde{z}(\tau)$}}}^2d\tau \right\}\\
\nonumber
&\hspace*{2cm}\times \|\tilde{z}'(s)\|_{L^2(\Omega)} \exp \left(-C_{\text{{\tiny $2,\!6,\!1$}}}\int_0^s \|\tilde{z}'(\tau)\|_{L^2(\Omega)}d\tau\right).
\end{align}
From \eqref{Equation-2-51} and \eqref{Equation-2-52} we derive
\begin{gather}
\label{Equation-2-53}
\int_0^t \left\{\Phi_\nu (\tilde{v}(s))+1 \right\}\|\tilde{z}'(s)\|_{L^2(\Omega)}ds 
\le \int_0^t \left\{ \Phi_\nu (v_0)+1+\frac{5}{4} \int_0^s \|\tilde{g}(\tau)\|_{\text{{\tiny $\delta,\!\tilde{z}(\tau)$}}}^2d\tau \right\}\\
\nonumber
\times \|\tilde{z}'(s)\|_{L^2(\Omega)} \exp \left(C_{\text{{\tiny $2,\!6,\!1$}}} \int_s^t \|\tilde{z}'(\tau)\|_{L^2(\Omega)}d\tau\right)ds.
\end{gather}
Substituting \eqref{Equation-2-53} into \eqref{Equation-2-50}, we deduce the following inequality for all $t \in [0,T]$:
\begin{align*}
&\,\Phi_\nu (\tilde{v}(t))+\frac{1}{2} \int_0^t \|\tilde{v}'(s)\|_{\text{{\tiny $\delta,\!\tilde{z}(s)$}}}^2ds\\
\le&\,\Phi_\nu (v_0)+\frac{5}{4} \int_0^t \|\tilde{g}(s)\|_{\text{{\tiny $\delta,\!\tilde{z}(s)$}}}^2ds+C_{\text{{\tiny $2,\!6,\!1$}}} \int_0^t \left\{ \Phi_\nu (v_0)+1
+\frac{5}{4} \int_0^s \|\tilde{g}(\tau)\|_{\text{{\tiny $\delta,\!\tilde{z}(\tau)$}}}^2d\tau \right\}\\
&\,\hspace*{3cm}\times \|\tilde{z}'(s)\|_{L^2(\Omega)} \exp \left( C_{\text{{\tiny $2,\!6,\!1$}}} \int_s^t \|\tilde{z}'(\tau)\|_{L^2(\Omega)}d\tau\right)ds\\
\le&\,\left\{\frac{1}{2} \int_\Omega |\nabla v_0|^2dx+\frac{1}{2}\int_\Omega |v_0|^2dx+1+\frac{5 \delta (C_2)^2}{4} \int_0^T \|\tilde{g}(t)\|_{L^2(\Omega)}^2dt\right\}\\
&\,\hspace*{0.5cm} \times \left\{1+C_{\text{{\tiny $2,\!6,\!1$}}} \int_0^T \|\tilde{z}'(t)\|_{L^2(\Omega)}dt \cdot 
\exp \left(C_{\text{{\tiny $2,\!6,\!1$}}}\int_0^T \|\tilde{z}'(t)\|_{L^2(\Omega)}dt\right)\right\}
=:C_{\text{{\tiny $2,\!6,\!2$}}},
\end{align*}
which yields that the following estimates hold:
\begin{equation}\label{Equation-2-54}
\sup_{0 \le t \le T} \Phi_\nu (\tilde{v}(t))+\int_0^T \|\tilde{v}'(t)\|_{L^2(\Omega)}^2dt \le C_{\text{{\tiny $2,\!6,\!2$}}} \left\{1+\frac{2\delta}{(C_1)^2}\right\}.
\end{equation}
Going back to \eqref{Equation-2-40}, we have 
\begin{equation}\label{Equation-2-55}
\nu \Delta_N \tilde{v}(t)=\tilde{v}(t)+M_{\text{{\tiny $\delta,\!\tilde{z}(t)$}}}\tilde{v}'(t)-M_{\text{{\tiny $\delta,\!\tilde{z}(t)$}}}\tilde{g}(t) \quad
\text{in} \quad L^2(\Omega),\quad \text{a.e.}~t \in (0,T).
\end{equation}
From \eqref{Equation-1-8}, \eqref{Equation-2-16} and \eqref{Equation-2-55} we derive 
\begin{align*}
\nu \|\tilde{v}(t)\|_{H^2(\Omega)}&\le K_5 \left( \|\nu \Delta_N \tilde{v}(t)\|_{L^2(\Omega)}+\|\tilde{v}(t)\|_{L^2(\Omega)} \right)\\
&\le K_5 \left( \|M_{\text{{\tiny $\delta,\!\tilde{z}(t)$}}}\tilde{g}(t)\|_{L^2(\Omega)}+\|M_{\text{{\tiny $\delta,\!\tilde{z}(t)$}}}\tilde{v}'(t)\|_{L^2(\Omega)}
+2\|\tilde{v}(t)\|_{L^2(\Omega)} \right)\\
&\le K_5 \left(2+\frac{\delta}{K_1}\right) \left( \|\tilde{g}(t)\|_{L^2(\Omega)}+\|\tilde{v}'(t)\|_{L^2(\Omega)}+\|\tilde{v}(t)\|_{L^2(\Omega)} \right),
\end{align*}
which implies
\begin{align}
\label{Equation-2-56}
\nu^2 \int_0^T \|\tilde{v}(t)\|_{H^2(\Omega)}^2dt &\le 3 \left\{K_5 \left(1+\frac{\delta}{K_1}\right)\right\}^2 
\int_0^T \left( \|\tilde{g}(t)\|_{L^2(\Omega)}^2+\|\tilde{v}'(t)\|_{L^2(\Omega)}^2 \right)dt\\
\nonumber
&\hspace*{1cm}+3T \left\{K_5 \left(1+\frac{\delta}{K_1}\right)\right\}^2 \left( \max_{0 \le t \le T} \|\tilde{v}(t)\|_{L^2(\Omega)} \right)^2.
\end{align}
Combining \eqref{Equation-2-54} and \eqref{Equation-2-56} with Lemma \ref{Lemma-2-5}, we complete the proof of this lemma.
\end{proof}
%%%%%
%%%%%
%%%%%
\indent
At the end of this subsection, we show Lemma \ref{Lemma-2-7}, which guarantees the continuity property of the operator $S_{\text{{\tiny $1,\!\delta,\!\nu$}}}$,
which plays a key role to apply the Schauder fixed point theorem in Subsection \ref{Subsection-3-1}.
%%%%%
%%%%%
%%%%%
\begin{lemma}\label{Lemma-2-7}
We have the following convergence as $m \to \infty$:
\begin{equation}\label{Equation-2-57}
S_{\text{{\tiny $1,\!\delta,\!\nu$}}}(\tilde{z}_m,\tilde{g}_m) \longrightarrow S_{\text{{\tiny $1,\!\delta,\!\nu$}}}(\tilde{z},\tilde{g}) \quad \left\{
\begin{array}{l}
\text{in} \quad C([0,T]\,;L^2(\Omega)),\\[0.1cm]
\text{weakly in} \quad W^{1,2}(0,T\,;L^2(\Omega)),\\[0.1cm]
\text{weakly$^*$ in} \quad L^\infty (0,T\,;H^1(\Omega)),\\[0.1cm]
\text{weakly in} \quad L^2(0,T\,;H^2(\Omega)),
\end{array}
\right.
\end{equation}
whenever a sequence $\{(\tilde{z}_m,\tilde{g}_m)\}_{m \in \mathbb{N}} \subset D(S_{\text{{\tiny $1,\!\delta,\!\nu$}}})$ and a pair 
$(\tilde{z},\tilde{g}) \in D(S_{\text{{\tiny $1,\!\delta,\!\nu$}}})$ satisfy the following convergence as $m \to \infty$:
\begin{equation}
\label{Equation-2-58}
(\tilde{z}_m,\tilde{g}_m) \longrightarrow (\tilde{z},\tilde{g}) \quad \text{in} \quad C([0,T]\,;L^2(\Omega)) \times L^2(0,T\,;L^2(\Omega)),
\end{equation}
and the following boundedness:
\begin{equation}\label{Equation-2-59}
\sup_{m\,\in\,\mathbb{N}} \int_0^T \|z_m'(t)\|_{L^2(\Omega)} dt < \infty.
\end{equation}
\end{lemma}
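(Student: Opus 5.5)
We set $v_m:=S_{\text{{\tiny $1,\!\delta,\!\nu$}}}(\tilde{z}_m,\tilde{g}_m)$ and $v:=S_{\text{{\tiny $1,\!\delta,\!\nu$}}}(\tilde{z},\tilde{g})$, and argue by compactness plus uniqueness.

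\textbf{Step 1 (uniform bounds).} By \eqref{Equation-2-58}, $\int_0^T\|\tilde g_m\|_{L^2(\Omega)}^2dt$ is bounded, and \eqref{Equation-2-59} bounds $\int_0^T\|\tilde z_m'\|_{L^2(\Omega)}dt$ uniformly in $m$. Lemmas \ref{Lemma-2-5} and \ref{Lemma-2-6} then give a bound, uniform in $m$ (with $\delta,\nu$ fixed), on
\begin{equation*}
\max_{0\le t\le T}\|v_m(t)\|_{L^2(\Omega)}^2+\int_0^T\|v_m'\|_{L^2(\Omega)}^2dt+\sup_{0\le t\le T}\Phi_\nu(v_m(t))+\int_0^T\|v_m\|_{H^2(\Omega)}^2dt .
\end{equation*}
Since $\nu$ is fixed, $\Phi_\nu$ controls the full $H^1$-norm. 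Thus $\{v_m\}$ is bounded in $W^{1,2}(0,T;L^2(\Omega))\cap L^\infty(0,T;H^1(\Omega))\cap L^2(0,T;H^2(\Omega))$.

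\textbf{Step 2 (compactness).} The embedding $H^1(\Omega)\hookrightarrow L^2(\Omega)$ is compact, so the Aubin--Lions--Simon lemma applies. For any subsequence we can extract a further subsequence and some $\hat v$ such that $v_m\to\hat v$ in $C([0,T];L^2(\Omega))$, weakly in $W^{1,2}(0,T;L^2(\Omega))$, weakly$^*$ in $L^\infty(0,T;H^1(\Omega))$, and weakly in $L^2(0,T;H^2(\Omega))$. In particular $\hat v(0)=v_0$.

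\textbf{Step 3 (identification of the limit).} By Proposition \ref{Proposition-2-1}, $v_m$ satisfies \eqref{Equation-2-32} in the form
\begin{equation*}
v_m'-(\mathcal{K}_{\text{{\tiny $\delta$}}}\tilde z_m)(-\nu\Delta_N v_m+v_m)+\ldots
\end{equation*}
For the limit passage it is more convenient to multiply through and write it as
\begin{equation*}
v_m' + (\mathcal{K}_{\text{{\tiny $\delta$}}}\tilde z_m)(-\nu\Delta_N v_m+v_m)=\tilde g_m .
\end{equation*}
The key observation is that, by \eqref{Equation-2-2} and (A1)(b),
\begin{equation*}
\|\mathcal{K}_{\text{{\tiny $\delta$}}}\tilde z_m(t)-\mathcal{K}_{\text{{\tiny $\delta$}}}\tilde z(t)\|_{L^\infty(\Omega)}\le K_2\|\tilde z_m(t)-\tilde z(t)\|_{L^2(\Omega)}\to0
\end{equation*}
uniformly in $t$, and the coefficients are bounded in $L^\infty$ by \eqref{Equation-2-14}. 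The product of a coefficient converging uniformly in $L^\infty(Q_T)$ with $-\nu\Delta_N v_m+v_m$, which converges weakly in $L^2(Q_T)$, therefore converges weakly in $L^2(Q_T)$ to $(\mathcal{K}_{\text{{\tiny $\delta$}}}\tilde z)(-\nu\Delta_N\hat v+\hat v)$. Weak limits preserve the Neumann condition, since $D(-\Delta_N)$-valued $L^2$ functions form a weakly closed subspace thanks to \eqref{Equation-1-8}. Hence $\hat v$ is a strong solution of $\text{(AP1)}{}_{\text{{\tiny $\delta,\!\nu$}}}$ with data $(\tilde z,\tilde g)$. We also need $\tilde z\in W^{1,2}$ for the uniqueness step; this holds because $(\tilde z,\tilde g)\in D(S_{\text{{\tiny $1,\!\delta,\!\nu$}}})$.

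\textbf{Step 4 (uniqueness and the whole sequence).} Uniqueness in Proposition \ref{Proposition-2-1} gives $\hat v=v$. Since every subsequence has a further subsequence converging to the same limit $v$, the whole sequence converges in all four topologies of \eqref{Equation-2-57}.

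The main obstacle is Step 3, namely passing to the limit in the product of the state-dependent coefficient with the highest-order term. This works only because the coefficient converges strongly, indeed uniformly, which follows from the contraction property of $\alpha_{\text{{\tiny $\delta$}}}$ together with (A1)(b). A secondary point to check is that Lemma \ref{Lemma-2-6} genuinely depends on $\tilde z$ only through $\int_0^T\|\tilde z'\|_{L^2(\Omega)}dt$, so that \eqref{Equation-2-59} suffices for the uniform bounds.
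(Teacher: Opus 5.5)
Your proof is correct. Steps 1, 2 and 4 coincide with the paper's: the paper gets compactness in $C([0,T];L^2(\Omega))$ via Ascoli--Arzel\`a from the same $W^{1,2}$ and $L^\infty(0,T;H^1(\Omega))$ bounds, which is the content of your Aubin--Lions--Simon step.

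The real difference is Step 3. The paper identifies the limit inside the twisted-gradient-flow formulation:
\begin{itemize}
\item it integrates the subdifferential inequality $(\tilde g_m-\tilde v_m',\eta-\tilde v_m)_{\delta,\tilde z_m(t)}\le\Phi_\nu(\eta)-\Phi_\nu(\tilde v_m(t))$ over $(0,T)$;
\item it passes to the limit using convergence of the quasi-variational inner products (quoted from Ito's 2019 paper) and weak lower semicontinuity of $\Phi_\nu$;
\item it localizes back to a.e.\ $t$ via Br\'ezis' Example 2.1.3 and Lemma 3.8 of Ito's paper;
\item only then does it use Lemma~\ref{Lemma-2-4} to read off \eqref{Equation-2-32}.
\end{itemize}

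You instead pass to the limit directly in the strong form $v_m'+(\mathcal{K}_\delta\tilde z_m)(-\nu\Delta_N v_m+v_m)=\tilde g_m$. You combine convergence of $\mathcal{K}_\delta\tilde z_m$ to $\mathcal{K}_\delta\tilde z$ in $L^\infty(Q_T)$ with $\Delta_N v_m\rightharpoonup\Delta_N\hat v$ weakly in $L^2(Q_T)$. The first comes from the contraction property of $\alpha_\delta$, $K\ge 0$ and (A1)(b). The second is available precisely because $\nu>0$ is fixed and Lemma~\ref{Lemma-2-6} bounds $v_m$ in $L^2(0,T;H^2(\Omega))$.

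Your route is shorter, linear and self-contained, since it needs no external lemmas on state-dependent inner products. It does rely on the explicit single-valued form of $\partial_{\delta,\bar z}\Phi_\nu$ and on elliptic $H^2$ regularity. The paper's variational route needs only lower semicontinuity of $\Phi_\nu$ and convergence of the metrics. It would therefore carry over to a general convex functional with multivalued subdifferential, which is the point of the abstract framework.

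Two cosmetic repairs are needed. First, delete the first, garbled display of Step 3, which has the wrong sign and a dangling ``$+\ldots$''. Second, justify the Neumann condition in the limit by the closedness of the graph of $-\Delta_N$ in $L^2(Q_T)\times L^2(Q_T)$, which gives weak closedness. \eqref{Equation-1-8} by itself only compares norms on $D(-\Delta_N)$.
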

%%%%%
%%%%%
%%%%%
\begin{proof}[Proof.]
In this proof we set $\tilde{v}:=S_{\text{{\tiny $1,\!\delta,\!\nu$}}}(\tilde{z},\tilde{g})$ and $\tilde{v}_m:=S_{\text{{\tiny $1,\!\delta,\!\nu$}}}(\tilde{z}_m,\tilde{g}_m)$ 
for all $m \in \mathbb{N}$.
Since from Lemma \ref{Lemma-2-6} with \eqref{Equation-2-59} the sequence $\{\tilde{v}_m\}_{m \in \mathbb N}$ is bounded in 
\begin{equation*}
W^{1,2}(0,T;L^2(\Omega)) \cap L^\infty (0,T;H^1(\Omega)) \cap L^2(0,T\,;H^2(\Omega)),
\end{equation*}
we have 
\begin{equation*}
\|\tilde{v}_m(t)-\tilde{v}_m(s)\|_{L^2(\Omega)} \le |t-s|^{\frac{1}{2}} \|\tilde{v}_m'\|_{L^2(0,T;L^2(\Omega))}, \quad \forall s,t \in [0,T],
\end{equation*}
hence, we see that the sequence $\{\tilde{v}_m\}_{m \in \mathbb N}$ is equicontinuous in $C([0,T];L^2(\Omega))$.
Since for each $t \in [0,T]$ the sequence $\{\tilde v_m(t)\}$ is bounded in $H^1(\Omega)$ and the embedding $H^1(\Omega) \subset L^2(\Omega)$ is compact,
the sequence $\{\tilde v_m(t)\}$ is relatively compact in $L^2(\Omega)$.
Combined with equicontinuity, the Ascoli-Arzel\`{a} theorem yields that there exist a subsequence, still denoted by $\{\tilde{v}_m\}_{m \in \mathbb N}$, and 
a function $\hat{v} \in C([0,T];L^2(\Omega))$ such that the following convergence holds as $m \to \infty$:
\begin{equation}\label{Equation-2-60}
\tilde{v}_m \longrightarrow \hat{v} \quad \left\{
\begin{array}{l}
\text{in} \quad C([0,T]\,;L^2(\Omega)),\\[0.1cm]
\text{weakly in} \quad W^{1,2}(0,T\,;L^2(\Omega)),\\[0.1cm]
\text{weakly$^*$ in} \quad L^\infty(0,T\,;H^1(\Omega)),\\[0.1cm]
\text{weakly in} \quad L^2(0,T\,;H^2(\Omega)).
\end{array}
\right.
\end{equation}
\indent
Now, we go back to Proposition \ref{Proposition-2-1}.
Using the definition of the subdifferential $\partial_{\text{{\tiny $\delta,\!\tilde{z}_m(t)$}}} \Phi_\nu$ on the time-dependent twisted space $L^2(\delta,\tilde{z}_m(t))$, 
we have 
\begin{gather*}
(\tilde{g}_m(t)-\tilde{v}_m'(t),\eta-\tilde{v}_m(t))_{\text{{\tiny $\delta,\!\tilde{z}_m(t)$}}} \le \Phi_\nu (\eta)-\Phi_\nu (\tilde{v}_m(t)),\\
\forall \eta \in L^2(\delta,\tilde{z}_m(t))=L^2(\Omega),\quad \text{a.e.}~t \in (0,T),
\end{gather*}
hence,
\begin{align}
\label{Equation-2-61}
&\int_0^T (\tilde{g}_m(t)-\tilde{v}_m'(t),\tilde{\xi}(t)-\tilde{v}_m(t))_{\text{{\tiny $\delta,\!\tilde{z}_m(t)$}}}dt\\
\nonumber
&\hspace*{1cm}\le \int_0^T \Phi_\nu (\tilde{\xi}(t))dt-\int_0^T \Phi_\nu (\tilde{v}_m(t))dt,\quad \forall \tilde{\xi} \in L^2(0,T\,;L^2(\Omega)).
\end{align}
Since the truncation operator $\alpha_{\text{{\tiny $\delta$}}}: \mathbb{R} \rightarrow \mathbb{R}$ is Lipschitz continuous (cf. \eqref{Equation-2-2}), the strong 
convergences \eqref{Equation-2-58} together with \eqref{Equation-2-60} and (2) of Lemma \ref{Lemma-2-2} guarantees the convergence of 
the quasi-variational inner products $(\cdot,\cdot)_{\text{{\tiny $\delta,\!\tilde{z}_m(t)$}}}$, whose detail proof is given in \cite[Lemma 2.2 and Lemma 3.4]{ito-2019}.
Passing to the limit in the left-hand side of \eqref{Equation-2-61}, and using the weak lower semicontinuity of the convex functional $\Phi_\nu$, we derive 
\begin{align}
\label{Equation-2-62}
&\int_0^T (\tilde{g}(t)-\hat{v}'(t),\tilde{\xi}(t)-\hat{v}(t))_{\text{{\tiny $\delta,\!\tilde{z}(t)$}}}dt\\
\nonumber
&\hspace*{1cm} \le \int_0^T \Phi_\nu (\tilde{\xi}(t))dt-\int_0^T \Phi_\nu (\hat{v}(t))dt,\quad \forall \tilde{\xi} \in L^2(0,T\,;L^2(\Omega)).
\end{align}
Applying \cite[Example 2.1.3]{Brezis-1973} together with \cite[Lemma 3.8]{ito-2019}, we infer from \eqref{Equation-2-62} that 
\begin{equation}\label{Equation-2-63}
\tilde{g}(t)-\hat{v}'(t) \in \partial_{\text{{\tiny $\delta,\!\tilde{z}(t)$}}} \Phi_\nu (\hat{v}(t)), \quad \text{a.e.}~t \in (0,T).
\end{equation}
By Lemma \ref{Lemma-2-4}, the inclusion \eqref{Equation-2-63} yields 
\begin{gather*}
\hat{v}'(t)+M_{\text{{\tiny $\delta,\!\tilde{z}(t)$}}}^{-1}\left(-\nu \Delta_N \hat{v}(t)+\hat{v}(t)\right)=\tilde{g}(t) \quad \text{in} \quad L^2(\delta,\tilde{z}(t)),
\quad \text{a.e.}~t \in (0,T).
\end{gather*}
Since $\tilde{v}_m(0)=v_0$ for all $m\in\mathbb N$ and $\tilde{v}_m \longrightarrow \hat{v}$ in $C([0,T];L^2(\Omega))$ as $m \to \infty$ (cf. \eqref{Equation-2-60}), 
we have $\hat{v}(0)=v_0$.
Hence, $\hat{v}$ is a solution to $\text{(AP1)}{}_{\text{{\tiny $\delta,\!\nu$}}}$ associated with the pair $(\tilde{z},\tilde{g})$.
By uniqueness of solutions to $\text{(AP1)}{}_{\text{{\tiny $\delta,\!\nu$}}}$, we obtain $\hat{v}=S_{\text{{\tiny $1,\!\delta,\!\nu$}}}(\tilde{z},\tilde{g})$.
Since every convergent subsequence of $\{\tilde{v}_m\}_{m \in \mathbb{N}}$ has the same limit, the whole sequence converges to 
$S_{\text{{\tiny $1,\!\delta,\!\nu$}}}(\tilde{z},\tilde{g})$ in $C([0,T];L^2(\Omega))$.
Besides, the weak and weak$^*$ convergences in \eqref{Equation-2-57} follow from the uniqueness of the limit together with \eqref{Equation-2-60}.
\end{proof}
%%%%%%%%%%%%%%%%%%%%%%%%%%%%%%%%%%%%%%%%%%%%%%%%%%%%%%%%%%%%%%%%%%%%%%%%%%%%%%%%%%%%%%%%%%%%%%%%%%%%%%%%%%%%%%%%%%%%%%%%%%%%%%%%%%%%%%%
%%%%%%%%%%%%%%%%%%%%%%%%%%%%%%%%%%%%%%%%%%%%%%%%%%%%%%%%%%%%%%%%%%%%%%%%%%%%%%%%%%%%%%%%%%%%%%%%%%%%%%%%%%%%%%%%%%%%%%%%%%%%%%%%%%%%%%%
%%%%%%%%%%%%%%%%%%%%%%%%%%%%%%%%%%%%%%%%%%%%%%%%%%%%%%%%%%%%%%%%%%%%%%%%%%%%%%%%%%%%%%%%%%%%%%%%%%%%%%%%%%%%%%%%%%%%%%%%%%%%%%%%%%%%%%%
\subsection{Evolution equation for the twisted metric generator}\label{Subsection-2-3}
%%%%%%%%%%%%%%%%%%%%%%%%%%%%%%%%%%%%%%%%%%%%%%%%%%%%%%%%%%%%%%%%%%%%%%%%%%%%%%%%%%%%%%%%%%%%%%%%%%%%%%%%%%%%%%%%%%%%%%%%%%%%%%%%%%%%%%%
%%%%%%%%%%%%%%%%%%%%%%%%%%%%%%%%%%%%%%%%%%%%%%%%%%%%%%%%%%%%%%%%%%%%%%%%%%%%%%%%%%%%%%%%%%%%%%%%%%%%%%%%%%%%%%%%%%%%%%%%%%%%%%%%%%%%%%%
%%%%%%%%%%%%%%%%%%%%%%%%%%%%%%%%%%%%%%%%%%%%%%%%%%%%%%%%%%%%%%%%%%%%%%%%%%%%%%%%%%%%%%%%%%%%%%%%%%%%%%%%%%%%%%%%%%%%%%%%%%%%%%%%%%%%%%%
We introduce ``\,{\sf a twisted metric generator}\,'' $\bar{z}$, whose evolution determines the state-dependent Hilbert structure through the nonlocal coefficient 
$\mathcal{K}_{\text{{\tiny $\delta$}}}\bar{z}$.\\
\indent
In the following argument, we set an admissible class $\mathcal{V}(v_0)$ for the function $v$ in the following way: a function $\tilde{v}$ belongs to $\mathcal{V}(v_0)$ 
if and only if $\tilde{v}(0)=v_0$ in $L^2(\Omega)$ and the regularity
\begin{equation*}
\tilde{v} \in W^{1,2}(0,T\,;L^2(\Omega)) \quad \text{with} \quad \sup_{0 \le t \le T} \Phi_\nu (\tilde{v}(t)) < \infty.
\end{equation*}
For each $z_0 \in H^1(\Omega)$ and $\tilde{v} \in \mathcal{V}(v_0)$ we consider the initial-boundary value problem $\text{(AP2)}{}_{\text{{\tiny $\varepsilon$}}}$:=
\{\eqref{Equation-2-64}--\eqref{Equation-2-66}\}, which generates the evolution of the metric variable $z$, as the second auxiliary problem 
of $\text{(AP)}{}_{\text{{\tiny $\delta,\!\varepsilon,\!\nu$}}}$:
\begin{gather}
\label{Equation-2-64}
z'-\varepsilon \Delta z=\tilde{v}_t,\quad \text{a.e.in}\quad Q_T,\\
\label{Equation-2-65}
\nabla z \cdot \bm{n}=0,\quad \text{a.e. on}\quad \Sigma_T,\\
\label{Equation-2-66}
z(0)=z_0,\quad \text{a.e. in} \quad \Omega.
\end{gather}
Especially, if the initial condition $z_0=v_0$ holds, the twisted metric generator $z$ initially coincides with the initial state of the twisted gradient flow.\\
\indent
First, we give the following lemma which provides the well-posedness and uniform estimates 
for the twisted metric generator system $\text{(AP2)}{}_{\text{{\tiny $\varepsilon$}}}$.
%%%%%
%%%%%
%%%%%
\begin{lemma}\label{Lemma-2-8}
For each $z_0 \in H^1(\Omega)$ and $\tilde{v} \in \mathcal{V}(v_0)$ the initial-boundary value problem $\text{(AP2)}{}_{\text{{\tiny $\varepsilon$}}}$ has a unique 
strong solution $z_{\text{{\tiny $\varepsilon$}}} (z_0,\tilde{v})$ on $[0,T]$ satisfying 
\begin{gather}
\label{Equation-2-67}
\hspace*{-0.8cm}z_{\text{{\tiny $\varepsilon$}}} (z_0,\tilde{v}) \in W^{1,2}(0,T\,;L^2(\Omega)) \cap L^\infty (0,T\,;H^1(\Omega)) \cap L^2(0,T\,;H^2(\Omega)),\\
\label{Equation-2-68}
(z_{\text{{\tiny $\varepsilon$}}}(z_0,\tilde{v}))'(t)-\varepsilon \Delta_N z_{\text{{\tiny $\varepsilon$}}}(z_0,\tilde{v}\,;t)=\tilde{v}'(t) \quad 
\text{in} \quad L^2(\Omega), \quad \text{a.e.}~t \in (0,T),\\
\label{Equation-2-69}
z_{\text{{\tiny $\varepsilon$}}}(z_0,\tilde{v}\,;0)=z_0 \quad \text{in} \quad L^2(\Omega).
\end{gather}
\indent
Moreover, there exists a constant $C_6>0$, which depends on the following values:
\begin{equation*}
T,\quad \|z_0\|_{H^1(\Omega)},\quad \int_0^T \|\tilde{v}'(t)\|_{L^2(\Omega)}^2dt,
\end{equation*}
such that the following estimate holds for all $\varepsilon \in (0,1)$:
\begin{align*}
&\left(\max_{0 \le t \le T} \|z_{\text{{\tiny $\varepsilon$}}} (z_0,\tilde{v}\,;t)\|_{L^2(\Omega)}\right)^2
+\int_0^T \|z_{\text{{\tiny $\varepsilon$}}}'(z_0,\tilde{v}\,;t)\|_{L^2(\Omega)}^2dt\\
&\hspace*{1cm}+\varepsilon \left(\sup_{0 \le t \le T} \|\nabla z_{\text{{\tiny $\varepsilon$}}} (z_0,\tilde{v}\,;t)\|_{\bm{L}^2(\Omega)} \right)^2
+\varepsilon^2 \int_0^T \|z_{\text{{\tiny $\varepsilon$}}} (z_0,\tilde{v}\,;t)\|_{H^2(\Omega)}^2dt \le C_6.
\end{align*}
Especially, as a result of this estimate we have the boundedness of the strong solutions $z_{\text{{\tiny $\varepsilon$}}} (z_0,\tilde{v})$ in 
$C([0,T]\,;L^2(\Omega)) \cap W^{1,2}(0,T\,;L^2(\Omega))$ uniformly with respect to $\varepsilon \in (0,1)$.
\end{lemma}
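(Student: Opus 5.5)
Lemma \ref{Lemma-2-8} concerns a linear heat equation with Neumann boundary condition, right-hand side $f:=\tilde{v}' \in L^2(0,T\,;L^2(\Omega))$ and initial datum $z_0 \in H^1(\Omega)$. I would therefore apply the standard theory of gradient flows in the flat space $L^2(\Omega)$. Let $\Psi_\varepsilon(\eta):=\frac{\varepsilon}{2}\int_\Omega|\nabla\eta|^2dx$ if $\eta\in H^1(\Omega)$, and $\Psi_\varepsilon(\eta):=\infty$ otherwise. This is a proper l.s.c. convex function on $L^2(\Omega)$, and $\partial\Psi_\varepsilon=-\varepsilon\Delta_N$ with domain $D(-\Delta_N)$; this follows by the same argument as in Lemma \ref{Lemma-2-4}, now with $M=I$ and without the zero-order term. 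Since $z_0\in H^1(\Omega)=D(\Psi_\varepsilon)$ and $f\in L^2(0,T\,;L^2(\Omega))$, the classical Br\'ezis theorem \cite{Brezis-1973} gives a unique solution $z\in W^{1,2}(0,T\,;L^2(\Omega))$ with $\sup_t\Psi_\varepsilon(z(t))<\infty$ satisfying \eqref{Equation-2-68} and \eqref{Equation-2-69}. Then $-\varepsilon\Delta_N z=\tilde{v}'-z'\in L^2(0,T\,;L^2(\Omega))$, and the elliptic estimate \eqref{Equation-1-8} gives $z\in L^2(0,T\,;H^2(\Omega))$. This proves \eqref{Equation-2-67}. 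Uniqueness is immediate: the difference of two solutions solves the homogeneous heat equation with zero data, and testing that equation by the difference itself shows it vanishes.

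For the uniform estimates I would use two energy identities. First, test \eqref{Equation-2-68} by $z$. This gives
\[
\frac12\frac{d}{dt}\|z\|_{L^2(\Omega)}^2+\varepsilon\|\nabla z\|_{\bm{L}^2(\Omega)}^2\le \frac12\|\tilde v'\|_{L^2(\Omega)}^2+\frac12\|z\|_{L^2(\Omega)}^2,
\]
and Gronwall's lemma bounds $\max_t\|z(t)\|_{L^2(\Omega)}^2$ by $e^T\bigl(\|z_0\|_{L^2(\Omega)}^2+\int_0^T\|\tilde v'\|_{L^2(\Omega)}^2dt\bigr)$, independently of $\varepsilon$. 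Second, test by $z'$, which is justified by the chain rule for $\Psi_\varepsilon$ along $W^{1,2}$ solutions. This gives
\[
\|z'\|_{L^2(\Omega)}^2+\frac{\varepsilon}{2}\frac{d}{dt}\|\nabla z\|_{\bm{L}^2(\Omega)}^2=(\tilde v',z')_{L^2(\Omega)}\le\frac12\|\tilde v'\|_{L^2(\Omega)}^2+\frac12\|z'\|_{L^2(\Omega)}^2.
\]
Integrating in time yields
\[
\int_0^T\|z'\|_{L^2(\Omega)}^2dt+\varepsilon\sup_t\|\nabla z(t)\|_{\bm{L}^2(\Omega)}^2\le \varepsilon\|\nabla z_0\|_{\bm{L}^2(\Omega)}^2+\int_0^T\|\tilde v'\|_{L^2(\Omega)}^2dt.
\]
Since $\varepsilon<1$, the right-hand side is bounded by $\|z_0\|_{H^1(\Omega)}^2+\int_0^T\|\tilde v'\|_{L^2(\Omega)}^2dt$, uniformly in $\varepsilon$.

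Finally, by \eqref{Equation-1-8},
\[
\varepsilon\|z\|_{H^2(\Omega)}\le K_5\bigl(\|\tilde v'-z'\|_{L^2(\Omega)}+\varepsilon\|z\|_{L^2(\Omega)}\bigr).
\]
Squaring and integrating bounds $\varepsilon^2\int_0^T\|z\|_{H^2(\Omega)}^2dt$ by $3K_5^2\bigl(\int_0^T\|\tilde v'\|^2+\int_0^T\|z'\|^2+T\max_t\|z\|^2\bigr)$, which is controlled by the previous steps. Summing the three bounds gives $C_6$, depending only on $T$, $\|z_0\|_{H^1(\Omega)}$ and $\int_0^T\|\tilde v'\|_{L^2(\Omega)}^2dt$. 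The only delicate point is justifying the identity $\frac{d}{dt}\Psi_\varepsilon(z)=(\partial\Psi_\varepsilon(z),z')_{L^2(\Omega)}$ for a.e. $t$. This is the standard chain rule for subdifferentials (Br\'ezis), applicable because $z'\in L^2$ and $\partial\Psi_\varepsilon(z)\in L^2(0,T\,;L^2(\Omega))$. Alternatively, it can be obtained via a Galerkin approximation with Neumann eigenfunctions.
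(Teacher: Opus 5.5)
Your proof is correct and follows essentially the same route as the paper. You obtain well-posedness from standard linear theory (made concrete via Br\'ezis' theorem for $\partial\Psi_\varepsilon=-\varepsilon\Delta_N$, where the paper simply cites linear parabolic theory), then test \eqref{Equation-2-68} by $z$ with Gronwall, test by $z'$, and use the Neumann elliptic estimate \eqref{Equation-1-8} for the $\varepsilon^2\int_0^T\|z\|_{H^2(\Omega)}^2dt$ term, exactly as the paper does. One harmless slip: integrating the $z'$-identity over $[0,t]$ bounds $\int_0^T\|z'\|_{L^2(\Omega)}^2dt$ and $\varepsilon\sup_t\|\nabla z(t)\|_{\bm{L}^2(\Omega)}^2$ each separately by the right-hand side (as in \eqref{Equation-2-74}--\eqref{Equation-2-75}), so their sum needs a factor $2$, which only changes the constant $C_6$.
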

%%%%%
%%%%%
%%%%%
\begin{proof}[Proof.]
From the standard theory of linear parabolic PDEs, the problem $\text{(AP2)}_{\varepsilon}$ has a unique strong solution 
$\tilde{z}:=z_{\text{{\tiny $\varepsilon$}}} (z_0,\tilde{v})$ satisfying the required regularity \eqref{Equation-2-67}.
Hence, in the rest of this proof, we show its boundedness.\\
\indent
First, we take the inner product in both sides of \eqref{Equation-2-68} in $L^2(\Omega)$ by $\tilde{z}$.
Then, we obtain 
\begin{equation*}
\frac{1}{2} \frac{d}{dt} \|\tilde{z}(t)\|_{L^2(\Omega)}^2+\varepsilon \|\nabla \tilde{z}_\varepsilon (t)\|_{\bm{L}^2(\Omega)}^2=(\tilde{v}'(t),\tilde{z}(t))_{L^2(\Omega)},
\quad \text{a.e.}~t \in (0,T),
\end{equation*}
hence, the following inequality for a.e. $t \in (0,T)$:
\begin{equation}\label{Equation-2-70}
\frac{d}{dt} \|\tilde{z}(t)\|_{L^2(\Omega)}^2+2\epsilon \|\nabla \tilde{z}(t)\|_{\bm{L}^2(\Omega)}^2 \le \|\tilde{z}(t)\|_{L^2(\Omega)}^2+\|\tilde{v}'(t)\|_{L^2(\Omega)}^2.
\end{equation}
Applying the Gronwall inequality to \eqref{Equation-2-70}, we derive
\begin{align*}
&\|\tilde{z}(t)\|_{L^2(\Omega)}^2+2\epsilon \int_0^t e^{t-s}\|\nabla \tilde{z}(s)\|_{\bm{L}^2(\Omega)}^2ds\\
&\hspace*{1cm}\le e^t \|z_0\|_{L^2(\Omega)}^2+\int_0^t e^{t-s} \|\tilde{v}'(s)\|_{L^2(\Omega)}^2ds,\quad \forall t \in [0,T],
\end{align*}
which implies that the following estimates hold:
\begin{align}
\label{Equation-2-71}
\bullet~&\left(\max_{0 \le t \le T} \|\tilde{z}(t)\|_{L^2(\Omega)}\right)^2 \le e^T \left\{ \|z_0\|_{L^2(\Omega)}^2+\int_0^T \|\tilde{v}'(t)\|_{L^2(\Omega)}^2dt\right\},\\
\label{Equation-2-72}
\bullet~&\varepsilon \int_0^T \|\nabla \tilde{z}(t)\|_{\bm{L}^2(\Omega)}^2dt 
\le \frac{e^T}{2} \left\{ \|z_0\|_{L^2(\Omega)}^2+\int_0^T \|\tilde{v}'(t)\|_{L^2(\Omega)}^2dt\right\}.
\end{align}
\indent
Secondly, we take the inner product in both sides of \eqref{Equation-2-68} by $\tilde{z}'$.
Then, we obtain 
\begin{equation*}
\|\tilde{z}'(t)\|_{L^2(\Omega)}^2+\frac{\varepsilon}{2} \frac{d}{dt} \|\nabla \tilde{z}(t)\|_{\bm{L}^2(\Omega)}^2=(\tilde{v}'(t),\tilde{z}'(t))_{L^2(\Omega)},
\quad \text{a.e.}~t \in (0,T),
\end{equation*}
hence, the following inequality:
\begin{equation}\label{Equation-2-73}
\|\tilde{z}'(t)\|_{L^2(\Omega)}^2+\varepsilon \frac{d}{dt} \|\nabla \tilde{z}(t)\|_{\bm{L}^2(\Omega)}^2 \le \|\tilde{v}'(t)\|_{L^2(\Omega)}^2,\quad \text{a.e.}~t \in (0,T). 
\end{equation}
Integrating both sides of \eqref{Equation-2-73} on any interval $[0,t] \subset [0,T]$, we derive the following inequality for all $t \in [0,T]$:
\begin{equation*}
\int_0^t \|\tilde{z}'(s)\|_{L^2(\Omega)}^2ds+\varepsilon \|\nabla \tilde{z}(t)\|_{\bm{L}^2(\Omega)}^2 \le \|z_0\|_{H^1(\Omega)}^2+\int_0^t \|\tilde{v}'(s)\|_{L^2(\Omega)}^2ds,
\end{equation*}
which implies that the following estimates holds:
\begin{align}
\label{Equation-2-74}
\bullet~&\int_0^T \|\tilde{z}'(t)\|_{L^2(\Omega)}^2dt \le \|z_0\|_{H^1(\Omega)}^2+\int_0^T \|\tilde{v}'(t)\|_{L^2(\Omega)}^2,\\
\label{Equation-2-75}
\bullet~&\varepsilon \left( \sup_{0 \le t \le T} \|\nabla \tilde{z}(t)\|_{\bm{L}^2(\Omega)}^2 \right) \le \|z_0\|_{H^1(\Omega)}^2+\int_0^T \|\tilde{v}'(t)\|_{L^2(\Omega)}^2dt.
\end{align}
\indent
Thirdly, we use the Neumann elliptic regularity \eqref{Equation-1-8} again and repeat the similar argument as in the derivation of \eqref{Equation-2-56} in the proof of 
Lemma \ref{Lemma-2-6}.
Then, we have
\begin{align*}
\varepsilon \|\tilde{z}(t)\|_{H^2(\Omega)}&\le \varepsilon K_5 \left( \|\Delta_N \tilde{z}(t)\|_{L^2(\Omega)}+\|\tilde{z}(t)\|_{L^2(\Omega)} \right)\\
&\le K_5 \left( \|\tilde{v}'(t)\|_{L^2(\Omega)}+\|\tilde{z}'(t)\|_{L^2(\Omega)}+\|\tilde{z}(t)\|_{L^2(\Omega)}\right),
\end{align*}
which implies 
\begin{align}
\label{Equation-2-76}
\varepsilon^2 \int_0^T \|\tilde{z}(t)\|_{H^2(\Omega)}^2dt &\le 3(K_5)^2 \int_0^T \|\tilde{v}'(t)\|_{L^2(\Omega)}^2dt+3(K_5)^2 \int_0^T \|\tilde{z}'(t)\|_{L^2(\Omega)}^2dt\\
\nonumber
&\hspace*{2cm}+3T(K_5)^2 \left( \max_{0 \le t \le T} \|\tilde{z}(t)\|_{L^2(\Omega)}\right)^2.
\end{align}
\indent
Finally, combining all estimates \eqref{Equation-2-71}, \eqref{Equation-2-72} and \eqref{Equation-2-74}--\eqref{Equation-2-76}, we derive the required uniform 
estimates in this lemma.
\end{proof}
%%%%%
%%%%%
%%%%%
By Lemma \ref{Lemma-2-8}, we can define a single-valued solution operator
\begin{equation*}
S_{\text{{\tiny $2,\!\varepsilon,\!z_0$}}}:C([0,T]\,;L^2(\Omega)) \rightarrow C([0,T]\,;L^2(\Omega))
\end{equation*}
by the following way:
\begin{gather*}
D(S_{\text{{\tiny $2,\!\varepsilon,\!z_0$}}})=\mathcal{V}(v_0),\\
R(S_{\text{{\tiny $2,\!\varepsilon,\!z_0$}}}) \subset W^{1,2}(0,T\,;L^2(\Omega)) \cap L^\infty (0,T\,;H^1(\Omega)) \cap L^2(0,T\,;H^2(\Omega)),\\
S_{\text{{\tiny $2,\!\varepsilon,\!z_0$}}} (\tilde{v}):=z_{\varepsilon}(z_0,\tilde{v}).
\end{gather*}
At the end of this subsection, we show Lemma \ref{Lemma-2-9}, which guarantees the continuity property of the operator $S_{\text{{\tiny $2,\!\varepsilon,\!z_0$}}}$,
which also plays a key role to apply the Schauder fixed point theorem in Subsection \ref{Subsection-3-1}.
%%%%%
%%%%%
%%%%%
\begin{lemma}\label{Lemma-2-9}
We have the following convergence as $m \to \infty$:
\begin{equation}\label{Equation-2-77}
S_{\text{{\tiny $2,\!\varepsilon,\!z_0$}}}(\tilde{v}_m) \longrightarrow S_{\text{{\tiny $2,\!\varepsilon,\!z_0$}}}(\tilde{v}) \quad \left\{
\begin{array}{l}
\text{in} \quad C([0,T]\,;L^2(\Omega)),\\[0.1cm]
\text{weakly in} \quad W^{1,2}(0,T\,;L^2(\Omega)),
\end{array}
\right.
\end{equation}
whenever a sequence $\{\tilde{v}_m\}_{m \in \mathbb{N}} \subset \mathcal{V}(v_0)$ and a function $\tilde{v} \in \mathcal{V}(v_0)$ satisfy the following convergences 
as $m \to \infty$:
\begin{equation}\label{Equation-2-78}
\tilde{v}_m \longrightarrow \tilde{v} \quad \left\{
\begin{array}{l}
\text{in} \quad C([0,T]\,;L^2(\Omega)),\\[0.1cm]
\text{weakly in} \quad W^{1,2}(0,T\,;L^2(\Omega)).
\end{array}
\right.
\end{equation}
\end{lemma}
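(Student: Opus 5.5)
We will argue by compactness and identification of the limit, following the scheme of Lemma \ref{Lemma-2-7}. Set $\tilde{z}_m:=S_{\text{{\tiny $2,\!\varepsilon,\!z_0$}}}(\tilde{v}_m)$ and $\tilde{z}:=S_{\text{{\tiny $2,\!\varepsilon,\!z_0$}}}(\tilde{v})$. The weak convergence in \eqref{Equation-2-78} implies that $\sup_m \int_0^T\|\tilde{v}_m'(t)\|_{L^2(\Omega)}^2dt<\infty$. Lemma \ref{Lemma-2-8} then gives a bound on $\{\tilde{z}_m\}$ that is uniform in $m$, with $\varepsilon$ fixed:
\begin{equation*}
\sup_{m} \left\{ \|\tilde{z}_m\|_{W^{1,2}(0,T;L^2(\Omega))}+\sup_{0\le t\le T}\|\tilde{z}_m(t)\|_{H^1(\Omega)}+\|\tilde{z}_m\|_{L^2(0,T;H^2(\Omega))}\right\}<\infty .
\end{equation*}

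The $W^{1,2}(0,T;L^2(\Omega))$ bound yields the H\"older estimate $\|\tilde{z}_m(t)-\tilde{z}_m(s)\|_{L^2(\Omega)}\le |t-s|^{1/2}\sup_k\|\tilde{z}_k'\|_{L^2(0,T;L^2(\Omega))}$, so the family is equicontinuous. The uniform $H^1(\Omega)$ bound at each fixed time, combined with the compact embedding $H^1(\Omega)\hookrightarrow L^2(\Omega)$, makes $\{\tilde{z}_m(t)\}$ relatively compact in $L^2(\Omega)$. By Ascoli--Arzel\`a, a subsequence converges to some $\hat{z}$ strongly in $C([0,T];L^2(\Omega))$, weakly in $W^{1,2}(0,T;L^2(\Omega))$ and weakly in $L^2(0,T;H^2(\Omega))$.

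The next step is to identify the limit. Because the equation is linear, this is simpler than in Lemma \ref{Lemma-2-7}. Take any $\zeta\in L^2(0,T;L^2(\Omega))$ and test \eqref{Equation-2-68} for $\tilde{z}_m$ against it:
\begin{equation*}
\int_0^T(\tilde{z}_m'-\varepsilon\Delta_N\tilde{z}_m,\zeta)_{L^2(\Omega)}dt=\int_0^T(\tilde{v}_m',\zeta)_{L^2(\Omega)}dt .
\end{equation*}
We pass to the limit term by term. The map $\eta\mapsto\Delta_N\eta$ is bounded and linear from $L^2(0,T;H^2(\Omega))$ to $L^2(0,T;L^2(\Omega))$, hence weakly continuous, so $\Delta_N\tilde{z}_m\rightharpoonup\Delta_N\hat{z}$. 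The Neumann condition is preserved in the limit because $\{\eta\in L^2(0,T;H^2(\Omega)):\nabla\eta\cdot\bm{n}=0\}$ is a closed subspace and therefore weakly closed. Alternatively, one can work with the weak form $\varepsilon\int_0^T(\nabla\tilde{z}_m,\nabla\zeta)\,dt$ for $\zeta\in L^2(0,T;H^1(\Omega))$ and then invoke elliptic regularity as in Lemma \ref{Lemma-2-4}. The right-hand side converges because $\tilde{v}_m'\rightharpoonup\tilde{v}'$ weakly in $L^2(0,T;L^2(\Omega))$. Finally, $\hat{z}(0)=z_0$ follows from the uniform convergence together with $\tilde{z}_m(0)=z_0$.

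Thus $\hat{z}$ is a strong solution of $\text{(AP2)}{}_{\text{{\tiny $\varepsilon$}}}$ with data $\tilde{v}$, and uniqueness in Lemma \ref{Lemma-2-8} gives $\hat{z}=\tilde{z}$. Since every subsequence has a further subsequence with this same limit, the whole sequence converges, which proves \eqref{Equation-2-77}. We could also bypass compactness for the strong convergence: the difference $\tilde{z}_m-\tilde{z}$ solves the linear problem with forcing $(\tilde{v}_m-\tilde{v})'$, which converges only weakly, so a direct energy estimate does not suffice. This is why the compactness route is preferred. The main obstacle is precisely this point, namely that the forcing converges only weakly, and it is resolved by the $\varepsilon$-dependent $H^1$ and $H^2$ bounds of Lemma \ref{Lemma-2-8}, which supply the compactness.
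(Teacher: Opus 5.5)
Your proposal is correct and follows essentially the same route as the paper. Both arguments take the $\varepsilon$-dependent bounds of Lemma~\ref{Lemma-2-8}, apply Ascoli--Arzel\`a, pass to the limit in the linear equation tested against $L^2(0,T;L^2(\Omega))$, and conclude with the uniqueness of $\text{(AP2)}{}_{\text{{\tiny $\varepsilon$}}}$ and a subsequence argument. Your explicit justifications fill in two points the paper leaves implicit: that weak convergence of $\tilde v_m$ gives an $m$-uniform bound on $\int_0^T\|\tilde v_m'\|^2_{L^2(\Omega)}dt$, and that the Neumann condition survives the weak limit.
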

%%%%%
%%%%%
%%%%%
\begin{proof}
We set $\tilde{z}:=S_{\text{{\tiny $2,\!\varepsilon,\!z_0$}}}(\tilde{v})$ and $\tilde{z}_m:=S_{\text{{\tiny $2,\!\varepsilon,\!z_0$}}}(\tilde{v}_m)$ for all $m \in \mathbb{N}$.
Since from Lemma \ref{Lemma-2-8} the sequence $\{\tilde{z}_m\}_{m \in \mathbb N}$ is bounded in 
\begin{equation*}
W^{1,2}(0,T;L^2(\Omega)) \cap L^\infty (0,T;H^1(\Omega)) \cap L^2(0,T\,;H^2(\Omega)),
\end{equation*}
the Ascoli-Arzel\`{a} theorem yields that there exist a subsequence, still denoted by $\{\tilde{z}_m\}_{m \in \mathbb N}$, and a function 
$\hat{z} \in C([0,T];L^2(\Omega))$ such that the following convergences hold as $m \to \infty$:
\begin{equation}\label{Equation-2-79}
\tilde{z}_m \longrightarrow \hat{z} \quad \left\{
\begin{array}{l}
\text{in} \quad C([0,T]\,;L^2(\Omega)),\\[0.1cm]
\text{weakly in} \quad W^{1,2}(0,T\,;L^2(\Omega)),\\[0.1cm]
\text{weakly$^*$ in} \quad L^\infty(0,T\,;H^1(\Omega)),\\[0.1cm]
\text{weakly in} \quad L^2(0,T\,;H^2(\Omega)).
\end{array}
\right.
\end{equation}
Since $\{\tilde{z}_m\}_{m \in \mathbb{N}} \subset L^2(0,T\,;H^2(\Omega))$, for each $m \in \mathbb{N}$ the function $\tilde{z}_m$ satisfies the following weak
formulation:
\begin{align}
\label{Equation-2-80}
&\int_0^T (\tilde{z}_m'(t),\xi (t))_{L^2(\Omega)}dt-\varepsilon \int_0^T (\Delta_N \tilde{z}_m(t),\xi (t))_{L^2(\Omega)}dt\\
\nonumber
&\hspace*{2cm}=\int_0^T (\tilde{v}_m'(t),\xi (t))_{L^2(\Omega)}dt,\quad \forall \xi \in L^2(0,T\,;L^2(\Omega)).
\end{align}
Passing to the limit as $m \to \infty$ in \eqref{Equation-2-80}, by virtue of \eqref{Equation-2-78} and \eqref{Equation-2-79} we obtain 
\begin{align*}
&\int_0^T (\hat{z}'(t),\xi (t))_{L^2(\Omega)}dt-\varepsilon \int_0^T (\Delta_N \hat{z}(t),\xi (t))_{L^2(\Omega)}dt\\
&\hspace*{2cm}=\int_0^T (\tilde{v}'(t),\xi (t))_{L^2(\Omega)}dt,\quad \forall \xi \in L^2(0,T\,;L^2(\Omega)).
\end{align*}
Moreover, $\hat{z}(0)=z_0$ in $L^2(\Omega)$ holds by the convergence in $C([0,T]\,;L^2(\Omega))$.
Hence, $\hat{z}$ is a solution to $\text{(AP2)}_{\varepsilon}$ associated with $\tilde{v}$.
By the uniqueness of solutions to $\text{(AP2)}_{\varepsilon}$, we obtain $\hat{z}=S_{\text{{\tiny $2,\!\varepsilon,\!z_0$}}}(\tilde{v})$.
Since every convergent subsequence of $\{\tilde{z}_m\}_{m \in \mathbb{N}}$ has the same limit, the whole sequence converges to 
$S_{\text{{\tiny $2,\!\varepsilon,\!z_0$}}}(\tilde{v})$ in $C([0,T]\,;L^2(\Omega))$.
Besides, the weak convergence in \eqref{Equation-2-77} follows from the uniqueness of the limit together with \eqref{Equation-2-79}.
\end{proof}
%%%%%%%%%%%%%%%%%%%%%%%%%%%%%%%%%%%%%%%%%%%%%%%%%%%%%%%%%%%%%%%%%%%%%%%%%%%%%%%%%%%%%%%%%%%%%%%%%%%%%%%%%%%%%%%%%%%%%%%%%%%%%%%%%%%%%%%%
%%%%%%%%%%%%%%%%%%%%%%%%%%%%%%%%%%%%%%%%%%%%%%%%%%%%%%%%%%%%%%%%%%%%%%%%%%%%%%%%%%%%%%%%%%%%%%%%%%%%%%%%%%%%%%%%%%%%%%%%%%%%%%%%%%%%%%%%
%%%%%%%%%%%%%%%%%%%%%%%%%%%%%%%%%%%%%%%%%%%%%%%%%%%%%%%%%%%%%%%%%%%%%%%%%%%%%%%%%%%%%%%%%%%%%%%%%%%%%%%%%%%%%%%%%%%%%%%%%%%%%%%%%%%%%%%%
\subsection{Subsystem associated with the variables $u$ and $w$}\label{Subsection-2-4}
%%%%%%%%%%%%%%%%%%%%%%%%%%%%%%%%%%%%%%%%%%%%%%%%%%%%%%%%%%%%%%%%%%%%%%%%%%%%%%%%%%%%%%%%%%%%%%%%%%%%%%%%%%%%%%%%%%%%%%%%%%%%%%%%%%%%%%%%
%%%%%%%%%%%%%%%%%%%%%%%%%%%%%%%%%%%%%%%%%%%%%%%%%%%%%%%%%%%%%%%%%%%%%%%%%%%%%%%%%%%%%%%%%%%%%%%%%%%%%%%%%%%%%%%%%%%%%%%%%%%%%%%%%%%%%%%%
%%%%%%%%%%%%%%%%%%%%%%%%%%%%%%%%%%%%%%%%%%%%%%%%%%%%%%%%%%%%%%%%%%%%%%%%%%%%%%%%%%%%%%%%%%%%%%%%%%%%%%%%%%%%%%%%%%%%%%%%%%%%%%%%%%%%%%%%
In this subsection, for each $\tilde{v} \in \mathcal{V}(v_0)$ we consider the following initial-boundary value problem 
$\text{(AP3)}{}_{\text{{\tiny $\delta$}}}$:=\{\eqref{Equation-2-81}--\eqref{Equation-2-86}\} as the third auxiliary problem of 
$\text{(AP)}{}_{\text{{\tiny $\delta,\!\varepsilon,\!\nu$}}}$: 
\begin{gather}
\label{Equation-2-81}
u'-D_1\Delta u=\beta_{\text{{\tiny $\delta$}}}(u) (c_1 \tilde{v}-c_2 w), \quad \text{a.e. in} \quad Q_T,\\
\label{Equation-2-82}
w'-D_2 \Delta w+c_5 w=c_6 -c_7 \tilde{v} \beta_{\text{{\tiny $\delta$}}} (w),\quad \text{a.e. in} \quad Q_T,\\
\label{Equation-2-83}
\nabla u \cdot \nu=0,\quad \text{a.e. on} \quad \Sigma_T,\\
\label{Equation-2-84}
\nabla w \cdot \nu=0,\quad \text{a.e. on} \quad \Sigma_T,\\
\label{Equation-2-85}
u(0)=u_0,\quad \text{a.e. in} \quad \Omega,\\
\label{Equation-2-86}
w(0)=w_0,\quad \text{a.e. in} \quad \Omega.
\end{gather}
The purpose of introducing this subsystem is to construct a dynamical system for the remaining variables $(u,w)$ under a prescribed state $\tilde{v}$.
In the following argument, we establish the well-posedness of the auxiliary problem $\text{(AP3)}{}_{\text{{\tiny $\delta$}}}$.
%%%%%
%%%%%
%%%%%
\begin{lemma}\label{Lemma-2-10}
For each $\tilde{v} \in \mathcal{V}(v_0)$ the initial-boundary value problem $\text{(AP3)}{}_{\text{{\tiny $\delta$}}}$ has a unique strong solution 
$(u_{\text{{\tiny $\delta$}}} (u_0,w_0,\tilde{v}),w_{\text{{\tiny $\delta$}}} (w_0,\tilde{v}))$ on $[0,T]$ satisfying the following properties:
\begin{enumerate}\leftskip6pt
\item[(1)] $u_{\text{{\tiny $\delta$}}} (u_0,w_0,\tilde{v}) \in W^{1,2}(0,T\,;L^2(\Omega)) \cap L^\infty (0,T\,;H^1(\Omega)) \cap L^2(0,T\,;H^2(\Omega))$.\\[-0.2cm]
\item[(2)] $w_{\text{{\tiny $\delta$}}} (w_0,\tilde{v}) \in W^{1,2}(0,T\,;L^2(\Omega)) \cap L^\infty (0,T\,;H^1(\Omega)) \cap L^2(0,T\,;H^2(\Omega))$.\\[-0.2cm]
\item[(3)] \{(\ref{Equation-2-81}),\,(\ref{Equation-2-83})\} is satisfied in the following variational sense for a.e. $t \in (0,T)$:
\begin{align}
\label{Equation-2-87}
&u_{\text{{\tiny $\delta$}}}'(u_0,w_0,\tilde{v}\,;t)-D_1 \Delta_N u_{\text{{\tiny $\delta$}}} (u_0,w_0,\tilde{v}\,;t)\\
\nonumber
&\hspace*{1cm}=\beta_{\text{{\tiny $\delta$}}}(u_{\text{{\tiny $\delta$}}} (u_0,w_0,\tilde{v}\,;t)) 
\left\{c_1 \tilde{v}(t)-c_2 w_{\text{{\tiny $\delta$}}} (w_0,\tilde{v}\,;t)\right\} \quad \text{in} \quad L^2(\Omega).
\end{align}
\item[(4)] \{(\ref{Equation-2-82}),\,(\ref{Equation-2-84})\} is satisfied in the following variational sense for a.e. $t \in (0,T)$:
\begin{align}
\label{Equation-2-88}
&w_{\text{{\tiny $\delta$}}}'(w_0,\tilde{v}\,;t)-D_2 \Delta_N w_{\text{{\tiny $\delta$}}}(w_0,\tilde{v}\,;t)+c_5 w_{\text{{\tiny $\delta$}}} (w_0,\tilde{v}\,;t)\\
\nonumber
&\hspace*{1cm}=c_6 -c_7 \tilde{v}(t) \beta_{\text{{\tiny $\delta$}}} (w_{\text{{\tiny $\delta$}}} (w_0,\tilde{v}\,;t)) \quad \text{in} \quad L^2(\Omega).
\end{align}
\item[(5)] $u_{\text{{\tiny $\delta$}}} (u_0,w_0,\tilde{v}\,;0)=u_0 \quad \text{in} \quad L^2(\Omega)$.\\[-0.2cm]
\item[(6)] $w_{\text{{\tiny $\delta$}}} (w_0,\tilde{v}\,;0)=w_0 \quad \text{in} \quad L^2(\Omega)$.
\end{enumerate}
\end{lemma}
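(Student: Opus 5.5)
The plan is to exploit the triangular structure of $\text{(AP3)}{}_{\text{{\tiny $\delta$}}}$. The $w$-equation \eqref{Equation-2-82} does not involve $u$, so I solve it first. Then I insert the resulting $w$ into \eqref{Equation-2-81} and solve for $u$. Both steps are semilinear parabolic problems with Lipschitz nonlinearities (by \eqref{Equation-2-4}) and bounded truncations (by \eqref{Equation-2-3}). The only delicate coefficient is $\tilde{v}$. It belongs to $\mathcal{V}(v_0)$, so $\tilde v\in L^\infty(0,T;H^1(\Omega))\cap C([0,T];L^2(\Omega))$, which for $N\le 3$ gives $\tilde v\in L^\infty(0,T;L^6(\Omega))$. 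It is not bounded in $L^\infty(Q_T)$.

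\textbf{Step 1 ($w$).} I view \eqref{Equation-2-88} as $w'+\partial\varphi_2(w)=f_2(t,w)$ in $L^2(\Omega)$. Here $\varphi_2(\eta)=\frac{D_2}{2}\|\nabla\eta\|^2+\frac{c_5}{2}\|\eta\|^2$ on $H^1(\Omega)$, with $\partial\varphi_2=-D_2\Delta_N+c_5$, and $f_2(t,\eta)=c_6-c_7\tilde v(t)\beta_\delta(\eta)$. Since $|\beta_\delta|\le\delta$, we get $\|f_2(t,\eta)\|_{L^2}\le c_6|\Omega|^{1/2}+c_7\delta\|\tilde v(t)\|_{L^2}$, which is uniformly bounded in $t$. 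This boundedness is in fact the purpose of the truncation.

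For existence I use a Banach fixed point argument on $C([0,T];L^2(\Omega))$. Given $\bar w$, I solve the linear problem with right-hand side $f_2(t,\bar w)\in L^\infty(0,T;L^2)$. The standard linear theory with $w_0\in H^1(\Omega)$ gives a solution in $W^{1,2}(0,T;L^2)\cap L^\infty(0,T;H^1)\cap L^2(0,T;H^2)$, using \eqref{Equation-1-8}.

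For the contraction, the difference of the right-hand sides is $c_7\tilde v(\beta_\delta(\bar w_1)-\beta_\delta(\bar w_2))$. I estimate it in $L^{6/5}$ or in the $(H^1)^*$ sense: by H\"older and \eqref{Equation-2-4},
\begin{equation*}
\|\tilde v(\beta_\delta(\bar w_1)-\beta_\delta(\bar w_2))\|_{L^{3/2}}\le\|\tilde v\|_{L^6}\|\bar w_1-\bar w_2\|_{L^2}.
\end{equation*}
I then test the difference equation with $w_1-w_2$, use $L^{3/2}\subset (H^1)^*$, and absorb the gradient term. This yields a Gronwall-type contraction on a short interval whose length depends only on $\sup_t\|\tilde v(t)\|_{H^1}$, so it iterates to all of $[0,T]$. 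Uniqueness follows from the same estimate.

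\textbf{Step 2 ($u$).} With $w$ fixed, I treat \eqref{Equation-2-87} in the same way: $\partial\varphi_1=-D_1\Delta_N$ (adding $u$ to both sides to get coercivity) and $f_1(t,\eta)=\beta_\delta(\eta)(c_1\tilde v-c_2 w)$. Again $\|f_1\|_{L^2}\le\delta(c_1\|\tilde v\|_{L^2}+c_2\|w\|_{L^2})$ is bounded in time. The Lipschitz difference is controlled by $(c_1\|\tilde v\|_{L^6}+c_2\|w\|_{L^6})\|u_1-u_2\|_{L^2}$ in $L^{3/2}$, and $w\in L^\infty(0,T;H^1)$ from Step 1. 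The same contraction/Gronwall argument gives a unique solution with regularity (1). Properties (3)--(6) are then just the equations and initial conditions of the constructed fixed points.

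The main obstacle is the lack of an $L^\infty$ bound on $\tilde v$. This prevents the naive $L^2$-Lipschitz estimate of the nonlinearity, and it is why I use the $H^1\subset L^6$ embedding ($N\le3$) together with the diffusion to absorb the resulting $\|\nabla(u_1-u_2)\|$ terms via Young's inequality. If that absorption proves awkward, my fallback is to run the fixed point in $C([0,T];L^2)\cap L^2(0,T;H^1)$ and apply Gagliardo--Nirenberg interpolation to $\|u_1-u_2\|_{L^3}$.
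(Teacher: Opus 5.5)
Your proposal is correct and has the same skeleton as the paper. You use the triangular structure: solve the $w$-equation with $\tilde v$ frozen, then the $u$-equation with that $w$ inserted. Each step is a fixed point built on linear parabolic maximal regularity and \eqref{Equation-1-8}, with $|\beta_\delta|\le\delta$ keeping the right-hand sides in $L^\infty(0,T;L^2(\Omega))$.

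The genuine difference is the fixed-point principle. The paper only sketches its argument. It invokes the Schauder theorem for existence, using the boundedness of $\beta_\delta$ to get a bounded invariant set and parabolic regularity for compactness. It then treats uniqueness separately by an energy estimate of the type in Lemma~\ref{Lemma-2-13}, where $\int_\Omega|\tilde v||W|^2\,dx$ is controlled via $H^1(\Omega)\hookrightarrow L^4(\Omega)$ and absorbed into the $H^1$-norm.

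You instead feed the Lipschitz bound \eqref{Equation-2-4} and $\tilde v\in L^\infty(0,T;H^1(\Omega))$ into essentially that same energy estimate. The bound on $\tilde v$ is legitimate because $\sup_t\Phi_\nu(\tilde v(t))<\infty$ with $\nu$ fixed. Your estimate uses the $L^6$--$L^2$--$L^3$ H\"older splitting rather than $L^4$, and it makes the solution map a contraction on $C([0,\tau];L^2(\Omega))$. You therefore get existence and uniqueness from one argument, with no compactness or invariant set to specify.

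The step length $\tau$ depends only on $\sup_t\|\tilde v(t)\|_{H^1(\Omega)}$, and in the $u$-step also on $\sup_t\|w(t)\|_{H^1(\Omega)}$; it does not depend on the data. So restarting from $w(\tau)$ or $u(\tau)$, which lie in $H^1(\Omega)$ by the regularity of the linear solution, legitimately continues the solution to $[0,T]$. Alternatively, the weighted norm $\sup_t e^{-\lambda t}\|\cdot\|_{L^2(\Omega)}$ with $\lambda$ large gives a global contraction at once.

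Schauder's route would survive a merely continuous truncation. Since $\beta_\delta$ is Lipschitz here anyway, your route is the more economical and self-contained one.
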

%%%%%
%%%%%
%%%%%
\begin{proof}
Since the truncation function $\beta_{\text{{\tiny $\delta$}}}$ is globally bounded and Lipschitz continuous on $\mathbb{R}$, the nonlinear term on the right-hand 
side of \eqref{Equation-2-82} is well-defined and belongs to $L^2(0,T\,;L^2(\Omega))$.
Therefore, by applying the Schauder fixed-point theorem combined with the standard maximal regularity theory for linear parabolic equations 
(cf. Ladyzhenskaya-Solonnikov-Ural'tseva \cite{LSU-1968}), the initial-boundary value problem \{\eqref{Equation-2-82},\eqref{Equation-2-84},\eqref{Equation-2-86}\}
admits a strong solution $w_{\text{{\tiny $\delta$}}}(w_0,\tilde{v})$ on $[0,T]$ satisfying (2), (4) and (6).
Since the proof of the uniqueness is based on a standard energy argument very similar to the one used in the proof of Lemma \ref{Lemma-2-13} below,
we omit the details here.\\
\indent
Once $w_{\text{{\tiny \(\delta \)}}} (w_0,\tilde{v})$ is determined, \eqref{Equation-2-81} becomes a semilinear parabolic equation for $u$.
Since the function $\beta_{\text{{\tiny \(\delta \)}}}$ is globally bounded, $\tilde{v} \in L^\infty (0,T\,;H^1(\Omega))$ and the obtained $w_{\text{{\tiny \(\delta \)}}}$ 
belongs to $L^\infty(0,T\,;H^1(\Omega)) \cap L^2(0,T\,;H^2(\Omega))$, the right-hand side of \eqref{Equation-2-81} also belongs to $L^2(0,T\,;L^2(\Omega))$.
Under the assumption $u_0 \in H^1(\Omega)$, using the standard theory of semilinear parabolic equations in \cite{LSU-1968} again along with the same uniqueness
argument, the initial-boundary value problem \{\eqref{Equation-2-81},\,\eqref{Equation-2-83},\,\eqref{Equation-2-85}\} admits a unique strong solution 
$u_{\text{{\tiny \(\delta \)}}} (u_0,w_0,\tilde{v})$ on $[0,T]$ satisfying (1), (3) and (5).
\end{proof}
%%%%%
%%%%%
%%%%%
\indent
By Lemma \ref{Lemma-2-10}, we can define a single-valued solution operator
\begin{equation*}
S_{\text{{\tiny $3,\!\delta$}}}:C([0,T]\,;L^2(\Omega)) \rightarrow \left(C([0,T]\,;L^2(\Omega))\right)^2
\end{equation*}
in the following way:
\begin{gather*}
D(S_{\text{{\tiny $3,\!\delta$}}})=\mathcal{V}(v_0),\\
R(S_{\text{{\tiny $3,\!\delta$}}}) \subset \left( W^{1,2}(0,T\,;L^2(\Omega)) \cap L^\infty (0,T\,;H^1(\Omega)) \cap L^2(0,T\,;H^2(\Omega)) \right)^2,\\
S_{\text{{\tiny $3,\!\delta$}}} (\tilde{v}):=(u_{\text{{\tiny $\delta$}}}(u_0,w_0,\tilde{v}),w_{\text{{\tiny $\delta$}}}(w_0,\tilde{v})),\quad \forall \tilde{v} \in \mathcal{V}(v_0).
\end{gather*}
\indent
In the next argument, we show Lemmas \ref{Lemma-2-11} and \ref{Lemma-2-12}, which give the uniform estimates of the solutions
$w_{\text{{\tiny $\delta$}}}(w_0,\tilde{v})$ and $u_{\text{{\tiny $\delta$}}}(u_0,w_0,\tilde{v})$ to $\text{(AP3)}{}_{\text{{\tiny $\delta$}}}$, respectively.
%%%%%
%%%%%
%%%%%
\begin{lemma}\label{Lemma-2-11}
There exists a constant $C_7>0$, which depends on the following values:
\begin{equation*}
\delta, \quad T, \quad \|w_0\|_{H^1(\Omega)}, \quad \int_0^T \|\tilde{v}(t)\|_{L^2(\Omega)}^2dt,
\end{equation*}
such that following boundedness holds:
\begin{equation*}
\int_0^T \|w_{\text{{\tiny $\delta$}}}' (w_0,\tilde{v}\,;t)\|_{L^2(\Omega)}^2
+\sup_{0 \le t \le T} \|w_{\text{{\tiny $\delta$}}}(w_0,\tilde{v}\,;t)\|_{H^1(\Omega)}^2
+\int_0^T \|w_{\text{{\tiny $\delta$}}} (w_0,\tilde{v}\,;t)\|_{H^2(\Omega)}^2dt \le C_7.
\end{equation*}
\end{lemma}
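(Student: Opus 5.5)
We use two standard energy estimates for \eqref{Equation-2-88} and then Neumann elliptic regularity \eqref{Equation-1-8}. Throughout we write $w:=w_{\text{{\tiny $\delta$}}}(w_0,\tilde{v})$. The key observation is that, by \eqref{Equation-2-3}, $|\beta_{\text{{\tiny $\delta$}}}(w)|\le \delta$ pointwise. Hence the nonlinear source is bounded by
\begin{equation*}
\|c_7\tilde{v}(t)\beta_{\text{{\tiny $\delta$}}}(w(t))\|_{L^2(\Omega)}\le c_7\delta\|\tilde{v}(t)\|_{L^2(\Omega)} .
\end{equation*}
Similarly, $\|c_6\cdot 1\|_{L^2(\Omega)}=c_6|\Omega|^{1/2}$. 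So the full right-hand side $f:=c_6-c_7\tilde{v}\beta_{\text{{\tiny $\delta$}}}(w)$ satisfies
\begin{equation*}
\int_0^T\|f(t)\|_{L^2(\Omega)}^2dt\le 2c_6^2|\Omega|T+2c_7^2\delta^2\int_0^T\|\tilde{v}(t)\|_{L^2(\Omega)}^2dt,
\end{equation*}
which involves only the quantities allowed in $C_7$. This is what makes the dependence on $\delta$ appear and avoids any need for bounds on $\tilde{v}$ in stronger norms.

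First, test \eqref{Equation-2-88} by $w$ in $L^2(\Omega)$. Young's inequality gives
\begin{equation*}
\frac{d}{dt}\|w\|_{L^2(\Omega)}^2+2D_2\|\nabla w\|_{\bm{L}^2(\Omega)}^2\le \|w\|_{L^2(\Omega)}^2+\|f\|_{L^2(\Omega)}^2 ,
\end{equation*}
and Gronwall yields a bound on $\max_t\|w(t)\|_{L^2(\Omega)}^2$ by $e^T(\|w_0\|_{L^2(\Omega)}^2+\int_0^T\|f\|^2_{L^2(\Omega)}dt)$. Second, test \eqref{Equation-2-88} by $w'$, which is admissible since $w\in W^{1,2}(0,T;L^2(\Omega))\cap L^2(0,T;H^2(\Omega))$ by Lemma~\ref{Lemma-2-10}. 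We obtain
\begin{equation*}
\|w'\|_{L^2(\Omega)}^2+\frac{1}{2}\frac{d}{dt}\left(D_2\|\nabla w\|_{\bm{L}^2(\Omega)}^2+c_5\|w\|_{L^2(\Omega)}^2\right)=(f,w')_{L^2(\Omega)}\le \frac12\|f\|^2_{L^2(\Omega)}+\frac12\|w'\|^2_{L^2(\Omega)} .
\end{equation*}
Integrating over $(0,t)$ bounds $\int_0^T\|w'\|^2_{L^2(\Omega)}dt$ and $\sup_t\|\nabla w(t)\|^2_{\bm{L}^2(\Omega)}$ by $\max\{D_2,c_5\}\|w_0\|^2_{H^1(\Omega)}+\int_0^T\|f\|^2_{L^2(\Omega)}dt$. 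Divided by $\min\{D_2,c_5\}$, and combined with the first step, this gives the $\sup_t\|w(t)\|_{H^1(\Omega)}^2$ bound. The chain-rule identity $(-\Delta_N w,w')=\frac12\frac{d}{dt}\|\nabla w\|^2$ is justified by the regularity in (2) of Lemma~\ref{Lemma-2-10}, as in the proof of Lemma~\ref{Lemma-2-8}.

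Third, as in the derivation of \eqref{Equation-2-56} and \eqref{Equation-2-76}, rewrite \eqref{Equation-2-88} as
\begin{equation*}
D_2\Delta_N w=w'+c_5w-f .
\end{equation*}
Then \eqref{Equation-1-8} gives
\begin{equation*}
D_2\|w(t)\|_{H^2(\Omega)}\le K_5\left(\|w'(t)\|_{L^2(\Omega)}+(c_5+D_2)\|w(t)\|_{L^2(\Omega)}+\|f(t)\|_{L^2(\Omega)}\right).
\end{equation*}
Squaring and integrating in time yields the $L^2(0,T;H^2(\Omega))$ bound from the previous two steps. Collecting constants produces $C_7$, which depends only on $\delta$, $T$, $\|w_0\|_{H^1(\Omega)}$ and $\int_0^T\|\tilde{v}\|^2_{L^2(\Omega)}dt$ (besides the fixed data $D_2,c_5,c_6,c_7,|\Omega|,K_5$).

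There is no genuine obstacle here. The only step needing care is the first observation: the coupling term $\tilde{v}\beta_{\text{{\tiny $\delta$}}}(w)$ must be estimated through the truncation bound \eqref{Equation-2-3}, not through any bound on $w$ in $L^\infty$. Doing so keeps the estimate linear and closes it without Gronwall-type feedback from $\tilde{v}$, at the price of the explicit factor $\delta^2$.
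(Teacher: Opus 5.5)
Your proposal is correct and follows essentially the same route as the paper: test \eqref{Equation-2-88} by $w$, then by $w'$, then apply the Neumann elliptic regularity \eqref{Equation-1-8}, with the coupling term controlled through $|\beta_\delta|\le\delta$ from \eqref{Equation-2-3}. The only cosmetic difference is in the first step, where the paper absorbs the source using the damping term $c_5\|w\|_{L^2(\Omega)}^2$ and so avoids your Gronwall factor $e^T$.
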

%%%%%
%%%%%
%%%%%
\begin{proof}[Proof.]
For simplicity, we set $\tilde{w}:=w_{\text{{\tiny $\delta$}}}(w_0,\tilde{v})$.
First of all, we take the inner product in both sides of \eqref{Equation-2-88} in $L^2(\Omega)$ by $\tilde{w}(t)$.
Then we obtain
\begin{align*}
&\,\frac{1}{2}\frac{d}{dt}\|\tilde{w}(t)\|_{L^2(\Omega)}^2+D_2 \|\nabla \tilde{w}(t)\|_{\bm{L}^2(\Omega)}^2+c_5 \|\tilde{w}(t)\|_{L^2(\Omega)}^2\\
&\,\hspace*{1cm}=c_6 \int_\Omega \tilde{w}(t)dx-c_7 \int_\Omega \tilde{v}(t) \beta_{\text{{\tiny $\delta$}}} (\tilde{w}(t)) \tilde{w}(t) dx,\quad \text{a.e.}~t \in (0,T),
\end{align*}
hence, the following inequality for a.e. $t \in (0,T)$:
\begin{gather}
\label{Equation-2-89}
\frac{d}{dt} \|\tilde{w}(t)\|_{L^2(\Omega)}^2+2D_2 \|\nabla \tilde{w}(t)\|_{\bm{L}^2(\Omega)}^2+c_5\|\tilde{w}(t)\|_{L^2(\Omega)}^2\\
\nonumber
\le \frac{2(\delta c_7)^2}{c_5} \|\tilde{v}(t)\|_{L^2(\Omega)}^2+\frac{2(c_6)^2|\Omega|}{c_5}.
\end{gather}
Applying the Gronwall lemma to \eqref{Equation-2-89}, we derive the following inequality for all $t \in [0,T]$:
\begin{align*}
&\|\tilde{w}(t)\|_{L^2(\Omega)}^2+2D_2 \int_0^t \|\nabla \tilde{w}(s)\|_{\bm{L}^2(\Omega)}^2 e^{-c_5(t-s)}ds\\
&\hspace*{0.5cm}\le \|w_0\|_{L^2(\Omega)}^2e^{-c_5t}+\int_0^t \left\{ \frac{2(\delta c_7)^2}{c_5} \|\tilde{v}(s)\|_{L^2(\Omega)}^2
+\frac{2(c_6)^2|\Omega|}{c_5}\right\} e^{-c_5(t-s)}ds,
\end{align*}
which implies 
\begin{equation}\label{Equation-2-90}
\max_{0 \le t \le T}\|\tilde{w}(t)\|_{L^2(\Omega)}^2 \le \|w_0\|_{L^2(\Omega)}^2
+\int_0^T \left\{ \frac{2(\delta c_7)^2}{c_5} \|\tilde{v}(t)\|_{L^2(\Omega)}^2+\frac{2(c_6)^2|\Omega|}{c_5}\right\}dt.
\end{equation}
\indent
Secondly, we take the inner product in both sides of \eqref{Equation-2-88} in $L^2(\Omega)$ by $w'(t)$.
Then we obtain the following equality for a.e. $t \in (0,T)$:
\begin{gather*}
\|\tilde{w}'(t)\|_{L^2(\Omega)}^2+\frac{d}{dt} \left(\frac{D_2}{2} \|\nabla \tilde{w}(t)\|_{\bm{L}^2(\Omega)}^2+\frac{c_5}{2} \|\tilde{w}(t)\|_{L^2(\Omega)}^2\right)\\
=c_6 \int_\Omega \tilde{w}'(t)dx-c_7 \int_\Omega \tilde{v}(t) \beta_{\text{{\tiny $\delta$}}} (\tilde{w}(t)) \tilde{w}'(t)dx.
\end{gather*}
From \eqref{Equation-2-3} we obtain 
\begin{align}
\label{Equation-2-91}
&\|\tilde{w}'(t)\|_{L^2(\Omega)}^2+\frac{d}{dt} \left(D_2 \|\nabla \tilde{w}(t)\|_{\bm{L}^2(\Omega)}^2+c_5 \|\tilde{w}(t)\|_{L^2(\Omega)}^2\right)\\
\nonumber
&\hspace*{1cm}\le 2\left\{(c_6)^2 |\Omega|+\left( \delta c_7\right)^2 \|\tilde{v}(t)\|_{L^2(\Omega)}^2\right\},\quad \text{a.e.}~t \in (0,T).
\end{align}
Integrating \eqref{Equation-2-91} on any interval $[0,t] \subset [0,T]$, we derive
\begin{gather*}
\int_0^t \|\tilde{w}'(s)\|_{L^2(\Omega)}^2ds+D_2\|\nabla \tilde{w}(t)\|_{\bm{L}^2(\Omega)}^2+c_5 \|\tilde{w}(t)\|_{L^2(\Omega)}^2\\
\le D_2\|\nabla w_0\|_{\bm{L}^2(\Omega)}^2+c_5\|w_0\|_{L^2(\Omega)}^2+2 \int_0^t \left\{(c_6)^2 |\Omega|
+\left( \delta c_7\right)^2 \|\tilde{v}(s)\|_{L^2(\Omega)}^2\right\}ds,
\end{gather*}
which implies
\begin{align}
\label{Equation-2-92}
\bullet~&\int_0^T \|\tilde{w}'(t)\|_{L^2(\Omega)}^2dt \le 2\max\{D_2,c_5\} \|w_0\|_{H^1(\Omega)}^2\\
\nonumber
&\,\hspace*{4cm}+2 \int_0^T \left\{(c_6)^2 |\Omega|+\left( \delta c_7\right)^2 \|\tilde{v}(t)\|_{L^2(\Omega)}^2\right\}dt,\\
\label{Equation-2-93}
\bullet~&\left(\sup_{0 \le t \le T} \|\tilde{w}(t)\|_{H^1(\Omega)}\right)^2 \le \frac{\max\{D_2,c_5\}}{\min\{D_2,c_5\}} \|w_0\|_{H^1(\Omega)}^2\\
\nonumber
&\,\hspace*{2cm}+\frac{2}{\min\{D_2,c_5\}} \int_0^T \left\{(c_6)^2 |\Omega|+\left( \delta c_7\right)^2 \|\tilde{v}(t)\|_{L^2(\Omega)}^2\right\}dt.
\end{align}
\indent
Thirdly, from \eqref{Equation-2-88}, \eqref{Equation-2-90} and \eqref{Equation-2-92} we derive the regularity $\Delta_N \tilde{w} \in L^2(0,T\,;L^2(\Omega))$.
Using the Neumann elliptic regularity \eqref{Equation-1-8} again, from \eqref{Equation-2-88} we obtain the following inequality for all $t \in (0,T)$: 
\begin{align}
\label{Equation-2-94}
&\,\int_0^T \|\tilde{w}(t)\|_{H^2(\Omega)}^2dt \le (K_5)^2 \int_0^T \left(\|\Delta_N \tilde{w}(t)\|_{L^2(\Omega)}+\|\tilde{w}(t)\|_{L^2(\Omega)} \right)^2dt\\
\nonumber
\le&\,\left(\frac{K_5}{D_2}\right)^2 \int_0^T \left\{ \|\tilde{w}'(t)\|_{L^2(\Omega)}+(D_2+c_5)\|\tilde{w}(t)\|_{L^2(\Omega)}
+\delta c_7 \|\tilde{v}(t)\|_{L^2(\Omega)}+c_6 |\Omega|^{\frac{1}{2}}\right\}^2dt\\
\nonumber
\le&\,\left(\frac{2K_5}{D_2}\right)^2 \int_0^T \|\tilde{w}'(t)\|_{L^2(\Omega)}^2dt+\left(\frac{2\delta c_7 K_5}{D_2} \right)^2\int_0^T \|\tilde{v}(t)\|_{L^2(\Omega)}^2
+T\left(\frac{2c_6 K_5 |\Omega|^{\frac{1}{2}}}{D_2}\right)^2\\
\nonumber
&\hspace*{2cm}+T\left\{2K_5 \left(1+\frac{c_5}{D_2}\right) \left( \sup_{0 \le t \le T} \|\tilde{w}(t)\|_{L^2(\Omega)}\right)\right\}^2.
\end{align}
\indent
Finally, combining all the estimates \eqref{Equation-2-90} and \eqref{Equation-2-92}--\eqref{Equation-2-94}, we derive the required uniform estimate.
\end{proof}
%%%%%
%%%%%
%%%%%
\begin{lemma}\label{Lemma-2-12}
There exists a constant $C_8>0$, which depends on the following values:
\begin{equation*}
\delta, \quad T, \quad \|u_0\|_{H^1(\Omega)}, \quad \|w_0\|_{H^1(\Omega)}, \quad \int_0^T \|\tilde{v}(t)\|_{L^2(\Omega)}^2dt,
\end{equation*}
such that following boundedness holds:
\begin{gather*}
\int_0^T \|(u_{\text{{\tiny $\delta$}}} (u_0,w_0,\tilde{v}))'(t)\|_{L^2(\Omega)}^2+\sup_{0 \le t \le T} \|u_{\text{{\tiny $\delta$}}} (u_0,w_0,\tilde{v}\,;t)\|_{H^1(\Omega)}^2\\
+\int_0^T \|u_{\text{{\tiny $\delta$}}} (u_0,w_0,\tilde{v}\,;t)\|_{H^2(\Omega)}^2dt \le C_8.
\end{gather*}
\end{lemma}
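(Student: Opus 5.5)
We follow the scheme of the proof of Lemma \ref{Lemma-2-11}, writing $\tilde{u}:=u_{\text{{\tiny $\delta$}}}(u_0,w_0,\tilde{v})$ and $\tilde{w}:=w_{\text{{\tiny $\delta$}}}(w_0,\tilde{v})$. The right-hand side of \eqref{Equation-2-87} is
\begin{equation*}
f(t):=\beta_{\text{{\tiny $\delta$}}}(\tilde{u}(t))\{c_1\tilde{v}(t)-c_2\tilde{w}(t)\}.
\end{equation*}
By \eqref{Equation-2-3} we have $|\beta_{\text{{\tiny $\delta$}}}(\tilde{u})|\le\delta$ pointwise, so
\begin{equation*}
\|f(t)\|_{L^2(\Omega)}\le \delta\left(c_1\|\tilde{v}(t)\|_{L^2(\Omega)}+c_2\|\tilde{w}(t)\|_{L^2(\Omega)}\right).
\end{equation*}
Lemma \ref{Lemma-2-11} (in fact \eqref{Equation-2-90} alone) bounds $\max_t\|\tilde{w}(t)\|_{L^2(\Omega)}$ by a constant depending on $\delta$, $T$, $\|w_0\|_{H^1(\Omega)}$ and $\int_0^T\|\tilde{v}\|_{L^2(\Omega)}^2dt$. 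Hence
\begin{equation*}
\int_0^T\|f(t)\|_{L^2(\Omega)}^2dt\le 2\delta^2\left\{(c_1)^2\int_0^T\|\tilde{v}(t)\|_{L^2(\Omega)}^2dt+(c_2)^2T\max_{0\le t\le T}\|\tilde{w}(t)\|_{L^2(\Omega)}^2\right\}=:F,
\end{equation*}
which depends only on the allowed quantities. We never need the $H^1$-regularity of $\tilde{v}$ here. This is the only point where the nonlinearity enters, and the truncation $\beta_{\text{{\tiny $\delta$}}}$ makes it trivial. The only delicate issue is to avoid using any bound on $\tilde{u}$ itself in estimating $f$.

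We then test \eqref{Equation-2-87} against $\tilde{u}$ in $L^2(\Omega)$. Young's inequality gives
\begin{equation*}
\frac{d}{dt}\|\tilde{u}\|_{L^2(\Omega)}^2+2D_1\|\nabla\tilde{u}\|_{\bm{L}^2(\Omega)}^2\le\|\tilde{u}\|_{L^2(\Omega)}^2+\|f\|_{L^2(\Omega)}^2,
\end{equation*}
and Gronwall yields $\max_t\|\tilde{u}(t)\|_{L^2(\Omega)}^2\le e^T(\|u_0\|_{L^2(\Omega)}^2+F)$.

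Next we test against $\tilde{u}'$, using Green's formula for $\Delta_N$ as in \eqref{Equation-2-91}:
\begin{equation*}
\|\tilde{u}'\|_{L^2(\Omega)}^2+D_1\frac{d}{dt}\|\nabla\tilde{u}\|_{\bm{L}^2(\Omega)}^2\le\|f\|_{L^2(\Omega)}^2.
\end{equation*}
Integrating over $[0,t]$ bounds $\int_0^T\|\tilde{u}'\|_{L^2(\Omega)}^2dt$ and $\sup_t\|\nabla\tilde{u}\|_{\bm{L}^2(\Omega)}^2$ by $D_1\|\nabla u_0\|_{\bm{L}^2(\Omega)}^2+F$, up to factors depending on $D_1$. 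Combined with the $L^2$ bound, this controls $\sup_t\|\tilde{u}(t)\|_{H^1(\Omega)}^2$.

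Finally, \eqref{Equation-2-87} gives $D_1\Delta_N\tilde{u}=\tilde{u}'-f$. The elliptic estimate \eqref{Equation-1-8} then yields
\begin{equation*}
\int_0^T\|\tilde{u}\|_{H^2(\Omega)}^2dt\le 3\left(\frac{K_5}{D_1}\right)^2\int_0^T\left(\|\tilde{u}'\|_{L^2(\Omega)}^2+\|f\|_{L^2(\Omega)}^2\right)dt+3T(K_5)^2\max_t\|\tilde{u}(t)\|_{L^2(\Omega)}^2,
\end{equation*}
exactly as in \eqref{Equation-2-94}. Collecting the three estimates defines $C_8$.
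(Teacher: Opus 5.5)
Your proposal is correct and follows essentially the same route as the paper: test \eqref{Equation-2-87} with $\tilde{u}$ and apply Gronwall, test it with $\tilde{u}'$ and integrate, then use the Neumann elliptic estimate \eqref{Equation-1-8}. The only difference is cosmetic. You bound $\int_0^T\|f(t)\|_{L^2(\Omega)}^2dt$ once up front, using only \eqref{Equation-2-90} for $\tilde{w}$. The paper instead estimates the two reaction terms separately inside each energy inequality and invokes the constant $C_7$ of Lemma \ref{Lemma-2-11}.
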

%%%%%
%%%%%
%%%%%
\begin{proof}[Proof.]
Throughout this proof, for simplicity we set $(\tilde{u},\tilde{w}):=(u_{\text{{\tiny $\delta$}}} (u_0,w_0,\tilde{v}),w_{\text{{\tiny $\delta$}}} (w_0,\tilde{v}))$.\\
\indent
Firstly, we take the inner product in both sides of \eqref{Equation-2-87} in $L^2(\Omega)$ by $\tilde{u}(t)$.
Then, we obtain the following equality for a.e. $t \in (0,T)$:
\begin{align*}
&\,\frac{1}{2}\frac{d}{dt}\|\tilde{u}(t)\|_{L^2(\Omega)}^2+D_1 \|\nabla \tilde{u}(t)\|_{\bm{L}^2(\Omega)}^2\\
&\,\hspace*{1cm}=c_1 \int_\Omega \tilde{v}(t) \beta_{\text{{\tiny $\delta$}}} (\tilde{u}(t)) \tilde{u}(t)dx
-c_2 \int_\Omega \tilde{w}(t)\beta_{\text{{\tiny $\delta$}}} (\tilde{u}(t)) \tilde{u}(t)dx,
\end{align*}
hence, the following inequality for a.e. $t \in (0,T)$:
\begin{align}
\label{Equation-2-95}
&\frac{d}{dt} \|\tilde{u}(t)\|_{L^2(\Omega)}^2+2D_1 \|\nabla \tilde{u}(t)\|_{\bm{L}^2(\Omega)}^2\\
\nonumber
&\hspace*{1cm}\le \|\tilde{u}(t)\|_{L^2(\Omega)}^2+2\delta^2 \left\{ (c_1)^2+(c_2)^2 \right\} \left( \|\tilde{v}(t)\|_{L^2(\Omega)}^2+\|\tilde{w}(t)\|_{L^2(\Omega)}^2 \right).
\end{align}
Applying the Gronwall lemma to \eqref{Equation-2-95} and using Lemma \ref{Lemma-2-11}, we obtain
\begin{align*}
&\,\|\tilde{u}(t)\|_{L^2(\Omega)}^2+2D_1 \int_0^t \|\nabla \tilde{u}(s)\|_{\bm{L}^2(\Omega)}^2 ds\\
\le&\,\|u_0\|_{L^2(\Omega)}^2e^t+2\delta^2 \left\{(c_1)^2+(c_2)^2\right\} \int_0^t \left( \|\tilde{v}(s)\|_{L^2(\Omega)}^2+\|\tilde{w}(s)\|_{L^2(\Omega)}^2 \right)e^{t-s}ds\\
\le&\,e^T \left[ \|u_0\|_{L^2(\Omega)}^2+2\delta^2 \left\{(c_1)^2+(c_2)^2\right\} 
\int_0^T \left( \|\tilde{v}(t)\|_{L^2(\Omega)}^2+C_7\right)dt\right], \quad \forall t \in [0,T],
\end{align*}
which implies 
\begin{align}
\label{Equation-2-96}
&\,\max_{0 \le t \le T}\|\tilde{u}(t)\|_{L^2(\Omega)}^2+2D_1 \int_0^T \|\nabla \tilde{u}(t)\|_{\bm{L}^2(\Omega)}^2dt\\
\nonumber
\le&\,2e^T \left[ \|u_0\|_{L^2(\Omega)}^2+2\delta^2 \left\{(c_1)^2+(c_2)^2\right\} \int_0^T \left( \|\tilde{v}(t)\|_{L^2(\Omega)}^2+C_7\right)dt\right].
\end{align}
\indent
Secondly, we take the inner product in both sides of \eqref{Equation-2-87} in $L^2(\Omega)$ by $\tilde{u}'(t)$.
Then, we obtain the following equality for a.e. $t \in (0,T)$: 
\begin{align*}
&\|\tilde{u}'(t)\|_{L^2(\Omega)}^2+\frac{d}{dt} \left(\frac{D_1}{2} \|\nabla \tilde{u}(t)\|_{\bm{L}^2(\Omega)}^2\right)\\
&\hspace*{1cm}=c_1 \int_\Omega \tilde{v}(t) \beta_{\text{{\tiny $\delta$}}}(\tilde{u}(t)) \tilde{u}'(t)dx
-c_2 \int_\Omega \tilde{w}(t) \beta_{\text{{\tiny $\delta$}}} (\tilde{u}(t)) \tilde{u}'(t)dx,
\end{align*}
hence, the following inequality for a.e. $t \in (0,T)$:
\begin{align}
\label{Equation-2-97}
&\|\tilde{u}'(t)\|_{L^2(\Omega)}^2+\frac{d}{dt} \left(D_1 \|\nabla \tilde{u}(t)\|_{\bm{L}^2(\Omega)}^2\right)\\
\nonumber
&\hspace*{1cm}\le 2\delta^2 \left\{ (c_1)^2+(c_2)^2 \right\} \left( \|\tilde{v}(t)\|_{L^2(\Omega)}^2+\|\tilde{w}(t)\|_{L^2(\Omega)}^2 \right).
\end{align}
Integrating \eqref{Equation-2-97} on any interval $[0,t] \subset [0,T]$ and using \eqref{Equation-2-96}, we obtain 
the following inequality for all $t \in [0,T]$:
\begin{align*}
&\int_0^t \|\tilde{u}'(s)\|_{L^2(\Omega)}^2ds+D_1 \|\nabla \tilde{u}(t)\|_{\bm{L}^2(\Omega)}^2\\
&\hspace*{1cm}\le D_1 \|\nabla u_0\|_{\bm{L}^2(\Omega)}^2+2 \delta^2 \left\{ (c_1)^2+(c_2)^2 \right\} 
\int_0^t \left( \|\tilde{v}(s)\|_{L^2(\Omega)}^2+\|\tilde{w}(s)\|_{L^2(\Omega)}^2 \right)ds,
\end{align*}
which implies
\begin{align}
\label{Equation-2-98}
&\int_0^T \|\tilde{u}'(t)\|_{L^2(\Omega)}^2dt+D_1 \left( \sup_{0 \le t \le T} \|\nabla \tilde{u}(t)\|_{\bm{L}^2(\Omega)} \right)^2\\
\nonumber
&\hspace*{1cm}\le 2\left[ D_1 \|u_0\|_{H^1(\Omega)}^2+2 \delta^2 \left\{ (c_1)^2+(c_2)^2 \right\} \int_0^T \left( \|\tilde{v}(t)\|_{L^2(\Omega)}^2+C_7\right)dt \right].
\end{align}
\indent
Thirdly, from \eqref{Equation-2-87}, \eqref{Equation-2-90} and \eqref{Equation-2-98} we have $\Delta_N \tilde{u} \in L^2(0,T\,;L^2(\Omega))$.
Using the Neumann elliptic regularity \eqref{Equation-1-8} again and repeating a similar argument of the derivation of \eqref{Equation-2-94},
we derive
\begin{align}
\label{Equation-2-99}
&\,\int_0^T \|\tilde{u}(t)\|_{H^2(\Omega)}^2dt \le (K_5)^2 \int_0^T \left( \|\Delta_N \tilde{u}(t)\|_{L^2(\Omega)}+\|\tilde{u}(t)\|_{L^2(\Omega)}\right)^2dt\\
\nonumber
\le&\,\left(\frac{K_5}{D_1}\right)^2 \int_0^T \left(\|\tilde{u}'(t)\|_{L^2(\Omega)}+\delta c_1 \|\tilde{v}(t)\|_{L^2(\Omega)}+\delta c_2 \|\tilde{w}(t)\|_{L^2(\Omega)}+
D_1\|\tilde{u}(t)\|_{L^2(\Omega)} \right)^2dt\\
\nonumber
\le&\,4\left(\frac{K_5}{D_1}\right)^2 \int_0^T \|\tilde{u}'(t)\|_{L^2(\Omega)}^2dt+4\left(\frac{\delta c_1 K_5}{D_1}\right)^2 \int_0^T \|\tilde{v}(t)\|_{L^2(\Omega)}^2dt\\
\nonumber
&\hspace*{1cm}+4T\left(\frac{\delta c_2 K_5}{D_1}\right)^2 \left( \max_{0\,\le\,t\,\le\,T} \|\tilde{w}(t)\|_{L^2(\Omega)}\right)^2+
4T(K_5)^2 \left( \max_{0 \le t \le T} \|\tilde{u}(t)\|_{L^2(\Omega)}\right)^2.
\end{align}
\indent
Finally, combining all the estimates \eqref{Equation-2-96}, \eqref{Equation-2-98} and \eqref{Equation-2-99} with Lemma \ref{Lemma-2-11}, 
we obtain the required estimate in this lemma.
\end{proof}
%%%%%
%%%%%
%%%%%
At the end of this subsection, we show Lemma \ref{Lemma-2-13}, which establishes the continuity of the solution operator $S_{\text{{\tiny $3,\!\delta$}}}$.
Just as with $S_{\text{{\tiny $1,\!\delta,\!\nu$}}}$ and $S_{\text{{\tiny $2,\!\varepsilon$}}}$, the operator $S_{\text{{\tiny $3,\!\delta$}}}$ plays a key role to
apply the Schauder fixed point theorem in Subsection \ref{Subsection-3-1}.
%%%%%
%%%%%
%%%%%
\begin{lemma}\label{Lemma-2-13}
We have the following convergence as $m \to \infty$:
\begin{equation}\label{Equation-2-100}
S_{\text{{\tiny $3,\!\delta$}}}(\tilde{v}_m) \longrightarrow S_{\text{{\tiny $3,\!\delta$}}}(\tilde{v}) \quad \text{in} \quad \left(C([0,T]\,;L^2(\Omega))\right)^2 
\quad \text{and} \quad \left(L^2(0,T\,;H^1(\Omega))\right)^2
\end{equation}
whenever a sequence $\{\tilde{v}_m\}_{m \in \mathbb{N}} \subset \mathcal{V}(v_0)$ and a function $\tilde{v} \in \mathcal{V}(v_0)$ satisfy 
\begin{equation}\label{Equation-2-101}
\tilde{v}_m \longrightarrow \tilde{v} \quad \text{in} \quad C([0,T]\,;L^2(\Omega)) \quad \text{as} \quad m \to \infty.
\end{equation}
\end{lemma}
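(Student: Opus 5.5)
We argue by energy estimates on the differences. Set $(\tilde{u}_m,\tilde{w}_m):=S_{\text{{\tiny $3,\!\delta$}}}(\tilde{v}_m)$ and $(\tilde{u},\tilde{w}):=S_{\text{{\tiny $3,\!\delta$}}}(\tilde{v})$. Since \eqref{Equation-2-101} gives $\sup_m\int_0^T\|\tilde{v}_m(t)\|_{L^2(\Omega)}^2dt<\infty$, Lemmas \ref{Lemma-2-11} and \ref{Lemma-2-12} show that $\{(\tilde{u}_m,\tilde{w}_m)\}$ is bounded in $W^{1,2}(0,T;L^2(\Omega))\cap L^\infty(0,T;H^1(\Omega))\cap L^2(0,T;H^2(\Omega))$, uniformly in $m$. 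In particular, by \eqref{Equation-1-7}, $\int_0^T\|\tilde{w}(t)\|_{L^\infty(\Omega)}^2dt$ and $\int_0^T\|\tilde{u}(t)\|_{L^\infty(\Omega)}^2dt$ are finite. We never need $L^\infty$ bounds on $\tilde{v}_m$, because every product containing $\tilde{v}_m$ also contains a factor $\beta_{\text{{\tiny $\delta$}}}(\cdot)$, which is bounded by $\delta$ by \eqref{Equation-2-3}.

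\textbf{Step 1 ($w$-difference).} Put $W_m:=\tilde{w}_m-\tilde{w}$ and subtract the two copies of \eqref{Equation-2-88}. The source term splits as
$c_7(\tilde{v}_m-\tilde{v})\beta_{\text{{\tiny $\delta$}}}(\tilde{w}_m)+c_7\tilde{v}\{\beta_{\text{{\tiny $\delta$}}}(\tilde{w}_m)-\beta_{\text{{\tiny $\delta$}}}(\tilde{w})\}$.
Test with $W_m$. The first piece is bounded by $\delta c_7\|\tilde{v}_m-\tilde{v}\|_{L^2(\Omega)}\|W_m\|_{L^2(\Omega)}$. The second piece is the delicate one, since $\tilde{v}$ is only in $L^\infty(0,T;H^1(\Omega))$ and $N\le 3$. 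Using \eqref{Equation-2-4}, Hölder and $H^1(\Omega)\hookrightarrow L^4(\Omega)$, it is bounded by $c_7\|\tilde{v}\|_{L^2(\Omega)}\|W_m\|_{L^4(\Omega)}^2\le C\|\tilde{v}\|_{H^1(\Omega)}\|W_m\|_{L^2(\Omega)}^{1/2}\|W_m\|_{H^1(\Omega)}^{3/2}$ via Gagliardo--Nirenberg in 3D. Young's inequality then absorbs it into $D_2\|\nabla W_m\|^2_{\bm{L}^2(\Omega)}$, leaving $C(\sup_t\|\tilde{v}\|_{H^1(\Omega)})^4\|W_m\|^2_{L^2(\Omega)}$; here $\sup_t\Phi_\nu(\tilde{v}(t))<\infty$ because $\tilde{v}\in\mathcal{V}(v_0)$. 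Gronwall gives
\begin{equation*}
\max_{t}\|W_m(t)\|_{L^2(\Omega)}^2+\int_0^T\|\nabla W_m\|_{\bm{L}^2(\Omega)}^2dt\le C\int_0^T\|\tilde{v}_m-\tilde{v}\|_{L^2(\Omega)}^2dt\to0 .
\end{equation*}
The same computation proves the uniqueness claimed in Lemma \ref{Lemma-2-10}.

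\textbf{Step 2 ($u$-difference).} Put $U_m:=\tilde{u}_m-\tilde{u}$ and subtract the two copies of \eqref{Equation-2-87}. Write the right-hand side difference as
$\beta_{\text{{\tiny $\delta$}}}(\tilde{u}_m)\{c_1(\tilde{v}_m-\tilde{v})-c_2W_m\}+\{\beta_{\text{{\tiny $\delta$}}}(\tilde{u}_m)-\beta_{\text{{\tiny $\delta$}}}(\tilde{u})\}(c_1\tilde{v}-c_2\tilde{w})$.
Test with $U_m$. The first group is bounded by $\delta(c_1\|\tilde{v}_m-\tilde{v}\|_{L^2(\Omega)}+c_2\|W_m\|_{L^2(\Omega)})\|U_m\|_{L^2(\Omega)}$. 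In the second group, the $\tilde{w}$-part is bounded by $c_2\|\tilde{w}\|_{L^\infty(\Omega)}\|U_m\|^2_{L^2(\Omega)}$, which is admissible in Gronwall since $\|\tilde{w}\|_{L^\infty(\Omega)}\in L^2(0,T)$. The $\tilde{v}$-part is treated by the same $L^4$/Gagliardo--Nirenberg absorption as in Step 1. Gronwall together with Step 1 yields $U_m\to0$ in $C([0,T];L^2(\Omega))\cap L^2(0,T;H^1(\Omega))$.

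The main obstacle is the term $\tilde{v}\{\beta_{\text{{\tiny $\delta$}}}(\tilde{w}_m)-\beta_{\text{{\tiny $\delta$}}}(\tilde{w})\}$ and its $u$-analogue: $\tilde{v}$ has no $L^\infty$ bound, so these terms cannot be treated as zeroth-order perturbations. The fix is the interpolation absorption above, which uses $N\le3$ and the uniform bound $\sup_t\|\tilde{v}(t)\|_{H^1(\Omega)}<\infty$ coming from $\tilde{v}\in\mathcal{V}(v_0)$. If the $3/2$ power causes trouble in that bookkeeping, there is a fallback: compactness from the uniform bounds gives convergence of a subsequence in $C([0,T];L^2(\Omega))\cap L^2(0,T;H^1(\Omega))$ by Aubin--Lions and Ascoli--Arzel\`a; the limit is identified by passing to the limit in the equations as in Lemma \ref{Lemma-2-9}; and uniqueness then upgrades this to convergence of the whole sequence.
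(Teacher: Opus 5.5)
Your proof is correct and follows the paper's own argument: subtract the two copies of the equations, test with $\tilde w_m-\tilde w$ and then with $\tilde u_m-\tilde u$, control the cross terms containing $\tilde v$ through $H^1(\Omega)\hookrightarrow L^4(\Omega)$ and Young's inequality using $\sup_t\|\tilde v(t)\|_{H^1(\Omega)}<\infty$, and close with Gronwall. The differences are only in bookkeeping. You bound $\int_\Omega|\tilde v||\tilde w_m-\tilde w|^2dx$ by $\|\tilde v\|_{L^2(\Omega)}\|\tilde w_m-\tilde w\|_{L^4(\Omega)}^2$ plus Gagliardo--Nirenberg, where the paper uses $\|\tilde v\|_{L^4(\Omega)}\|\tilde w_m-\tilde w\|_{L^4(\Omega)}\|\tilde w_m-\tilde w\|_{L^2(\Omega)}$; and you handle the $\tilde w$-term in the $u$-equation with $\|\tilde w\|_{L^\infty(\Omega)}\in L^2(0,T)$, where the paper uses the $L^4$ bound with the factor $\|\tilde w\|_{H^1(\Omega)}^2$.
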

%%%%%
%%%%%
%%%%%
\begin{proof}[Proof.]
For simplicity we set $(\tilde{U}_m,\tilde{W}_m,\tilde{V}_m):=(\tilde{u}_m-\tilde{u},\tilde{w}_m-\tilde{w},\tilde{v}_m-\tilde{v})$, where 
$(\tilde{u}_m,\tilde{w}_m):=S_{\text{{\tiny $3,\!\delta$}}}(\tilde{v}_m)$ for all $m \in \mathbb{N}$ and $(\tilde{u},\tilde{w}):=S_{\text{{\tiny $3,\!\delta$}}}(\tilde{v})$.
In this proof we use \eqref{Equation-2-3} and \eqref{Equation-2-4}, repeatedly.\\ 
\indent
Firstly, we consider the sequence $\{\tilde{W}_m\}_{m \in \mathbb{N}}$.
From \eqref{Equation-2-85} and \eqref{Equation-2-88} we have
\begin{gather}
\label{Equation-2-102}
\tilde{W}_m'(t)-D_2 \Delta_N \tilde{W}_m(t)+c_5 \tilde{W}_m(t)=-c_7 \tilde{v}_m(t) \beta_{\text{{\tiny $\delta$}}} (\tilde{w}_m(t))
+c_7 \tilde{v}(t) \beta_{\text{{\tiny $\delta$}}} (\tilde{w}(t))\\
\nonumber
\text{in} \quad L^2(\Omega),\quad \text{a.e.}~t \in (0,T),\\
\label{Equation-2-103}
\tilde{W}_m(0)=0 \quad \text{in} \quad L^2(\Omega).
\end{gather}
We take the inner product in both sides of \eqref{Equation-2-102} by $\tilde{W}_m(t)$.
From \eqref{Equation-2-4} and \eqref{Equation-2-3} we obtain the following inequality for a.e. $t \in (0,T)$:
\begin{gather}
\label{Equation-2-104}
\frac{1}{2} \frac{d}{dt} \|\tilde{W}_m(t)\|_{L^2(\Omega)}^2+\min\{D_2,\,c_5\} \|\tilde{W}_m(t)\|_{H^1(\Omega)}^2\\
\nonumber
\le \delta c_7 \|\tilde{V}_m(t)\|_{L^2(\Omega)} \|\tilde{W}_m(t)\|_{L^2(\Omega)}+c_7 \int_\Omega |\tilde{v}(t)| |\tilde{W}_m(t)|^2dx.
\end{gather}
Since the embedding $ H^1(\Omega)\hookrightarrow L^4(\Omega) $ is continuous, there exists a constant $K_6>0$ such that
\begin{equation}\label{Equation-2-105}
\|\xi\|_{L^4(\Omega)} \le K_6 \|\xi\|_{H^1(\Omega)},\quad \forall \xi \in H^1(\Omega).
\end{equation}
Using the generalized H\"{o}lder inequality, we see from \eqref{Equation-2-105} that the second term in the right-hand side of \eqref{Equation-2-104} 
is estimated as follows:
\begin{align}
\label{Equation-2-106}
&\,c_7\int_\Omega |\tilde{v}(t)| |\tilde{W}_m(t)|^2dx\\
\nonumber
\le&\,c_7\|\tilde{v}(t)\|_{L^4(\Omega)} \|\tilde{W}_m(t)\|_{L^4(\Omega)} \|\tilde{W}_m(t)\|_{L^2(\Omega)}\\
\nonumber
\le&\,c_7(K_6)^2 \|\tilde{v}(t)\|_{H^1(\Omega)} \|\tilde{W}_m(t)\|_{H^1(\Omega)} \|\tilde{W}_m(t)\|_{L^2(\Omega)}\\
\nonumber
\le&\,\frac{\min\{D_2,\,c_5\}}{2} \|\tilde{W}_m(t)\|_{H^1(\Omega)}^2
+\frac{(c_7)^2 (K_6)^4}{2\min\{D_2,\,c_5\}} \|\tilde{v}(t)\|_{H^1(\Omega)}^2 \|\tilde{W}_m(t)\|_{L^2(\Omega)}^2,
\end{align}
For the first term in the right-hand side of \eqref{Equation-2-104} we obtain 
\begin{equation}\label{Equation-2-107}
\delta c_7 \|\tilde{V}_m(t)\|_{L^2(\Omega)} \|\tilde{W}_m(t)\|_{L^2(\Omega)} \le \frac{\delta c_7}{2} \|\tilde{V}_m(t)\|_{L^2(\Omega)}^2
+\frac{\delta c_7}{2} \|\tilde{W}_m(t)\|_{L^2(\Omega)}^2.
\end{equation}
Substituting \eqref{Equation-2-106} and \eqref{Equation-2-107} into \eqref{Equation-2-104} and multiplying its result by $2$, we obtain the following inequality for a.e. $t \in (0,T)$:
\begin{gather}
\label{Equation-2-108}
\frac{d}{dt} \|\tilde{W}_m(t)\|_{L^2(\Omega)}^2+\min\{D_2,c_5\} \|\tilde{W}_m(t)\|_{H^1(\Omega)}^2\\
\nonumber
\le \left[ \frac{2 (c_7)^2 (K_6)^4}{\nu \min\{D_2,\,c_5\}} \Phi_\nu (\tilde{v}(t))+\delta c_7 \right] \| \tilde{W}_m(t)\|_{L^2(\Omega)}^2
+\delta c_7 \|\tilde{V}_m(t)\|_{L^2(\Omega)}^2.
\end{gather}
Applying the Gronwall lemma to \eqref{Equation-2-108} and using \eqref{Equation-2-103}, we obtain the following inequality for all $t \in [0,T]$:
\begin{gather*}
\|\tilde{W}_m(t)\|_{L^2(\Omega)}^2+\min\{D_2,c_5\} \int_0^t \|\tilde{W}_m(s)\|_{H^1(\Omega)}^2ds\\
\le \delta c_7 \int_0^t \|\tilde{V}_m(s)\|_{L^2(\Omega)}^2 \exp \left( \frac{(c_7)^2 (K_6)^4}{\min\{D_2,\,c_5\}} \int_s^t \Phi_\nu (\tilde{v}(\tau))d\tau
+\delta c_7 (t-s) \right)ds,
\end{gather*}
hence,
\begin{align}
\label{Equation-2-109}
&\,\max_{0 \le t \le T} \|\tilde{W}_m(t)\|_{L^2(\Omega)}^2+\int_0^T \|\tilde{W}_m(t)\|_{H^1(\Omega)}^2dt\\
\nonumber
\le&\,\delta c_7 T \left[ 1+\frac{1}{\min\{D_2,c_5\}} \right] \left( \max_{0\,\le\,t\,\le\,T} \|\tilde{V}(t)\|_{L^2(\Omega)} \right)^2\\
\nonumber
&\,\hspace*{1cm}\times \exp \left( \frac{(c_7)^2 (K_6)^4 T}{\min\{D_2,\,c_5\}} \sup_{0 \le t \le T} \Phi_\nu (\tilde{v}(t))+\delta c_7 T \right).
\end{align}
Combining \eqref{Equation-2-101} with \eqref{Equation-2-109}, we obtain the following convergence as $m \to \infty$:
\begin{equation}\label{Equation-2-110}
\tilde{w}_m \longrightarrow \tilde{w} \quad \text{in} \quad C([0,T]\,;L^2(\Omega)) \quad \text{and} \quad L^2(0,T\,;H^1(\Omega)).
\end{equation}
\indent
Secondly, we consider the sequence $\{\tilde{U}_m\}_{m \in \mathbb{N}}$.
From \eqref{Equation-2-85} and \eqref{Equation-2-87} we have 
\begin{gather}
\label{Equation-2-111}
\tilde{U}_m'(t)-D_1 \Delta_N \tilde{U}_m(t)=c_1 \beta_{\text{{\tiny $\delta$}}} (\tilde{u}_m(t)) \tilde{V}_m(t)+
c_1 \tilde{v}(t) \left\{ \beta_{\text{{\tiny $\delta$}}} (\tilde{u}_m(t))-\beta_{\text{{\tiny $\delta$}}} (\tilde{u}(t))\right\}\\
\nonumber
-c_2 \beta_{\text{{\tiny $\delta$}}} (\tilde{u}_m(t)) \tilde{W}_m(t)-c_2 \tilde{w}(t) \left\{ \beta_{\text{{\tiny $\delta$}}} (\tilde{u}_m(t))-
\beta_{\text{{\tiny $\delta$}}} (\tilde{u}(t))\right\}\\
\nonumber
\text{in} \quad L^2(\Omega),\quad \text{a.e.}~t \in (0,T),\\
\label{Equation-2-112}
\tilde{U}_m(0)=0 \quad \text{in} \quad L^2(\Omega).
\end{gather}
We take the inner product in both sides of \eqref{Equation-2-111} by $\tilde{U}_m(t)$.
From \eqref{Equation-2-3} and \eqref{Equation-2-4} we obtain
\begin{align}
\label{Equation-2-113}
\bullet~&c_1 \int_\Omega \beta_{\text{{\tiny $\delta$}}} (\tilde{u}_m(t)) |\tilde{V}_m(t)| |\tilde{U}_m(t)| dx \le \frac{\delta c_1}{2} \|\tilde{V}_m(t)\|_{L^2(\Omega)}^2
+\frac{\delta c_1}{2} \|\tilde{U}_m(t)\|_{L^2(\Omega)}^2,\\
\label{Equation-2-114}
\bullet~&c_2 \int_\Omega \beta_{\text{{\tiny $\delta$}}} (\tilde{u}_m(t)) |\tilde{W}_m(t)| |\tilde{U}_m(t)|dx \le \frac{\delta c_2}{2} \|\tilde{W}_m(t)\|_{L^2(\Omega)}^2
+\frac{\delta c_2}{2} \|\tilde{U}_m(t)\|_{L^2(\Omega)}^2.
\end{align}
Repeating the similar argument to the derivation of \eqref{Equation-2-106}, we obtain not only 
\begin{align}
\label{Equation-2-115}
&\,c_1 \int_\Omega \tilde{v}(t) \{ \beta_{\text{{\tiny $\delta$}}} (\tilde{u}_m(t))-\beta_{\text{{\tiny $\delta$}}} (\tilde{u}(t))\} \tilde{U}_m(t)dx\\
\nonumber
\le&\,c_1\|\tilde{v}(t)\|_{L^4(\Omega)} \|\tilde{U}_m(t)\|_{L^4(\Omega)} \|\tilde{U}_m(t)\|_{L^2(\Omega)}\\
\nonumber
\le&\,c_1 (K_6)^2 \|\tilde{v}(t)\|_{H^1(\Omega)} \|\tilde{U}_m(t)\|_{H^1(\Omega)} \|\tilde{U}_m(t)\|_{L^2(\Omega)}\\
\nonumber
\le&\,\frac{D_1}{4} \|\tilde{U}_m(t)\|_{H^1(\Omega)}^2+\frac{(c_1)^2(K_6)^4}{D_1} \|\tilde{v}(t)\|_{H^1(\Omega)}^2 \|\tilde{U}_m(t)\|_{L^2(\Omega)}^2\\
\nonumber
\le&\,\frac{D_1}{4} \|\nabla \tilde{U}_m(t)\|_{\bm{L}^2(\Omega)}^2+\left\{ \frac{2(c_1)^2(K_6)^4}{\nu D_1} \Phi_\nu (\tilde{v}(t))+\frac{D_1}{4} \right\}
\|\tilde{U}_m(t)\|_{L^2(\Omega)}^2,
\end{align}
but also
\begin{align}
\label{Equation-2-116}
&\,-c_2 \int_\Omega \tilde{w}(t) \{ \beta_{\text{{\tiny $\delta$}}} (\tilde{u}_m(t))-\beta_{\text{{\tiny $\delta$}}} (\tilde{u}(t))\} \tilde{U}_m(t)dx\\
\nonumber
\le&\,\frac{D_1}{4} \|\nabla \tilde{U}_m(t)\|_{\bm{L}^2(\Omega)}^2+\left\{ \frac{(c_2)^2 (K_6)^4}{D_1}\|\tilde{w}(t)\|_{H^1(\Omega)}^2+\frac{D_1}{4}\right\}
\|\tilde{U}_m(t)\|_{L^2(\Omega)}^2.
\end{align}
Using \eqref{Equation-2-113}--\eqref{Equation-2-116}, we obtain the following inequality for a.e. $t \in (0,T)$:
\begin{align}
\label{Equation-2-117}
&\,\frac{d}{dt} \|\tilde{U}_m(t)\|_{L^2(\Omega)}^2+D_1 \|\nabla \tilde{U}_m(t)\|_{\bm{L}^2(\Omega)}^2\\
\nonumber
\le&\,\left\{ \frac{4(c_1)^2(K_6)^4}{\nu D_1} \Phi_\nu (\tilde{v}(t))+\frac{2(c_2)^2 (K_6)^4}{D_1}\|\tilde{w}(t)\|_{H^1(\Omega)}^2+\delta (c_1+c_2)+D_1 \right\}
\|\tilde{U}_m(t)\|_{L^2(\Omega)}^2\\
\nonumber
&\,\hspace*{2cm}+\delta \left( c_1 \|\tilde{V}_m(t)\|_{L^2(\Omega)}^2+c_2 \|\tilde{W}_m(t)\|_{L^2(\Omega)}^2 \right).
\end{align}
Applying the Gronwall lemma to \eqref{Equation-2-117} and using \eqref{Equation-2-112}, we obtain the following inequality for all $t \in [0,T]$:
\begin{align*}
&\,\|\tilde{U}_m(t)\|_{L^2(\Omega)}^2+D_1 \int_0^t \|\nabla \tilde{U}_m(s)\|_{\bm{L}^2(\Omega)}^2ds\\ 
\le&\,\delta \int_0^t \left( c_1 \|\tilde{V}_m(s)\|_{L^2(\Omega)}^2+c_2 \|\tilde{W}_m(s)\|_{L^2(\Omega)}^2 \right)\\
&\hspace*{2cm} \times \exp \left( C_{\text{{\tiny $2,\!13,\!1$}}} \int_s^t \left\{ \Phi_\nu (\tilde{v}(\tau))+\|\tilde{w}(\tau)\|_{H^1(\Omega)}^2+1\right\}d\tau \right)ds,
\end{align*}
where we set 
\begin{equation*}
C_{\text{{\tiny $2,\!13,\!1$}}}:=\frac{4(c_1)^2(K_6)^4}{\nu D_1}+\frac{2(c_2)^2 (K_6)^4}{D_1}+\delta (c_1+c_2)+D_1.
\end{equation*}
hence, 
\begin{align}
\label{Equation-2-118}
&\,\max_{0 \le t \le T} \|\tilde{U}_m(t)\|_{L^2(\Omega)}^2+\int_0^T \|\nabla \tilde{U}_m(t)\|_{\bm{L}^2(\Omega)}^2dt\\
\nonumber
\le&\,\delta T \left(1+\frac{1}{D_1} \right) \exp \left( T C_{\text{{\tiny $2,\!13,\!1$}}} \left\{ \sup_{0 \le t \le T} \Phi_\nu (\tilde{v}(t))
+\left( \sup_{0 \le t \le T} \|\tilde{w}(t)\|_{H^1(\Omega)}\right)^2+1\right\} \right)\\
\nonumber
&\,\hspace*{1cm} \times \left\{ c_1\left( \max_{0 \le t \le T} \|\tilde{V}_m(t)\|_{L^2(\Omega)}\right)^2
+c_2\left(\max_{0 \le t \le T} \|\tilde{W}_m(t)\|_{L^2(\Omega)}\right)^2 \right\}.
\end{align}
Combining the convergences \eqref{Equation-2-101}, \eqref{Equation-2-110} and the estimate \eqref{Equation-2-118} with Lemma \ref{Lemma-2-11},
we obtain the following convergence as $m \to \infty$:
\begin{equation}\label{Equation-2-119}
\tilde{u}_m \longrightarrow \tilde{u} \quad \text{in} \quad C([0,T]\,;L^2(\Omega)) \quad \text{and} \quad L^2(0,T\,;H^1(\Omega)).
\end{equation}
\indent
Finally, from the convergences \eqref{Equation-2-110} and \eqref{Equation-2-119} we obtain \eqref{Equation-2-100}.
\end{proof}
%%%%%%%%%%%%%%%%%%%%%%%%%%%%%%%%%%%%%%%%%%%%%%%%%%%%%%%%%%%%%%%%%%%%%%%%%%%%%%%%%%%%%%%%%%%%%%%%%%%%%%%%%%%%%%%%%%%%%%%%%%%%%%%%%%%%%%%%
%%%%%%%%%%%%%%%%%%%%%%%%%%%%%%%%%%%%%%%%%%%%%%%%%%%%%%%%%%%%%%%%%%%%%%%%%%%%%%%%%%%%%%%%%%%%%%%%%%%%%%%%%%%%%%%%%%%%%%%%%%%%%%%%%%%%%%%%
%%%%%%%%%%%%%%%%%%%%%%%%%%%%%%%%%%%%%%%%%%%%%%%%%%%%%%%%%%%%%%%%%%%%%%%%%%%%%%%%%%%%%%%%%%%%%%%%%%%%%%%%%%%%%%%%%%%%%%%%%%%%%%%%%%%%%%%%
%%%%%%%%%%%%%%%%%%%%%%%%%%%%%%%%%%%%%%%%%%%%%%%%%%%%%%%%%%%%%%%%%%%%%%%%%%%%%%%%%%%%%%%%%%%%%%%%%%%%%%%%%%%%%%%%%%%%%%%%%%%%%%%%%%%%%%%%
%%%%%%%%%%%%%%%%%%%%%%%%%%%%%%%%%%%%%%%%%%%%%%%%%%%%%%%%%%%%%%%%%%%%%%%%%%%%%%%%%%%%%%%%%%%%%%%%%%%%%%%%%%%%%%%%%%%%%%%%%%%%%%%%%%%%%%%%
\section{Strong solutions to approximate problems}\label{Section-3}
%%%%%%%%%%%%%%%%%%%%%%%%%%%%%%%%%%%%%%%%%%%%%%%%%%%%%%%%%%%%%%%%%%%%%%%%%%%%%%%%%%%%%%%%%%%%%%%%%%%%%%%%%%%%%%%%%%%%%%%%%%%%%%%%%%%%%%%%
%%%%%%%%%%%%%%%%%%%%%%%%%%%%%%%%%%%%%%%%%%%%%%%%%%%%%%%%%%%%%%%%%%%%%%%%%%%%%%%%%%%%%%%%%%%%%%%%%%%%%%%%%%%%%%%%%%%%%%%%%%%%%%%%%%%%%%%%
%%%%%%%%%%%%%%%%%%%%%%%%%%%%%%%%%%%%%%%%%%%%%%%%%%%%%%%%%%%%%%%%%%%%%%%%%%%%%%%%%%%%%%%%%%%%%%%%%%%%%%%%%%%%%%%%%%%%%%%%%%%%%%%%%%%%%%%%
%%%%%%%%%%%%%%%%%%%%%%%%%%%%%%%%%%%%%%%%%%%%%%%%%%%%%%%%%%%%%%%%%%%%%%%%%%%%%%%%%%%%%%%%%%%%%%%%%%%%%%%%%%%%%%%%%%%%%%%%%%%%%%%%%%%%%%%%
%%%%%%%%%%%%%%%%%%%%%%%%%%%%%%%%%%%%%%%%%%%%%%%%%%%%%%%%%%%%%%%%%%%%%%%%%%%%%%%%%%%%%%%%%%%%%%%%%%%%%%%%%%%%%%%%%%%%%%%%%%%%%%%%%%%%%%%%
%%%%%%%%%%%%%%%%%%%%%%%%%%%%%%%%%%%%%%%%%%%%%%%%%%%%%%%%%%%%%%%%%%%%%%%%%%%%%%%%%%%%%%%%%%%%%%%%%%%%%%%%%%%%%%%%
%%%%%%%%%%%%%%%%%%%%%%%%%%%%%%%%%%%%%%%%%%%%%%%%%%%%%%%%%%%%%%%%%%%%%%%%%%%%%%%%%%%%%%%%%%%%%%%%%%%%%%%%%%%%%%%%
%%%%%%%%%%%%%%%%%%%%%%%%%%%%%%%%%%%%%%%%%%%%%%%%%%%%%%%%%%%%%%%%%%%%%%%%%%%%%%%%%%%%%%%%%%%%%%%%%%%%%%%%%%%%%%%%
\subsection{Existence of approximate strong local-in-time solutions}\label{Subsection-3-1}
%%%%%%%%%%%%%%%%%%%%%%%%%%%%%%%%%%%%%%%%%%%%%%%%%%%%%%%%%%%%%%%%%%%%%%%%%%%%%%%%%%%%%%%%%%%%%%%%%%%%%%%%%%%%%%%%
%%%%%%%%%%%%%%%%%%%%%%%%%%%%%%%%%%%%%%%%%%%%%%%%%%%%%%%%%%%%%%%%%%%%%%%%%%%%%%%%%%%%%%%%%%%%%%%%%%%%%%%%%%%%%%%%
%%%%%%%%%%%%%%%%%%%%%%%%%%%%%%%%%%%%%%%%%%%%%%%%%%%%%%%%%%%%%%%%%%%%%%%%%%%%%%%%%%%%%%%%%%%%%%%%%%%%%%%%%%%%%%%%
We fix any approximation parameters $\delta \in (1,\infty)$, $\varepsilon \in (0,1)$ and $\nu \in (0,1)$.
The main purpose of this subsection is to show Proposition \ref{Proposition-2}, which guarantees the existence of strong local-in-time solutions to the approximate 
problem $\text{(AP)}{}_{\text{{\tiny $\delta,\!\varepsilon,\!\nu$}}}$.
In the following argument, it is essential that any finite time $T>0$ is prescribed and fixed at first.
%%%%%
%%%%%
%%%%%
\begin{proposition}\label{Proposition-2}
Let any finite time $T>0$ be prescribed and fixed.
There exists a finite time $T_{\text{{\tiny $\delta$}}} \in (0,T]$, which is independent of the approximate parameters $\varepsilon \in (0,1)$ and $\nu \in (0,1)$, 
such that the approximate initial-boundary value problem $\text{(AP)}{}_{\text{{\tiny $\delta,\!\varepsilon,\!\nu$}}}$ admits a strong solution 
$(u,v,w,z):=(u_{\text{{\tiny $\delta,\!\varepsilon,\!\nu$}}},v_{\text{{\tiny $\delta,\!\varepsilon,\!\nu$}}},w_{\text{{\tiny $\delta,\!\varepsilon,\!\nu$}}},
z_{\text{{\tiny $\delta,\!\varepsilon,\!\nu$}}})$ on $[0,T_{\text{{\tiny $\delta$}}}]$ satisfying the following properties:
\begin{enumerate}\leftskip10pt
\item[(1)] $(u,v,z,w) \in (W^{1,2}(0,T_{\text{{\tiny $\delta$}}}\,;L^2(\Omega)) \cap 
L^\infty (0,T_{\text{{\tiny $\delta$}}}\,;H^1(\Omega)) \cap L^2(0,T_{\text{{\tiny $\delta$}}}\,;H^2(\Omega)))^4$.\\[-0.3cm]
\item[(2)] The system \{(\ref{Equation-2-7})--(\ref{Equation-2-10})\} with boundary conditions (\ref{Equation-2-11}) is satisfied in the following variational sense 
for a.e. $t \in (0,T_{\text{{\tiny $\delta$}}})$:
\begin{gather}
\label{Equation-3-1}
u'(t)-D_1 \Delta_N u(t)=\beta_{\text{{\tiny $\delta$}}}(u(t))\{c_1 v(t)-c_2 w(t)\} \quad \mbox{in} \quad L^2(\Omega),\\
\label{Equation-3-2}
M_{\text{{\tiny $\delta,\!z(t)$}}} v'(t)-\nu \Delta_N v(t)+v(t)=M_{\text{{\tiny $\delta,\!z(t)$}}}g_{\text{{\tiny $\delta$}}}(u,v,w\,;t) 
\quad \mbox{in} \quad L^2(\Omega),\\
\label{Equation-3-3}
w'(t)-D_2 \Delta_N w(t)+c_5 w(t)=c_6-c_7 v(t) \beta_{\text{{\tiny $\delta$}}} (w(t)) \quad \mbox{in} \quad L^2(\Omega),\\
\label{Equation-3-4}
z'(t)-\varepsilon \Delta_N z(t)=v'(t) \quad \mbox{in} \quad L^2(\Omega).
\end{gather}
where we define $g_{\text{{\tiny $\delta$}}}(u,v,w):[0,T] \rightarrow L^2(\Omega)$ by
\begin{equation*}
g_{\text{{\tiny $\delta$}}}(u,v,w\,;t):=v(t)\{c_3 \beta_{\text{{\tiny $\delta$}}} (w(t))-c_4 \beta_{\text{{\tiny $\delta$}}} (u(t))\},\quad \forall t \in [0,T].
\end{equation*}
\item[(3)] $(u(0),v(0),z(0),w(0))=(u_0,v_0,z_0,w_0) \quad \text{in} \quad (L^2(\Omega))^4$.
\end{enumerate}
\end{proposition}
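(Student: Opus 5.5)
The plan is a Schauder fixed point argument that composes the three solution operators $S_{1,\delta,\nu}$, $S_{2,\varepsilon,z_0}$ (with $z_0=v_0$) and $S_{3,\delta}$. The fixed point is taken in the $v$-variable on the closed convex set
\begin{equation*}
\mathcal{B}_{\tau}:=\left\{\tilde{v}\in\mathcal{V}(v_0)\,;\,\int_0^{\tau}\|\tilde{v}'(t)\|_{L^2(\Omega)}^2dt\le R,\ \max_{0\le t\le \tau}\|\tilde{v}(t)\|_{L^2(\Omega)}^2\le R,\ \sup_{0\le t\le\tau}\Phi_\nu(\tilde v(t))\le R_\nu\right\},
\end{equation*}
regarded as a subset of $C([0,\tau];L^2(\Omega))$, with $\tau\in(0,T]$ and $R>0$ to be chosen. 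For $\tilde{v}\in\mathcal{B}_\tau$ we define three objects. First, $\tilde{z}:=S_{2,\varepsilon,v_0}(\tilde{v})$; by Lemma \ref{Lemma-2-8} this satisfies $\int_0^\tau\|\tilde{z}'\|^2_{L^2(\Omega)}dt\le \|v_0\|_{H^1(\Omega)}^2+R$, uniformly in $\varepsilon$. Second, $(\tilde{u},\tilde{w}):=S_{3,\delta}(\tilde{v})$. Third, $\tilde{g}:=\tilde{v}\{c_3\beta_\delta(\tilde w)-c_4\beta_\delta(\tilde u)\}$. By \eqref{Equation-2-3} we have $\|\tilde g(t)\|_{L^2(\Omega)}\le\delta(c_3+c_4)\|\tilde v(t)\|_{L^2(\Omega)}$, hence $\int_0^\tau\|\tilde g\|^2_{L^2(\Omega)}dt\le \delta^2(c_3+c_4)^2\tau R$. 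The map is then $\mathcal{T}(\tilde v):=S_{1,\delta,\nu}(\tilde z,\tilde g)$. A fixed point $v=\mathcal{T}(v)$, together with $z=\tilde z$ and $(u,w)=S_{3,\delta}(v)$, gives (1)--(3): equation \eqref{Equation-3-2} is \eqref{Equation-2-40}, equation \eqref{Equation-3-4} is \eqref{Equation-2-68}, and the remaining equations come from Lemma \ref{Lemma-2-10}. The regularity in (1) comes from Proposition \ref{Proposition-2-1} and Lemmas \ref{Lemma-2-8} and \ref{Lemma-2-10}.

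\textbf{Invariance $\mathcal{T}(\mathcal{B}_\tau)\subset\mathcal{B}_\tau$ (main obstacle).} The difficulty is that the bound must be independent of $\varepsilon$ and $\nu$, while the $\Phi_\nu$-bound constant $C_5$ of Lemma \ref{Lemma-2-6} still depends on $\|v_0\|_{H^1(\Omega)}$. I would take $R_\nu$ to be a bound for $\sup_t\Phi_\nu$ coming from Lemma \ref{Lemma-2-6}; since $\Phi_\nu(v_0)\le\|v_0\|^2_{H^1(\Omega)}$, it can be chosen independent of $\nu$, so the set is effectively uniform. The key quantitative point is to track the dependence of the estimates in Lemmas \ref{Lemma-2-5} and \ref{Lemma-2-6} on $\int_0^\tau\|\tilde z'\|dt$ and $\int_0^\tau\|\tilde g\|^2dt$. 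By Cauchy--Schwarz,
\begin{equation*}
\int_0^\tau\|\tilde z'\|_{L^2(\Omega)}dt\le \tau^{1/2}\left(\|v_0\|^2_{H^1(\Omega)}+R\right)^{1/2},
\end{equation*}
and $\int_0^\tau\|\tilde g\|^2dt=O(\tau R)$. Hence, writing $C_0$ for the value of the Lemma \ref{Lemma-2-6} bound with both forcing quantities set to zero (essentially $\Phi_\nu(v_0)$ times $\delta$-constants), the outputs satisfy $\int_0^\tau\|v'\|^2\le C_0(\delta,\|v_0\|_{H^1(\Omega)})\cdot(1+o(1))$ as $\tau\to0$, for fixed $R$. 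I would then choose $R:=2C_0+2\delta C_2^2\|v_0\|^2_{L^2(\Omega)}$ (covering the Lemma \ref{Lemma-2-5} bound as well), and pick $\tau=T_\delta$ small so that the $o(1)$ terms are at most $1$. All of this depends only on $\delta$, $T$ and the data, not on $\varepsilon$ or $\nu$. I expect this bookkeeping to be delicate, because the constant $C_{2,6,1}$ in \eqref{Equation-2-49} is not explicit. I would simply assert that it depends only on $\delta$ through Lemma \ref{Lemma-2-2}.

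\textbf{Compactness and continuity.} Compactness of $\mathcal{T}(\mathcal{B}_\tau)$ in $C([0,\tau];L^2(\Omega))$ follows as in the proof of Lemma \ref{Lemma-2-7}: the image is bounded in $W^{1,2}(0,\tau;L^2(\Omega))\cap L^\infty(0,\tau;H^1(\Omega))$, and Ascoli--Arzel\`a together with the compact embedding $H^1\subset L^2$ gives relative compactness. For continuity, let $\tilde v_m\to\tilde v$ in $C([0,\tau];L^2)$ within $\mathcal{B}_\tau$. Because of the uniform $W^{1,2}$ bound, this also gives weak convergence in $W^{1,2}$, so Lemma \ref{Lemma-2-9} yields $\tilde z_m\to\tilde z$ in $C([0,\tau];L^2)$, with $\sup_m\int\|\tilde z_m'\|<\infty$. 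Lemma \ref{Lemma-2-13} gives $(\tilde u_m,\tilde w_m)\to(\tilde u,\tilde w)$ in $C([0,\tau];L^2)$. The Lipschitz property \eqref{Equation-2-4} and the bound \eqref{Equation-2-3} then give $\tilde g_m\to\tilde g$ in $L^2(0,\tau;L^2)$ by dominated convergence: split $\tilde v_m\beta_\delta(\tilde w_m)-\tilde v\beta_\delta(\tilde w)$, and for the term $\tilde v(\beta_\delta(\tilde w_m)-\beta_\delta(\tilde w))$ use boundedness by $2\delta|\tilde v|$ together with a.e.\ convergence along subsequences. Lemma \ref{Lemma-2-7} then gives $\mathcal{T}(\tilde v_m)\to\mathcal{T}(\tilde v)$. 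Finally, Schauder's theorem applied on the closed convex set $\mathcal{B}_\tau$ produces the fixed point. Closedness of $\mathcal{B}_\tau$ in $C([0,\tau];L^2)$ follows from weak lower semicontinuity of the norms and of $\Phi_\nu$.
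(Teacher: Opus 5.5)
Your proposal is correct and follows essentially the paper's own route. It applies Schauder's theorem to $\tilde v\mapsto S_{1,\delta,\nu}\bigl(S_{2,\varepsilon,z_0}(\tilde v),g_\delta(\tilde v)\bigr)$ on a convex set bounded in $W^{1,2}(0,\tau;L^2(\Omega))$ and in $\sup_t\Phi_\nu$. Invariance comes from inserting $\int_0^\tau\|\tilde z'(t)\|_{L^2(\Omega)}dt=O(\tau^{1/2})$ and $\int_0^\tau\|\tilde g(t)\|_{L^2(\Omega)}^2dt=O(\tau)$ into the bounds of Lemmas \ref{Lemma-2-5}--\ref{Lemma-2-6}, which is the content of Lemmas \ref{Lemma-3-2}--\ref{Lemma-3-4}. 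Continuity comes from Lemmas \ref{Lemma-2-7}, \ref{Lemma-2-9} and \ref{Lemma-2-13}.

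The main variation is that you prove continuity of $\tilde v\mapsto g_\delta(\tilde v)$ by dominated convergence along a.e.\ convergent subsequences, instead of the $L^4$--$H^1$ estimate of Lemma \ref{Lemma-3-3}. This is slightly cleaner, since it needs neither the $\nu^{-1}\Phi_\nu$ bound nor convergence in $L^2(0,T;H^1(\Omega))$. Your explicit formula for $R$ needs a minor adjustment, but the separate constraint on $\max_t\|\tilde v(t)\|_{L^2(\Omega)}^2$ is redundant anyway, because $\frac{1}{2}\|\tilde v\|_{L^2(\Omega)}^2\le\Phi_\nu(\tilde v)$.
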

%%%%%
%%%%%
%%%%%
While using the same notations as in Section \ref{Section-2}, for each $\tilde{v} \in \mathcal{V}(v_0)$ we set 
\begin{align}
\label{Equation-3-5}
&g_{\text{{\tiny $\delta$}}} (\tilde{v}):=\tilde{v}\{c_3 \beta_{\text{{\tiny $\delta$}}} (w_{\text{{\tiny $\delta$}}} (w_0,\tilde{v}))
-c_4 \beta_{\text{{\tiny $\delta$}}} (u_{\text{{\tiny $\delta$}}} (u_0,w_0,\tilde{v}))\},\\
\nonumber
&\hspace*{1cm}\text{where} \quad S_{\text{{\tiny $3,\!\delta$}}}(\tilde{v})=(u_{\text{{\tiny $\delta$}}} (u_0,w_0,\tilde{v}),w_{\text{{\tiny $\delta$}}} (w_0,\tilde{v})),
\quad \forall \tilde{v} \in \mathcal{V}(v_0),
\end{align}
and define an operator $S:C([0,T]\,;L^2(\Omega)) \rightarrow C([0,T]\,;L^2(\Omega))$ in the following way:
\begin{gather*}
D(S)=\mathcal{V}(v_0),\\
R(S) \subset W^{1,2}(0,T\,;L^2(\Omega)) \cap L^\infty (0,T\,;H^1(\Omega)) \cap L^2(0,T\,;H^2(\Omega)),\\
S(\tilde{v}):=S_{\text{{\tiny $1,\!\delta,\!\nu$}}} \left(S_{\text{{\tiny $2,\!\varepsilon,\!z_0$}}}(\tilde{v}),g_{\text{{\tiny $\delta$}}} (\tilde{v})\right).
\end{gather*}
Moreover, we fix a sufficiently large number $R>0$ and introduce a subset $\mathcal{V}_R(v_0,\nu) \subset \mathcal{V}(v_0)$ in the following manner:
$\tilde{v} \in \mathcal{V}_R(v_0,\nu)$ if and only if $\tilde{v} \in \mathcal{V}(v_0)$ and the following estimate is satisfied:
\begin{equation}\label{Equation-3-6}
\int_0^T \|\tilde{v}'(t)\|_{L^2(\Omega)}^2dt+\sup_{0 \le t \le T} \Phi_\nu (\tilde{v}(t)) \le R.
\end{equation}
Since from \eqref{Equation-2-21} we have 
\begin{equation}\label{Equation-3-7}
\frac{1}{2} \|\bar{v}\|_{L^2(\Omega)}^2 \le \Phi_\nu (\bar{v}),\quad \forall \bar{v} \in L^2(\Omega),~\forall \nu \in (0,1).
\end{equation}
\eqref{Equation-3-6} yields the following uniform boundedness:
\begin{equation}\label{Equation-3-8}
\sup_{z \in \mathcal{V}_R(v_0,\nu)} \left( \sup_{0 \le t \le T} \|\tilde{v}(t)\|_{L^2(\Omega)} \right)^2 \le 2R.
\end{equation}
First, we investigate the property of $\mathcal{V}_R(v_0,\nu)$.
%%%%%
%%%%%
%%%%%
\begin{lemma}\label{Lemma-3-1}
There exists a number $R_*>0$, which is independent of the approximation parameter $\nu \in (0,1]$, such that $\mathcal{V}_R(v_0,\nu)$ satisfies 
the following properties for all $R \ge R_*$:
\begin{enumerate}\leftskip6pt
\item[(1)] $\mathcal{V}_R(v_0,\nu)$ is nonempty and convex.\\[-0.3cm]
\item[(2)] $\mathcal{V}_R(v_0,\nu)$ is compact in $C([0,T]\,;L^2(\Omega))$.\\[-0.3cm]
\item[(3)] $\mathcal{V}_R(v_0,\nu)$ is weakly closed in $W^{1,2}(0,T\,;L^2(\Omega))$.\\[-0.3cm]
\item[(4)] $\mathcal{V}_R(v_0,\nu)$ is weakly$^*$ closed in $L^\infty (0,T\,;H^1(\Omega))$.
\end{enumerate}
\end{lemma}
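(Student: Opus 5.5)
The plan is to treat the four properties in order, since all of them come from the fact that the constraint \eqref{Equation-3-6} is built from convex, lower semicontinuous functionals.

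\textbf{Property (1).} Nonemptiness follows by exhibiting the constant-in-time function $\tilde v(t):\equiv v_0$. Indeed, $\tilde v(0)=v_0$ and $\tilde v'=0$. Since $\nu\in(0,1]$,
\begin{equation*}
\sup_{0\le t\le T}\Phi_\nu(v_0)\le \tfrac12\|v_0\|_{H^1(\Omega)}^2 .
\end{equation*}
We therefore choose
\begin{equation*}
R_*:=\tfrac12\|v_0\|_{H^1(\Omega)}^2 ,
\end{equation*}
which is independent of $\nu$ by (A2); then $v_0\in\mathcal V_R(v_0,\nu)$ for every $R\ge R_*$.

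Convexity comes from convexity of each functional. Take $\tilde v_1,\tilde v_2\in\mathcal V_R(v_0,\nu)$ and $\lambda\in[0,1]$. The initial condition is affine, so $\lambda\tilde v_1+(1-\lambda)\tilde v_2$ still takes the value $v_0$ at $t=0$. The map $\tilde v\mapsto\int_0^T\|\tilde v'\|^2_{L^2(\Omega)}dt$ is convex. For each fixed $t$, $\Phi_\nu$ is convex, and a supremum of convex functionals is convex. The sum of these two convex functionals is therefore at most $\lambda R+(1-\lambda)R=R$.

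\textbf{Property (2).} I will apply Ascoli--Arzel\`a exactly as in the proof of Lemma \ref{Lemma-2-7}. Every $\tilde v\in\mathcal V_R$ satisfies
\begin{equation*}
\|\tilde v(t)-\tilde v(s)\|_{L^2(\Omega)}\le \sqrt{R}\,|t-s|^{1/2},
\end{equation*}
so the family is equicontinuous. For each $t$, $\Phi_\nu(\tilde v(t))\le R$ gives
\begin{equation*}
\|\tilde v(t)\|^2_{H^1(\Omega)}\le \tfrac{2R}{\nu}.
\end{equation*}
Since $\nu$ is fixed in this lemma, this bound is acceptable, and the compact embedding $H^1(\Omega)\hookrightarrow L^2(\Omega)$ makes each orbit $\{\tilde v(t)\}$ relatively compact. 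Hence $\mathcal V_R$ is relatively compact in $C([0,T];L^2(\Omega))$.

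It remains to show closedness in $C([0,T];L^2(\Omega))$. Let $\tilde v_m\to\tilde v$ in $C([0,T];L^2(\Omega))$ with $\tilde v_m\in\mathcal V_R$.
\begin{itemize}
\item The bound on $\int_0^T\|\tilde v_m'\|^2dt$ gives a subsequence with $\tilde v_m'\rightharpoonup \tilde v'$ weakly in $L^2(0,T;L^2(\Omega))$, so $\tilde v\in W^{1,2}(0,T;L^2(\Omega))$.
\item $\tilde v(0)=v_0$ follows from the uniform convergence.
\item The key inequality is $\liminf$-superadditivity, which lets both terms be handled at once:
\begin{equation*}
\int_0^T\|\tilde v'\|^2dt+\sup_t\Phi_\nu(\tilde v(t))\le \liminf_{m}\int_0^T\|\tilde v_m'\|^2dt+\sup_t\liminf_m\Phi_\nu(\tilde v_m(t))\le\liminf_m(\cdots)\le R.
\end{equation*}
Here the first term uses weak lower semicontinuity of the norm. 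The second uses lower semicontinuity of $\Phi_\nu$ on $L^2(\Omega)$ pointwise in $t$, together with $\sup_t\liminf_m\le\liminf_m\sup_t$.
\end{itemize}
So the limit lies in $\mathcal V_R$, and the set is compact.

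\textbf{Properties (3) and (4).} The same lower-semicontinuity argument applies, and the main point is recovering pointwise-in-time information.
\begin{itemize}
\item For (3): weak convergence in $W^{1,2}(0,T;L^2(\Omega))$ of a sequence in $\mathcal V_R$ is upgraded by (2) to strong convergence in $C([0,T];L^2(\Omega))$. Uniqueness of limits identifies the two limits, and the closedness shown in (2) concludes.
\item For (4): weak$^*$ convergence in $L^\infty(0,T;H^1(\Omega))$ gives only a.e.-in-time type information. I will again pass to a $C([0,T];L^2(\Omega))$-convergent subsequence via (2) and identify it with the weak$^*$ limit by testing against $L^1(0,T;L^2(\Omega))$ functions.
\end{itemize}

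I expect the main obstacle to be exactly this identification of limits in (3)–(4): showing that the weak$^*$ limit coincides with the continuous representative. Once that is done, the constraint passes to the limit pointwise in time through the estimate in (2).
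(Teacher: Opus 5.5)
Your proposal is correct and follows essentially the paper's proof. It uses the same constant-in-time witness $\tilde v\equiv v_0$ with $R_*=\frac12\|v_0\|_{H^1(\Omega)}^2$, the convexity and lower semicontinuity of $\tilde v\mapsto\int_0^T\|\tilde v'(t)\|_{L^2(\Omega)}^2dt$ and $\tilde v\mapsto\sup_{t}\Phi_\nu(\tilde v(t))$, and the Ascoli--Arzel\`a theorem combined with the compact embedding $H^1(\Omega)\hookrightarrow L^2(\Omega)$.

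You are more explicit than the paper in two places. The paper never verifies closedness in $C([0,T];L^2(\Omega))$, and it gets (3)--(4) from a one-line appeal to weak lower semicontinuity; you prove the closedness directly and obtain (3)--(4) by identifying the weak (resp.\ weak$^*$) limit with the uniform limit from (2). One cosmetic point, shared with the paper: $R_*$ should be enlarged (e.g.\ by $1$) so that $R_*>0$ also when $v_0=0$.
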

%%%%%
%%%%%
%%%%%
\begin{proof}
We set a number $R_*>0$ by
\begin{equation*}
R_*:=\frac{1}{2}\int_\Omega |\nabla v_0|^2dx+\frac{1}{2} \int_\Omega |v_0|^2dx=\frac{1}{2} \|v_0\|_{H^1(\Omega)}^2,
\end{equation*}
and a function $\tilde{v}_0:[0,T] \rightarrow L^2(\Omega)$ defined by $\tilde{v}_0(t):=v_0$ for all $t \in [0,T]$.
Then, we have 
\begin{equation*}
\int_0^T \|\tilde{v}_0'(t)\|_{L^2(\Omega)}^2dt+\sup_{0 \le t \le T} \Phi_\nu (\tilde{v}_0(t)) \le R_*,
\end{equation*}
which implies $\mathcal{V}_R(v_0,\nu) \ne \emptyset$ for all $R \ge R_*$ and $\nu \in (0,1]$.
By the convexity of $\Phi_\nu$ and norms, together with their weak lower semicontinuity, the set $\mathcal{V}_R(v_0,\nu)$ is convex, weakly closed in
$W^{1,2}(0,T;L^2(\Omega))$ and weakly$^*$ closed in $L^\infty(0,T;H^1(\Omega))$.
Moreover, since 
\begin{equation*}
\mathcal{V}_R(v_0,\nu) \subset W^{1,2}(0,T\,;L^2(\Omega)) \cap L^\infty (0,T\,;H^1(\Omega)),
\end{equation*}
the compact embedding $H^1(\Omega) \hookrightarrow \hookrightarrow L^2(\Omega)$ and the Ascoli-Arzel\`{a} theorem, which is used in Section 2, imply that 
$\mathcal{V}_R(v_0,\nu)$ is compact in $C([0,T]\,;L^2(\Omega))$.
\end{proof}
%%%%%
%%%%%
%%%%%
\indent
In the following argument, we fix a sufficiently large number $R > R_*$, where the number $R_*$ is the same as in Lemma \ref{Lemma-3-1}.
%%%%%
%%%%%
%%%%%
\begin{lemma}\label{Lemma-3-2}
The following uniform estimate holds for all $t \in [0,T]$:
\begin{equation*}
\sup_{\nu\,\in\,(0,1)} \left( \sup_{\tilde{v}\,\in\,\mathcal{V}_R(v_0,\nu)} \int_0^t \|(S_{\text{{\tiny $2,\!\varepsilon,\!z_0$}}}(\tilde{v}))'(s)\|_{L^2(\Omega)}ds \right)
\le t^{\frac{1}{2}}\left( \|z_0\|_{H^1(\Omega)}+R\right).
\end{equation*}
\end{lemma}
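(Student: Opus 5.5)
The plan is to combine the Cauchy--Schwarz inequality in time with the energy estimate \eqref{Equation-2-73} from the proof of Lemma \ref{Lemma-2-8}, applied on the subinterval $[0,t]$ instead of $[0,T]$. Fix $\nu\in(0,1)$, $\tilde v\in\mathcal{V}_R(v_0,\nu)$ and set $\tilde z:=S_{\text{{\tiny $2,\!\varepsilon,\!z_0$}}}(\tilde v)$. The first step is the elementary bound
\begin{equation*}
\int_0^t \|\tilde z'(s)\|_{L^2(\Omega)}ds \le t^{\frac12}\left(\int_0^t \|\tilde z'(s)\|_{L^2(\Omega)}^2ds\right)^{\frac12},
\end{equation*}
which reduces the statement to an $L^2$-in-time bound for $\tilde z'$ on $[0,t]$. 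This bound must be uniform in $\nu$, $\tilde v$, and also $\varepsilon$.

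For the second step, test \eqref{Equation-2-68} with $\tilde z'(s)$ as in the derivation of \eqref{Equation-2-73}, and integrate over $[0,t]$. This gives
\begin{equation*}
\int_0^t \|\tilde z'(s)\|_{L^2(\Omega)}^2ds+\varepsilon\|\nabla \tilde z(t)\|_{\bm{L}^2(\Omega)}^2 \le \varepsilon\|\nabla z_0\|_{\bm{L}^2(\Omega)}^2+\int_0^t\|\tilde v'(s)\|_{L^2(\Omega)}^2ds .
\end{equation*}
Here one uses $2(\tilde v',\tilde z')_{L^2(\Omega)}\le \|\tilde v'\|^2+\|\tilde z'\|^2$, so that one copy of $\|\tilde z'\|^2$ is absorbed. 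Since $\varepsilon<1$, we have $\varepsilon\|\nabla z_0\|^2\le\|z_0\|_{H^1(\Omega)}^2$. The definition \eqref{Equation-3-6} of $\mathcal{V}_R(v_0,\nu)$ gives $\int_0^t\|\tilde v'\|^2\le\int_0^T\|\tilde v'\|^2\le R$, and this holds for every $\nu$. Dropping the nonnegative gradient term yields $\int_0^t\|\tilde z'\|^2\le \|z_0\|_{H^1(\Omega)}^2+R$, independently of $\varepsilon,\nu,\tilde v$.

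The last step converts the square root into the stated linear form. We have $(\|z_0\|_{H^1}^2+R)^{1/2}\le \|z_0\|_{H^1}+R^{1/2}$, and $R^{1/2}\le R$ as soon as $R\ge1$. This is the only mildly delicate point. It is where I would use that $R$ is fixed ``sufficiently large''; if necessary I would enlarge $R_*$ in Lemma \ref{Lemma-3-1} to $\max\{R_*,1\}$, which changes nothing there. Taking the supremum over $\tilde v$ and then over $\nu$ finishes the proof. I expect no real obstacle beyond checking that the constant in \eqref{Equation-2-73} is exactly as needed (coefficient $1$ in front of $\|\tilde z'\|^2$ after absorption) and that the estimate is genuinely independent of $\varepsilon$.
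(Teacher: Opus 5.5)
Your proof is correct and follows the paper's argument: test \eqref{Equation-3-9} with $\tilde z'$, absorb via Young, integrate in time, and finish with Cauchy--Schwarz in $t$. The paper integrates over $[0,T]$ rather than $[0,t]$, which makes no difference, and it writes $R^2$ in \eqref{Equation-3-12} without comment; that silently uses the same $R\ge 1$ condition you state explicitly.
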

%%%%%
%%%%%
%%%%%
\begin{proof}[Proof.]
In this proof we set $\tilde{z}:=S_{\text{{\tiny $2,\!\varepsilon,\!z_0$}}}(\tilde{v})$.
Since we have 
\begin{gather}
\label{Equation-3-9}
\tilde{z}'(t)-\varepsilon \Delta_N \tilde{z}(t)=\tilde{v}'(t) \quad \text{in} \quad L^2(\Omega),\quad \text{a.e.}~t \in (0,T),\\
\label{Equation-3-10}
\tilde{z}(0)=z_0 \quad \text{in} \quad L^2(\Omega),
\end{gather}
we take the inner product in both sides of \eqref{Equation-3-9} in $L^2(\Omega)$ by $\tilde{z}'(t)$ and obtain
\begin{equation*}
\|\tilde{z}'(t)\|_{L^2(\Omega)}^2+\frac{\varepsilon}{2} \frac{d}{dt} \|\nabla \tilde{z}(t)\|_{\bm{L}^2(\Omega)}^2
=\int_\Omega \tilde{v}'(t) \tilde{z}'(t)dx,\quad \text{a.e.}~t \in (0,T),
\end{equation*}
hence,
\begin{equation}\label{Equation-3-11}
\|\tilde{z}'(t)\|_{L^2(\Omega)}^2+\varepsilon \frac{d}{dt} \|\nabla \tilde{z}(t)\|_{\bm{L}^2(\Omega)}^2 \le \|\tilde{v}'(t)\|_{L^2(\Omega)}^2,\quad \text{a.e.}~t \in (0,T).
\end{equation}
Integrating \eqref{Equation-3-11} over $[0,T]$ and using \eqref{Equation-3-10}, we obtain
\begin{equation}\label{Equation-3-12}
\int_0^T \|\tilde{z}'(s)\|_{L^2(\Omega)}^2ds \le \varepsilon \|\nabla z_0\|_{\bm{L}^2(\Omega)}^2+\int_0^T \|\tilde{v}'(s)\|_{L^2(\Omega)}^2ds 
\le \|z_0\|_{H^1(\Omega)}^2+R^2. 
\end{equation}
Using the Cauchy-Schwarz inequality, the estimate \eqref{Equation-3-12} yields
\begin{equation*}
\int_0^t \|\tilde{z}'(s)\|_{L^2(\Omega)}ds \le t^{\frac{1}{2}} \left(\int_0^T \|\tilde{z}'(t)\|_{L^2(\Omega)}^2dt\right)^{\frac{1}{2}} 
\le t^{\frac{1}{2}} \left( \|z_0\|_{H^1(\Omega)}+R\right),
\end{equation*}
which implies that this lemma holds.
\end{proof}
%%%%%
%%%%%
%%%%%
In the next lemma, we investigate the properties of the external forcing term $g_{\text{{\tiny $\delta$}}} (\tilde{v})$ given by \eqref{Equation-3-5}.
%%%%%
%%%%%
%%%%%
\begin{lemma}\label{Lemma-3-3}
We have 
\begin{equation}\label{Equation-3-13}
g_{\text{{\tiny $\delta$}}} (\tilde{v}_m) \longrightarrow g_{\text{{\tiny $\delta$}}} (\tilde{v}) \quad \text{in} \quad L^2(0,T\,;L^2(\Omega)) \quad 
\text{as} \quad m \to \infty 
\end{equation}
whenever a sequence $\{\tilde{v}_m\}_{m\,\in\,\mathbb{N}} \subset \mathcal{V}_R(v_0,\nu)$ and a function $\tilde{v} \in \mathcal{V}(v_0)$ satisfy the following 
convergences as $m \to \infty$:
\begin{equation}\label{Equation-3-14}
\tilde{v}_m \longrightarrow \tilde{v} \quad \left\{
\begin{array}{l}
\text{in} \quad C([0,T]\,;L^2(\Omega)),\\[0.1cm]
\text{weakly in} \quad W^{1,2}(0,T\,;L^2(\Omega)),\\[0.1cm]
\text{weakly$^*$ in} \quad L^\infty (0,T\,;H^1(\Omega)).
\end{array}
\right.
\end{equation}
\indent
Moreover, there exists a constant $C_9>0$, which is independent of the approximate parameters $\delta \in (1,\infty)$ and $\nu \in (0,1)$, 
such that the following uniform estimate holds:
\begin{equation*}
\sup_{\nu\,\in\,(0,1)} \left\{ \sup_{\tilde{v}\,\in\,\mathcal{V}_R(v_0,\nu)} 
\left( \sup_{0 \le t \le T} \|(g_{\text{{\tiny $\delta$}}} (\tilde{v}))(t)\|_{L^2(\Omega)} \right) \right\} \le \delta C_9 R^{\frac{1}{2}}.
\end{equation*}
\end{lemma}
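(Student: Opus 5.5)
The uniform bound comes first, since it is elementary. By \eqref{Equation-2-3} we have $|\beta_{\delta}(r)|\le\delta$ for all $r\in\mathbb{R}$. Hence, for every $\tilde{v}\in\mathcal{V}_R(v_0,\nu)$ and every $t\in[0,T]$,
\begin{equation*}
\|(g_{\delta}(\tilde{v}))(t)\|_{L^2(\Omega)} \le (c_3+c_4)\,\delta\,\|\tilde{v}(t)\|_{L^2(\Omega)} \le (c_3+c_4)\,\delta\,(2R)^{\frac{1}{2}},
\end{equation*}
where the last step uses \eqref{Equation-3-8}, which follows from \eqref{Equation-3-6} and \eqref{Equation-3-7}. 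We can therefore take $C_9:=\sqrt{2}(c_3+c_4)$. This constant depends neither on $\delta$ nor on $\nu$, and the supremum over $t$ is legitimate because $\tilde{v}\in C([0,T];L^2(\Omega))$.

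For the convergence \eqref{Equation-3-13}, write $(\tilde{u}_m,\tilde{w}_m):=S_{3,\delta}(\tilde{v}_m)$ and $(\tilde{u},\tilde{w}):=S_{3,\delta}(\tilde{v})$. Then split
\begin{align*}
g_{\delta}(\tilde{v}_m)-g_{\delta}(\tilde{v})
&=(\tilde{v}_m-\tilde{v})\{c_3\beta_{\delta}(\tilde{w}_m)-c_4\beta_{\delta}(\tilde{u}_m)\}\\
&\quad+\tilde{v}\,\bigl[c_3\{\beta_{\delta}(\tilde{w}_m)-\beta_{\delta}(\tilde{w})\}-c_4\{\beta_{\delta}(\tilde{u}_m)-\beta_{\delta}(\tilde{u})\}\bigr].
\end{align*}
The first term is at most $\delta(c_3+c_4)\|\tilde{v}_m-\tilde{v}\|_{C([0,T];L^2(\Omega))}$ pointwise in $t$ in $L^2(\Omega)$, so it tends to $0$ in $L^2(0,T;L^2(\Omega))$ by \eqref{Equation-3-14}. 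For the second term, Lemma \ref{Lemma-2-13} applies because $\tilde{v}_m\to\tilde{v}$ in $C([0,T];L^2(\Omega))$. It gives $\tilde{u}_m\to\tilde{u}$ and $\tilde{w}_m\to\tilde{w}$ in $C([0,T];L^2(\Omega))\cap L^2(0,T;H^1(\Omega))$.

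The main obstacle is the product $\tilde{v}\,\{\beta_{\delta}(\tilde{w}_m)-\beta_{\delta}(\tilde{w})\}$. Here $\tilde{v}$ is only in $L^\infty(0,T;H^1(\Omega))$, not in $L^\infty$, so we cannot bound it by $\|\tilde v\|_{L^\infty}\|\tilde w_m-\tilde w\|_{L^2}$. I would use the generalized H\"older inequality together with \eqref{Equation-2-4} and \eqref{Equation-2-105}, exactly as in \eqref{Equation-2-106}:
\begin{equation*}
\|\tilde{v}(t)\{\beta_{\delta}(\tilde{w}_m(t))-\beta_{\delta}(\tilde{w}(t))\}\|_{L^2(\Omega)} \le \|\tilde{v}(t)\|_{L^4(\Omega)}\|\tilde{W}_m(t)\|_{L^4(\Omega)} \le (K_6)^2\|\tilde{v}(t)\|_{H^1(\Omega)}\|\tilde{W}_m(t)\|_{H^1(\Omega)} .
\end{equation*}
Squaring and integrating in time bounds the $L^2(0,T;L^2(\Omega))$ norm squared of this term by
\begin{equation*}
(K_6)^4\Bigl(\sup_{0\le t\le T}\|\tilde{v}(t)\|_{H^1(\Omega)}\Bigr)^2\int_0^T\|\tilde{W}_m(t)\|_{H^1(\Omega)}^2dt,
\end{equation*}
which tends to $0$ by Lemma \ref{Lemma-2-13}. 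The factor $\sup_t\|\tilde{v}(t)\|_{H^1(\Omega)}$ is finite because $\tilde{v}\in\mathcal{V}(v_0)$ gives $\sup_t\Phi_\nu(\tilde{v}(t))<\infty$ for the fixed $\nu$, and this is all that is needed since no uniformity in $\nu$ is claimed for the convergence. The $u$-term is handled identically with $\tilde{U}_m$ in place of $\tilde{W}_m$. Alternatively, one could pass to a subsequence converging a.e.\ and use dominated convergence, but the H\"older route is cleaner and directly reuses Lemma \ref{Lemma-2-13}.
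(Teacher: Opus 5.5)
Your proof is correct and follows essentially the same route as the paper. It uses the same splitting of $g_{\delta}(\tilde{v}_m)-g_{\delta}(\tilde{v})$, the same $L^4$--$L^4$ H\"older estimate via $H^1(\Omega)\hookrightarrow L^4(\Omega)$ combined with Lemma \ref{Lemma-2-13}, and the same constant $C_9=\sqrt{2}(c_3+c_4)$. The only cosmetic difference is that the paper first invokes Lemma \ref{Lemma-3-1} to place $\tilde{v}$ in $\mathcal{V}_R(v_0,\nu)$ and so gets the explicit factor $2(K_6)^4R/\nu$, whereas you directly use that $\sup_t\Phi_\nu(\tilde{v}(t))<\infty$ for $\tilde{v}\in\mathcal{V}(v_0)$, which suffices.
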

%%%%%
%%%%%
%%%%%
\begin{proof}[Proof.]
First of all, by Lemma \ref{Lemma-3-1} it follows $\tilde{v} \in \mathcal{V}_R(v_0,\nu)$.
Throughout this proof, for simplicity we set $(\tilde{u}_m,\tilde{w}_m):=S_{\text{{\tiny $3,\!\delta$}}}(\tilde{v}_m)$ for all $m \in \mathbb{N}$ and 
$(\tilde{u},\tilde{w}):=S_{\text{{\tiny $3,\!\delta$}}}(\tilde{v})$.
Using \eqref{Equation-2-3} and \eqref{Equation-2-4}, we obtain the following inequality for all $m \in \mathbb{N}$: 
\begin{align}
\label{Equation-3-15}
&\,\int_0^T \|(g_{\text{{\tiny $\delta$}}} (\tilde{v}_m))(t)-(g_{\text{{\tiny $\delta$}}} (\tilde{v}))(t)\|_{L^2(\Omega)}^2dt\\
\nonumber
\le&\,4(\delta c_3)^2 \int_0^T \|\tilde{v}_m(t)-\tilde{v}(t)\|_{L^2(\Omega)}^2dt+4(c_3)^2 \int_0^T \|\tilde{v}(t)\{\tilde{w}_m(t)-\tilde{w}(t)\}\|_{L^2(\Omega)}^2dt\\
\nonumber
&\,\hspace*{0.5cm}+4(\delta c_4)^2 \int_0^T \|\tilde{v}_m(t)-\tilde{v}(t)\|_{L^2(\Omega)}^2dt+4(c_4)^2 \int_0^T \|\tilde{v}(t)\{\tilde{u}_m(t)-\tilde{u}(t)\}\|_{L^2(\Omega)}^2dt.
\end{align}
Since from \eqref{Equation-2-105} we obtain 
\begin{align*}
\int_0^T \|\tilde{v}(t)\{\tilde{w}_m(t)-\tilde{w}(t)\}\|_{L^2(\Omega)}^2dt &\le \int_0^T \|\tilde{v}(t)\|_{L^4(\Omega)}^2 \|\tilde{w}_m(t)-\tilde{w}(t)\|_{L^4(\Omega)}^2dt\\
&\le \frac{2(K_6)^4}{\nu} \int_0^T \Phi_\nu (\tilde{v}(t)) \|\tilde{w}_m(t)-\tilde{w}(t)\|_{H^1(\Omega)}^2dt\\
&\le \frac{2(K_6)^4 R}{\nu} \int_0^T \|\tilde{w}_m(t)-\tilde{w}(t)\|_{H^1(\Omega)}^2dt 
\end{align*}
as well as 
\begin{equation*}
\int_0^T \|\tilde{v}(t)\{\tilde{u}_m(t)-\tilde{u}(t)\}\|_{L^2(\Omega)}^2dt \le \frac{2(K_6)^4 R}{\nu} \int_0^T \|\tilde{u}_m(t)-\tilde{u}(t)\|_{H^1(\Omega)}^2dt, 
\end{equation*}
we use Lemma \ref{Lemma-2-13} and obtain the following convergences as $m \to \infty$:
\begin{gather}
\label{Equation-3-16}
\tilde{v} (\tilde{w}_m-\tilde{w}) \longrightarrow 0 \quad \text{in} \quad L^2(0,T\,;L^2(\Omega)),\\
\label{Equation-3-17}
\tilde{v} (\tilde{u}_m-\tilde{u}) \longrightarrow 0 \quad \text{in} \quad L^2(0,T\,;L^2(\Omega)).
\end{gather}
Using \eqref{Equation-3-15} with all convergences \eqref{Equation-3-14}, \eqref{Equation-3-16} and \eqref{Equation-3-17}, 
it follows that the required convergence \eqref{Equation-3-13} in the first part of this lemma holds as $m \to \infty$.\\
\indent
In the rest of this proof, we show the uniform boundedness of $\{g_{\text{{\tiny $\delta$}}} (\tilde{v})\,;\,\tilde{v} \in \mathcal{V}_R(v_0,\nu)\}$.
Using \eqref{Equation-2-3} again and \eqref{Equation-3-8}, we obtain 
\begin{gather}
\label{Equation-3-18}
\|(g_{\text{{\tiny $\delta$}}} (\tilde{v}))(t)\|_{L^2(\Omega)}^2=\int_\Omega |\tilde{v}(t)|^2 |c_3 \beta_{\text{{\tiny $\delta$}}} (\tilde{w}(t))
-c_4 \beta_{\text{{\tiny $\delta$}}} (\tilde{u}(t))|^2 dx\\
\nonumber
\le \delta^2 (c_3+c_4)^2 \|\tilde{v}(t)\|_{L^2(\Omega)}^2 \le 2 \delta^2 (c_3+c_4)^2 R,
\end{gather}
which completes the proof by taking a constant $C_9:=2^{\frac{1}{2}} (c_3+c_4)$.
\end{proof}
%%%%%
%%%%%
%%%%%
The last lemma guarantees the uniform boundedness of $\{S(\tilde{v})\,;\,\tilde{v} \in \mathcal{V}(v_0)\}$, which enables us to proceed the Schauder fixed point argument.
%%%%%
%%%%%
%%%%%
\begin{lemma}\label{Lemma-3-4}
There exist constants 
\begin{gather*}
C_{10}:=C_{10}(\|v_0\|_{H^1(\Omega)})>0, \quad C_{11}:=C_{11}(\delta,R,\|z_0\|_{H^1(\Omega)})>0,\\
C_{12}:=C_{12}\left(\delta,R\right)>0,
\end{gather*}
such that the following uniform estimate holds for all $t \in [0,T]$:
\begin{equation*}
\sup_{\tilde{v}\,\in\,\mathcal{V}_R(v_0,\nu)} \left\{ \int_0^t \|(S(\tilde{v}))'(s)\|_{L^2(\Omega)}^2ds+\sup_{0 \le s \le t} \Phi_\nu ((S(\tilde{v}))(s)) \right\}
\le C_{10}+C_{11} t^{\frac{1}{2}}+C_{12} t.
\end{equation*}
\end{lemma}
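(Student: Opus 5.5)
Fix $\tilde{v} \in \mathcal{V}_R(v_0,\nu)$ and set $\tilde{z}:=S_{\text{{\tiny $2,\!\varepsilon,\!z_0$}}}(\tilde{v})$, $\tilde{g}:=g_{\text{{\tiny $\delta$}}}(\tilde{v})$ and $\hat{v}:=S(\tilde{v})=S_{\text{{\tiny $1,\!\delta,\!\nu$}}}(\tilde{z},\tilde{g})$. The plan is to repeat the energy argument from the proof of Lemma \ref{Lemma-2-6}, but on an arbitrary interval $[0,t]$. The point is to keep track of how each term depends on $t$, and not just to bound everything by a constant depending on $T$.

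We start from the abstract energy inequality \eqref{Equation-2-49} with $\lambda=1/8$, which gives \eqref{Equation-2-50} on $[0,t]$. We then insert two bounds.

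\textbf{The forcing term.} By (1) of Lemma \ref{Lemma-2-2} and Lemma \ref{Lemma-3-3},
\begin{equation*}
\frac{5}{4}\int_0^t \|\tilde{g}(s)\|_{\text{{\tiny $\delta,\!\tilde{z}(s)$}}}^2ds \le \frac{5}{4}\delta (C_2)^2 \left(\delta C_9 R^{\frac{1}{2}}\right)^2 t,
\end{equation*}
which will be the source of the term $C_{12}t$.

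\textbf{The metric-deformation term.} Lemma \ref{Lemma-3-2} gives
\begin{equation*}
\int_0^t \|\tilde{z}'(s)\|_{L^2(\Omega)}ds \le t^{\frac{1}{2}}\left(\|z_0\|_{H^1(\Omega)}+R\right).
\end{equation*}
This bound is uniform in $\nu$, $\varepsilon$ and $\tilde{v}$, and it is small for small $t$.

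Next we eliminate the perturbation term $C_{\text{{\tiny $2,\!6,\!1$}}}\int_0^t\{\Phi_\nu(\hat{v})+1\}\|\tilde{z}'\|\,ds$. We use the same integrating-factor function $\xi$ as in \eqref{Equation-2-51}–\eqref{Equation-2-53}, now on $[0,t]$. This yields
\begin{align*}
&\,\Phi_\nu(\hat{v}(t))+\frac{1}{2}\int_0^t\|\hat{v}'\|_{\text{{\tiny $\delta,\!\tilde{z}$}}}^2ds\\
\le&\,\Big\{\Phi_\nu(v_0)+1+c\,\delta^3 C_9^2 R\,t\Big\}\Big\{1+C_{\text{{\tiny $2,\!6,\!1$}}}\,t^{\frac{1}{2}}(\|z_0\|_{H^1(\Omega)}+R)\,e^{C_{\text{{\tiny $2,\!6,\!1$}}}T^{1/2}(\|z_0\|_{H^1(\Omega)}+R)}\Big\}.
\end{align*}
Since the left-hand side is monotone in $t$ in the relevant sense, the same bound holds with $\sup_{0\le s\le t}\Phi_\nu(\hat v(s))$ in place of $\Phi_\nu(\hat v(t))$. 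We then convert the twisted norm of $\hat{v}'$ to the flat norm using $\|\cdot\|_{L^2(\Omega)}^2\le \delta C_1^{-2}\|\cdot\|_{\text{{\tiny $\delta,\!\tilde z$}}}^2$.

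Expanding the product and using $t\le T$ to absorb the cross terms $t^{1/2}\cdot t\le T^{1/2}t$ and $t\le T^{1/2}t^{1/2}$, we arrive at the stated form. Here $C_{10}$ is a multiple of $\Phi_\nu(v_0)+1\le \frac12\|v_0\|_{H^1(\Omega)}^2+1$, which is independent of $\nu<1$. The other two constants, $C_{11}$ and $C_{12}$, collect the $\delta$, $R$ and $\|z_0\|_{H^1}$ dependence.

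\textbf{Main obstacle.} The key issue is to ensure that the $t$-independent part $C_{10}$ carries no $\delta$- or $R$-dependence. This is what later allows $R$ and $T_\delta$ to be chosen so that $C_{10}+C_{11}T_\delta^{1/2}+C_{12}T_\delta\le R$. To achieve this, every factor involving $\delta$ or $R$ must multiply a power of $t$. Two places need care:
\begin{itemize}
\item The conversion of $\int_0^t\|\hat v'\|^2$ to the flat norm introduces a factor $\delta/C_1^2$ that multiplies $C_{10}$ as well. To avoid this, I would first try to bound $\int_0^t\|\hat v'\|_{L^2(\Omega)}^2$ directly, by testing \eqref{Equation-2-40} with $\hat v'$ in the flat $L^2(\Omega)$ product. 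This uses the lower bound $M_{\text{{\tiny $\delta,\!\tilde z$}}}\ge(\delta K_2|\Omega|^{1/2})^{-1}$.
\item The constant $C_{\text{{\tiny $2,\!6,\!1$}}}$ from the abstract theory must not depend on $\nu$.
\end{itemize}
If the first point cannot be avoided, I would accept a $\delta$-dependent $C_{10}$ and choose $R$ depending on $\delta$ in the fixed-point argument, which is still consistent with $T_\delta$ depending only on $\delta$.
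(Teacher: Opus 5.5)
Your handling of the $t^{1/2}$- and $t$-terms is sound. Re-running the integrating factor of \eqref{Equation-2-51}--\eqref{Equation-2-53} on $[0,t]$ is a valid, somewhat longer alternative to the paper's route, which instead inserts the global bound \eqref{Equation-3-19}, $\sup_{[0,T]}\Phi_\nu(\hat v)\le C_{3,4,1}$, into the perturbation integral of \eqref{Equation-2-50}. (It is the right-hand side of your bound that is nondecreasing in $t$, and that is what lets you pass to $\sup_{0\le s\le t}\Phi_\nu(\hat v(s))$.)

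\textbf{The gap.} The genuine gap is the claim $C_{10}=C_{10}(\|v_0\|_{H^1(\Omega)})$, and your remedy does not close it. Testing \eqref{Equation-2-40} by $\hat v'$ in the flat product gives $(M_{\delta,\tilde z(s)}\hat v'(s),\hat v'(s))_{L^2(\Omega)}=\|\hat v'(s)\|_{\delta,\tilde z(s)}^2$, which is the same twisted quantity. The lower bound $(\delta K_2|\Omega|^{1/2})^{-1}$ on the multiplier is exactly $C_1^2/\delta$ from (1) of Lemma~\ref{Lemma-2-2}. So you again get $\int_0^t\|\hat v'\|_{L^2(\Omega)}^2\le 2\delta K_2|\Omega|^{1/2}\,\Phi_\nu(v_0)+\cdots$. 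Your fallback therefore proves a weaker statement. That weaker statement is enough for Proposition~\ref{Proposition-2}, which only needs $C_{10}$ independent of $R$, $\nu$, $\varepsilon$, but it is not the lemma as written. The paper's proof does not settle this point either. Its $C_{10}=\frac32\|v_0\|_{H^1(\Omega)}^2$ only undoes the factor $\frac12$ in \eqref{Equation-3-20} and tacitly treats $\|\cdot\|_{\delta,\tilde z(s)}$ as the flat norm. Contrast \eqref{Equation-2-54}, where the factor $2\delta/C_1^2$ is kept.

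\textbf{The missing idea.} Replace the global bound $\delta K_2|\Omega|^{1/2}$ on $\mathcal{K}_\delta\tilde z(s)$ by a time-local, $\delta$-independent one. Since $\delta>1$, we have $0<\alpha_\delta(r)\le|r|+1$, so (A1)(b) gives
\begin{equation*}
0<(\mathcal{K}_\delta z_0)(x)\le\int_\Omega K(x,y)\bigl(|z_0(y)|+1\bigr)dy\le K_2\bigl(\|z_0\|_{L^2(\Omega)}+|\Omega|^{\frac{1}{2}}\bigr)=:a.
\end{equation*}
By (A1)(b), \eqref{Equation-2-2} and Lemma~\ref{Lemma-3-2},
\begin{equation*}
\bigl|(\mathcal{K}_\delta\tilde z(s))(x)-(\mathcal{K}_\delta z_0)(x)\bigr|\le K_2\int_0^s\|\tilde z'(\tau)\|_{L^2(\Omega)}d\tau\le K_2\bigl(\|z_0\|_{H^1(\Omega)}+R\bigr)s^{\frac{1}{2}}=:b\,s^{\frac{1}{2}}.
\end{equation*}
Since $\|\hat v'(s)\|_{L^2(\Omega)}^2\le\|\mathcal{K}_\delta\tilde z(s)\|_{L^\infty(\Omega)}\|\hat v'(s)\|_{\delta,\tilde z(s)}^2$, you get $\|\hat v'(s)\|_{L^2(\Omega)}^2\le(a+b\,t^{1/2})\|\hat v'(s)\|_{\delta,\tilde z(s)}^2$ for $0\le s\le t$. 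Combine this with your twisted estimate. The $t$-independent part of the final bound is then $(1+2a)(\Phi_\nu(v_0)+1)$. This depends only on $\|v_0\|_{H^1(\Omega)}$ because $z_0=v_0$ (cf.~\eqref{Equation-2-12}). Every $\delta$- or $R$-dependent contribution carries a factor $t^{1/2}$ or $t$, using $t\le T$.

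Your flat testing is still worth keeping, for a different reason. Exactly as in \eqref{Equation-3-23}, it yields $\frac12\int_0^t\|\hat v'\|_{\delta,\tilde z}^2ds+\Phi_\nu(\hat v(t))\le\Phi_\nu(v_0)+\frac12\int_0^t\|\tilde g\|_{\delta,\tilde z}^2ds$ with no metric-deformation term. That bypasses \eqref{Equation-2-49} and your concern about the $\nu$-dependence of $C_{2,6,1}$.
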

%%%%%
%%%%%
%%%%%
\begin{proof}[Proof.]
In this proof we set $\hat{v}:=S(\tilde{v})$.
By Lemmas \ref{Lemma-2-6}, \ref{Lemma-3-2} and \ref{Lemma-3-3}, we have already obtained the following uniform estimate: there exists a constant 
$C_{\text{{\tiny $3,\!4,\!1$}}}>0$, which depends on the following values:
\begin{equation*}
\delta,\quad \|v_0\|_{H^1(\Omega)}, \quad \|z_0\|_{H^1(\Omega)}, \quad T, \quad R,
\end{equation*}
which is independent of $\varepsilon$, such that 
\begin{equation}\label{Equation-3-19}
\sup_{\tilde{v}\,\in\,\mathcal{V}_R(v_0,\nu)} \left\{ \sup_{0 \le t \le T} \Phi_\nu (\hat{v}(t))+\int_0^T \|\hat{v}'(t)\|_{L^2(\Omega)}^2dt\right\} \le C_{\text{{\tiny $3,\!4,\!1$}}}.
\end{equation}
Then, we go back to \eqref{Equation-2-50}.
Using (a) of Lemma \ref{Lemma-2-2} with \eqref{Equation-3-19}, we derive the following inequality for all $t \in [0,T]$:
\begin{align}
\label{Equation-3-20}
&\,\Phi_\nu (\hat{v}(t))+\frac{1}{2} \int_0^t \|\hat{v}'(s)\|_{\text{{\tiny $\delta,\!\tilde{z}(s)$}}}^2ds\\
\nonumber
\le&\,\Phi_\nu (v_0)+C_{\text{{\tiny $2,\!6,\!1$}}} \int_0^t \left\{\Phi_\nu (\hat{v}(s))+1\right\} \|\tilde{z}'(s)\|_{L^2(\Omega)}ds
+\frac{5}{4} \int_0^t \|(g_{\text{{\tiny $\delta$}}} (\tilde{v}))(s)\|_{\text{{\tiny $\delta,\!\tilde{z}(s)$}}}^2ds\\
\nonumber
\le&\,\frac{1}{2}\|v_0\|_{H^1(\Omega)}^2+C_{\text{{\tiny $2,\!6,\!1$}}} (C_{\text{{\tiny $3,\!4,\!1$}}}+1) \int_0^t \|\tilde{z}'(s)\|_{L^2(\Omega)}ds\\
\nonumber
&\,\hspace*{2cm}+\frac{5\delta (C_2)^2}{4}\left(\sup_{0 \le t \le T} \|(g_{\text{{\tiny $\delta$}}} (\tilde{v}))(t)\|_{L^2(\Omega)}\right)^2 t\\
\nonumber
\le&\,\frac{1}{2}\|v_0\|_{H^1(\Omega)}^2+C_{\text{{\tiny $2,\!6,\!1$}}}(C_{\text{{\tiny $3,\!4,\!1$}}}+1) \left( \|z_0\|_{H^1(\Omega)}+R\right) t^{\frac{1}{2}}
+\frac{5\delta^3 R(C_2 C_9)^2}{4}t.
\end{align}
Taking the constants $C_{10}>0$, $C_{11}>0$ and $C_{12}>0$ by
\begin{gather*}
C_{10}:=\frac{3}{2}\|v_0\|_{H^1(\Omega)}^2, \quad C_{11}:=3 C_{\text{{\tiny $2,\!6,\!1$}}} (C_{\text{{\tiny $3,\!4,\!1$}}}+1) \left( \|z_0\|_{H^1(\Omega)}+R\right),\\
C_{12}:=\frac{15\delta^3 R(C_2 C_9)^2}{4},
\end{gather*}
\eqref{Equation-3-20} yields the required uniform bounded estimate.
\end{proof}
%%%%%
%%%%%
%%%%%
Now we are ready for showing Proposition \ref{Proposition-2}.
%%%%%
%%%%%
%%%%%
\begin{proof}[Proof of Proposition \ref{Proposition-2}.]
We fix a number $R_0 \ge R_*$ satisfying $R_0 \ge C_{10}+1$, and choose a time $T_{\text{{\tiny $\delta$}}} \in (0,T]$, which is independent of $\nu \in (0,1)$ 
and $\varepsilon \in (0,1)$, such that 
\begin{equation}\label{Equation-3-21}
C_{11} T_{\text{{\tiny $\delta$}}}^{\frac{1}{2}}+C_{12} T_{\text{{\tiny $\delta$}}} \le 1.
\end{equation}
By Lemma \ref{Lemma-3-4} with \eqref{Equation-3-21} we have 
\begin{equation}\label{Equation-3-22}
S(\mathcal{V}_R(v_0,\nu)) \subset \mathcal{V}_R(v_0,\nu),\quad \forall \varepsilon \in (0,1).
\end{equation}
By Lemmas \ref{Lemma-2-7}, \ref{Lemma-2-6}, \ref{Lemma-2-13} and \ref{Lemma-3-3}, the operator 
$S:C([0,T_{\text{{\tiny $\delta$}}}]\,;L^2(\Omega)) \rightarrow C([0,T_{\text{{\tiny $\delta$}}}]\,;L^2(\Omega))$ is continuous satisfying \eqref{Equation-3-22}.
Using Lemma \ref{Lemma-3-1} and applying the Schauder fixed point theorem, we see that the operator $S$ has at least one fixed point $v$, that is, $S(v)=v$.
By the definition of the operators $S_{\text{{\tiny $1,\!\delta,\!\nu$}}}$, $S_{\text{{\tiny $2,\!\varepsilon,\!z_0$}}}$ and $S_{\text{{\tiny $3,\!\delta$}}}$, 
the fixed point $v$ of the operator $S$ generates the quadruple $(u,v,w,z)$ satisfying all required properties (s1)--(s3) in Theorem \ref{Proposition-2}.
Hence, $(u,v,w,z)$ is a strong solution to $\text{(AP)}{}_{\text{{\tiny $\delta,\!\varepsilon,\!\nu$}}}$ on $[0,T_{\text{{\tiny $\delta$}}}]$.
\end{proof}
%%%%%
%%%%%
%%%%%
%%%%%%%%%%%%%%%%%%%%%%%%%%%%%%%%%%%%%%%%%%%%%%%%%%%%%%%%%%%%%%%%%%%%%%%%%%%%%%%%%%%%%%%%%%%%%%%%%%%%%%%%%%%%%%%%
%%%%%%%%%%%%%%%%%%%%%%%%%%%%%%%%%%%%%%%%%%%%%%%%%%%%%%%%%%%%%%%%%%%%%%%%%%%%%%%%%%%%%%%%%%%%%%%%%%%%%%%%%%%%%%%%
%%%%%%%%%%%%%%%%%%%%%%%%%%%%%%%%%%%%%%%%%%%%%%%%%%%%%%%%%%%%%%%%%%%%%%%%%%%%%%%%%%%%%%%%%%%%%%%%%%%%%%%%%%%%%%%%
\subsection{Boundedness}\label{Subsection-3-2}
%%%%%%%%%%%%%%%%%%%%%%%%%%%%%%%%%%%%%%%%%%%%%%%%%%%%%%%%%%%%%%%%%%%%%%%%%%%%%%%%%%%%%%%%%%%%%%%%%%%%%%%%%%%%%%%%
%%%%%%%%%%%%%%%%%%%%%%%%%%%%%%%%%%%%%%%%%%%%%%%%%%%%%%%%%%%%%%%%%%%%%%%%%%%%%%%%%%%%%%%%%%%%%%%%%%%%%%%%%%%%%%%%
%%%%%%%%%%%%%%%%%%%%%%%%%%%%%%%%%%%%%%%%%%%%%%%%%%%%%%%%%%%%%%%%%%%%%%%%%%%%%%%%%%%%%%%%%%%%%%%%%%%%%%%%%%%%%%%%
In this subsection, we show some uniform boundedness of approximate strong local-in-time solutions to the approximate Cauchy problem 
$\text{(AP)}{}_{\text{{\tiny $\delta,\!\varepsilon,\!\nu$}}}$, which plays an important role not only to show the uniqueness of approximate strong local-in-time 
solutions in Subsection \ref{Subsection-3-3} but also to apply the continuation argument of the	approximate strong local-in-time solution for constructing approximate 
strong global-in-time solutions to $\text{(AP)}{}_{\text{{\tiny $\delta,\!\varepsilon,\!\nu$}}}$ in Section \ref{Subsection-3-4}.
In the following argument, we denote by a quadruple $(u_{\text{{\tiny $\delta,\!\varepsilon,\!\nu$}}},v_{\text{{\tiny $\delta,\!\varepsilon,\!\nu$}}},
w_{\text{{\tiny $\delta,\!\varepsilon,\!\nu$}}},z_{\text{{\tiny $\delta,\!\varepsilon,\!\nu$}}})$ a strong solution to the approximate initial-boundary value problem 
$\text{(AP)}{}_{\text{{\tiny $\delta,\!\varepsilon,\!\nu$}}}$ on $[0,T^*]$, where $T^*>0$ is a certain finite time depending on the approximate parameter 
$\delta \in (1,\infty)$ in general.\\
\indent
Firstly, we show Lemma \ref{Lemma-3-6}.
Especially, for each $\delta \in (1,\infty)$ Lemma \ref{Lemma-3-6} guarantees that the second component $v_{\text{{\tiny $\delta,\!\varepsilon,\!\nu$}}}$ 
of the strong solution to $\text{(AP)}{}_{\text{{\tiny $\delta,\!\varepsilon,\!\nu$}}}$ on $[0,T^*]$ is uniformly bounded in 
$C([0,T]\,;L^2(\Omega)) \cap W^{1,2}(0,T^*\,;L^2(\Omega))$ with 
respect to $\varepsilon \in (0,1)$ and $\nu \in (0,1)$, which comes from \eqref{Equation-3-7}.
%%%%%
%%%%%
%%%%%
\begin{lemma}\label{Lemma-3-6}
There exists a constant $C_{13}>0$, which depends on $\delta$, $T^*$ and $\|v_0\|_{H^1(\Omega)}$, such that the following uniform boundedness holds 
for all $\nu \in (0,1)$:
\begin{align*}
&\sup_{\varepsilon \in (0,1)} \left\{ \int_0^{T^*} \|v_{\text{{\tiny $\delta,\!\varepsilon,\!\nu$}}}'(t)\|_{L^2 (\Omega)}^2dt
+\sup_{0 \le t \le T^*} \Phi_\nu (v_{\text{{\tiny $\delta,\!\varepsilon,\!\nu$}}}(t)) \right.\\
&\hspace*{4cm}\left.+\nu^2 \int_0^{T^*} \|v_{\text{{\tiny $\delta,\!\varepsilon,\!\nu$}}}(t)\|_{H^2(\Omega)}^2dt \right\} \le C_{13}.
\end{align*}
In particular, the following uniform boundedness holds:
\begin{equation*}
\sup_{\varepsilon \in (0,1),\,\nu \in (0,1)} \left\{ \int_0^{T^*} \|v_{\text{{\tiny $\delta,\!\varepsilon,\!\nu$}}}'(t)\|_{L^2 (\Omega)}^2dt+
\frac{1}{2} \left( \max_{0 \le t \le T^*} \|v_{\text{{\tiny $\delta,\!\varepsilon,\!\nu$}}}(t)\|_{L^2(\Omega)}\right)^2 \right\} \le C_{13}.
\end{equation*}

\end{lemma}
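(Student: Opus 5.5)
We prove Lemma \ref{Lemma-3-6} by viewing the $v$-component as the solution of the twisted gradient flow of Proposition \ref{Proposition-2-1} with data $\tilde{z}=z$ and $\tilde{g}=g_{\text{{\tiny $\delta$}}}(u,v,w)$. We then apply Lemmas \ref{Lemma-2-5} and \ref{Lemma-2-6} on $[0,T^*]$. The work is to bound the two data quantities that enter $C_4$ and $C_5$, namely $\int_0^{T^*}\|z'\|_{L^2(\Omega)}dt$ and $\int_0^{T^*}\|g_{\text{{\tiny $\delta$}}}\|_{L^2(\Omega)}^2dt$, independently of $\varepsilon$ and $\nu$. This is harder than in Lemma \ref{Lemma-3-4}, because $v$ is no longer a priori in $\mathcal{V}_R(v_0,\nu)$ and both bounds involve $v$ itself.

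\emph{Step 1: the forcing term.} By \eqref{Equation-2-3}, pointwise $|g_{\text{{\tiny $\delta$}}}(u,v,w;t)|\le\delta(c_3+c_4)|v(t)|$, so
\begin{equation*}
\int_0^t\|g_{\text{{\tiny $\delta$}}}(s)\|_{L^2(\Omega)}^2ds\le\delta^2(c_3+c_4)^2\int_0^t\|v(s)\|_{L^2(\Omega)}^2ds .
\end{equation*}

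\emph{Step 2: the metric generator.} Testing \eqref{Equation-3-4} by $z'$ as in Lemma \ref{Lemma-3-2} gives, for every $t\le T^*$,
\begin{equation*}
\int_0^t\|z'(s)\|_{L^2(\Omega)}^2ds\le\|v_0\|_{H^1(\Omega)}^2+\int_0^t\|v'(s)\|_{L^2(\Omega)}^2ds ,
\end{equation*}
uniformly in $\varepsilon$. This couples the estimate for $z'$ back to the quantity we are trying to bound.

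\emph{Step 3: closing the loop.} First get the $L^\infty(L^2)$ bound for $v$ in a way that does not depend on $z'$. The idea is to test the $v$-equation in its flat form $v'+\mathcal{K}_{\text{{\tiny $\delta$}}}z(-\nu\Delta_N v+v)=g_{\text{{\tiny $\delta$}}}$ by $M_{\text{{\tiny $\delta,\!z$}}}v$, that is, to work with $\int_\Omega(\mathcal{K}_{\text{{\tiny $\delta$}}}z)^{-1}|v|^2dx$. Differentiating this weight produces $z'$ again. So instead I would test directly with $v$ in $L^2(\Omega)$. Then
\begin{equation*}
\int_\Omega\mathcal{K}_{\text{{\tiny $\delta$}}}z\,(\nu|\nabla v|^2+|v|^2)\,dx+\nu\int_\Omega v\,\nabla(\mathcal{K}_{\text{{\tiny $\delta$}}}z)\cdot\nabla v\,dx ,
\end{equation*}
where the cross term is bounded through (A1)(c) by $\nu\delta K_3|\Omega|^{1/2}\|v\|_{L^2(\Omega)}\|\nabla v\|_{\bm{L}^2(\Omega)}$. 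The coercive part is bounded below by $(K_1/\delta)\nu\|\nabla v\|^2$ via \eqref{Equation-2-14}, so Young's inequality absorbs the cross term. This gives $\frac{d}{dt}\|v\|_{L^2(\Omega)}^2\le C(\delta)\|v\|_{L^2(\Omega)}^2$, and Gronwall yields $\max_{[0,T^*]}\|v\|_{L^2(\Omega)}^2\le C(\delta,T^*)\|v_0\|_{L^2(\Omega)}^2$, independent of $\varepsilon$ and $\nu$. Step 1 then bounds the $g$-integral.

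For the $v'$ and $\Phi_\nu$ bounds, I would use the energy inequality \eqref{Equation-2-50} on $[0,t]$ and insert Step 2 through Cauchy--Schwarz:
\begin{equation*}
\int_0^t\|z'\|_{L^2(\Omega)}ds\le t^{1/2}\Big(\|v_0\|_{H^1(\Omega)}^2+\int_0^t\|v'\|_{L^2(\Omega)}^2ds\Big)^{1/2}.
\end{equation*}
The perturbation term is roughly $C_{\text{{\tiny $2,\!6,\!1$}}}(\sup\Phi_\nu+1)\,t^{1/2}\,(\cdots)^{1/2}$. It is superlinear in the unknown quantity $Y(t):=\int_0^t\|v'\|^2ds+\sup_{s\le t}\Phi_\nu(v(s))$, but it carries the small factor $t^{1/2}$. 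Using the lower bound in Lemma \ref{Lemma-2-2}(1), we get $Y(t)\le A+B\,t^{1/2}(Y(t)+1)^{3/2}$, with $A,B$ depending only on $\delta$, $\|v_0\|_{H^1(\Omega)}$ and the Step 3 bound.

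\emph{Main obstacle.} Closing this nonlinear inequality is the main obstacle. A continuity argument, using that $Y$ is continuous with $Y(0)=\Phi_\nu(v_0)\le\frac12\|v_0\|_{H^1(\Omega)}^2$, gives $Y\le 2A+1$ on a short interval $[0,\tau]$ whose length depends only on $(\delta,\|v_0\|_{H^1})$. On each subinterval the restarted $\Phi_\nu$ data are controlled by the previous step, which yields growing constants. One must then check that finitely many steps, with a number depending on $T^*$ and $\delta$, cover $[0,T^*]$ without the step length shrinking to zero. If that fails, I would fall back on using that $T^*\le T_{\text{{\tiny $\delta$}}}$ from Proposition \ref{Proposition-2}, where the invariance $v\in\mathcal{V}_R(v_0,\nu)$ already gives the bound.

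Finally, the $\nu^2\int\|v\|_{H^2}^2$ term follows exactly as in \eqref{Equation-2-56}. The second displayed estimate of the lemma follows from \eqref{Equation-3-7}.
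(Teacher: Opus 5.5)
Your proof has a genuine gap, and it is the one you flag yourself as the main obstacle. You route the $v'$ and $\Phi_\nu$ bounds through the abstract energy inequality \eqref{Equation-2-50}, so you import the term $C_{2,6,1}\int_0^t\{\Phi_\nu(v(s))+1\}\|z'(s)\|_{L^2(\Omega)}\,ds$. Because $z'$ is controlled only through $v'$ (your Step 2), this produces $Y(t)\le A+Bt^{1/2}(Y(t)+1)^{3/2}$. That inequality closes only on an interval of length roughly $(B^2A)^{-1}$. At each restart the new $A$ is bounded only by something like $2A+1$, and the restarted $z$-estimate also picks up $\varepsilon\|\nabla z(\tau)\|_{L^2(\Omega)}^2$. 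So the admissible step lengths can decay geometrically and sum to less than $T^*$; the comparison ODE $y'\sim y^{3/2}$ blows up in finite time, and nothing in your sketch rules this out.

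The fallback $T^*\le T_\delta$ does not rescue the argument. The lemma must hold on every $[0,T^*]$ where a strong solution exists, with a constant depending only on $(\delta,T^*,\|v_0\|_{H^1(\Omega)})$. It is used on $[0,T_m]$ with $T_m$ increasing to the maximal existence time in the continuation argument of Proposition~\ref{Proposition-4}. Moreover, the invariance $v\in\mathcal{V}_R(v_0,\nu)$ from Proposition~\ref{Proposition-2} concerns only the fixed point on $[0,T_\delta]$.

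The missing idea is that the $z'$-perturbation is not needed here at all, because $\Phi_\nu$ does not depend on $z$. You tried testing with $M_{\delta,z}v$ (which differentiates the weight) and with $v$. Instead, test \eqref{Equation-3-2} by $v'(t)$ in the flat space $L^2(\Omega)$; equivalently, take the twisted inner product of the flow equation with $v'$. The weight then multiplies only $v'$, so it is never differentiated:
\begin{itemize}
\item by the chain rule, $(-\nu\Delta_N v+v,v')_{L^2(\Omega)}=\frac{d}{dt}\Phi_\nu(v(t))$;
\item by \eqref{Equation-2-15}, $(M_{\delta,z(t)}v',v')_{L^2(\Omega)}\ge(\delta K_2|\Omega|^{1/2})^{-1}\|v'\|_{L^2(\Omega)}^2$;
\item by \eqref{Equation-2-15} and \eqref{Equation-2-3}, $|(M_{\delta,z(t)}g_\delta,v')_{L^2(\Omega)}|\le\delta^2(c_3+c_4)K_1^{-1}\|v\|_{L^2(\Omega)}\|v'\|_{L^2(\Omega)}$.
\end{itemize}
Absorbing $\|v'\|^2$ by Young's inequality and using \eqref{Equation-3-7} gives
\[
\frac{1}{2\delta K_2|\Omega|^{1/2}}\|v'(t)\|_{L^2(\Omega)}^2+\frac{d}{dt}\Phi_\nu(v(t))\le C(\delta)\,\Phi_\nu(v(t)).
\]
This inequality is linear, so Gronwall gives the bound on all of $[0,T^*]$ with constant of order $\Phi_\nu(v_0)e^{C(\delta)T^*}$, uniformly in $\varepsilon$ and $\nu$. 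It needs neither Step 2 nor your separate $L^\infty(L^2)$ estimate, which is valid but superfluous. This is exactly the paper's proof. Your remaining steps are fine: the $\nu^2\int\|v\|_{H^2(\Omega)}^2$ bound follows from the equation and elliptic regularity, and the second display follows from \eqref{Equation-3-7}.
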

%%%%%
%%%%%
%%%%%
\begin{proof}[Proof.]
For simplicity we set $(u,v,w,z):=(u_{\text{{\tiny $\delta,\!\varepsilon,\!\nu$}}},v_{\text{{\tiny $\delta,\!\varepsilon,\!\nu$}}},w_{\text{{\tiny $\delta,\!\varepsilon,\!\nu$}}},
z_{\text{{\tiny $\delta,\!\varepsilon,\!\nu$}}})$.
Taking the inner product in both sides of \eqref{Equation-3-2} in $L^2(\Omega)$ by $v'(t)$ and using \eqref{Equation-2-18}, 
we obtain
\begin{align}
\label{Equation-3-23}
&\int_\Omega \left(\mathcal{K}_{\text{{\tiny $\delta$}}}(z(t))\right)^{-1} |v'(t)|^2dx+\frac{d}{dt} \Phi_\nu (v(t))\\
\nonumber
&\hspace*{1cm}=\int_\Omega \left(\mathcal{K}_{\text{{\tiny $\delta$}}}(z(t))\right)^{-1} g_{\text{{\tiny $\delta$}}}(u,v,w\,;t) v'(t)dx,\quad \text{a.e.}~t \in (0,T^*).
\end{align}
In order to estimate the right-hand side of \eqref{Equation-3-23}, we use \eqref{Equation-2-15} and obtain the following inequality for a.e. $t \in (0,T^*)$:
\begin{gather*}
\frac{1}{\delta K_2 |\Omega|^{\frac{1}{2}}} \|v'(t)\|_{L^2(\Omega)}^2+\frac{d}{dt} \Phi_\nu (v(t)) \le \frac{\delta^2 (c_3+c_4)}{K_1} \int_\Omega |v(t)| |v'(t)|dx\\
\le \frac{1}{2\delta K_2 |\Omega|^{\frac{1}{2}}} \|v'(t)\|_{L^2(\Omega)}^2+\frac{\delta^5 K_2 |\Omega|^{\frac{1}{2}}}{2}
\left( \frac{c_3+c_4}{K_1}\right)^2 \|v(t)\|_{L^2(\Omega)}^2.
\end{gather*}
From \eqref{Equation-3-7} the above inequality yields the following inequality for a.e. $t \in (0,T^*)$:
\begin{equation}\label{Equation-3-24}
\frac{1}{2\delta K_2 |\Omega|^{\frac{1}{2}}} \|v'(t)\|_{L^2(\Omega)}^2+\frac{d}{dt} \Phi_\nu (v(t)) \le C_{\text{{\tiny $3,\!5,\!1$}}}(\delta) \Phi_\nu (v(t)),
\end{equation}
where we take the constant $C_{\text{{\tiny $3,\!5,\!1$}}}(\delta)>0$ by 
\begin{equation*}
C_{\text{{\tiny $3,\!5,\!1$}}}(\delta):=\delta^5 K_2 |\Omega|^{\frac{1}{2}} \left( \frac{c_3+c_4}{K_1}\right)^2.
\end{equation*}
Applying the Gronwall lemma to \eqref{Equation-3-24}, we obtain the following inequality for all $t \in [0,T^*]$:
\begin{equation*}
\frac{1}{2\delta K_2 |\Omega|^{\frac{1}{2}}} \int_0^t \|v'(s)\|_{L^2(\Omega)}^2ds+\Phi_\nu (v(t)) \le \Phi_\nu (v_0) \exp \bigl(C_{\text{{\tiny $3,\!5,\!1$}}}(\delta)T\bigr),
\end{equation*}
which implies 
\begin{equation}\label{Equation-3-25}
\int_0^T \|v'(t)\|_{L^2(\Omega)}^2dt+\sup_{0 \le t \le T^*} \Phi_\nu (v(t)) \le C_{\text{{\tiny $3,\!5,\!2$}}}(\delta)
\end{equation}
by taking the constant $C_{\text{{\tiny $3,\!5,\!2$}}}>0$ as follows:
\begin{equation*}
C_{\text{{\tiny $3,\!5,\!2$}}}(\delta):=\left(\frac{1}{2}+\delta K_2 |\Omega|^{\frac{1}{2}}\right) \|v_0\|_{H^1(\Omega)} ^2 \exp \bigl(C_{\text{{\tiny $3,\!5,\!1$}}}(\delta)T\bigr).
\end{equation*}
\indent
Next, we use the Neumann elliptic regularity (cf. \eqref{Equation-1-8}).
Then, we obtain
\begin{align}
\label{Equation-3-26}
&\,\nu^2 \int_0^{T^*} \|v(t)\|_{H^2(\Omega)}^2dt \le (K_5)^2 \int_0^{T^*} \left( \nu \|\Delta v(t)\|_{L^2(\Omega)}+\|v(t)\|_{L^2(\Omega)}\right)^2dt\\
\nonumber
\le&\,(K_5)^2 \int_0^{T^*} \left( \|M_{\text{{\tiny $\delta,\!z(t)$}}}v'(t)\|_{L^2 (\Omega)}+\|M_{\text{{\tiny $\delta,\!z(t)$}}} g_{\text{{\tiny $\delta$}}}(u,v,w\,;t)\|_{L^2(\Omega)}
+2\|v(t)\|_{L^2(\Omega)} \right)^2 dt\\
\nonumber
\le&\,3(K_5)^2 \int_0^{T^*} \|M_{\text{{\tiny $\delta,\!z(t)$}}}v'(t)\|_{L^2 (\Omega)}^2dt+12(K_5)^2 \int_0^{T^*} \|v(t)\|_{L^2(\Omega)}^2dt\\
\nonumber
&\,\hspace*{2cm}+3(K_5)^2 \int_0^{T^*} \|M_{\text{{\tiny $\delta,\!z(t)$}}} g_{\text{{\tiny $\delta$}}}(u,v,w\,;t)\|_{L^2(\Omega)}^2dt.
\end{align}
From \eqref{Equation-2-15} we obtain 
\begin{equation}\label{Equation-3-27}
\|M_{\text{{\tiny $\delta,\!z(t)$}}}v'(t)\|_{L^2 (\Omega)}^2=\int_\Omega |\mathcal{K}_{\text{{\tiny $\delta$}}}(z(t))|^{-2} |v'(t)|^2dx 
\le \left(\frac{\delta}{K_1}\right)^2 \|v'(t)\|_{L^2(\Omega)}^2,
\end{equation}
and by repeating the similar derivation to \eqref{Equation-3-18}
\begin{equation}\label{Equation-3-28}
\|M_{\text{{\tiny $\delta,\!z(t)$}}} g_{\text{{\tiny $\delta$}}}(u,v,w\,;t)\|_{L^2(\Omega)}^2 \le \delta^4 \left( \frac{c_3+c_4}{K_1} \right)^2 \|v(t)\|_{L^2(\Omega)}^2.
\end{equation}
Substituting \eqref{Equation-3-2} into \eqref{Equation-3-26}, from \eqref{Equation-3-7}, \eqref{Equation-3-27} and \eqref{Equation-3-28} we obtain 
\begin{equation}\label{Equation-3-29}
\nu^2 \int_0^{T^*} \|v(t)\|_{H^2(\Omega)}^2dt \le C_{\text{{\tiny $3,\!5,\!3$}}}(\delta)
\left( \int_0^{T^*} \|v'(t)\|_{L^2(\Omega)}^2dt+\sup_{0 \le t \le T^*} \Phi_\nu (v(t))\right),
\end{equation}
where the constant $C_{\text{{\tiny $3,\!5,\!3$}}}(\delta)>0$ is given by
\begin{equation*}
C_{\text{{\tiny $3,\!5,\!3$}}}(\delta):=3\left(\frac{\delta K_5}{K_1}\right)^2+6\delta^4 T^* \left\{ \frac{(c_3+c_4) K_5}{K_1} \right\}^2+24 T^* (K_5)^2.
\end{equation*}
\indent
Hence, \eqref{Equation-3-25} and \eqref{Equation-3-29} yield the required uniform boundedness in this lemma. 
\end{proof}
%%%%%
%%%%%
%%%%%
Secondly, we show Lemma \ref{Lemma-3-7} which implies that the third component $w_{\text{{\tiny $\delta,\!\varepsilon,\!\nu$}}}$ of the strong solution to 
$\text{(AP)}{}_{\text{{\tiny $\delta,\!\varepsilon,\!\nu$}}}$ on $[0,T^*]$ is uniformly bounded in 
\begin{equation*}
W^{1,2}(0,T^*;L^2(\Omega)) \cap L^\infty (0,T^*;H^1(\Omega)) \cap L^2(0,T^*;H^2(\Omega))
\end{equation*}
with respect to the approximate parameters $\varepsilon \in (0,1)$ and $\nu \in (0,1)$.
%%%%%
%%%%%
%%%%%
\begin{lemma}\label{Lemma-3-7}
There exists a constant $C_{14}>0$, which depends on $\delta$, $T^*$, $\|v_0\|_{H^1(\Omega)}$ and $\|w_0\|_{H^1(\Omega)}$, such that 
\begin{align*}
&\sup_{\varepsilon \in (0,1),\,\nu \in (0,1)} \left( \sup_{0 \le t \le T} \|w_{\text{{\tiny $\delta,\!\varepsilon,\!\nu$}}}(t)\|_{H^1(\Omega)}^2
+\int_0^T \|w_{\text{{\tiny $\delta,\!\varepsilon,\!\nu$}}}'(t)\|_{L^2 (\Omega)}^2dt \right.\\
&\hspace*{4cm}\left.+\int_0^T \|w_{\text{{\tiny $\delta,\!\varepsilon,\!\nu$}}}(t)\|_{H^2(\Omega)}^2 ds \right) \le C_{14}.
\end{align*}
\end{lemma}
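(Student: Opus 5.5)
The estimate follows the scheme of Lemma \ref{Lemma-2-11}, applied with $\tilde{v}:=v_{\text{{\tiny $\delta,\!\varepsilon,\!\nu$}}}$ and with the $L^2$-bound on $v$ from Lemma \ref{Lemma-3-6}. Set $(u,v,w,z):=(u_{\text{{\tiny $\delta,\!\varepsilon,\!\nu$}}},v_{\text{{\tiny $\delta,\!\varepsilon,\!\nu$}}},w_{\text{{\tiny $\delta,\!\varepsilon,\!\nu$}}},z_{\text{{\tiny $\delta,\!\varepsilon,\!\nu$}}})$. Because $w$ solves \eqref{Equation-3-3}, which is exactly \eqref{Equation-2-88} with $\tilde{v}=v$, the energy computations of Lemma \ref{Lemma-2-11} carry over unchanged. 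Only the dependence of the constants needs checking.

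First, test \eqref{Equation-3-3} by $w(t)$. Using \eqref{Equation-2-3} to bound $|v\,\beta_{\text{{\tiny $\delta$}}}(w)\,w|\le \delta|v||w|$, we obtain the analogue of \eqref{Equation-2-89}. Gronwall then gives \eqref{Equation-2-90}, with $\int_0^{T^*}\|v(t)\|_{L^2(\Omega)}^2dt$ on the right. By the second assertion of Lemma \ref{Lemma-3-6}, this integral is at most $2T^*C_{13}$, uniformly in $\varepsilon,\nu\in(0,1)$.

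Next, test \eqref{Equation-3-3} by $w'(t)$. Again by \eqref{Equation-2-3}, $\|c_7 v\beta_{\text{{\tiny $\delta$}}}(w)\|_{L^2(\Omega)}\le \delta c_7\|v\|_{L^2(\Omega)}$, which yields \eqref{Equation-2-91}. Integrating over $[0,t]$ gives the bounds \eqref{Equation-2-92} and \eqref{Equation-2-93} on $\int_0^{T^*}\|w'\|_{L^2(\Omega)}^2$ and $\sup_t\|w(t)\|_{H^1(\Omega)}^2$. Their right-hand sides involve only $\|w_0\|_{H^1(\Omega)}$, $\delta$, $T^*$ and $\int_0^{T^*}\|v\|_{L^2(\Omega)}^2\le 2T^*C_{13}$.

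Finally, solve \eqref{Equation-3-3} for $D_2\Delta_N w$ and apply the elliptic regularity \eqref{Equation-1-8} as in \eqref{Equation-2-94}. This bounds $\int_0^{T^*}\|w\|_{H^2(\Omega)}^2$ by the quantities already controlled. Collecting the three estimates defines $C_{14}$. Since $C_{13}$ depends only on $\delta$, $T^*$ and $\|v_0\|_{H^1(\Omega)}$, $C_{14}$ depends only on $\delta$, $T^*$, $\|v_0\|_{H^1(\Omega)}$ and $\|w_0\|_{H^1(\Omega)}$. The $T$ in the statement should be read as $T^*$.

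There is no real obstacle. The one point to check is that none of these estimates uses the $H^1$ norm of $v$, whose bound $\Phi_\nu(v)$ degenerates as $\nu\downarrow0$. The truncation $\beta_{\text{{\tiny $\delta$}}}$ ensures that only $\|v\|_{L^2(\Omega)}$ enters, and Lemma \ref{Lemma-3-6} controls that norm independently of $\varepsilon$ and $\nu$.
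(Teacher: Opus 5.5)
Your proposal is correct and follows essentially the same route as the paper. As in the paper, you transfer the energy computations of Lemma~\ref{Lemma-2-11} to $\tilde v=v_{\delta,\varepsilon,\nu}$, let only the $(\varepsilon,\nu)$-uniform $L^2(\Omega)$-bound on $v$ from Lemma~\ref{Lemma-3-6} enter (through \eqref{Equation-2-3}), and get the $H^2$-estimate from \eqref{Equation-1-8}. The one difference is your preliminary test of \eqref{Equation-3-3} by $w(t)$: it is harmless but redundant, because the test by $w'(t)$ already controls $c_5\|w(t)\|_{L^2(\Omega)}^2$, and the paper goes straight to that test.
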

%%%%%
%%%%%
%%%%%
\begin{proof}[Proof.]
For simplicity we set $(v,w):=(v_{\text{{\tiny $\delta,\!\varepsilon,\!\nu$}}},w_{\text{{\tiny $\delta,\!\varepsilon,\!\nu$}}})$, and repeat 
the similar argument to the proof of Lemma \ref{Lemma-2-11}.
We take the inner product in both sides of \eqref{Equation-3-3} in $L^2(\Omega)$ by $w'(t)$. 
From \eqref{Equation-2-3} we obtain the following inequality for a.e. $t \in (0,T^*)$:
\begin{gather*}
\|w'(t)\|_{L^2(\Omega)}^2+\frac{d}{dt} \left(\frac{D_2}{2} \|\nabla w(t)\|_{\bm{L}^2(\Omega)}^2+\frac{c_5}{2} \|w(t)\|_{L^2(\Omega)}^2\right)\\
\le c_6 \int_\Omega |w'(t)|dx+\delta c_7 \int_\Omega |v(t)| |w'(t)|dx.
\end{gather*}
Then, we see from Lemma \ref{Lemma-3-6} with \eqref{Equation-3-7} that the following inequality holds for a.e. $t \in (0,T^*)$:
\begin{gather}
\label{Equation-3-30}
\|w'(t)\|_{L^2(\Omega)}^2+\frac{d}{dt} \left(D_2 \|\nabla w(t)\|_{\bm{L}^2(\Omega)}^2+c_5 \|w(t)\|_{L^2(\Omega)}^2\right)\\
\nonumber
\le 2(c_6)^2|\Omega|+(\delta c_7)^2 \|v(t)\|_{L^2(\Omega)}^2 \le C_{\text{{\tiny $3,\!6,\!1$}}}(\delta), 
\end{gather}
where the constant $C_{\text{{\tiny $3,\!6,\!1$}}}(\delta)>0$ is given by
\begin{equation*}
C_{\text{{\tiny $3,\!6,\!1$}}}(\delta):=2(c_6)^2|\Omega|+4(\delta c_7)^2 C_{13}.
\end{equation*}
Integrating \eqref{Equation-3-30} on $[0,t] \subset [0,T^*]$, we obtain the following inequality for all $t \in [0,T^*]$:
\begin{equation*}
\int_0^t \|w'(s)\|_{L^2(\Omega)}^2ds+\min\{D_2,c_5\} \|w(t)\|_{H^1(\Omega)}^2 \le \max\{D_2,c_5\} \|w_0\|_{H^1(\Omega)}^2+T^*C_{\text{{\tiny $3,\!6,\!1$}}}(\delta),
\end{equation*}
hence,
\begin{equation}\label{Equation-3-31}
\int_0^{T^*} \|w'(t)\|_{L^2(\Omega)}^2dt+\left( \sup_{0 \le t \le T} \|w(t)\|_{H^1(\Omega)} \right)^2 \le C_{\text{{\tiny $3,\!6,\!2$}}}(\delta),
\end{equation}
where the constant $C_{\text{{\tiny $3,\!6,\!2$}}}(\delta)>0$ is given by
\begin{equation*}
C_{\text{{\tiny $3,\!6,\!2$}}}(\delta):=\left(1+\frac{1}{\min\{D_2,\,c_5\}}\right) \left[ \max\{D_2,\,c_5\} \|w_0\|_{H^1(\Omega)}+T^*C_{\text{{\tiny $3,\!6,\!1$}}}(\delta)\right].
\end{equation*}
Moreover, by repeating the similar argument to the derivation of \eqref{Equation-2-94}, the Neumann elliptic regularity (cf. \eqref{Equation-1-8}) with 
\eqref{Equation-3-7} yields the following inequality:
\begin{align}
\label{Equation-3-32}
\int_0^{T^*} \|w(t)\|_{H^2(\Omega)}^2dt &\le (K_5)^2 \int_0^{T^*} \left( \|\Delta_N w(t)\|_{L^2(\Omega)}+\|w(t)\|_{L^2(\Omega)}\right)^2dt\\
\nonumber
&\le C_{\text{{\tiny $3,\!6,\!3$}}}(\delta) \left\{ \int_0^{T^*} \|w'(t)\|_{L^2(\Omega)}^2dt+\left( \sup_{0 \le t \le T^*} \|v(t)\|_{L^2(\Omega)} \right)^2\right.\\
\nonumber
&\,\hspace*{3cm}\left.+\left(\sup_{0 \le t \le T^*} \|w(t)\|_{L^2(\Omega)}\right)^2+1 \right\},
\end{align}
where the constant $C_{\text{{\tiny $3,\!6,\!3$}}}(\delta)>0$ is given by
\begin{equation*}
C_{\text{{\tiny $3,\!6,\!3$}}}(\delta):=\left( \frac{2K_5}{D_2} \right)^2 \left\{ 2 (\delta c_7)^2 T^*+(D_2+c_5)^2T^*+(c_6)^2 |\Omega| T^*+1 \right\}.
\end{equation*}
\indent
Finally, combining the estimates \eqref{Equation-3-31} and \eqref{Equation-3-32} with Lemma \ref{Lemma-3-6}, we obtain the required uniform boundedness.
\end{proof}
%%%%%
%%%%%
%%%%%
Thirdly, we show Lemma \ref{Lemma-3-5}.
As with Lemma \ref{Lemma-3-4}, this lemma implies that the first component $u_{\text{{\tiny $\delta,\!\varepsilon,\!\nu$}}}$ of 
the strong solution to $\mbox{(AP)}{}_{\text{{\tiny $\delta,\!\varepsilon,\!\nu$}}}$ on $[0,T^*]$ is uniformly bounded in 
\begin{equation*}
W^{1,2}(0,T^*\,;\,L^2(\Omega)) \cap L^\infty (0,T^*\,;H^1(\Omega)) \cap L^2(0,T^*\,;H^2(\Omega))
\end{equation*}
with respect to the approximate parameters $\varepsilon \in (0,1)$ and $\nu \in (0,1)$.
%
%%%%%
%%%%%
%%%%%
\begin{lemma}\label{Lemma-3-5}
There exists a constant $C_{15}>0$, which depends on $\delta$, $T^*$, $\|u_0\|_{H^1(\Omega)}$, $\|v_0\|_{H^1(\Omega)}$ and $\|w_0\|_{H^1(\Omega)}$, such that
\begin{align*}
&\sup_{\nu \in (0,1),\,\varepsilon \in (0,1)} \left\{ \left( \sup_{0 \le t \le T^*} \|u_{\text{{\tiny $\delta,\!\varepsilon,\!\nu$}}}(t)\|_{H^1(\Omega)} \right)^2
+\int_0^{T^*} \|u_{\text{{\tiny $\delta,\!\varepsilon,\!\nu$}}}'(t)\|_{L^2 (\Omega)}^2dt\right.\\
&\hspace*{4cm}\left.+\int_0^{T^*} \|u_{\text{{\tiny $\delta,\!\varepsilon,\!\nu$}}}(t)\|_{H^2(\Omega)}^2dt \right\} \le C_{15}.
\end{align*}
\end{lemma}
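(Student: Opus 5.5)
We follow the proof of Lemma \ref{Lemma-2-12}, now applied to the actual solution $(u,v,w)=(u_{\text{{\tiny $\delta,\!\varepsilon,\!\nu$}}},v_{\text{{\tiny $\delta,\!\varepsilon,\!\nu$}}},w_{\text{{\tiny $\delta,\!\varepsilon,\!\nu$}}})$ of \eqref{Equation-3-1}. The $L^2$-in-time bounds on $v$ and $w$ are supplied by Lemmas \ref{Lemma-3-6} and \ref{Lemma-3-7} instead of by the prescribed data. The key structural fact is that the truncation $\beta_{\text{{\tiny $\delta$}}}$ satisfies $|\beta_{\text{{\tiny $\delta$}}}(u)|\le\delta$ by \eqref{Equation-2-3}. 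Hence
\begin{equation*}
\|\beta_{\text{{\tiny $\delta$}}}(u(t))\{c_1v(t)-c_2w(t)\}\|_{L^2(\Omega)}\le \delta\bigl(c_1\|v(t)\|_{L^2(\Omega)}+c_2\|w(t)\|_{L^2(\Omega)}\bigr),
\end{equation*}
and the right-hand side is bounded uniformly in $t\in[0,T^*]$, $\varepsilon$ and $\nu$, by a constant depending only on $\delta$, $T^*$, $\|v_0\|_{H^1(\Omega)}$ and $\|w_0\|_{H^1(\Omega)}$. This uses the second estimate of Lemma \ref{Lemma-3-6} and Lemma \ref{Lemma-3-7}. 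The nonlinearity is therefore just a uniformly bounded forcing $f\in L^\infty(0,T^*;L^2(\Omega))$, and no Gronwall argument involving $\Phi_\nu(v)/\nu$ (as in Lemma \ref{Lemma-2-13}) is needed. This matters because such a factor would destroy uniformity in $\nu$.

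The plan has three steps. First, test \eqref{Equation-3-1} with $u(t)$ and use Young's inequality. This gives
\begin{equation*}
\frac{d}{dt}\|u(t)\|_{L^2(\Omega)}^2+2D_1\|\nabla u(t)\|_{\bm{L}^2(\Omega)}^2\le \|u(t)\|_{L^2(\Omega)}^2+\|f(t)\|_{L^2(\Omega)}^2 .
\end{equation*}
Gronwall's lemma then bounds $\max_{t}\|u(t)\|_{L^2(\Omega)}^2$ by $e^{T^*}\bigl(\|u_0\|_{L^2(\Omega)}^2+T^*\sup_t\|f(t)\|_{L^2(\Omega)}^2\bigr)$. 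Second, test \eqref{Equation-3-1} with $u'(t)$, which gives
\begin{equation*}
\|u'(t)\|_{L^2(\Omega)}^2+D_1\frac{d}{dt}\|\nabla u(t)\|_{\bm{L}^2(\Omega)}^2\le\|f(t)\|_{L^2(\Omega)}^2 .
\end{equation*}
Integrating over $[0,t]$ yields the bounds on $\int_0^{T^*}\|u'\|_{L^2(\Omega)}^2dt$ and $\sup_t\|\nabla u(t)\|_{\bm{L}^2(\Omega)}^2$ in terms of $\|u_0\|_{H^1(\Omega)}$ and $T^*\sup_t\|f\|^2$. Combined with the first step, this gives the $L^\infty(0,T^*;H^1(\Omega))$ bound.

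Third, for the $H^2$ bound, write $D_1\Delta_N u=u'-f$ and apply the Neumann elliptic regularity \eqref{Equation-1-8}:
\begin{equation*}
\int_0^{T^*}\|u(t)\|_{H^2(\Omega)}^2dt\le 3\Bigl(\frac{K_5}{D_1}\Bigr)^2\int_0^{T^*}\bigl(\|u'\|_{L^2(\Omega)}^2+\|f\|_{L^2(\Omega)}^2\bigr)dt+3T^*(K_5)^2\max_t\|u(t)\|_{L^2(\Omega)}^2 ,
\end{equation*}
exactly as in the derivation of \eqref{Equation-2-99}. Collecting the three estimates defines $C_{15}$, which depends only on $\delta$, $T^*$, $\|u_0\|_{H^1(\Omega)}$, and (through $C_{13}$ and $C_{14}$) on $\|v_0\|_{H^1(\Omega)}$ and $\|w_0\|_{H^1(\Omega)}$.

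There is no genuine obstacle here. The one point to check carefully is that the constants of Lemmas \ref{Lemma-3-6} and \ref{Lemma-3-7} used for $f$ are really independent of $\varepsilon$ and $\nu$. This holds because we only use the $L^2$ bound on $v$, which comes from $\tfrac12\|v\|_{L^2(\Omega)}^2\le\Phi_\nu(v)$ (cf. \eqref{Equation-3-7}), and never the $\nu$-dependent $H^1$ bound on $v$.
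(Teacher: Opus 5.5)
Your proposal is correct and follows essentially the same route as the paper. Like the paper, you test \eqref{Equation-3-1} with $u(t)$ and $u'(t)$, bound the reaction term via $|\beta_\delta|\le\delta$ and the $\varepsilon,\nu$-uniform $L^2$ bounds on $v$ and $w$ from Lemmas \ref{Lemma-3-6} and \ref{Lemma-3-7}, and close with the Neumann elliptic regularity \eqref{Equation-1-8}, exactly as in \eqref{Equation-3-33}--\eqref{Equation-3-35}; treating the right-hand side as a single forcing $f\in L^\infty(0,T^*;L^2(\Omega))$ is only a cosmetic repackaging of that argument.
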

%%%%%
%%%%%
%%%%%
\begin{proof}[Proof.]
For simplicity we set $(u,v,w):=(u_{\text{{\tiny $\delta,\!\varepsilon,\!\nu$}}},v_{\text{{\tiny $\delta,\!\varepsilon,\!\nu$}}},w_{\text{{\tiny $\delta,\!\varepsilon,\!\nu$}}})$,
and repeat the similar argument to the proof of Lemma \ref{Lemma-2-12}.\\
\indent
Firstly, we take the inner product in both sides of \eqref{Equation-3-1} in $L^2(\Omega)$ by $u(t)$, and repeat the similar argument to the derivation of 
\eqref{Equation-2-96}.
By Lemmas \ref{Lemma-3-6} and \ref{Lemma-3-7}, we obtain 
\begin{gather}
\label{Equation-3-33}
\left( \max_{0 \le t \le T^*} \|u(t)\|_{L^2(\Omega)}\right)^2 \le e^{T^*} \|u_0\|_{L^2(\Omega)}^2
+2(\delta c_1)^2 e^{T^*} \int_0^{T^*} \|v(t)\|_{L^2(\Omega)}^2dt\\
\nonumber
+2(\delta c_2)^2 e^{T^*} \int_0^{T^*} \|w(t)\|_{L^2(\Omega)}^2dt \le C_{\text{{\tiny $3,\!7,\!1$}}}(\delta),
\end{gather}
where the constant $C_{\text{{\tiny $3,\!7,\!1$}}}(\delta)>0$ is given by
\begin{equation*}
C_{\text{{\tiny $3,\!7,\!1$}}}(\delta):=e^{T^*} \left[ \|u_0\|_{L^2(\Omega)}^2+2 T^* \left\{ 2(\delta c_1)^2 C_{13}+(\delta c_2)^2 C_{14} \right\} \right].
\end{equation*}
\indent
Secondly, we take the inner product in both sides of \eqref{Equation-3-1} in $L^2(\Omega)$ by $u'(t)$, and repeat the similar argument of the derivation of 
\eqref{Equation-2-98}.
Using Lemmas \ref{Lemma-3-6} and \ref{Lemma-3-7} again, we obtain 
\begin{equation}\label{Equation-3-34}
\int_0^{T^*} \|u'(t)\|_{L^2(\Omega)}^2dt+\left( \sup_{0 \le t \le T^*} \|\nabla u(t)\|_{\bm{L}^2(\Omega)}\right)^2 \le C_{\text{{\tiny $3,\!7,\!2$}}}(\delta),
\end{equation}
where the constant $C_{\text{{\tiny $3,\!7,\!2$}}}(\delta)>0$ is given by
\begin{equation*}
C_{\text{{\tiny $3,\!7,\!2$}}}(\delta):=\left(1+\frac{1}{D_1}\right) \left[ \|\nabla u_0\|_{\bm{L}^2(\Omega)}^2
+2 T^* \left\{ 2(\delta c_1)^2 C_{13}+(\delta c_2)^2 C_{14} \right\} \right].
\end{equation*}
\indent
Thirdly, we use the Neumann elliptic regularity (cf. \eqref{Equation-1-8}) again, and repeat the similar argument to the derivation of \eqref{Equation-2-99}.
Then, we obtain
\begin{align}
\label{Equation-3-35}
&\,\int_0^{T^*} \|u(t)\|_{H^2(\Omega)}^2dt \le (K_5)^2 \int_0^{T^*} \left( \|\Delta u(t)\|_{L^2(\Omega)}+\|u(t)\|_{L^2(\Omega)}\right)^2dt\\
\nonumber
\le&\,C_{\text{{\tiny $3,\!7,\!3$}}}(\delta) \left\{ \int_0^{T^*} \|u'(t)\|_{L^2(\Omega)}^2dt+\left( \max_{0 \le t \le T^*} \|u(t)\|_{L^2(\Omega)} \right)^2 \right.\\
\nonumber
&\,\hspace*{3cm}\left.+\left( \max_{0 \le t \le T^*} \|v(t)\|_{L^2(\Omega)}\right)^2+\left( \max_{0 \le t \le T^*} \|w(t)\|_{L^2(\Omega)}\right)^2 \right\},
\end{align}
where the constant $C_{\text{{\tiny $3,\!7,\!3$}}}(\delta)>0$ is given by
\begin{equation*}
C_{\text{{\tiny $3,\!7,\!3$}}}(\delta):=\left( \frac{2 K_5}{D_1} \right)^2 \left[ 1+T^* \left\{ (D_1)^2+2 (\delta c_1)^2+(\delta c_2)^2\right\} \right].
\end{equation*}
\indent
Finally, combining the estimates \eqref{Equation-3-33}--\eqref{Equation-3-35} with Lemmas \ref{Lemma-3-6} and \ref{Lemma-3-7}, we obtain the required 
uniform boundedness.
\end{proof}
%%%%%
%%%%%
%%%%%
Finally, we show Lemma \ref{Lemma-3-8} which also gives the uniform boundedness of the fourth component $z_{\text{{\tiny $\delta,\!\varepsilon,\!\nu$}}}$ of 
the strong solution to $\mbox{(AP)}{}_{\text{{\tiny $\delta,\!\varepsilon,\!\nu$}}}$ on $[0,T^*]$.
Although for each $\delta \in (1,\infty)$ and $\varepsilon \in (0,1)$ the family $\{ z_{\text{{\tiny $\delta,\!\varepsilon,\!\nu$}}}\,;\,\nu \in (0,1)\}$ is bounded in
\begin{equation*}
W^{1,2}(0,T^*\,;\,L^2(\Omega)) \cap L^\infty (0,T^*\,;H^1(\Omega)) \cap L^2(0,T^*\,;H^2(\Omega)),
\end{equation*}
we only observe that for each $\delta \in (1,\infty)$ the family $\{ z_{\text{{\tiny $\delta,\!\varepsilon,\!\nu$}}}\,;\,\nu \in (0,1),\,\varepsilon \in (0,1)\}$ is bounded in 
$C([0,T^*]\,;L^2(\Omega)) \cap W^{1,2}(0,T^*\,;\,L^2(\Omega))$.
%
%%%%%
%%%%%
%%%%%
\begin{lemma}\label{Lemma-3-8}
There exists a constant $C_{16}>0$, which depends on $\delta$, $T^*$, $\|v_0\|_{H^1(\Omega)}$ and $\|z_0\|_{H^1(\Omega)}$, such that the following boundedness 
holds for all $\varepsilon \in (0,1)$:
\begin{align*}
&\sup_{\nu \in (0,1)} \left\{ \max_{0 \le t \le T^*} \|z_{\text{{\tiny $\delta,\!\varepsilon,\!\nu$}}}(t)\|_{L^2(\Omega)}^2
+\varepsilon \left( \sup_{0 \le t \le T^*} \|\nabla z_{\text{{\tiny $\delta,\!\varepsilon,\!\nu$}}}(t)\|_{\bm{L}^2(\Omega)}\right)^2\right.\\
&\hspace*{2cm}\left.+\varepsilon^2 \int_0^{T^*} \|z_{\text{{\tiny $\delta,\!\varepsilon,\!\nu$}}}(t)\|_{H^2(\Omega)}^2
+\int_0^{T^*} \|z_{\text{{\tiny $\delta,\!\varepsilon,\!\nu$}}}'(t)\|_{L^2 (\Omega)}^2dt \right\} \le C_{16}.
\end{align*}
\end{lemma}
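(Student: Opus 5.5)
The estimate follows by repeating the argument of Lemma \ref{Lemma-2-8} with $\tilde{v}$ replaced by $v:=v_{\text{{\tiny $\delta,\!\varepsilon,\!\nu$}}}$. By Proposition \ref{Proposition-2} (or its continuation on $[0,T^*]$), the fourth component $z:=z_{\text{{\tiny $\delta,\!\varepsilon,\!\nu$}}}$ solves \eqref{Equation-3-4} with $z(0)=z_0$. The only input about $v$ that enters is $\int_0^{T^*}\|v'(t)\|_{L^2(\Omega)}^2dt$. By Lemma \ref{Lemma-3-6}, this is bounded by $C_{13}$ uniformly in $\varepsilon,\nu\in(0,1)$, with $C_{13}$ depending only on $\delta$, $T^*$ and $\|v_0\|_{H^1(\Omega)}$.

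The steps are as follows.

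First, test \eqref{Equation-3-4} by $z(t)$. Young's inequality gives
\begin{equation*}
\frac{d}{dt}\|z(t)\|_{L^2(\Omega)}^2+2\varepsilon\|\nabla z(t)\|_{\bm{L}^2(\Omega)}^2\le \|z(t)\|_{L^2(\Omega)}^2+\|v'(t)\|_{L^2(\Omega)}^2 .
\end{equation*}
Gronwall's lemma then yields $\max_{0\le t\le T^*}\|z(t)\|_{L^2(\Omega)}^2\le e^{T^*}\bigl(\|z_0\|_{L^2(\Omega)}^2+C_{13}\bigr)$, as in \eqref{Equation-2-71}.

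Second, test \eqref{Equation-3-4} by $z'(t)$. As in \eqref{Equation-2-73} this gives $\|z'(t)\|_{L^2(\Omega)}^2+\varepsilon\frac{d}{dt}\|\nabla z(t)\|_{\bm{L}^2(\Omega)}^2\le\|v'(t)\|_{L^2(\Omega)}^2$. Integrating over $[0,t]$ and using $\varepsilon<1$, we get
\begin{equation*}
\int_0^{T^*}\|z'(t)\|_{L^2(\Omega)}^2dt+\varepsilon\sup_{0\le t\le T^*}\|\nabla z(t)\|_{\bm{L}^2(\Omega)}^2\le\|z_0\|_{H^1(\Omega)}^2+C_{13}.
\end{equation*}

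Third, rewrite \eqref{Equation-3-4} as $\varepsilon\Delta_N z=z'-v'$. The Neumann elliptic regularity \eqref{Equation-1-8}, multiplied by $\varepsilon<1$, gives
\begin{equation*}
\varepsilon\|z(t)\|_{H^2(\Omega)}\le K_5\bigl(\|z'(t)\|_{L^2(\Omega)}+\|v'(t)\|_{L^2(\Omega)}+\|z(t)\|_{L^2(\Omega)}\bigr).
\end{equation*}
Squaring and integrating gives the $\varepsilon^2\int_0^{T^*}\|z\|_{H^2(\Omega)}^2dt$ bound, exactly as in \eqref{Equation-2-76}. Summing the three estimates defines $C_{16}$, which depends only on $T^*$, $\|z_0\|_{H^1(\Omega)}$ and $C_{13}$, and hence on $\delta$, $T^*$, $\|v_0\|_{H^1(\Omega)}$ and $\|z_0\|_{H^1(\Omega)}$.

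There is no serious obstacle here; the one point to check is uniformity. We must verify that no step introduces $\nu$ or $\varepsilon$ in an unfavourable way. The constant $\varepsilon$ appears only as the weight on the gradient and $H^2$ terms, and it is harmlessly bounded by $1$ where it multiplies $\|\nabla z_0\|_{\bm{L}^2(\Omega)}^2$. The dependence on $\nu$ enters only through $C_{13}$, which Lemma \ref{Lemma-3-6} already makes uniform in $\nu$.
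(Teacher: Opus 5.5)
Your proposal is correct and follows the paper's proof essentially line by line. Like the paper, you test \eqref{Equation-3-4} by $z$ and by $z'$, then apply the Neumann elliptic regularity \eqref{Equation-1-8}, with Lemma \ref{Lemma-3-6} supplying the bound on $\int_0^{T^*}\|v'(t)\|_{L^2(\Omega)}^2dt$ uniformly in $\nu,\varepsilon$. The only slip is a constant: the integrated inequality controls $\int_0^{t}\|z'\|_{L^2(\Omega)}^2ds+\varepsilon\|\nabla z(t)\|_{\bm{L}^2(\Omega)}^2$ for each fixed $t$, so putting $\int_0^{T^*}\|z'\|_{L^2(\Omega)}^2dt$ and $\varepsilon\sup_t\|\nabla z(t)\|_{\bm{L}^2(\Omega)}^2$ in one display costs a factor $2$ (as in the paper's $2(\|\nabla z_0\|_{\bm{L}^2(\Omega)}^2+C_{13})$), which is harmless for the lemma.
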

%%%%%
%%%%%
%%%%%
\begin{proof}[Proof.]
For simplicity we set $(v,z):=(v_{\text{{\tiny $\delta,\!\varepsilon,\!\nu$}}},z_{\text{{\tiny $\delta,\!\varepsilon,\!\nu$}}})$, and repeat the similar argument in the proof 
of Lemma \ref{Lemma-2-8}.\\
\indent
Firstly, we take the inner product in both sides of \eqref{Equation-3-4} in $L^2(\Omega)$ by $z(t)$, and repeat the similar argument to the derivation of 
\eqref{Equation-2-71}.
Then, we obtain 
\begin{equation}\label{Equation-3-36}
\left(\max_{0 \le t \le T^*}\|z(t)\|_{L^2(\Omega)}\right)^2 \le e^{T^*} \left( \|z_0\|_{L^2(\Omega)}^2+\int_0^{T^*} \|v'(t)\|_{L^2(\Omega)}^2dt \right).
\end{equation}
Combining Lemma \ref{Lemma-3-6} with \eqref{Equation-3-36}, it follows
\begin{equation}\label{Equation-3-37}
\left(\max_{0 \le t \le T^*}\|z(t)\|_{L^2(\Omega)}\right)^2 \le C_{\text{{\tiny $3,\!8,\!1$}}}(\delta),
\end{equation}
where the constant $C_{\text{{\tiny $3,\!8,\!1$}}}(\delta)>0$ is given by 
\begin{equation*}
C_{\text{{\tiny $3,\!8,\!1$}}}(\delta):=e^{T^*} \left( \|z_0\|_{L^2(\Omega)}^2+C_{13} \right).
\end{equation*}
\indent
Secondly, we take the inner product in both sides of \eqref{Equation-3-4} in $L^2(\Omega)$ by $z'(t)$, and repeat the similar argument to the derivations of 
\eqref{Equation-2-74} and \eqref{Equation-2-75}.
Using Lemma \ref{Lemma-3-6} again, we obtain 
\begin{equation}\label{Equation-3-38}
\int_0^{T^*} \|z'(t)\|_{L^2(\Omega)}^2dt+\varepsilon \left(\sup_{0 \le t \le T^*} \|\nabla z(t)\|_{\bm{L}^2(\Omega)}\right)^2 \le 
2 \left( \|\nabla z_0\|_{\bm{L}^2(\Omega)}^2+C_{13} \right).
\end{equation}
\indent
Thirdly, we use the Neumann elliptic regularity (cf. \eqref{Equation-1-8}), and repeat the similar argument to the derivation of \eqref{Equation-2-76}.
By Lemma \ref{Lemma-3-6} again, we obtain 
\begin{align}
\label{Equation-3-39}
\varepsilon^2 \int_0^{T^*} \|z(t)\|_{H^2(\Omega)}^2dt &\le 3(K_5)^2 \int_0^{T^*} \|z'(t)\|_{L^2(\Omega)}^2dt\\
\nonumber
&\hspace*{1cm}+3(K_5)^2 \left\{ T^*\left( \max_{0 \le t \le T^*} \|z(t)\|_{L^2(\Omega)} \right)^2+C_{13} \right\}.
\end{align}
\indent
Finally, all the estimates \eqref{Equation-3-37}--\eqref{Equation-3-39} yields the required uniform boundedness.
\end{proof}
%%%%%
%%%%%
%%%%%
%%%%%%%%%%%%%%%%%%%%%%%%%%%%%%%%%%%%%%%%%%%%%%%%%%%%%%%%%%%%%%%%%%%%%%%%%%%%%%%%%%%%%%%%%%%%%%%%%%%%%%%%%%%%%%%%
%%%%%%%%%%%%%%%%%%%%%%%%%%%%%%%%%%%%%%%%%%%%%%%%%%%%%%%%%%%%%%%%%%%%%%%%%%%%%%%%%%%%%%%%%%%%%%%%%%%%%%%%%%%%%%%%
%%%%%%%%%%%%%%%%%%%%%%%%%%%%%%%%%%%%%%%%%%%%%%%%%%%%%%%%%%%%%%%%%%%%%%%%%%%%%%%%%%%%%%%%%%%%%%%%%%%%%%%%%%%%%%%%
\subsection{Uniqueness}\label{Subsection-3-3}
%%%%%%%%%%%%%%%%%%%%%%%%%%%%%%%%%%%%%%%%%%%%%%%%%%%%%%%%%%%%%%%%%%%%%%%%%%%%%%%%%%%%%%%%%%%%%%%%%%%%%%%%%%%%%%%%
%%%%%%%%%%%%%%%%%%%%%%%%%%%%%%%%%%%%%%%%%%%%%%%%%%%%%%%%%%%%%%%%%%%%%%%%%%%%%%%%%%%%%%%%%%%%%%%%%%%%%%%%%%%%%%%%
%%%%%%%%%%%%%%%%%%%%%%%%%%%%%%%%%%%%%%%%%%%%%%%%%%%%%%%%%%%%%%%%%%%%%%%%%%%%%%%%%%%%%%%%%%%%%%%%%%%%%%%%%%%%%%%%
The purpose of this section is to show Proposition \ref{Proposition-3}, which guarantees the uniqueness of local-in-time solutions to
$\text{(AP)}{}_{\text{{\tiny $\delta,\!\varepsilon,\!\nu$}}}$.
Just as the continuous embedding $H^2(\Omega) \hookrightarrow L^\infty (\Omega)$ (cf. \eqref{Equation-1-7}) played a crucial role in the proof of Lemma \ref{Lemma-2-13},
it also plays a crucial role in the proof of Proposition \ref{Proposition-3}.
Actually, the uniform boundedness in Lemmas \ref{Lemma-3-6}--\ref{Lemma-3-8} enables us to repeat the similar argument to the proof of Lemma \ref{Lemma-2-13}.
%%%%%
%%%%%
%%%%%
\begin{proposition}\label{Proposition-3}
Fix approximation parameters $\delta\in(1,\infty)$, $\nu\in(0,1)$, and $\varepsilon\in(0,1)$.
For any finite time $T>0$, the strong solution to the initial-boundary value problem $\text{(AP)}{}_{\text{{\tiny $\delta,\!\varepsilon,\!\nu$}}}$ on $[0,T]$,
if it exists, is unique.
\end{proposition}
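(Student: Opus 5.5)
Let $(u_i,v_i,w_i,z_i)$, $i=1,2$, be two strong solutions to $\text{(AP)}{}_{\text{{\tiny $\delta,\!\varepsilon,\!\nu$}}}$ on $[0,T]$ with the same data. Set $U:=u_1-u_2$, $V:=v_1-v_2$, $W:=w_1-w_2$, $Z:=z_1-z_2$. The plan is a single Gronwall inequality for
\begin{equation*}
E(t):=\|U(t)\|_{L^2(\Omega)}^2+\|V(t)\|_{H^1(\Omega)}^2+\|W(t)\|_{L^2(\Omega)}^2+\|Z(t)\|_{L^2(\Omega)}^2 ,
\end{equation*}
with $E(0)=0$. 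Since $\delta,\varepsilon,\nu$ are fixed, we may use freely the bounds of Lemmas \ref{Lemma-3-6}--\ref{Lemma-3-8} on both solutions: $\sup_t\|v_i(t)\|_{H^1}$, $\sup_t\|w_i(t)\|_{H^1}$ and $\sup_t\|u_i(t)\|_{H^1}$ are finite, and $v_i,z_i\in L^2(0,T;H^2(\Omega))$ with $\nu$- and $\varepsilon$-dependent constants. Through \eqref{Equation-1-7} this gives $\|v_i(t)\|_{L^\infty(\Omega)}\in L^2(0,T)$, which is the integrable weight that will appear in the Gronwall exponent.

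\textbf{The $u$ and $w$ equations.} Subtract \eqref{Equation-3-1} and \eqref{Equation-3-3} for the two solutions and test with $U$ and $W$. These are exactly the computations \eqref{Equation-2-104}--\eqref{Equation-2-117} in Lemma \ref{Lemma-2-13}. They use \eqref{Equation-2-3}, \eqref{Equation-2-4} and the $L^4$ embedding \eqref{Equation-2-105}, and give
\begin{equation*}
\frac{d}{dt}\bigl(\|U\|^2+\|W\|^2\bigr)\le C\,(1+\|v_2\|_{H^1}^2+\|w_2\|_{H^1}^2)\bigl(\|U\|^2+\|W\|^2\bigr)+C\|V\|^2 .
\end{equation*}

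\textbf{The $z$ equation.} Subtracting \eqref{Equation-3-4} gives $Z'-\varepsilon\Delta_N Z=V'$. I would \emph{not} test with $Z$ directly, because $V'$ is not controlled by $E$. Instead, set $Y:=Z-V$. Then
\begin{equation*}
Y'-\varepsilon\Delta_N Y=\varepsilon\Delta_N V ,\qquad Y(0)=0 .
\end{equation*}
Testing with $Y$ and integrating by parts gives
\begin{equation*}
\tfrac{d}{dt}\|Y\|^2+\varepsilon\|\nabla Y\|^2\le \varepsilon\|\nabla V\|^2 .
\end{equation*}
Hence $\|Z\|\le\|Y\|+\|V\|$ is controlled by $\int_0^t\|V\|_{H^1}^2$. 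This is why $E$ contains $\|V\|_{H^1}^2$.

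\textbf{The $v$ equation (the main obstacle).} Write \eqref{Equation-3-2} as $v_i'+\mathcal{K}_{\text{{\tiny $\delta$}}}z_i\,(-\nu\Delta_N v_i+v_i)=g_{\text{{\tiny $\delta$}}}(u_i,v_i,w_i)$ and subtract:
\begin{equation*}
V'+\mathcal{K}_{\text{{\tiny $\delta$}}}z_1(-\nu\Delta_N V+V)=-(\mathcal{K}_{\text{{\tiny $\delta$}}}z_1-\mathcal{K}_{\text{{\tiny $\delta$}}}z_2)(-\nu\Delta_N v_2+v_2)+g_1-g_2 .
\end{equation*}
I would test this with $M_{\text{{\tiny $\delta,\!z_1$}}}(-\nu\Delta_N V+V)$ in the flat $L^2$ product; this is the twisted gradient structure of Lemma \ref{Lemma-2-4}. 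The left-hand side then produces:
\begin{itemize}
\item a time-derivative part of the form $\tfrac{d}{dt}\Phi_\nu(V)$, up to a commutator with the weight $(\mathcal{K}_{\text{{\tiny $\delta$}}}z_1)^{-1}$;
\item a good term $\int(\mathcal{K}_{\text{{\tiny $\delta$}}}z_1)^{-1}\cdot$ (the operator term)$^2\ge \delta^{-1}K_2^{-1}|\Omega|^{-1/2}\|-\nu\Delta_N V+V\|^2$, by \eqref{Equation-2-15}.
\end{itemize}
A cleaner alternative is to test with $-\nu\Delta_N V+V$ after dividing by $\mathcal{K}_{\text{{\tiny $\delta$}}}z_1$. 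The difficulty is that the weight then sits on $V'$, and Lemma \ref{Lemma-2-3}-type corrections require $\|z_1'\|$, which is only $L^2$ in time; that is still Gronwall-admissible. The critical right-hand term is bounded using \eqref{Equation-2-20} and (A1)(b):
\begin{equation*}
\|\mathcal{K}_{\text{{\tiny $\delta$}}}z_1-\mathcal{K}_{\text{{\tiny $\delta$}}}z_2\|_{L^\infty}\le K_2\|Z\|,
\end{equation*}
so this term is at most $K_2\|Z\|\,\|{-\nu\Delta_N v_2+v_2}\|$. The factor $\|v_2\|_{H^2}\in L^2(0,T)$ is the integrable weight. The good term absorbs the test function by Young's inequality, and $g_1-g_2$ is handled as in \eqref{Equation-3-15} with the $L^4$ embedding. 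The expected result is
\begin{equation*}
\frac{d}{dt}\|V\|_{H^1}^2\le C\,(1+\|v_2\|_{H^2}^2+\|z_1'\|^2)\bigl(\|V\|_{H^1}^2+\|Z\|^2+\|U\|_{H^1}\|U\|+\|W\|_{H^1}\|W\|\bigr).
\end{equation*}
The $\|\nabla U\|$ and $\|\nabla W\|$ pieces are absorbed by the dissipation from the first step.

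\textbf{Conclusion.} Summing the three estimates gives $E'\le h(t)E$ with $h\in L^1(0,T)$ and $E(0)=0$, so Gronwall yields $E\equiv0$. Then the bound from the $z$ step gives $Z\equiv0$, and uniqueness follows. I expect the commutator and weight bookkeeping in the $v$ step to be the delicate part.
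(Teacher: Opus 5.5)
Your plan works, but it takes a genuinely different route from the paper's.

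\textbf{The paper's route.} The paper keeps the twisted form $M_{\delta,z_2}v_2'-M_{\delta,z_1}v_1'-\nu\Delta_N V+V=\sum_j H_j$ and tests it with $V'$.
\begin{enumerate}
\item[(a)] The weight then sits on the time derivative and gives the coercive term $\int_\Omega(\mathcal{K}_\delta z_2)^{-1}|V'|^2dx$.
\item[(b)] The operator part gives exactly $\frac{d}{dt}\Phi_\nu(V)$, with no commutator.
\item[(c)] The metric difference is paid by $\|v_1'\|\,\|Z\|$, with $\|v_1'\|^2\in L^1(0,T)$.
\item[(d)] The $Z$-equation is tested directly with $Z$, and its forcing $V'$ is absorbed by the $\|V'\|^2$ dissipation. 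So your objection that $V'$ is not controlled by $E$ is answered by the dissipation, not by $E$.
\end{enumerate}

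\textbf{Your route.} You work one level higher in space. Your dissipation is $\|AV\|^2$ with $A:=-\nu\Delta_N+I$, the critical metric difference is paid by $\|v_2\|_{H^2}\|Z\|$, and $Z$ is recovered through $Y=Z-V$, a device the paper only uses later, in Lemma \ref{Lemma-4-3}. In effect you trade the time regularity $\|v_1'\|^2\in L^1$ for the spatial regularity $\|v_2\|_{H^2}^2\in L^1$. Both routes rely on $v\in L^2(0,T;H^2(\Omega))$, which holds only for fixed $\nu>0$. Testing with $Z$ directly would also work for you, since the equation gives $\|V'\|\le \|\mathcal{K}_\delta z_1\|_{L^\infty}\|AV\|+K_2\|Z\|\,\|Av_2\|+\|g_1-g_2\|$.

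\textbf{The delicate step is self-inflicted.} It comes from your weighted test function $M_{\delta,z_1}AV$. With $m:=(\mathcal{K}_\delta z_1)^{-1}$ one has, formally, $(V',mAV)=\frac{d}{dt}\frac12\int_\Omega m(\nu|\nabla V|^2+|V|^2)dx-\frac12\int_\Omega(\partial_t m)(\nu|\nabla V|^2+|V|^2)dx+\nu(V',\nabla m\cdot\nabla V)$. Closing this needs four things:
\begin{enumerate}
\item[(a)] the bound $|\partial_t m|\le C\|z_1'\|$;
\item[(b)] $\nabla m\in L^\infty$, which comes from (A1)(c);
\item[(c)] re-substituting the equation for $V'$ in the last term, so that it is absorbed by $\|AV\|^2$;
\item[(d)] a time-regularization argument to justify the weighted chain rule.
\end{enumerate}
All of this disappears if you test $V'+\mathcal{K}_\delta z_1AV=\cdots$ with the unweighted $AV$. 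Then $(V',AV)=\frac{d}{dt}\Phi_\nu(V)$ by the standard chain rule, and $\int_\Omega\mathcal{K}_\delta z_1|AV|^2dx\ge(K_1/\delta)\|AV\|^2$ by \eqref{Equation-2-14}. Put $\Phi_\nu(V)$ and $\|Y\|^2$ in $E$ in place of $\|V\|_{H^1}^2$ and $\|Z\|^2$, and the rest of your argument goes through.

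\textbf{A bookkeeping fix.} As displayed, your final inequality puts the $L^1$ weight in front of $\|U\|_{H^1}\|U\|$ and $\|W\|_{H^1}\|W\|$. Absorbing these into the constant dissipations $D_1\|\nabla U\|^2$ and $\min\{D_2,c_5\}\|W\|_{H^1}^2$ would require the square of the weight to be integrable, which $\|v_2\|_{H^2}^4$ need not be. Instead, estimate $v_2\{\beta_\delta(u_1)-\beta_\delta(u_2)\}$ and its $w$-analogue by $K_4\|v_2\|_{H^2}$ times $\|U\|$ or $\|W\|$ via \eqref{Equation-1-7}, as in (iv) and (vii) of the paper. Then no gradients of $U$ or $W$ appear in the $v$-step.
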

%%%%%
%%%%%
%%%%%
\begin{proof}[Proof.]
Let $(u_1,v_1,w_1,z_1)$ and $(u_2,v_2,w_2,z_2)$ be two strong solutions to $\text{(AP)}{}_{\text{{\tiny $\delta,\!\varepsilon,\!\nu$}}}$ on $[0,T]$.
For simplicity, we set $(U,V,W,Z):=(u_2-u_1,v_2-v_1,w_2-w_1,z_2-z_1)$.\\
\indent
Firstly, we consider the function $U:[0,T] \rightarrow L^2(\Omega)$, which satisfies the following evolution equation: 
\begin{gather*}
U'(t)-D_1 \Delta_N U(t)=c_1 \beta_{\text{{\tiny $\delta$}}}(u_2(t))V(t)+c_1 v_1(t)\left\{ \beta_{\text{{\tiny $\delta$}}}(u_2(t))-\beta_{\text{{\tiny $\delta$}}}(u_1(t)) \right\}\\
-c_2 \beta_{\text{{\tiny $\delta$}}}(u_2(t)) W(t)-c_2 w_1(t)\left\{ \beta_{\text{{\tiny $\delta$}}}(u_2(t))-\beta_{\text{{\tiny $\delta$}}}(u_1(t)) \right\}\\
\text{in} \quad L^2(\Omega),\quad \text{a.e.}~t \in (0,T),
\end{gather*}
with the initial condition 
\begin{equation*}
U(0)=0 \quad \text{in} \quad L^2(\Omega).
\end{equation*}
We repeat the similar argument to the derivation of \eqref{Equation-2-117}, in which the triplets $(\tilde{u}_m,\tilde{v}_m,\tilde{w}_m)$ and 
$(\tilde{u},\tilde{v},\tilde{w})$ are replaced by $(u_2,v_2,w_2)$ and $(u_1,v_1,w_1)$, respectively.
Then, we obtain the following inequality for a.e. $t \in (0,T)$: 
\begin{gather}
\label{Equation-3-40}
\frac{d}{dt} \|U(t)\|_{L^2(\Omega)}^2+D_1\|\nabla U(t)\|_{\bm{L}^2(\Omega)}^2\\
\nonumber
\le \ell_1(t) \left( \|U(t)\|_{L^2(\Omega)}^2+\|V(t)\|_{L^2(\Omega)}^2+\|W(t)\|_{L^2(\Omega)}^2 \right),
\end{gather}
where the positive function $\ell_1:[0,T] \rightarrow \mathbb{R}$ is given by
\begin{gather*}
\ell_1(t):=\frac{4(c_1)^2(K_6)^4}{\nu D_1} \Phi_\nu (v_1(t))+\frac{2(c_2)^2 (K_6)^4}{D_1}\|w_1(t)\|_{H^1(\Omega)}^2+\delta (c_1+c_2)+D_1,\\
\forall t \in [0,T].
\end{gather*}
\indent
Secondly, we consider the function $V:[0,T] \rightarrow L^2(\Omega)$, which satisfies the following evolution equation:
\begin{gather}
\label{Equation-3-41}
M_{\text{{\tiny $\delta,\!z_2(t)$}}}v_2'(t)-M_{\text{{\tiny $\delta,\!z_1(t)$}}}v_1'(t)-\nu \Delta_N V(t)+V(t)=\sum_{j=1}^6 H_j(t)\\
\nonumber
\text{in} \quad L^2(\Omega),\quad\text{a.e.}~t \in (0,T),
\end{gather}
with the initial condition 
\begin{equation*}
V(0)=0 \quad \text{in} \quad L^2(\Omega),
\end{equation*}
where for all $t \in [0,T]$ the functions $H_j (t) \in L^2(\Omega)~(j=1,2,3,4,5,6)$ are given as follows:
\begin{equation*}
\left\{
\begin{array}{l}
H_1(t):=c_3 \left\{ M_{\text{{\tiny $\delta,\!z_2(t)$}}}-M_{\text{{\tiny $\delta,\!z_1(t)$}}} \right\} v_2(t) \beta_{\text{{\tiny $\delta$}}}(w_2(t)),\\[0.2cm]
H_2(t):=c_3 M_{\text{{\tiny $\delta,\!z_1(t)$}}} V(t) \beta_{\text{{\tiny $\delta$}}}(w_2(t)),\\[0.2cm]
H_3(t):=c_3M_{\text{{\tiny $\delta,\!z_1(t)$}}} v_1(t) \left\{ \beta_{\text{{\tiny $\delta$}}}(w_2(t))-\beta_{\text{{\tiny $\delta$}}}(w_1(t)) \right\},\\[0.2cm]
H_4(t):=-c_4 \left\{ M_{\text{{\tiny $\delta,\!z_2(t)$}}}-M_{\text{{\tiny $\delta,\!z_1(t)$}}} \right\} v_2(t) \beta_{\text{{\tiny $\delta$}}}(u_2(t)),\\[0.2cm]
H_5(t):=-c_4 M_{\text{{\tiny $\delta,\!z_1(t)$}}} V(t) \beta_{\text{{\tiny $\delta$}}}(u_2(t)),\\[0.2cm]
H_6(t):=-c_4M_{\text{{\tiny $\delta,\!z_1(t)$}}} v_1(t) \left\{ \beta_{\text{{\tiny $\delta$}}}(u_2(t))-\beta_{\text{{\tiny $\delta$}}}(u_1(t)) \right\}.
\end{array}
\right.
\end{equation*}
We take the inner product in both sides of \eqref{Equation-3-41} in $L^2(\Omega)$ by $V'(t)$.
As one of the benefits of introducing the twisted spaces $\{L^2(\nu,\bar{z})\,;\,\bar{z} \in L^2(\Omega)\}$, we obtain the following estimates by using 
the Cauchy-Schwarz inequality and the Yong inequality repeatedly:
\begin{enumerate}\leftskip10pt
\item[(i)] From \eqref{Equation-2-15} and \eqref{Equation-2-20} we obtain the following inequality for a.e. $t \in (0,T)$:
\begin{align*}
&\,(M_{\text{{\tiny $\delta,\!z_2(t)$}}}v_2'(t)-M_{\text{{\tiny $\delta,\!z_1(t)$}}}v_1'(t),V'(t))_{L^2(\Omega)}\\
=&\,\int_\Omega \left[ \left\{ \mathcal{K}_{\text{{\tiny $\delta$}}}z_2(t) \right\}^{-1}v_2'(t)
-\left\{ \mathcal{K}_{\text{{\tiny $\delta$}}}z_1(t) \right\}^{-1}v_1'(t)\right]V'(t)dx\\
\ge&\,\int_\Omega \{\mathcal{K}_{\text{{\tiny $\delta$}}}z_2(t)\}^{-1} |V'(t)|^2dx-\int_\Omega \left|\{\mathcal{K}_{\text{{\tiny $\delta$}}}z_2(t)\}^{-1}
-\{\mathcal{K}_{\text{{\tiny $\delta$}}}z_1(t)\}^{-1}\right||v_1'(t)| |V'(t)|dx\\
\ge&\,\frac{1}{\delta K_2 |\Omega|^{\frac{1}{2}}} \|V'(t)\|_{L^2(\Omega)}^2-K_2 \left(\frac{\delta}{K_1}\right)^2  \|v_1'(t)\|_{L^2(\Omega)} 
\|Z(t)\|_{L^2 (\Omega)} \|V'(t)\|_{L^2(\Omega)}\\
\ge&\,\left( \frac{1}{\delta K_2 |\Omega|^{\frac{1}{2}}}-\mu\right) \|V'(t)\|_{L^2(\Omega)}^2-\frac{\delta^4 (K_2)^2}{4\mu (K_1)^4}
\|v_1'(t)\|_{L^2(\Omega)}^2 \|Z(t)\|_{L^2 (\Omega)}^2.
\end{align*}
\item[(ii)] From \eqref{Equation-2-3} and \eqref{Equation-2-20} we obtain the following inequality for a.e. $t \in (0,T)$:
\begin{align*}
(H_1(t),V'(t))_{L^2(\Omega)}&
=c_3 \left( \left\{ M_{\text{{\tiny $\delta,\!z_2(t)$}}}-M_{\text{{\tiny $\delta,\!z_1(t)$}}} \right\} v_2(t) \beta_{\text{{\tiny $\delta$}}}(w_2(t)),V'(t)\right)_{L^2(\Omega)}\\
&\le \delta c_3 \int_\Omega \left| \{ \mathcal{K}_{\text{{\tiny $\delta$}}}z_2(t)\right\}^{-1}-\left\{ \mathcal{K}_{\text{{\tiny $\delta$}}}z_1(t) \}^{-1} \right| |v_2(t)| |V'(t)|dx\\
&\le \delta c_3 K_2 \left(\frac{\delta}{K_1}\right)^2 \|v_2(t)\|_{L^2(\Omega)} \|Z(t)\|_{L^2(\Omega)} \|V'(t)\|_{L^2(\Omega)}\\
&\le \mu \|V'(t)\|_{L^2(\Omega)}^2+\frac{\delta^6 (c_3 K_2)^2}{4\mu (K_1)^4} \|v_2(t)\|_{L^2(\Omega)}^2 \|Z(t)\|_{L^2(\Omega)}^2.
\end{align*}
\item[(iii)] From \eqref{Equation-2-3} and \eqref{Equation-2-15} we obtain the following inequality for a.e. $t \in (0,T)$:
\begin{align*}
(H_2(t),V'(t))_{L^2(\Omega)}&=c_3\int_\Omega \{\mathcal{K}_{\text{{\tiny $\delta$}}} z_1(t)\}^{-1} V(t) \beta_{\text{{\tiny $\delta$}}} (w_2(t)) V'(t)dx\\
&\le \frac{\delta^2 c_3}{K_1} \int_\Omega |V(t)| |V'(t)|dx \le \frac{\delta^2 c_3}{K_1} \|V(t)\|_{L^2(\Omega)} \|V'(t)\|_{L^2(\Omega)}\\
&\le \mu \|V'(t)\|_{L^2(\Omega)}^2+\frac{\delta^4 (c_3)^2}{4\mu (K_1)^2} \|V(t)\|_{L^2(\Omega)}^2.
\end{align*}
\item[(iv)] From \eqref{Equation-1-7}, \eqref{Equation-2-4} and \eqref{Equation-2-15} we obtain the following inequality for a.e. $t \in (0,T)$:
\begin{align*}
(H_3(t),V'(t))_{L^2(\Omega)}&=c_3\int_\Omega \{\mathcal{K}_{\text{{\tiny $\delta$}}} z_1(t)\}^{-1} v_1(t) 
\{ \beta_{\text{{\tiny $\delta$}}} (w_2(t))-\beta_{\text{{\tiny $\delta$}}} (w_1(t)) \} V'(t)dx\\
&\le \frac{\delta^2 c_3}{K_1} \int_\Omega |v_1(t)| |W(t)| |V'(t)|dx\\
&\le \frac{\delta^2 c_3}{K_1} \|v_1(t)\|_{L^\infty (\Omega)} \|W(t)\|_{L^2(\Omega)} \|V'(t)\|_{L^2(\Omega)}\\
&\le \mu \|V'(t)\|_{L^2(\Omega)}^2+\frac{\delta^4 (c_3 K_4)^2}{4\mu (K_1)^2} \|v_1(t)\|_{H^2(\Omega)}^2 \|W(t)\|_{L^2(\Omega)}^2.
\end{align*}
\item[(v)] We repeat the similar argument to the derivation of the estimate for $H_1$, and obtain the following inequality for a.e. $t \in (0,T)$:
\begin{align*}
(H_4(t),V'(t))_{L^2(\Omega)}&=c_4 \left( \left\{ M_{\text{{\tiny $\delta,\!z_2(t)$}}}-M_{\text{{\tiny $\delta,\!z_1(t)$}}} \right\} v_2(t) 
\beta_{\text{{\tiny $\delta$}}}(w_2(t)),V'(t)\right)_{L^2(\Omega)}\\
&\le \mu \|V'(t)\|_{L^2(\Omega)}^2+\frac{\delta^6 (c_4 K_2)^2}{4\mu (K_1)^4} \|v_2(t)\|_{L^2(\Omega)}^2 \|Z(t)\|_{L^2(\Omega)}^2.
\end{align*}
\item[(vi)] We repeat the similar argument to the derivation of the estimate for $H_2$, and obtain the following inequality for a.e. $t \in (0,T)$:
\begin{align*}
(H_5(t),V'(t))_{L^2(\Omega)}&=c_4\int_\Omega \{\mathcal{K}_{\text{{\tiny $\delta$}}} z_1(t)\}^{-1} V(t) \beta_{\text{{\tiny $\delta$}}} (u_2(t)) V'(t)dx\\
&\le \mu \|V'(t)\|_{L^2(\Omega)}^2+\frac{\delta^4 (c_4)^2}{4\mu (K_1)^2} \|V(t)\|_{L^2(\Omega)}^2.
\end{align*}
\item[(vii)] We repeat the similar argument to the derivation of the estimate for $H_3$, and obtain the following inequality for a.e. $t \in (0,T)$:
\begin{align*}
(H_6(t),V'(t))_{L^2(\Omega)}&=c_4\int_\Omega \{\mathcal{K}_{\text{{\tiny $\delta$}}} z_1(t)\}^{-1} v_1(t) 
\{ \beta_{\text{{\tiny $\delta$}}} (u_2(t))-\beta_{\text{{\tiny $\delta$}}} (u_1(t)) \} V'(t)dx\\
&\le \mu \|V'(t)\|_{L^2(\Omega)}^2+\frac{\delta^4 (c_4 K_4)^2}{4\mu (K_1)^2} \|v_1(t)\|_{H^2(\Omega)}^2 \|U(t)\|_{L^2(\Omega)}^2,
\end{align*}
\end{enumerate}
Adding all estimates (i)--(vii), we obtain the following inequality for a.e. $t \in (0,T)$:
\begin{gather}
\label{Equation-3-42}
\left(\frac{1}{\delta K_1 |\Omega|^{\frac{1}{2}}}-7\mu\right) \|V'(t)\|_{L^2(\Omega)}^2+\frac{d}{dt} \Phi_\nu (V(t))\\
\nonumber
\le \frac{\ell_2 (t)}{4\mu} \left( \|U(t)\|_{L^2(\Omega)}^2+\|V(t)\|_{L^2(\Omega)}^2+\|W(t)\|_{L^2(\Omega)}^2+\|Z(t)\|_{L^2(\Omega)}^2 \right),
\end{gather}
where the positive function $\ell: [0,T] \rightarrow \mathbb{R}$ is given by 
\begin{align*}
\ell_2 (t)&:=\frac{\delta^4 (K_2)^2}{(K_1)^4} \|v_1'(t)\|_{L^2(\Omega)}^2+\frac{\delta^6 (K_2)^2\{(c_3)^2+(c_4)^2\}}{(K_1)^4}  \|v_2(t)\|_{L^2(\Omega)}^2\\
&\hspace*{1cm}+\frac{\delta^4 \{(c_3)^2+(c_4)^2\}}{(K_1)^2}+\frac{\delta^4 (K_4)^2\{(c_3)^2+(c_4)^2\}}{(K_1)^2} \|v_1(t)\|_{H^2(\Omega)}^2,
\quad \forall t \in [0,T].
\end{align*}
\indent
Thirdly, we consider the function $W$, which satisfies the evolution equation:
\begin{gather*}
W'(t)-D_2 \Delta_N W(t)+c_5 W(t)\\
=-c_7 V(t) \beta_{\text{{\tiny $\delta$}}} (w_2(t))-c_7 v_1(t) \left\{ \beta_{\text{{\tiny $\delta$}}}(w_2(t))-\beta_{\text{{\tiny $\delta$}}}(w_1(t)) \right\}\\
\text{in} \quad L^2(\Omega),\quad \text{a.e.}~t \in (0,T),
\end{gather*}
with the initial condition 
\begin{equation*}
W(0)=0 \quad \text{in} \quad L^2(\Omega).
\end{equation*}
We repeat the similar argument to the derivation of \eqref{Equation-2-108} in Lemma \ref{Lemma-2-13}, in which the pairs $(\tilde{v}_m,\tilde{w}_m)$ and 
$(\tilde{v},\tilde{w})$ are replaced by $(v_2,w_2)$ and $(v_1,w_1)$, respectively.
Then, we obtain
\begin{align}
\label{Equation-3-43}
&\frac{d}{dt} \|W(t)\|_{L^2(\Omega)}^2+\min\{D_2,c_5\} \|W(t)\|_{H^1(\Omega)}^2\\
\nonumber
&\hspace*{1cm}\le \ell_3 (t) \left( \|W(t)\|_{L^2(\Omega)}^2+\delta c_7 \|V(t)\|_{L^2(\Omega)}^2 \right), \quad \text{a.e.}~t \in (0,T),
\end{align}
where the positive function $\ell_3:[0,T] \rightarrow \mathbb{R}$ is given by
\begin{equation*}
\ell_3 (t):=\frac{2 (c_7)^2 (K_6)^4}{\nu \min\{D_2,c_5\}} \Phi_\nu (v_1(t))+\delta c_7, \quad \forall t \in [0,T].
\end{equation*}
\indent
Fourthly, we consider the function $Z$.
Then, the function $Z$ is a strong solution to the following Cauchy problem on $[0,T]$:
\begin{gather}
\label{Equation-3-44}
Z'(t)-\varepsilon \Delta_N Z(t)=V'(t) \quad \text{in} \quad L^2(\Omega), \quad \text{a.e.}~t \in (0,T),\\
\nonumber
Z(0)=0 \quad \text{in} \quad L^2(\Omega).
\end{gather}
We take the inner product in both sides of \eqref{Equation-3-44} in $L^2(\Omega)$ by $Z(t)$.
For any $\mu >0$ we derive the following inequality for a.e. $t \in (0,T)$:
\begin{equation}\label{Equation-3-45}
\frac{d}{dt} \|Z(t)\|_{L^2(\Omega)}^2+2\varepsilon \|\nabla Z(t)\|_{\bm{L}^2(\Omega)}^2 \le \mu \|V'(t)\|_{L^2(\Omega)}^2+\frac{1}{\mu} \|Z(t)\|_{L^2(\Omega)}^2.
\end{equation}
\indent
Finally, we take out and fix a number $\mu:=\mu_* >0$ satisfying 
\begin{equation*}
\frac{1}{\delta K_1 |\Omega|^{\frac{1}{2}}}-8\mu_*>0.
\end{equation*}
From \eqref{Equation-3-40}, \eqref{Equation-3-42}, \eqref{Equation-4-43} and \eqref{Equation-3-45} we derive the following inequality for a.e. $t \in (0,T)$:
\begin{align}
\label{Equation-3-46}
&\,\frac{d}{dt} \left( \|U(t)\|_{L^2(\Omega)}^2+\Phi_\nu (V(t))+\|W(t)\|_{L^2(\Omega)}^2+\|Z(t)\|_{L^2(\Omega)}^2 \right)\\
\nonumber
&\,\hspace*{1cm}+C_{\text{{\tiny $3,\!1$}}} \left(\|\nabla U(t)\|_{\bm{L}^2(\Omega)}^2+\|V'(t)\|_{L^2(\Omega)}^2+\|W(t)\|_{H^1(\Omega)}^2
+\|\nabla Z(t)\|_{\bm{L}^2(\Omega)}^2\right)\\
\nonumber
\le&\,\ell (t) \left( \|U(t)\|_{L^2(\Omega)}^2+\Phi_\nu (V(t))+\|W(t)\|_{L^2(\Omega)}^2+\|Z(t)\|_{L^2(\Omega)}^2 \right),
\end{align}
where the constant $C_{\text{{\tiny $3,\!1$}}}>0$ and the positive function $\ell : [0,T] \rightarrow \mathbb{R}$ are given by
\begin{equation*}
C_{\text{{\tiny $3,\!1$}}}:=\min \left\{ D_1,\,\frac{1}{\delta K_1 |\Omega|^{\frac{1}{2}}}-8\mu^*,\,D_2,\,c_5,\,2\varepsilon \right\}, \quad 
\ell :=\ell_1+\frac{\ell_2}{4\mu^*}+\ell_3+\frac{1}{\mu^*}.
\end{equation*}
Since the regularities of $(u,v,,w,z)$ given in (1) of Proposition \ref{Proposition-2} yields $\ell \in L^1(0,T)$, 
we apply the Gronwall lemma to \eqref{Equation-3-46} and obtain the following equality by using the initial condition $(U(0),V(0),W(0),Z(0))=(0,0,0,0)$ in 
$(L^2(\Omega))^4$: 
\begin{equation*}
\|U(t)\|_{L^2(\Omega)}^2+\Phi_\nu (V(t))+\|Z(t)\|_{L^2(\Omega)}^2+\|W(t)\|_{L^2(\Omega)}^2=0,\quad \forall t \in [0,T],
\end{equation*}
which implies that strong solutions to $\text{(AP)}{}_{\text{{\tiny $\delta,\!\varepsilon,\!\nu$}}}$ on $[0,T]$ is unique.
\end{proof}
%%%%%
%%%%%
%%%%%
%%%%%%%%%%%%%%%%%%%%%%%%%%%%%%%%%%%%%%%%%%%%%%%%%%%%%%%%%%%%%%%%%%%%%%%%%%%%%%%%%%%%%%%%%%%%%%%%%%%%%%%%%%%%%%%%
%%%%%%%%%%%%%%%%%%%%%%%%%%%%%%%%%%%%%%%%%%%%%%%%%%%%%%%%%%%%%%%%%%%%%%%%%%%%%%%%%%%%%%%%%%%%%%%%%%%%%%%%%%%%%%%%
%%%%%%%%%%%%%%%%%%%%%%%%%%%%%%%%%%%%%%%%%%%%%%%%%%%%%%%%%%%%%%%%%%%%%%%%%%%%%%%%%%%%%%%%%%%%%%%%%%%%%%%%%%%%%%%%
\subsection{Approximate strong global-in-time solutions}\label{Subsection-3-4}
%%%%%%%%%%%%%%%%%%%%%%%%%%%%%%%%%%%%%%%%%%%%%%%%%%%%%%%%%%%%%%%%%%%%%%%%%%%%%%%%%%%%%%%%%%%%%%%%%%%%%%%%%%%%%%%%
%%%%%%%%%%%%%%%%%%%%%%%%%%%%%%%%%%%%%%%%%%%%%%%%%%%%%%%%%%%%%%%%%%%%%%%%%%%%%%%%%%%%%%%%%%%%%%%%%%%%%%%%%%%%%%%%
%%%%%%%%%%%%%%%%%%%%%%%%%%%%%%%%%%%%%%%%%%%%%%%%%%%%%%%%%%%%%%%%%%%%%%%%%%%%%%%%%%%%%%%%%%%%%%%%%%%%%%%%%%%%%%%%
In this subsection, we show Proposition \ref{Proposition-4}, for each $\delta \in (1,\infty)$, $\varepsilon \in (0,1)$ and $\nu \in (0,1)$ which guarantees 
the existence and uniqueness of global-in-time to the approximate initial-boundary value problem $\text{(AP)}{}_{\text{{\tiny $\delta,\!\varepsilon,\!\nu$}}}$.
%%%%%
%%%%%
%%%%%
\begin{proposition}\label{Proposition-4}
For each fixed approximation parameters $\delta \in (1,\infty)$, $\varepsilon \in (0,1)$ and $\nu \in (0,1)$ the approximate initial-boundary value problem 
$\text{(AP)}{}_{\text{{\tiny $\delta,\!\varepsilon,\!\nu$}}}$ admits a unique global-in-time solution 
$(u_{\text{{\tiny $\delta,\!\varepsilon,\!\nu$}}},v_{\text{{\tiny $\delta,\!\varepsilon,\!\nu$}}},w_{\text{{\tiny $\delta,\!\varepsilon,\!\nu$}}},
z_{\text{{\tiny $\delta,\!\varepsilon,\!\nu$}}})$ satisfying all properties (1)--(3) in Proposition \ref{Proposition-2} for all time $T>0$ in which the local existence time 
$T_{\text{{\tiny $\delta$}}}>0$ is replaced by any time $T>0$.
\end{proposition}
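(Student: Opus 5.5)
The plan is a standard continuation argument. It combines the local existence of Proposition \ref{Proposition-2}, the a priori bounds of Lemmas \ref{Lemma-3-6}--\ref{Lemma-3-8}, and the uniqueness of Proposition \ref{Proposition-3}. Fix $T>0$ and the parameters $\delta,\varepsilon,\nu$.

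First, I would observe that the local existence time $T_\delta$ in Proposition \ref{Proposition-2} depends on the initial data only through a few quantities. From the choice of $R_0\ge C_{10}+1$ and \eqref{Equation-3-21}, these are $\|v_0\|_{H^1(\Omega)}$ and $\|z_0\|_{H^1(\Omega)}$, via $C_{10}$ and $C_{11}$. The constants $C_9$ and $C_{12}$ also enter, and the bounds of Lemmas \ref{Lemma-2-11}--\ref{Lemma-2-12} behind them involve $\|u_0\|_{H^1(\Omega)}$ and $\|w_0\|_{H^1(\Omega)}$. Hence $T_\delta$ can be bounded below by a positive function $\tau(\Lambda)$ that is nonincreasing in
\begin{equation*}
\Lambda:=\|u_0\|_{H^1(\Omega)}+\|v_0\|_{H^1(\Omega)}+\|w_0\|_{H^1(\Omega)}+\|z_0\|_{H^1(\Omega)}.
\end{equation*}
The same construction applies verbatim to the shifted problem on $[t_0,t_0+\tau]$ with initial data $(u(t_0),v(t_0),w(t_0),z(t_0))\in (H^1(\Omega))^4$. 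This requires $z(t_0)\neq v(t_0)$ to be allowed, which is harmless since Lemma \ref{Lemma-2-8} treats a general $z_0\in H^1(\Omega)$.

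Second, I would set $T^{\max}:=\sup\{T'\in(0,T]\,;\,\text{(AP)}_{\delta,\varepsilon,\nu}\text{ has a strong solution on }[0,T']\}$. By Proposition \ref{Proposition-3}, solutions on overlapping intervals coincide, so they glue into a single strong solution on $[0,T^{\max})$. Applying Lemmas \ref{Lemma-3-6}--\ref{Lemma-3-8} on each $[0,T']$ with $T'<T^{\max}$ (with $T^*=T'$, and constants monotone in $T^*$, so they may be replaced by those for $T^*=T$) gives uniform bounds on $\sup_t\|(u,w)(t)\|_{H^1(\Omega)}$ and $\sup_t\Phi_\nu(v(t))$. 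For fixed $\nu$, the latter controls $\|v(t)\|_{H^1(\Omega)}$. We also get bounds on $\|(u,v,w,z)'\|_{L^2(0,T';L^2(\Omega))}$ and on the $L^2(H^2)$ norms, independent of $T'$. For $z$, Lemma \ref{Lemma-3-8} gives $\varepsilon\sup_t\|\nabla z(t)\|^2\le C_{16}$, so $\|z(t)\|_{H^1(\Omega)}$ is bounded for fixed $\varepsilon$. Thus $\Lambda(t)\le\Lambda^*$ for all $t<T^{\max}$, where $\Lambda(t)$ denotes the quantity $\Lambda$ above evaluated at the data $(u(t),v(t),w(t),z(t))$.

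Third, the $W^{1,2}(0,T^{\max};L^2)$ bounds give limits of all four components in $L^2(\Omega)$ as $t\uparrow T^{\max}$. The $L^\infty(H^1)$ bounds make these limits belong to $H^1(\Omega)$, so the solution extends as a strong solution to $[0,T^{\max}]$ with all regularity in (1) of Proposition \ref{Proposition-2}. If $T^{\max}<T$, I would restart from $T^{\max}-\tau(\Lambda^*)/2$ and extend beyond $T^{\max}$, which is a contradiction. Hence $T^{\max}=T$, and since $T$ is arbitrary and the solution is unique, this yields the global solution.

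The main obstacle is the first step: checking that the local time genuinely depends only on $H^1$-norms of the data and on $T$, and not on finer structure. This requires tracking $C_{\text{{\tiny $2,\!6,\!1$}}}$, $C_{\text{{\tiny $3,\!4,\!1$}}}$ and $C_9$ through Lemmas \ref{Lemma-3-2}--\ref{Lemma-3-4}. It also requires noting that $z(t_0)$ is only $\varepsilon$-dependently bounded in $H^1$, which is acceptable since $\varepsilon$ is fixed here.
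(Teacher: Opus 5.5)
Your proposal is correct and follows the same continuation-by-contradiction scheme as the paper. Both arguments use a maximal existence time and Proposition~\ref{Proposition-3} to make solutions on nested intervals consistent, take $(H^1(\Omega))^4$-bounds on the data near the maximal time from Lemmas~\ref{Lemma-3-6}--\ref{Lemma-3-8}, and restart via Proposition~\ref{Proposition-2} with a general $z_0$.

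There are two differences. First, the paper restarts exactly at the maximal time, from a weak $H^1$-limit of the terminal values $(u_m(T_m),v_m(T_m),w_m(T_m),z_m(T_m))$. Your uniform lower bound $\tau(\Lambda)$ on the local existence time, which you flag as the main obstacle, is therefore unnecessary once your third step has extended the solution to the closed interval $[0,T^{\max}]$. Second, you use the uniform $W^{1,2}(0,T^{\max};L^2(\Omega))$ bound to obtain the full limit as $t\uparrow T^{\max}$. This makes rigorous the continuity at the junction, which the paper infers only from subsequential convergence of the terminal values.
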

%%%%%
%%%%%
%%%%%
\begin{proof}[Proof.]
We see from the definition of global-in-time solutions to $\text{(AP)}{}_{\text{{\tiny $\delta,\!\varepsilon,\!\nu$}}}$ and Proposition \ref{Proposition-3} 
that for each fixed finite time $T>0$ the quadruple $(u_{\text{{\tiny $\delta,\!\varepsilon,\!\nu$}}},v_{\text{{\tiny $\delta,\!\varepsilon,\!\nu$}}},
w_{\text{{\tiny $\delta,\!\varepsilon,\!\nu$}}},z_{\text{{\tiny $\delta,\!\varepsilon,\!\nu$}}})$ is the unique solution to 
$\text{(AP)}{}_{\text{{\tiny $\delta,\!\varepsilon,\!\nu$}}}$ on $[0,T]$.
Since $T>0$ is arbitrary, it follows that the global-in-time solution to $\text{(AP)}{}_{\text{{\tiny $\delta,\!\varepsilon,\!\nu$}}}$ is unique.\\
\indent
In the following argument, we show the existence of strong global-in-time solutions to $\text{(AP)}{}_{\text{{\tiny $\delta,\!\varepsilon,\!\nu$}}}$ by contradiction.
We define the maximal existence time $T_{\text{{\tiny $\delta,\!\varepsilon,\!\nu$}}}^*$ by
\begin{equation}\label{Equation-3-47}
T_{\text{{\tiny $\delta,\!\varepsilon,\!\nu$}}}^*:=\sup \left\{ T>0\,;\,\text{$\text{(AP)}{}_{\text{{\tiny $\delta,\!\varepsilon,\!\nu$}}}$ has a solution on $[0,T]$} \right\}.
\end{equation}
Assume by contradiction that $T_{\text{{\tiny $\delta,\!\varepsilon,\!\nu$}}}^*<\infty$.
By Proposition \ref{Proposition-2}, we have $T_{\text{{\tiny $\delta,\!\varepsilon,\!\nu$}}}^*>0$.
Then, we take a strictly increasing sequence $\{T_m\}_{m \in \mathbb{N}}$ satisfying
\begin{equation}\label{Equation-3-48}
T_m \uparrow T_{\text{{\tiny $\delta,\!\varepsilon,\!\nu$}}}^* \quad \text{as} \quad m \to \infty,
\end{equation}
and let $(u_m,v_m,w_m,z_m)$ be the unique solution to $\text{(AP)}{}_{\text{{\tiny $\delta,\!\varepsilon,\!\nu$}}}$ on $[0,T_m]$ for each $m \in \mathbb{N}$.
By the uniqueness of strong local-in-time solutions, for all $m,n \in \mathbb{N}$ with $m < n$ we have 
\begin{gather}
\label{Equation-3-49}
(u_m(t),v_m(t),w_m(t),z_m(t))=(u_n(t),v_n(t),w_n(t),z_n(t))\\
\nonumber
\text{in} \quad (L^2(\Omega))^4,\quad \forall t \in [0,T_m].
\end{gather}
By the continuity of $(u_m,v_m,w_m,z_m)$ in $(C([0,T_m]\,;L^2(\Omega)))^4$, the terminal values 
\begin{equation*}
(u_m(T_m),v_m(T_m),w_m(T_m),z_m(T_m))~~\text{are well-defined as elements of}~~(L^2(\Omega))^4.
\end{equation*}
Moreover, by virtue of the uniform estimates obtained in Proposition \ref{Proposition-3} the sequence 
\begin{equation*}
\{(u_m(T_m),v_m(T_m),w_m(T_m),z_m(T_m))\}_{m \in \mathbb{N}}~~\text{is bounded in}~~(H^1(\Omega))^4.
\end{equation*}
Hence, there exists a subsequence of $\{(u_m(T_m),v_m(T_m),w_m(T_m),z_m(T_m))\}_{m \in \mathbb{N}}$, still denoted by the same notation, and a quadruple 
$(u_*,v_*,w_*,z_*) \in (H^1(\Omega))^4$ such that the following convergence holds as $m \to \infty$:
\begin{gather}\label{Equation-3-50}
(u_m(T_m),v_m(T_m),w_m(T_m),z_m(T_m)) \longrightarrow (u_*,v_*,w_*,z_*)\\
\nonumber
\text{in} \quad (L^2(\Omega))^4 \quad \text{and} \quad \text{weakly in} \quad (H^1(\Omega))^4.
\end{gather}
Using the quadruple $(u_*,v_*,w_*,z_*)$ as an initial datum, we consider the following Cauchy problem $\text{(AP)}{}_{\text{{\tiny $\delta,\!\varepsilon,\!\nu$}}}^*$:
\begin{gather}
\nonumber
u'(t)-D_1 \Delta_N u(t)=\beta_{\text{{\tiny $\delta$}}}(u(t)) \left\{ c_1 v(t)-c_2 w(t) \right\} 
\quad \text{in} \quad L^2(\Omega),\quad \text{a.e.}~t>0,\\
\nonumber
M_{\text{{\tiny$\delta,\!z(t)$}}}v'(t)-\nu \Delta_N v(t)+v(t)=M_{\text{{\tiny $\delta,\!z(t)$}}} v(t)
\left\{ c_3 \beta_{\text{{\tiny $\delta$}}} (w(t))-c_4 \beta_{\text{{\tiny $\delta$}}} (u(t)) \right\}\\
\nonumber
\text{in} \quad L^2(\Omega),\quad \text{a.e.}~t>0,\\
\nonumber
w'(t)-D_2 \Delta_N w(t)+c_5 w(t)=c_6 -c_7 v(t) \beta_{\text{{\tiny $\delta$}}} (w(t)) \quad \text{in} \quad L^2(\Omega), \quad \text{a.e.}~t>0,\\
\nonumber
z'(t)-\varepsilon \Delta_N z(t)=v'(t) \quad \text{in} \quad L^2(\Omega),\quad \text{a.e.}~t>0,\\
\label{Equation-3-51}
(u(0),v(0),z(0),w(0))=(u_*,v_*,z_*,w_*) \quad \text{in} \quad L^2(\Omega).
\end{gather}
Since all arguments as in Section \ref{Section-2} and Subsections \ref{Subsection-3-1}--\ref{Subsection-3-3} are available to 
$\text{(AP)}{}_{\text{{\tiny $\delta,\!\varepsilon,\!\nu$}}}^*$, we deduce that there exists a finite time $T_*>0$ such that 
$\text{(AP)}{}_{\text{{\tiny $\delta,\!\varepsilon,\!\nu$}}}^*$ admits a unique solution $(\hat{u},\hat{v},\hat{w},\hat{z})$ on $[0,T_*]$.
Using the sequence $\{(u_m,v_m,w_m,z_m)\}_{m \in \mathbb{N}}$ and $(\hat{u},\hat{v},\hat{w},\hat{z})$, we consider a quadruple $(\bar{u},\bar{v},\bar{w},\bar{z})$ 
as a function from $[0,T_{\text{{\tiny $\delta,\!\varepsilon,\!\nu$}}}^*+T_*]$ into $(L^2 (\Omega))^4$ defined by 
\begin{gather*}
(\bar{u}(t),\bar{v}(t),\bar{w}(t),\bar{z}(t)):=(u_m(t),v_m(t),w_m(t),z_m(t)),\\
\forall m \in \mathbb{N}~~\text{such that}~~t \in [0,T_m],\\
(\bar{u}(t),\bar{v}(t),\bar{w}(t),\bar{z}(t)):=(\hat{u}(t-T_{\text{{\tiny $\delta,\!\varepsilon,\!\nu$}}}^*),\hat{v}(t-T_{\text{{\tiny $\delta,\!\varepsilon,\!\nu$}}}^*),
\hat{w}(t-T_{\text{{\tiny $\delta,\!\varepsilon,\!\nu$}}}^*),\hat{z}(t-T_{\text{{\tiny $\delta,\!\varepsilon,\!\nu$}}}^*))\\
\text{if} \quad t \in [T_{\text{{\tiny $\delta,\!\varepsilon,\!\nu$}}}^*,T_{\text{{\tiny $\delta,\!\varepsilon,\!\nu$}}}^*+T_*].
\end{gather*}
This definition is independent of the choice of $m \in \mathbb{N}$ due to \eqref{Equation-3-49}, and we have 
\begin{equation*}
(\bar{u},\bar{v},\bar{w},\bar{z}) \in \left(C([0,T_{\text{{\tiny $\delta,\!\varepsilon,\!\nu$}}}^*+T_*]\,;L^2(\Omega))\right)^4
\end{equation*}
since from \eqref{Equation-3-48}--\eqref{Equation-3-50} and \eqref{Equation-3-51} we have
\begin{gather*}
(\bar{u}(t),\bar{u}(t),\bar{w}(t),\bar{z}(t)) \longrightarrow (u^*,v^*,w^*,z^*)=(\bar{u}(T_{\text{{\tiny $\delta,\!\varepsilon,\!\nu$}}}^*),
\bar{v}(T_{\text{{\tiny $\delta,\!\varepsilon,\!\nu$}}}^*),\bar{w}(T_{\text{{\tiny $\delta,\!\varepsilon,\!\nu$}}}^*),
\bar{z}(T_{\text{{\tiny $\delta,\!\varepsilon,\!\nu$}}}^*))\\
\text{in} \quad (L^2(\Omega))^4 \quad \text{as} \quad t \to T_{\text{{\tiny $\delta,\!\varepsilon,\!\nu$}}}^*.
\end{gather*}
Then, the quadruple $(\bar{u},\bar{v},\bar{z},\bar{w})$ is a solution to $\text{(AP)}{}_{\text{{\tiny $\delta,\!\varepsilon,\!\nu$}}}$ on 
$[0,T_{\text{{\tiny $\delta,\!\varepsilon,\!\nu$}}}^*+T_*]$.
This contradicts the definition of $T_{\text{{\tiny $\delta,\!\varepsilon,\!\nu$}}}^*$ given in \eqref{Equation-3-47}.
Hence, $T_{\text{{\tiny $\delta,\!\varepsilon,\!\nu$}}}^*=\infty$ must hold.
\end{proof}
%%%%%
%%%%%
%%%%%
%%%%%%%%%%%%%%%%%%%%%%%%%%%%%%%%%%%%%%%%%%%%%%%%%%%%%%%%%%%%%%%%%%%%%%%%%%%%%%%%%%%%%%%%%%%%%%%%%%%%%%%%%%%%%%%%
%%%%%%%%%%%%%%%%%%%%%%%%%%%%%%%%%%%%%%%%%%%%%%%%%%%%%%%%%%%%%%%%%%%%%%%%%%%%%%%%%%%%%%%%%%%%%%%%%%%%%%%%%%%%%%%%
%%%%%%%%%%%%%%%%%%%%%%%%%%%%%%%%%%%%%%%%%%%%%%%%%%%%%%%%%%%%%%%%%%%%%%%%%%%%%%%%%%%%%%%%%%%%%%%%%%%%%%%%%%%%%%%%
\subsection{Nonnegativity}\label{Subsection-3-5}
%%%%%%%%%%%%%%%%%%%%%%%%%%%%%%%%%%%%%%%%%%%%%%%%%%%%%%%%%%%%%%%%%%%%%%%%%%%%%%%%%%%%%%%%%%%%%%%%%%%%%%%%%%%%%%%%
%%%%%%%%%%%%%%%%%%%%%%%%%%%%%%%%%%%%%%%%%%%%%%%%%%%%%%%%%%%%%%%%%%%%%%%%%%%%%%%%%%%%%%%%%%%%%%%%%%%%%%%%%%%%%%%%
%%%%%%%%%%%%%%%%%%%%%%%%%%%%%%%%%%%%%%%%%%%%%%%%%%%%%%%%%%%%%%%%%%%%%%%%%%%%%%%%%%%%%%%%%%%%%%%%%%%%%%%%%%%%%%%%
In the rest part of this section, we show Proposition \ref{Proposition-5}, which states the nonnegativities of the components $u$, $v$ and $w$ of the 
global-in-time solution to $\text{(AP)}{}_{\text{{\tiny $\delta,\!\varepsilon,\!\nu$}}}$.
%%%%%
%%%%%
%%%%%
\begin{proposition}\label{Proposition-5}
For every approximation parameters $\delta \in (1,\infty)$, $\varepsilon \in (0,1)$ and $\nu \in (0,1)$ we let $(u_{\text{{\tiny $\delta,\!\varepsilon,\!\nu$}}},
v_{\text{{\tiny $\delta,\!\varepsilon,\!\nu$}}},w_{\text{{\tiny $\delta,\!\varepsilon,\!\nu$}}},z_{\text{{\tiny $\delta,\!\varepsilon,\!\nu$}}})$ be a strong 
global-in-time solution to the approximate initial-boundary value problem $\text{(AP)}{}_{\text{{\tiny $\delta,\!\varepsilon,\!\nu$}}}$.
Then, we have 
\begin{equation*}
u_{\text{{\tiny $\delta,\!\varepsilon,\!\nu$}}}(x,t) \ge 0,\quad v_{\text{{\tiny $\delta,\!\varepsilon,\!\nu$}}}(x,t) \ge 0,\quad 
w_{\text{{\tiny $\delta,\!\varepsilon,\!\nu$}}}(x,t) \ge 0,\quad \text{a.e.}~(x,t) \in Q.
\end{equation*}
\end{proposition}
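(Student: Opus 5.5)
We prove nonnegativity by the Stampacchia truncation method: test each equation with the negative part of the corresponding unknown and close a Gronwall inequality. Fix $\delta,\varepsilon,\nu$ and any $T>0$, and write $(u,v,w,z)$ for the solution of Proposition \ref{Proposition-4} on $[0,T]$. For $\eta\in H^1(\Omega)$ set $[\eta]^-:=\max\{-\eta,0\}\in H^1(\Omega)$. Since $u,v,w\in W^{1,2}(0,T;L^2(\Omega))\cap L^\infty(0,T;H^1(\Omega))$, the chain rule gives
\[
\tfrac{d}{dt}\tfrac12\|[\eta(t)]^-\|_{L^2(\Omega)}^2=-(\eta'(t),[\eta(t)]^-)_{L^2(\Omega)}
\]
for $\eta=u,v,w$, and Green's formula with the Neumann condition gives $-(\Delta_N\eta,[\eta]^-)_{L^2(\Omega)}=-\|\nabla[\eta]^-\|^2_{\bm{L}^2(\Omega)}\le 0$. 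Moreover $[u_0]^-=[v_0]^-=[w_0]^-=0$ by (A2).

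We treat $v$ first, because its equation is linear in $v$. Equation \eqref{Equation-3-2} is, pointwise,
\[
(\mathcal{K}_\delta z)^{-1}v'-\nu\Delta_N v+v=(\mathcal{K}_\delta z)^{-1}v\,h,\qquad h:=c_3\beta_\delta(w)-c_4\beta_\delta(u),
\]
with $|h|\le\delta(c_3+c_4)$ by \eqref{Equation-2-3}. We test with $-[v]^-$ and do not divide by the coefficient. The time-derivative term becomes $\tfrac12\int_\Omega(\mathcal{K}_\delta z)^{-1}\partial_t|[v]^-|^2dx$, which is precisely the twisted norm $\|[v]^-\|^2_{\delta,z(t)}$ differentiated without the metric variation. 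Lemma \ref{Lemma-2-3}, applied with $\bar z=z\in W^{1,2}(0,T;L^2(\Omega))$, then yields
\[
\frac{d}{dt}\|[v]^-\|^2_{\delta,z}+2\Phi_\nu([v]^-)\le\bigl(\delta^3C_3\|z'\|_{L^2(\Omega)}+2\delta(c_3+c_4)\bigr)\|[v]^-\|^2_{\delta,z}.
\]
The right-hand side uses that the reaction term gives $\int_\Omega(\mathcal{K}_\delta z)^{-1}h|[v]^-|^2dx\le\delta(c_3+c_4)\|[v]^-\|^2_{\delta,z}$. Since $\|z'\|_{L^2(\Omega)}\in L^1(0,T)$ and $[v(0)]^-=0$, Gronwall's lemma and Lemma \ref{Lemma-2-2}(1) give $v\ge0$.

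For $w$ we test \eqref{Equation-3-3} with $-[w]^-$. On the set $\{w<0\}$ we have $\beta_\delta(w)\le0$, and $v\ge0$ is already known, so $-c_7v\beta_\delta(w)\ge0$ there. Also $c_6>0$. Hence
\[
\frac{d}{dt}\|[w]^-\|^2+2D_2\|\nabla[w]^-\|^2+2c_5\|[w]^-\|^2=-2\int_\Omega(c_6-c_7v\beta_\delta(w))[w]^-dx\le0,
\]
and therefore $w\ge0$.

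For $u$ we test \eqref{Equation-3-1} with $-[u]^-$. Since $\beta_\delta(u)[u]^-=-|\beta_\delta(u)|[u]^-$ and $|\beta_\delta(u)|\le[u]^-$ on $\{u<0\}$, we obtain
\[
\frac{d}{dt}\|[u]^-\|^2+2D_1\|\nabla[u]^-\|^2\le2\int_\Omega(c_1|v|+c_2|w|)|[u]^-|^2dx.
\]
This is the main obstacle, because $v$ and $w$ are not bounded in $L^\infty$ uniformly, so the right-hand side cannot be controlled by $\|[u]^-\|^2$ directly. We handle it exactly as in \eqref{Equation-2-106}, using the Hölder inequality and \eqref{Equation-2-105}:
\[
\int_\Omega|v||[u]^-|^2dx\le K_6^2\|v\|_{H^1}\|[u]^-\|_{H^1}\|[u]^-\|_{L^2}.
\]
We then absorb $\|\nabla[u]^-\|^2$ by Young's inequality and obtain the coefficient $C(1+\|v\|^2_{H^1}+\|w\|^2_{H^1})$, which lies in $L^\infty(0,T)$ by Proposition \ref{Proposition-2}(1). (Alternatively, $\|v\|_{L^\infty}\le K_4\|v\|_{H^2}\in L^2(0,T)$ by \eqref{Equation-1-7}.) Gronwall's lemma then gives $[u]^-\equiv0$.

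Since $T$ is arbitrary, the nonnegativity holds a.e. in $Q$.
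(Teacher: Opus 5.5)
Your proof is correct, but it takes a different route from the paper. The paper never tests (AP) itself. It introduces an auxiliary problem (AAP) in which the reaction terms are replaced by $\beta_\delta(u_+)$, $v_+\{c_3\beta_\delta(w)-c_4\beta_\delta(u)\}$ and $\beta_\delta(w_+)$. It asserts global solvability of (AAP) by repeating the construction. After testing with $-u_-$, $-v_-$, $-w_-$, every reaction term vanishes identically, and the paper concludes that the (AAP) solution is a nonnegative solution of (AP); identifying it with the given solution tacitly uses the uniqueness of Proposition \ref{Proposition-3}. For $v$, the paper also works in the flat space with the equation multiplied through by $\mathcal{K}_\delta z$. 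Integrating $\nu\,\mathcal{K}_\delta z\,v_-\Delta v$ by parts then produces a term containing $\nabla_x K$, which is controlled by (A1)(c).

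You instead work directly with the given solution and keep the reaction terms, controlling them as follows:
\begin{itemize}
\item for $v$, by the bound $|h|\le\delta(c_3+c_4)$;
\item for $w$, by the already established sign of $v$;
\item for $u$, by $|\beta_\delta(u)|\le[u]^-$ together with the $L^4$ absorption of \eqref{Equation-2-106}.
\end{itemize}
For $v$ you also test in the twisted metric, so the nonlocal coefficient is never differentiated in space. Its time variation is absorbed by Lemma \ref{Lemma-2-3} through $\|z'\|_{L^2(\Omega)}\in L^1(0,T)$.

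Your route is self-contained, applies to every strong solution without an auxiliary existence result or a uniqueness argument, and does not need (A1)(c) in this step. The price is that it uses the time regularity of $z$ and the bounds $v,w\in L^\infty(0,T;H^1(\Omega))$, and it must treat $v$ before $w$. The paper's truncation trick makes the tested identities trivial, but it moves the real work into solving (AAP) and identifying its solution with the given one.

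Two small points. The left-hand side of your $v$-inequality actually carries $4\Phi_\nu([v]^-)$, which is harmless. Applying Lemma \ref{Lemma-2-3} to $[v]^-$ requires $[v]^-\in W^{1,2}(0,T;L^2(\Omega))$ with $([v]^-)'=-v'\chi_{\{v<0\}}$, which holds by the chain rule for Lipschitz truncations.
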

%%%%%
%%%%%
%%%%%
\begin{proof}[Proof.]
We consider the following initial-boundary value problem $\text{(AAP)}{}_{\text{{\tiny $\delta,\!\varepsilon,\!\nu$}}}$=\{\eqref{Equation-3-52}--\eqref{Equation-3-57}\} 
as an auxiliary problem for $\text{(AP)}{}_{\text{{\tiny $\delta,\!\varepsilon,\!\nu$}}}$:
\begin{gather}
\label{Equation-3-52}
u'-D_1 \Delta u=\beta_{\text{{\tiny $\delta$}}} (u_{\text{{\tiny $+$}}}) \left\{c_1 v-c_2 w\right\},\quad \text{a.e. in} \quad Q,\\
\label{Equation-3-53}
v'=\mathcal{K}_{\text{{\tiny $\delta$}}}z \left( \nu \Delta v-v\right)+v_{\text{{\tiny $+$}}}
\left\{c_3 \beta_{\text{{\tiny $\delta$}}}(w)-c_4 \beta_{\text{{\tiny $\delta$}}}(u)\right\},
\quad \text{a.e. in} \quad Q,\\
\label{Equation-3-54}
w'=D_2\Delta w-c_5 w+c_6-c_7 v \beta_{\text{{\tiny $\delta$}}}(w_{\text{{\tiny $+$}}}),\quad \text{a.e. in} \quad Q,\\
\label{Equation-3-55}
z'=\varepsilon \Delta z+v',\quad \text{a.e. in} \quad Q,\\
\label{Equation-3-56}
\nabla u \cdot \bm{n}=\nabla v \cdot \bm{n}=\nabla w \cdot \bm{n}=\nabla z \cdot \bm{n}=0,\quad \text{a.e. on} \quad \Sigma,\\
\label{Equation-3-57}
u(0)=u_0,\quad v(0)=v_0,\quad w(0)=w_0,\quad z(0)=v_0,\quad \text{a.e. in} \quad \Omega,
\end{gather}
where we set $a_{\text{{\tiny $+$}}}:=\max\{a,0\}$ and $a_{\text{{\tiny $-$}}}:=\max\{-a,0\}$.
By repeating the construction used for the approximate initial-boundary value problem $\text{(AP)}{}_{\text{{\tiny $\delta,\!\varepsilon,\!\nu$}}}$, we readily obtain 
the existence of a strong global-in-time solution to $\text{(AAP)}{}_{\text{{\tiny $\delta,\!\varepsilon,\!\nu$}}}$.\\
\indent
Firstly, we multiply both sides of \eqref{Equation-3-52} by $-u_{\text{{\tiny $-$}}}$ and integrate the result over $\Omega$.
Combining the fact that $\beta_{\text{{\tiny $\delta$}}}(u_{\text{{\tiny $+$}}}) u_{\text{{\tiny $-$}}}=0$ a.e. in $Q$, we obtain 
\begin{equation}\label{Equation-3-58}
\frac{1}{2} \frac{d}{dt} \int_\Omega |u_{\text{{\tiny $-$}}}(x,t)|^2dx=-D_1 \int_\Omega |\nabla u_{\text{{\tiny $-$}}}(x,t)|^2 \le 0, \quad \text{a.e.}~t>0.
\end{equation}
Since the initial condition $u_0 \ge 0$ a.e. in $\Omega$, from \eqref{Equation-3-58} we obtain 
\begin{equation}\label{Equation-3-59}
\int_\Omega |u_{\text{{\tiny $-$}}}(x,t)|^2dx=0,\quad \text{hence},\quad u(x,t) \ge 0,\quad \text{a.e.}~x \in \Omega,~\forall t \ge 0.
\end{equation}
\indent
Secondly, we multiply both sides of \eqref{Equation-3-53} by $-v_{\text{{\tiny $-$}}}$ and integrate the result over $\Omega$.
Combining the fact that $\beta_{\text{{\tiny $\delta$}}}(v_{\text{{\tiny $+$}}}) v_{\text{{\tiny $-$}}}=0$ a.e. in $Q$, we obtain 
\begin{align*}
\frac{1}{2} \frac{d}{dt} \int_\Omega |v_{\text{{\tiny $-$}}}(x,t)|^2dx&=\nu\int_\Omega \left(\int_\Omega K(x,y) \alpha_{\text{{\tiny $\delta$}}}(z(y,t))dy\right)
v_{\text{{\tiny $-$}}}(x,t) \Delta v_{\text{{\tiny $-$}}}(x,t)dx\\
&\hspace*{1cm}-\int_\Omega \left(\int_\Omega K(x,y)\alpha_{\text{{\tiny $\delta$}}}(z(y,t))dy\right)|v_{\text{{\tiny $-$}}}(x,t)|^2dx\\
&=-\nu\int_\Omega \nabla \left( \left(\int_\Omega K(x,y) \alpha_{\text{{\tiny $\delta$}}}(z(y,t))dy\right)v_{\text{{\tiny $-$}}}(x,t)\right)
\cdot \nabla v_{\text{{\tiny $-$}}}(x,t)dx\\
&\hspace*{1cm}-\int_\Omega \left(\int_\Omega K(x,y)\alpha_{\text{{\tiny $\delta$}}}(z(y,t))dy\right)|v_{\text{{\tiny $-$}}}(x,t)|^2dx\\
&=-\nu\int_\Omega v_{\text{{\tiny $-$}}}(x,t)\left(\int_\Omega \alpha_{\text{{\tiny $\delta$}}}(z(y,t)) \nabla K(x,y) \cdot \nabla v_{\text{{\tiny $-$}}}(x,t)dy\right) dx\\
&\hspace*{1cm}-\nu\int_\Omega \left(\int_\Omega K(x,y)\alpha_{\text{{\tiny $\delta$}}}(z(y,t)dy\right) |\nabla v_{\text{{\tiny $-$}}}(x,t)|^2dx\\
&\hspace*{2cm}-\int_\Omega \left(\int_\Omega K(x,y)\alpha_{\text{{\tiny $\delta$}}}(z(y,t))dy\right)|v_{\text{{\tiny $-$}}}(x,t)|^2dx,
\end{align*}
which yields the following equality for a.e. $t>0$:
\begin{gather}
\label{Equation-3-60}
\frac{1}{2} \frac{d}{dt} \int_\Omega |v_{\text{{\tiny $-$}}}(x,t)|^2dx+\nu \int_\Omega (\mathcal{K}_{\text{{\tiny $\delta$}}}z)(x,t)|\nabla v_{\text{{\tiny $-$}}}(x,t)|^2dx+
\int_\Omega (\mathcal{K}_{\text{{\tiny $\delta$}}}z)(x,t)|v_{\text{{\tiny $-$}}}(x,t)|^2dx\\
\nonumber
=-\nu\int_\Omega v_{\text{{\tiny $-$}}}(x,t)\left(\int_\Omega \alpha_{\text{{\tiny $\delta$}}}(z(y,t)) \nabla K(x,y) \cdot \nabla v_{\text{{\tiny $-$}}}(x,t)dy\right) dx.
\end{gather}
From \eqref{Equation-2-14} we have 
\begin{equation}\label{Equation-3-61}
\nu \int_\Omega (\mathcal{K}_{\text{{\tiny $\delta$}}}z)(x,t) |\nabla v_{\text{{\tiny $-$}}}(x,t)|^2dx \ge 
\frac{\nu K_1}{\delta} \int_\Omega |\nabla v_{\text{{\tiny $-$}}}(x,t)|^2dx.
\end{equation}
Moreover, from (c) of (A1) we estimate the right-hand side of \eqref{Equation-3-60} as follows by using the Cauchy-Schwartz inequality and the Young inequality: 
\begin{align}
\label{Equation-3-62}
&\,-\nu\int_\Omega v_{\text{{\tiny $-$}}}(x,t)\left(\int_\Omega \alpha_{\text{{\tiny $\delta$}}}(z(y,t)) \nabla K(x,y) \cdot \nabla v_{\text{{\tiny $-$}}}(x,t)dy\right) dx\\
\nonumber
\le&\,\nu \delta \int_\Omega |v_{\text{{\tiny $-$}}}(x,t)|\left(\int_\Omega |\nabla K(x,y)| |\nabla v_{\text{{\tiny $-$}}}(x,t)|dy\right) dx\\
\nonumber
\le&\,\nu \delta K_3 |\Omega|^{\frac{1}{2}} \int_\Omega |v_{\text{{\tiny $-$}}}(x,t)| |\nabla v_{\text{{\tiny $-$}}}(x,t)|dx\\
\nonumber
\le&\,\nu \delta K_3 |\Omega|^{\frac{1}{2}} \left(\int_\Omega |v_{\text{{\tiny $-$}}}(x,t)|^2dx\right)^{\frac{1}{2}} 
\left(\int_\Omega |\nabla v_{\text{{\tiny $-$}}}(x,t)|^2dx\right)^{\frac{1}{2}}\\
\nonumber
\le&\,\frac{\nu K_1}{2\delta} \int_\Omega |\nabla v_{\text{{\tiny $-$}}}(x,t)|^2dx+\frac{\nu \delta^3(K_3)^2 |\Omega|}{2K_1} \int_\Omega |v_{\text{{\tiny $-$}}}(x,t)|^2dx.
\end{align}
Substituting \eqref{Equation-3-61} and \eqref{Equation-3-62} into \eqref{Equation-3-60} and using the nonnegativity of $\mathcal{K}_{\text{{\tiny $\delta$}}}z$, 
we obtain the following inequality for a.e. $t>0$:
\begin{equation}\label{Equation-3-63}
\frac{d}{dt} \int_\Omega |v_{\text{{\tiny $-$}}}(x,t)|^2dx+\frac{\nu K_1}{\delta} \int_\Omega |\nabla v_{\text{{\tiny $-$}}}(x,t)|^2dx 
\le \frac{\delta^3(K_3)^2 |\Omega|}{K_1} \int_\Omega |v_{\text{{\tiny $-$}}}(x,t)|^2dx.
\end{equation}
Applying the Gronwall lemma to \eqref{Equation-3-63} and using the condition $v_0 \ge 0$ a.e. in $\Omega$, we obtain 
\begin{equation}\label{Equation-3-64}
\int_\Omega |v_{\text{{\tiny $-$}}}(x,t)|^2dx=0,\quad \text{hence},\quad v(x,t) \ge 0,\quad \text{a.e.}~x \in \Omega,~\forall t \ge 0.
\end{equation}
\indent
Thirdly, we multiply both sides of \eqref{Equation-3-54} by $-w_{\text{{\tiny $-$}}}$ and integrate the result over $\Omega$.
Combining the fact that $\beta_{\text{{\tiny $\delta$}}}(w_{\text{{\tiny $+$}}}) w_{\text{{\tiny $-$}}}=0$ a.e. in $Q$, we obtain 
\begin{align}
\label{Equation-3-65}
\frac{1}{2}\frac{d}{dt} \int_\Omega |w_{\text{{\tiny $-$}}}(x,t)|^2dx&=-D_2\int_\Omega |\nabla w_{\text{{\tiny $-$}}}(x,t)|^2dx
-c_5\int_\Omega |w_{\text{{\tiny $-$}}}(x,t)|^2dx\\
\nonumber
&\hspace*{2cm}-c_6\int_\Omega w_{\text{{\tiny $-$}}}(x,t)dx \le 0,\quad \text{a.e.}~t>0.
\end{align}
Since the condition $w_0 \ge 0$ a.e. in $\Omega$, from \eqref{Equation-3-65} we obtain 
\begin{equation}\label{Equation-3-66}
\int_\Omega |w_{\text{{\tiny $-$}}}(x,t)|^2dx=0,\quad \text{hence},\quad w(x,t) \ge 0,\quad \text{a.e.}~x \in \Omega,~\forall t \ge 0.
\end{equation}
\indent
Finally, from \eqref{Equation-3-59}, \eqref{Equation-3-64} and \eqref{Equation-3-66} we obtain 
\begin{equation*}
u_{\text{{\tiny $+$}}}=u,\quad v_{\text{{\tiny $+$}}}=v,\quad w_{\text{{\tiny $+$}}}=w,\quad \text{a.e. in} \quad Q.
\end{equation*}
Hence, we see that the solution $(u,v,w,z)$ to $\text{(AAP)}{}_{\text{{\tiny $\delta,\!\varepsilon,\!\nu$}}}$ is also a strong global-in-time solution to 
$\text{(AP)}{}_{\text{{\tiny $\delta,\!\varepsilon,\!\nu$}}}$, and complete the proof of this theorem.
\end{proof}
%%%%%
%%%%%
%%%%% 
%%%%%%%%%%%%%%%%%%%%%%%%%%%%%%%%%%%%%%%%%%%%%%%%%%%%%%%%%%%%%%%%%%%%%%%%%%%%%%%%%%%%%%%%%%%%%%%%%%%%%%%%%%%%%%%%%%%%%%%%%%%%%%%%%%%%%%%%
%%%%%%%%%%%%%%%%%%%%%%%%%%%%%%%%%%%%%%%%%%%%%%%%%%%%%%%%%%%%%%%%%%%%%%%%%%%%%%%%%%%%%%%%%%%%%%%%%%%%%%%%%%%%%%%%%%%%%%%%%%%%%%%%%%%%%%%%
%%%%%%%%%%%%%%%%%%%%%%%%%%%%%%%%%%%%%%%%%%%%%%%%%%%%%%%%%%%%%%%%%%%%%%%%%%%%%%%%%%%%%%%%%%%%%%%%%%%%%%%%%%%%%%%%%%%%%%%%%%%%%%%%%%%%%%%%
%%%%%%%%%%%%%%%%%%%%%%%%%%%%%%%%%%%%%%%%%%%%%%%%%%%%%%%%%%%%%%%%%%%%%%%%%%%%%%%%%%%%%%%%%%%%%%%%%%%%%%%%%%%%%%%%%%%%%%%%%%%%%%%%%%%%%%%%
%%%%%%%%%%%%%%%%%%%%%%%%%%%%%%%%%%%%%%%%%%%%%%%%%%%%%%%%%%%%%%%%%%%%%%%%%%%%%%%%%%%%%%%%%%%%%%%%%%%%%%%%%%%%%%%%%%%%%%%%%%%%%%%%%%%%%%%%	
\section{Limit procedures}\label{Section-4}
%%%%%%%%%%%%%%%%%%%%%%%%%%%%%%%%%%%%%%%%%%%%%%%%%%%%%%%%%%%%%%%%%%%%%%%%%%%%%%%%%%%%%%%%%%%%%%%%%%%%%%%%%%%%%%%%%%%%%%%%%%%%%%%%%%%%%%%%
%%%%%%%%%%%%%%%%%%%%%%%%%%%%%%%%%%%%%%%%%%%%%%%%%%%%%%%%%%%%%%%%%%%%%%%%%%%%%%%%%%%%%%%%%%%%%%%%%%%%%%%%%%%%%%%%%%%%%%%%%%%%%%%%%%%%%%%%
%%%%%%%%%%%%%%%%%%%%%%%%%%%%%%%%%%%%%%%%%%%%%%%%%%%%%%%%%%%%%%%%%%%%%%%%%%%%%%%%%%%%%%%%%%%%%%%%%%%%%%%%%%%%%%%%%%%%%%%%%%%%%%%%%%%%%%%%
%%%%%%%%%%%%%%%%%%%%%%%%%%%%%%%%%%%%%%%%%%%%%%%%%%%%%%%%%%%%%%%%%%%%%%%%%%%%%%%%%%%%%%%%%%%%%%%%%%%%%%%%%%%%%%%%%%%%%%%%%%%%%%%%%%%%%%%%
%%%%%%%%%%%%%%%%%%%%%%%%%%%%%%%%%%%%%%%%%%%%%%%%%%%%%%%%%%%%%%%%%%%%%%%%%%%%%%%%%%%%%%%%%%%%%%%%%%%%%%%%%%%%%%%%%%%%%%%%%%%%%%%%%%%%%%%%	
Unless otherwise specified, throughout this section we fix a finite time $T>0$.
%%%%%%%%%%%%%%%%%%%%%%%%%%%%%%%%%%%%%%%%%%%%%%%%%%%%%%%%%%%%%%%%%%%%%%%%%%%%%%%%%%%%%%%%%%%%%%%%%%%%%%%%%%%%%
%%%%%%%%%%%%%%%%%%%%%%%%%%%%%%%%%%%%%%%%%%%%%%%%%%%%%%%%%%%%%%%%%%%%%%%%%%%%%%%%%%%%%%%%%%%%%%%%%%%%%%%%%%%%%
%%%%%%%%%%%%%%%%%%%%%%%%%%%%%%%%%%%%%%%%%%%%%%%%%%%%%%%%%%%%%%%%%%%%%%%%%%%%%%%%%%%%%%%%%%%%%%%%%%%%%%%%%%%%%	
\subsection{Limit procedures for the parameter $\nu$}\label{Subsection-4-1}
%%%%%%%%%%%%%%%%%%%%%%%%%%%%%%%%%%%%%%%%%%%%%%%%%%%%%%%%%%%%%%%%%%%%%%%%%%%%%%%%%%%%%%%%%%%%%%%%%%%%%%%%%%%%%
%%%%%%%%%%%%%%%%%%%%%%%%%%%%%%%%%%%%%%%%%%%%%%%%%%%%%%%%%%%%%%%%%%%%%%%%%%%%%%%%%%%%%%%%%%%%%%%%%%%%%%%%%%%%%
%%%%%%%%%%%%%%%%%%%%%%%%%%%%%%%%%%%%%%%%%%%%%%%%%%%%%%%%%%%%%%%%%%%%%%%%%%%%%%%%%%%%%%%%%%%%%%%%%%%%%%%%%%%%%
In this subsection, for each $\delta \in (1,\infty)$ and $\varepsilon \in (0,1)$ we consider the following initial-boundary value problem 
$\mbox{(AP)}{}_{\text{{\tiny $\delta,\!\varepsilon$}}}$:=\{\eqref{Equation-4-1}--\eqref{Equation-4-6}\}, which is formally derived by taking the approximation 
parameter $\nu=0$ in $\mbox{(AP)}{}_{\text{{\tiny $\delta,\!\varepsilon,\!\nu$}}}$:
\begin{gather}
\label{Equation-4-1}
u'-D_1 \Delta u=\beta_{\text{{\tiny $\delta$}}}(u)(c_1 v-c_2 w) \quad \text{a.e. in}\quad Q_T:=\Omega \times (0,T),\\
\label{Equation-4-2}
v'+(\mathcal{K}_\delta z) v=v\{c_3 \beta_\delta (w)-c_4 \beta_\delta (u)\} \quad \text{a.e. in} \quad Q_T,\\
\label{Equation-4-3}
w'-D_2 \Delta w+c_5w=c_6-c_7 v \beta_\delta (w) \quad \text{a.e. in} \quad Q_T,\\
\label{Equation-4-4}
z'-\varepsilon \Delta z=v' \quad \text{a.e. in} \quad Q_T,\\
\label{Equation-4-5}
\nabla u \cdot \bm{n}=\nabla w \cdot \bm{n}=\nabla z \cdot \bm{n}=0 \quad \text{a.e. on} \quad \Sigma_T:=\Gamma \times (0,T),\\
\label{Equation-4-6}
(u(0),v(0),w(0),z(0))=(u_0,v_0,w_0,v_0) \quad \text{a.e. in} \quad \Omega.
\end{gather}
\indent
At first, for each $\delta \in (1,\infty)$ and $\varepsilon \in (0,1)$ we give the definition of strong solutions to the approximate initial-boundary value problem 
$\mbox{(AP)}{}_{\text{{\tiny $\delta,\!\varepsilon$}}}$ on $[0,T]$.
%%%%%
%%%%%
%%%%%
\begin{definition}\label{Definition-6-1}
A quadruple $(u,v,w,z):=(u_{\text{{\tiny $\delta,\!\varepsilon$}}},v_{\text{{\tiny $\delta,\!\varepsilon$}}},w_{\text{{\tiny $\delta,\!\varepsilon$}}},
z_{\text{{\tiny $\delta,\!\varepsilon$}}})$ is called a strong solution to the approximate initial-boundary value problem 
$\mbox{(AP)}{}_{\text{{\tiny $\delta,\!\varepsilon$}}}$ on $[0,T]$ if and only if the following properties are satisfied:
\begin{enumerate}\leftskip12pt
\item[(a1)] $(u,w,z) \in \left(W^{1,2}(0,T\,;L^2(\Omega)) \cap L^\infty (0,T\,;H^1(\Omega)) \cap L^2(0,T\,;H^2(\Omega))\right)^3$.\\[-0.3cm]
\item[(a2)] $v \in W^{1,2}(0,T\,;L^2(\Omega))$.\\[-0.3cm]
\item[(a3)] The system \{\eqref{Equation-4-1}--\eqref{Equation-4-4}\} with the boundary condition \eqref{Equation-4-5} is satisfied in the following variational 
sense for a.e. $t \in (0,T)$:
\begin{gather}
\label{Equation-4-7}
u'(t)-D_1 \Delta_N u(t)=\beta_{\text{{\tiny $\delta$}}} (u(t))\{c_1 v(t)-c_2 w(t)\} \quad \text{in} \quad L^2(\Omega),\\
\label{Equation-4-8}
v'(t)+(M_{\text{{\tiny $\delta,\!z(t)$}}})^{-1} v(t)=v(t)\{c_3 \beta_\delta (w(t))-c_4 \beta_\delta (u(t))\} \quad \text{in} \quad L^2(\Omega),\\
\label{Equation-4-9}
w'(t)-D_2 \Delta_N w(t)+c_5w(t)=c_6-c_7 v(t) \beta_\delta (w(t)) \quad \text{in} \quad L^2(\Omega),\\
\label{Equation-4-10}
z'(t)-\varepsilon \Delta_N z(t)=v'(t) \quad \text{in} \quad L^2(\Omega).
\end{gather}
\item[(a4)] $(u(0),v(0),w(0),z(0))=(u_0,v_0,w_0,v_0) \quad \text{in} \quad (L^2(\Omega))^4$.
\end{enumerate}
\end{definition}
The main purpose of this section is to show Proposition \ref{Proposition-6}, which guarantees the existence of strong solutions to 
$\mbox{(AP)}{}_{\text{{\tiny $\delta,\!\varepsilon$}}}$ on any prescribed bounded time interval $[0,T]$ by taking the limit procedure as $\nu \downarrow 0$. 
%%%%%
%%%%%
%%%%%
\begin{proposition}\label{Proposition-6}
For every $\delta \in (1,\infty)$ and $\varepsilon \in (0,1)$ the initial-boundary value problem $\mbox{(AP)}{}_{\text{{\tiny $\delta,\!\varepsilon$}}}$ 
admits a nonnegative strong solution $(u_{\text{{\tiny $\delta,\!\varepsilon$}}},v_{\text{{\tiny $\delta,\!\varepsilon$}}},w_{\text{{\tiny $\delta,\!\varepsilon$}}},
z_{\text{{\tiny $\delta,\!\varepsilon$}}})$ on $[0,T]$ satisfying 
\begin{equation}\label{Equation-4-11}
u_{\text{{\tiny $\delta,\!\varepsilon$}}}(x,t) \ge 0, \quad v_{\text{{\tiny $\delta,\!\varepsilon$}}}(x,t) \ge 0, \quad 
w_{\text{{\tiny $\delta,\!\varepsilon$}}}(x,t) \ge 0,\quad \text{a.e.}~(x,t) \in Q_T.
\end{equation}
\indent
Moreover, the following uniform estimates hold:
\begin{enumerate}\leftskip6pt
\item[(1)] There exists a constant $C_{17}>0$, which depends on $\delta$, $T$, $\|u_0\|_{H^1(\Omega)}$, $\|v_0\|_{H^1(\Omega)}$ and $\|w_0\|_{H^1(\Omega)}$,
such that
\begin{gather*}
\sup_{\varepsilon\,\in\,(0,1)} \left\{ \int_0^T \|u_{\text{{\tiny $\delta,\!\varepsilon$}}}'(t)\|_{L^2(\Omega)}^2dt+
\left( \sup_{0 \le t \le T} \|u_{\text{{\tiny $\delta,\!\varepsilon$}}}(t)\|_{H^1(\Omega)} \right)^2 \right.\\
\hspace*{2cm}\left. +\int_0^T \|u_{\text{{\tiny $\delta,\!\varepsilon$}}}(t)\|_{H^2(\Omega)}^2 dt \right\} \le C_{17}.
\end{gather*}
\item[(2)] There exists a constant $C_{18}>0$, which depends on $\delta$, $T$ and $\|v_0\|_{H^1(\Omega)}$, such that
\begin{gather*}
\sup_{\varepsilon\,\in\,(0,1)} \left\{ \int_0^T \|v_{\text{{\tiny $\delta,\!\varepsilon$}}}'(t)\|_{L^2(\Omega)}^2dt+
\left( \max_{0 \le t \le T} \|v_{\text{{\tiny $\delta,\!\varepsilon$}}}(t)\|_{L^2(\Omega)} \right)^2 \right\} \le C_{18}.
\end{gather*}
\item[(3)] There exists a constant $C_{19}>0$, which depends on $\delta$, $T$, $\|v_0\|_{H^1(\Omega)}$ and $\|w_0\|_{H^1(\Omega)}$, such that
\begin{gather*}
\sup_{\varepsilon\,\in\,(0,1)} \left\{ \int_0^T \|w_{\text{{\tiny $\delta,\!\varepsilon$}}}'(t)\|_{L^2(\Omega)}^2dt+
\left( \sup_{0 \le t \le T} \|w_{\text{{\tiny $\delta,\!\varepsilon$}}}(t)\|_{H^1(\Omega)} \right)^2 \right.\\
\hspace*{2cm}\left. +\int_0^T \|w_{\text{{\tiny $\delta,\!\varepsilon$}}}(t)\|_{H^2(\Omega)}^2 dt \right\} \le C_{19}.
\end{gather*}
\item[(4)] There exists a constant $C_{20}>0$, which depends on $\delta$, $T$ and $\|v_0\|_{H^1(\Omega)}$, such that the following boundedness holds 
for all $\varepsilon \in (0,1)$:
\begin{align*}
&\left( \max_{0 \le t \le T^*} \|z_{\text{{\tiny $\delta,\!\varepsilon$}}}(t)\|_{L^2(\Omega)} \right)^2
+\varepsilon \left( \sup_{0 \le t \le T^*} \|\nabla z_{\text{{\tiny $\delta,\!\varepsilon$}}}(t)\|_{\bm{L}^2(\Omega)}\right)^2\\
&\hspace*{2cm}+\varepsilon^2 \int_0^{T^*} \|z_{\text{{\tiny $\delta,\!\varepsilon$}}}(t)\|_{H^2(\Omega)}^2
+\int_0^{T^*} \|z_{\text{{\tiny $\delta,\!\varepsilon$}}}'(t)\|_{L^2 (\Omega)}^2dt \le C_{20}.
\end{align*}
\end{enumerate}
\end{proposition}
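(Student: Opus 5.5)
The plan is to obtain $(u_{\delta,\varepsilon},v_{\delta,\varepsilon},w_{\delta,\varepsilon},z_{\delta,\varepsilon})$ as a limit, as $\nu\downarrow 0$, of the unique nonnegative global solutions $(u_\nu,v_\nu,w_\nu,z_\nu):=(u_{\delta,\varepsilon,\nu},v_{\delta,\varepsilon,\nu},w_{\delta,\varepsilon,\nu},z_{\delta,\varepsilon,\nu})$ given by Propositions \ref{Proposition-4} and \ref{Proposition-5}, restricted to the fixed interval $[0,T]$. With $T^*:=T$, Lemmas \ref{Lemma-3-6}, \ref{Lemma-3-7}, \ref{Lemma-3-5} and \ref{Lemma-3-8} give bounds depending only on $\delta$, $T$ and the data, not on $\varepsilon,\nu$. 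For $u_\nu,w_\nu$ this is a bound in $W^{1,2}(0,T;L^2(\Omega))\cap L^\infty(0,T;H^1(\Omega))\cap L^2(0,T;H^2(\Omega))$. For $v_\nu$ it is a bound in $W^{1,2}(0,T;L^2(\Omega))\cap C([0,T];L^2(\Omega))$, together with $\nu\sup_t\|\nabla v_\nu(t)\|^2\le 2C_{13}$ and $\nu^2\int_0^T\|v_\nu\|_{H^2}^2\le C_{13}$. For $z_\nu$ it is the $\varepsilon$-weighted bound of Lemma \ref{Lemma-3-8}, which for fixed $\varepsilon$ controls $z_\nu$ in $W^{1,2}\cap L^\infty(H^1)\cap L^2(H^2)$. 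Since none of these constants depend on $\varepsilon$, passing to the limit in them by weak lower semicontinuity yields (1)--(4) directly.

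Next comes compactness. By Ascoli--Arzel\`a with the compact embedding $H^1\hookrightarrow\hookrightarrow L^2$ (as in Lemma \ref{Lemma-2-7}), a subsequence gives $u_\nu\to u$, $w_\nu\to w$, $z_\nu\to z$ in $C([0,T];L^2(\Omega))$. The same subsequence converges weakly in $W^{1,2}$, weakly$^*$ in $L^\infty(H^1)$ and weakly in $L^2(H^2)$. For $v_\nu$ I only get $v_\nu\to v$ weakly in $W^{1,2}(0,T;L^2(\Omega))$, with no spatial compactness. This is the main obstacle: the products $v\beta_\delta(w)$, $v\beta_\delta(u)$ and $(\mathcal{K}_\delta z)v$ must be identified.

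My first attempt is to obtain strong convergence of $v_\nu$ through a Cauchy argument, exploiting that \eqref{Equation-3-2} is pointwise in $x$ up to the vanishing term $\nu\Delta_N v_\nu$. I subtract the equations \eqref{Equation-3-2} for $\nu$ and $\nu'$, test with $v_\nu'-v_{\nu'}'$ or with $v_\nu-v_{\nu'}$ after multiplying by $\mathcal{K}_\delta z$, and estimate the metric difference by \eqref{Equation-2-20}. The remaining terms involve $\|z_\nu-z_{\nu'}\|_{L^2}$, $\|u_\nu-u_{\nu'}\|$ and $\|w_\nu-w_{\nu'}\|$, and these already tend to $0$ uniformly in $C([0,T];L^2)$. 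The dangerous products such as $v_\nu(\beta_\delta(w_\nu)-\beta_\delta(w_{\nu'}))$ are controlled using $|\beta_\delta|\le\delta$ together with the $L^\infty$ bound on $w_\nu$ coming from $H^2\hookrightarrow L^\infty$ \eqref{Equation-1-7}. The diffusion cross term $(\nu\nabla v_\nu-\nu'\nabla v_{\nu'},\nabla(v_\nu-v_{\nu'}))$ is bounded by $C(\nu+\nu')$ using $\nu\|\nabla v_\nu\|^2\le C$. Applying Gronwall should then show that $\{v_\nu\}$ is Cauchy in $C([0,T];L^2(\Omega))$.

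Once $v_\nu\to v$ strongly in $C([0,T];L^2)$, the rest is routine. We have $\mathcal{K}_\delta z_\nu\to\mathcal{K}_\delta z$ uniformly on $\Omega\times[0,T]$ by \eqref{Equation-2-20}, and the nonlinearities converge strongly in $L^2(Q_T)$ using Lipschitz continuity \eqref{Equation-2-4} and the $L^4$ bounds \eqref{Equation-2-105} on $u,w$. Also $\nu\Delta_N v_\nu\to0$ in $L^2(Q_T)$, because $\|\nu\Delta v_\nu\|_{L^2(Q_T)}^2\le \nu^2\int\|v_\nu\|_{H^2}^2\cdot$const is merely bounded; so I instead pass to the limit in the weak form against test functions in $L^2(0,T;H^1)$, where $\nu(\nabla v_\nu,\nabla\xi)\le\nu^{1/2}C\to0$. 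Multiplying \eqref{Equation-3-2} by $\mathcal{K}_\delta z_\nu$ before passing to the limit yields \eqref{Equation-4-8}. Equations \eqref{Equation-4-7}, \eqref{Equation-4-9} and \eqref{Equation-4-10} pass by linearity and weak convergence, and the initial data pass by uniform convergence. Nonnegativity \eqref{Equation-4-11} is preserved under a.e.\ (strong $L^2$) limits, which completes the plan.
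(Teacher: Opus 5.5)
\textbf{There is a genuine gap: your Cauchy argument for $v_\nu$ does not close, and it fails in the step you treat as harmless.} With $V:=v_\nu-v_{\nu'}$, the diffusion contribution is
\[
\bigl(\nu\nabla v_\nu-\nu'\nabla v_{\nu'},\nabla V\bigr)_{\bm{L}^2(\Omega)}
=\nu\|\nabla v_\nu\|_{\bm{L}^2(\Omega)}^2+\nu'\|\nabla v_{\nu'}\|_{\bm{L}^2(\Omega)}^2
-(\nu+\nu')\bigl(\nabla v_\nu,\nabla v_{\nu'}\bigr)_{\bm{L}^2(\Omega)}.
\]
The only gradient control you have is $\nu\|\nabla v_\nu\|_{\bm{L}^2(\Omega)}^2\le 2C_{13}$ from Lemma \ref{Lemma-3-6}.

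\begin{itemize}
\item This bounds the cross term only by $2C_{13}(\nu+\nu')/\sqrt{\nu\nu'}\ge 4C_{13}$, which is $O(1)$, not $O(\nu+\nu')$.
\item The two nonnegative terms cannot absorb it. Writing $a=\|\nabla v_\nu\|$ and $b=\|\nabla v_{\nu'}\|$, the best lower bound is $(a-b)(\nu a-\nu' b)$. The available bounds allow this to equal $-C_{13}$, for instance with $\nu'=4\nu$, $b=a/2$, $\nu a^2=2C_{13}$.
\item The bound $\nu^2\int_0^T\|v_\nu\|_{H^2(\Omega)}^2dt\le C_{13}$ does not help, since it only says $\nu\Delta_N v_\nu$ is bounded in $L^2(Q_T)$.
\end{itemize}

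Your claimed estimate $C(\nu+\nu')$ would need a $\nu$-independent bound on $\sup_t\|\nabla v_\nu(t)\|_{\bm{L}^2(\Omega)}$, which you have not derived. A similar problem affects the term $v_{\nu'}\{\beta_\delta(w_\nu)-\beta_\delta(w_{\nu'})\}$. It needs $\|w_\nu-w_{\nu'}\|_{L^\infty(\Omega)}\to0$ in $L^1(0,T)$, obtainable for instance by Gagliardo--Nirenberg interpolation between the $C([0,T];L^2(\Omega))$ convergence and the $L^2(0,T;H^2(\Omega))$ bound. An $L^\infty$ bound on $w_\nu$ alone gives no smallness. So as written, you do not obtain $v_\nu\to v$ strongly.

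\textbf{The strong convergence is not needed, and the paper avoids it.} In \eqref{Equation-3-1}--\eqref{Equation-3-4}, $v_\nu$ and $v_\nu'$ enter only linearly. Their coefficients $\beta_\delta(u_\nu)$, $\beta_\delta(w_\nu)$ and $(\mathcal{K}_\delta z_\nu)^{-1}$ are bounded uniformly in $\nu$ (by $\delta$, respectively $\delta/K_1$) and converge a.e.\ in $Q_T$.

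\begin{itemize}
\item For fixed $\xi\in L^2(0,T;L^2(\Omega))$, dominated convergence gives $\beta_\delta(u_\nu)\xi\to\beta_\delta(u)\xi$, $M_{\delta,z_\nu}\xi\to M_{\delta,z}\xi$, and similar convergences, strongly in $L^2(0,T;L^2(\Omega))$.
\item Pairing these with the weak convergences $v_\nu\rightharpoonup v$ and $v_\nu'\rightharpoonup v'$ identifies every product. This is exactly \eqref{Equation-4-21}--\eqref{Equation-4-25} and \eqref{Equation-4-34}--\eqref{Equation-4-36}.
\item The viscous term is removed against $L^2(0,T;H^1(\Omega))$ test functions, as you propose, and density finishes the argument.
\item $v(0)=v_0$ follows because $f\mapsto f(0)$ is continuous and linear on $W^{1,2}(0,T;L^2(\Omega))$, hence weakly continuous.
\item Nonnegativity of $v$ passes to the weak limit because the cone of a.e.\ nonnegative functions is convex and closed, hence weakly closed.
\end{itemize}

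With this replacement, the rest of your plan matches the paper's argument: compactness for $u,w,z$, the bounds (1)--(4) by weak lower semicontinuity, and the initial data.
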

%%%%%
%%%%%
%%%%%
Since the proof of Proposition \ref{Proposition-6} is so long, we divide it in the following parts.
%%%%%
%%%%%
%%%%%
\begin{proof}[Proof of (a1), (a2), (a4) and (\ref{Equation-4-11}) in Definition \ref{Definition-6-1}.] 
By Lemmas \ref{Lemma-3-6}--\ref{Lemma-3-8}, there exist a sequence $\{\delta_n\}_{n\,\in\,\mathbb{N}} \subset (0,1)$ and a quadruple 
$(u_{\text{{\tiny $\delta,\!\varepsilon$}}},v_{\text{{\tiny $\delta,\!\varepsilon$}}},w_{\text{{\tiny $\delta,\!\varepsilon$}}},z_{\text{{\tiny $\delta,\!\varepsilon$}}})$ 
such that 
\begin{gather*}
(u_{\text{{\tiny $\delta,\!\varepsilon$}}},w_{\text{{\tiny $\delta,\!\varepsilon$}}},z_{\text{{\tiny $\delta,\!\varepsilon$}}})
\in \left(W^{1,2}((0,T)\,;L^2(\Omega)) \cap L^\infty ((0,T)\,;H^1(\Omega)) \cap L^2((0,T)\,;H^2(\Omega))\right)^3,\\
v_{\text{{\tiny $\delta,\!\varepsilon$}}} \in W^{1,2}((0,T)\,;L^2(\Omega)),
\end{gather*}
and the following convergences hold as $n \to \infty$\,:
\begin{gather}
\label{Equation-4-12}
\nu_n \downarrow 0,\\
\label{Equation-4-13}
u_n:=u_{\text{{\tiny $\delta,\!\varepsilon,\!\nu_n$}}} \longrightarrow u_{\text{{\tiny $\delta,\!\varepsilon$}}} \quad \left\{
\begin{array}{l}
\text{a.e. in} \quad Q_T,\\[0.1cm]
\text{in} \quad C([0,T]\,;L^2(\Omega)),\\[0.1cm]
\text{weakly in} \quad W^{1,2}(0,T\,;L^2(\Omega)),\\[0.1cm]
\text{weakly$^*$ in} \quad L^\infty (0,T\,;H^1(\Omega)),\\[0.1cm]
\text{weakly in} \quad L^2 (0,T\,;H^2(\Omega)),
\end{array}
\right.\\
\label{Equation-4-14}
v_n:=v_{\text{{\tiny $\delta,\!\varepsilon,\!\nu_n$}}} \longrightarrow v_{\text{{\tiny $\delta,\!\varepsilon$}}} \quad \left\{
\begin{array}{l}
\text{weakly$^*$ in} \quad L^\infty (0,T\,;L^2(\Omega)),\\[0.1cm]
\text{weakly in} \quad W^{1,2}(0,T\,;L^2(\Omega)),
\end{array}
\right.\\
\label{Equation-4-15}
w_n:=w_{\text{{\tiny $\delta,\!\varepsilon,\!\nu_n$}}} \longrightarrow w_{\text{{\tiny $\delta,\!\varepsilon$}}} \quad \left\{
\begin{array}{l}
\text{a.e. in} \quad Q_T,\\[0.1cm]
\text{in} \quad C([0,T]\,;L^2(\Omega)),\\[0.1cm]
\text{weakly in} \quad W^{1,2}((0,T)\,;L^2(\Omega)),\\[0.1cm]
\text{weakly$^*$ in} \quad L^\infty (0,T\,;H^1(\Omega)),\\[0.1cm]
\text{weakly in} \quad L^2(0,T\,;H^2(\Omega)),
\end{array}
\right.\\
\label{Equation-4-16}
z_n:=z_{\text{{\tiny $\delta,\!\varepsilon,\!\nu_n$}}} \longrightarrow z_{\text{{\tiny $\delta,\!\varepsilon$}}} \quad \left\{
\begin{array}{l}
\text{a.e. in} \quad Q_T,\\[0.1cm]
\text{in} \quad C([0,T]\,;L^2(\Omega)),\\[0.1cm]
\text{weakly in} \quad W^{1,2}(0,T\,;L^2(\Omega)),\\[0.1cm]
\text{weakly$^*$ in} \quad L^\infty (0,T\,;H^1(\Omega)),\\[0.1cm]
\text{weakly in} \quad L^2 (0,T\,;H^2(\Omega)).
\end{array}
\right.
\end{gather}
By these convergences and Proposition \ref{Proposition-5}, the properties (a1), (a2) and \eqref{Equation-4-11} are obtained. 
Moreover, since $(u_n(0),v_n(0),w_n(0),z_n(0))=(u_0,v_0,w_0,v_0)$ in $(L^2(\Omega))^4$ for all $n \in \mathbb{N}$, we obtain (a4).
\end{proof}
%%%%%
%%%%%
%%%%%
Next, we show (a3) by using the properties of the truncation $\beta_{\text{{\tiny $\delta$}}}$ (cf. \eqref{Equation-2-3} and \eqref{Equation-2-4}), 
whose proof is divided into two parts below.
%%%%%
%%%%%
%%%%%
\begin{proof}[Proof of (\ref{Equation-4-7}), (\ref{Equation-4-9}) and (\ref{Equation-4-10}) of (a3) in Definition \ref{Definition-6-1}.]
Throughout this proof, for simplicity we set $(u,v,w,z):=(u_{\text{{\tiny $\delta,\!\varepsilon$}}},v_{\text{{\tiny $\delta,\!\varepsilon$}}},
w_{\text{{\tiny $\delta,\!\varepsilon$}}},z_{\text{{\tiny $\delta,\!\varepsilon$}}})$.
Using \eqref{Equation-2-4}, we see from \eqref{Equation-4-13} and \eqref{Equation-4-15} that 
the following convergences hold as $n \to \infty$:
\begin{align}
\label{Equation-4-17}
&\beta_{\text{{\tiny $\delta$}}}(u_n) \longrightarrow \beta_{\text{{\tiny $\delta$}}}(u) \quad \text{a.e. in} \quad Q_T,\\
\label{Equation-4-18}
&\beta_{\text{{\tiny $\delta$}}}(w_n) \longrightarrow \beta_{\text{{\tiny $\delta$}}}(w) \quad \text{a.e. in} \quad Q_T.
\end{align}
Moreover, from \eqref{Equation-2-3}, for any $\xi \in L^2(0,T\,;L^2(\Omega))$ we have 
\begin{gather}
\label{Equation-4-19}
\left\{ \beta_{\text{{\tiny $\delta$}}}(u_n)-\beta_{\text{{\tiny $\delta$}}}(u)\right\} \xi,
~\left\{ \beta_{\text{{\tiny $\delta$}}}(w_n)-\beta_{\text{{\tiny $\delta$}}}(w)\right\} \xi  \in L^2(0,T\,;L^2(\Omega)),\\
\label{Equation-4-20}
\max\left\{ \left|\left\{\beta_{\text{{\tiny $\delta$}}}(u_n)-\beta_{\text{{\tiny $\delta$}}}(u)\right\} \xi \right|,\,
\left|\left\{\beta_{\text{{\tiny $\delta$}}}(w_n)-\beta_{\text{{\tiny $\delta$}}}(w)\right\} \xi \right| \right\} \le 2\delta |\xi|,\quad \text{a.e. in}\quad Q_T.
\end{gather}
By \eqref{Equation-4-17}--\eqref{Equation-4-20}, the Lebesgue dominated convergence theorem yields the following convergences as $n \to \infty$ for all
$\xi \in L^2(0,T\,;L^2(\Omega))$:
\begin{gather}
\label{Equation-4-21}
\beta_{\text{{\tiny $\delta$}}}(u_n)\xi \longrightarrow \beta_{\text{{\tiny $\delta$}}}(u)\xi \quad \text{in} \quad L^2(0,T\,;L^2(\Omega)),\\
\label{Equation-4-22}
\beta_{\text{{\tiny $\delta$}}}(w_n)\xi \longrightarrow \beta_{\text{{\tiny $\delta$}}}(w)\xi \quad \text{in} \quad L^2(0,T\,;L^2(\Omega)).
\end{gather}
Since $\xi \in L^2(0,T\,;L^2(\Omega))$ is arbitrary, we see from \eqref{Equation-4-14}, \eqref{Equation-4-21} and \eqref{Equation-4-22} that 
the following convergences hold as $n \to \infty$:
\begin{gather}
\label{Equation-4-23}
w_n \beta_{\text{{\tiny $\delta$}}}(u_n) \longrightarrow w \beta_{\text{{\tiny $\delta$}}}(u) \quad \text{weakly in} \quad L^2(0,T\,;L^2(\Omega)),\\
\label{Equation-4-24}
v_n \beta_{\text{{\tiny $\delta$}}}(u_n) \longrightarrow v \beta_{\text{{\tiny $\delta$}}}(u) \quad \text{weakly in} \quad L^2(0,T\,;L^2(\Omega)),\\
\label{Equation-4-25}
v_n \beta_{\text{{\tiny $\delta$}}}(w_n) \longrightarrow v \beta_{\text{{\tiny $\delta$}}}(w) \quad \text{weakly in} \quad L^2(0,T\,;L^2(\Omega)).
\end{gather}
\indent
Since for each $n \in \mathbb{N}$ the quadruple $(u_n,v_n,w_n,z_n)$ is a strong global-in-time solutions to $\text{(AP)}{}_{\text{{\tiny $\delta,\!\varepsilon,\!\nu$}}}$,
we have the following equalities for all $\xi \in L^2(0,T\,;L^2(\Omega))$:
\begin{align*}
\bullet~~&\int_0^T \bigl(u_n'(t)-D_1 \Delta_N u_n(t)-\beta_{\text{{\tiny $\delta$}}} (u_n(t))\{c_1 v_n(t)-c_2 w_n(t)\},\xi (t)\bigr)_{L^2(\Omega)}dt=0,\\
\bullet~~&\int_0^T \bigl(w_n'(t)-D_2 \Delta_N w_n(t)+c_5 w_n(t)-c_6+c_7 v_n (t) \beta_{\text{{\tiny $\delta$}}} (w_n(t)),\xi (t)\bigr)_{L^2(\Omega)}dt=0,\\
\bullet~~&\int_0^T \bigl(z_n'(t)-\varepsilon \Delta_N z_n(t)-v_n'(t),\xi (t)\bigr)_{L^2(\Omega)}dt=0.
\end{align*}
Taking the limit $n \to \infty$ in the above equalities combining \eqref{Equation-4-13}--\eqref{Equation-4-16} and \eqref{Equation-4-23}--\eqref{Equation-4-25},
\begin{align*}
\bullet~~&\int_0^T \bigl(u'(t)-D_1 \Delta_N u(t)-\beta_{\text{{\tiny $\delta$}}} (u(t))\{c_1 v(t)-c_2 w(t)\},\xi (t)\bigr)_{L^2(\Omega)}dt=0,\\
\bullet~~&\int_0^T \bigl(w'(t)-D_2 \Delta_N w(t)+c_5 w(t)-c_6+c_7 v (t) \beta_{\text{{\tiny $\delta$}}} (w(t)),\xi (t)\bigr)_{L^2(\Omega)}dt=0,\\
\bullet~~&\int_0^T \bigl(z'(t)-\varepsilon \Delta_N z(t)-v'(t),\xi (t)\bigr)_{L^2(\Omega)}dt=0,
\end{align*}
which imply that evolutions \eqref{Equation-4-7}, \eqref{Equation-4-9} and \eqref{Equation-4-10} holds.
\end{proof}
%%%%%
%%%%%
%%%%%
We show \eqref{Equation-4-8} of (a3) in Definition \ref{Definition-6-1} whose proof will demonstrate the advantage of deforming the structure of 
the underlying flat space $L^2(\Omega)$ by the nonlocal mobility.  
%%%%%
%%%%%
%%%%%
\begin{proof}[Proof of (\ref{Equation-4-8}) of (a3) in Definition \ref{Definition-6-1}.]
Since from (b) of (A1) and \eqref{Equation-2-20} we have the following inequality for all $(x,t) \in Q_T$:
\begin{equation*}
\left| (\mathcal{K}_{\text{{\tiny $\delta$}}} z_n)^{-1}(x,t)-(\mathcal{K}_{\text{{\tiny $\delta$}}} z)^{-1}(x,t)\right| \le (K_2)^{\frac{1}{2}} 
\left( \frac{\delta}{K_1}\right)^2 \left( \max_{0\,\le\,t\,\le\,T} \|z_n(t)-z(t)\|_{L^2(\Omega)}\right),
\end{equation*}
which implies that the following convergence holds as $n \to \infty$:
\begin{equation}\label{Equation-4-26}
(\mathcal{K}_{\text{{\tiny $\delta$}}} z_n)^{-1} \longrightarrow (\mathcal{K}_{\text{{\tiny $\delta$}}} z)^{-1} \quad \text{a.e. in} \quad Q_T.
\end{equation}
From \eqref{Equation-4-17}, \eqref{Equation-4-18} and \eqref{Equation-4-26} the following convergences also hold as $n \to \infty$:
\begin{align}
\label{Equation-4-27}
&(\mathcal{K}_{\text{{\tiny $\delta$}}} z_n)^{-1} \beta_{\text{{\tiny $\delta$}}}(u_n) \longrightarrow 
(\mathcal{K}_{\text{{\tiny $\delta$}}} z)^{-1}\beta_{\text{{\tiny $\delta$}}}(u) \quad \text{a.e. in} \quad Q_T,\\
\label{Equation-4-28}
&(\mathcal{K}_{\text{{\tiny $\delta$}}} z_n)^{-1} \beta_{\text{{\tiny $\delta$}}}(w_n) \longrightarrow 
(\mathcal{K}_{\text{{\tiny $\delta$}}} z)^{-1}\beta_{\text{{\tiny $\delta$}}}(w) \quad \text{a.e. in} \quad Q_T.
\end{align}
Moreover, from \eqref{Equation-2-15} and \eqref{Equation-2-3}, for any $\xi \in L^2(0,T\,;L^2(\Omega))$ we have 
\begin{gather}
\label{Equation-4-29}
\left\{ (\mathcal{K}_{\text{{\tiny $\delta$}}} z_n)^{-1}-(\mathcal{K}_{\text{{\tiny $\delta$}}} z)^{-1}\right\} \xi \in L^2(0,T\,;L^2(\Omega)),\\
\label{Equation-4-30}
\left\{ (\mathcal{K}_{\text{{\tiny $\delta$}}} z_n)^{-1} \beta_{\text{{\tiny $\delta$}}}(u_n)
-(\mathcal{K}_{\text{{\tiny $\delta$}}} z)^{-1}\beta_{\text{{\tiny $\delta$}}}(u)\right\} \xi \in L^2(0,T\,;L^2(\Omega)),\\
\label{Equation-4-31}
~\left\{ (\mathcal{K}_{\text{{\tiny $\delta$}}} z_n)^{-1} \beta_{\text{{\tiny $\delta$}}}(w_n)
-(\mathcal{K}_{\text{{\tiny $\delta$}}} z)^{-1}\beta_{\text{{\tiny $\delta$}}}(w)\right\} \xi  \in L^2(0,T\,;L^2(\Omega)),\\
\label{Equation-4-32}
\left| \left\{ (\mathcal{K}_{\text{{\tiny $\delta$}}} z_n)^{-1} \beta_{\text{{\tiny $\delta$}}}(u_n)
-(\mathcal{K}_{\text{{\tiny $\delta$}}} z)^{-1}\beta_{\text{{\tiny $\delta$}}}(u)\right\} \xi \right| \le \frac{2\delta^2}{K_1} |\xi|,\quad \text{a.e. in}\quad Q_T,\\
\label{Equation-4-33}
\left| \left\{ (\mathcal{K}_{\text{{\tiny $\delta$}}} z_n)^{-1} \beta_{\text{{\tiny $\delta$}}}(w_n)
-(\mathcal{K}_{\text{{\tiny $\delta$}}} z)^{-1}\beta_{\text{{\tiny $\delta$}}}(w)\right\} \xi \right| \le \frac{2\delta^2}{K_1} |\xi|,\quad \text{a.e. in}\quad Q_T.
\end{gather}
By \eqref{Equation-4-27}--\eqref{Equation-4-33}, the Lebesgue dominated convergence theorem yields the following convergences as $n \to \infty$ for all
$\xi \in L^2(0,T\,;L^2(\Omega))$:
\begin{gather}
\label{Equation-4-34}
M_{\text{{\tiny $\delta,\!z_n$}}}\xi \longrightarrow M_{\text{{\tiny $\delta,\!z$}}} \xi \quad \text{in} \quad L^2(0,T\,;L^2(\Omega)),\\
\label{Equation-4-35}
(M_{\text{{\tiny $\delta,\!z_n$}}} \beta_{\text{{\tiny $\delta$}}}(u_n)) \xi \longrightarrow 
(M_{\text{{\tiny $\delta,\!z$}}} \beta_{\text{{\tiny $\delta$}}}(u)) \xi \quad \text{in} \quad L^2(0,T\,;L^2(\Omega)),\\
\label{Equation-4-36}
(M_{\text{{\tiny $\delta,\!z_n$}}} \beta_{\text{{\tiny $\delta$}}}(w_n)) \xi \longrightarrow 
(M_{\text{{\tiny $\delta,\!z$}}} \beta_{\text{{\tiny $\delta$}}}(w)) \xi \quad \text{in} \quad L^2(0,T\,;L^2(\Omega)).
\end{gather}
Moreover, by Lemma \ref{Lemma-3-6}, for any $\eta \in L^2(0,T\,;H^1(\Omega))$ we have 
\begin{align*}
\left|\int_0^T (-\nu_n \Delta_N v_n(t),\eta (t))_{L^2(\Omega)}dt\right| &\le \nu_n \int_0^T \|\nabla v_n(t)\|_{\bm{L}^2(\Omega)} \|\nabla \eta (t)\|_{\bm{L}^2(\Omega)} dt\\
&\le \nu_n^{\frac{1}{2}} \left( \int_0^T 2\Phi_{\nu_n} (v_n(t))dt\right)^{\frac{1}{2}} \left(\int_0^T \|\eta (t)\|_{H^1(\Omega)}^2 dt\right)^{\frac{1}{2}}\\
&\le \left(\nu_n T C_{14}\right)^{\frac{1}{2}} \left(\int_0^T \|\eta (t)\|_{H^1(\Omega)}^2 dt\right)^{\frac{1}{2}},
\end{align*}
which implies from \eqref{Equation-4-12} that the following convergence holds:
\begin{equation}\label{Equation-4-37}
\lim_{n \to \infty}\int_0^T (-\nu_n \Delta_N v_n(t),\eta (t))_{L^2(\Omega)}dt=0, \quad \forall \eta \in L^2(0,T\,;H^1(\Omega)).
\end{equation}
\indent
Since for each $n \in \mathbb{N}$ the quadruple $(u_n,v_n,w_n,z_n)$ is a strong global-in-time solutions to $\text{(AP)}{}_{\text{{\tiny $\delta,\!\varepsilon,\!\nu_n$}}}$,
we have the following equality for all $\xi \in L^2(0,T\,;L^2(\Omega))$:
\begin{align*}
&\int_0^T \bigl( M_{\text{{\tiny $\delta,\!z_n(t)$}}}v_n'(t)-\nu_n \Delta_N v_n(t)+v_n(t),\xi (t)\bigr)_{L^2(\Omega)}dt\\
&\hspace*{1cm}=\int_0^T \bigl( (M_{\text{{\tiny $\delta,\!z_n(t)$}}}v_n(t))
\{c_3\beta_{\text{{\tiny $\delta$}}} (w_n(t))-c_4\beta_{\text{{\tiny $\delta$}}}(u_n(t))\},\xi (t)\bigr)_{L^2(\Omega)}dt.
\end{align*}
Taking the limit $n \to \infty$ in the above equality and using \eqref{Equation-4-34}--\eqref{Equation-4-37}, we obtain the following equality for all 
$\eta \in L^2(0,T\,;H^1(\Omega))$:
\begin{align}
\label{Equation-4-38}
&\int_0^T \bigl( M_{\text{{\tiny $\delta,\!z(t)$}}}v'(t)+v(t),\eta (t)\bigr)_{L^2(\Omega)}dt\\
\nonumber
&\hspace*{1cm}=\int_0^T \bigl( (M_{\text{{\tiny $\delta,\!z(t)$}}}v(t))
\{c_3\beta_{\text{{\tiny $\delta$}}} (w(t))-c_4\beta_{\text{{\tiny $\delta$}}}(u(t))\},\eta (t)\bigr)_{L^2(\Omega)}dt.
\end{align}
Since $H^1(\Omega)$ is dense in $L^2(\Omega)$, the equality \eqref{Equation-4-38} holds for all $\xi \in L^2(0,T\,;L^2(\Omega))$. 
Hence, we see that evolutions \eqref{Equation-4-10} holds.
\end{proof}
%%%%%
%%%%%
%%%%%
Finally, we show the boundedness of $(u_{\text{{\tiny $\delta,\!\varepsilon$}}},v_{\text{{\tiny $\delta,\!\varepsilon$}}},w_{\text{{\tiny $\delta,\!\varepsilon$}}},
z_{\text{{\tiny $\delta,\!\varepsilon$}}})$.
%%%%%
%%%%%
%%%%%
\begin{proof}[Proof of (1)--(4).]
Applying Lemmas \ref{Lemma-3-5}--\ref{Lemma-3-8} to $u_n$, $v_n$, $w_n$ and $z_n$, respectively, and passing to the limit inferior, we obtain (1)–(4).
Hence, we complete the proof of Proposition \ref{Proposition-6}.
\end{proof}
%%%%%%%%%%%%%%%%%%%%%%%%%%%%%%%%%%%%%%%%%%%%%%%%%%%%%%%%%%%%%%%%%%%%%%%%%%%%%%%%%%%%%%%%%%%%%%%%%%%%%%%%%%%%%
%%%%%%%%%%%%%%%%%%%%%%%%%%%%%%%%%%%%%%%%%%%%%%%%%%%%%%%%%%%%%%%%%%%%%%%%%%%%%%%%%%%%%%%%%%%%%%%%%%%%%%%%%%%%%
%%%%%%%%%%%%%%%%%%%%%%%%%%%%%%%%%%%%%%%%%%%%%%%%%%%%%%%%%%%%%%%%%%%%%%%%%%%%%%%%%%%%%%%%%%%%%%%%%%%%%%%%%%%%%	
\subsection{Limit procedures for the parameter $\varepsilon$}\label{Subsection-4-2}
%%%%%%%%%%%%%%%%%%%%%%%%%%%%%%%%%%%%%%%%%%%%%%%%%%%%%%%%%%%%%%%%%%%%%%%%%%%%%%%%%%%%%%%%%%%%%%%%%%%%%%%%%%%%%
%%%%%%%%%%%%%%%%%%%%%%%%%%%%%%%%%%%%%%%%%%%%%%%%%%%%%%%%%%%%%%%%%%%%%%%%%%%%%%%%%%%%%%%%%%%%%%%%%%%%%%%%%%%%%
%%%%%%%%%%%%%%%%%%%%%%%%%%%%%%%%%%%%%%%%%%%%%%%%%%%%%%%%%%%%%%%%%%%%%%%%%%%%%%%%%%%%%%%%%%%%%%%%%%%%%%%%%%%%%	
In this subsection, for each $\delta \in (1,\infty)$ we consider the following initial-boundary value problem 
$\mbox{(AP)}{}_{\text{{\tiny $\delta$}}}$:=\{\eqref{Equation-4-39}--\eqref{Equation-4-43}\}, which is formally derived by taking the approximation 
parameter $\varepsilon =0$ in $\mbox{(AP)}_{\text{{\tiny $\delta,\!\varepsilon$}}}$:
\begin{gather}
\label{Equation-4-39}
u'-D_1 \Delta u=\beta_{\text{{\tiny $\delta$}}}(u)(c_1 v-c_2 w) \quad \text{a.e. in}\quad Q_T,\\
\label{Equation-4-40}
v'+(\mathcal{K}_\delta z) v=v\{c_3 \beta_\delta (w)-c_4 \beta_\delta (u)\} \quad \text{a.e. in} \quad Q_T,\\
\label{Equation-4-41}
w'-D_2 \Delta w+c_5w=c_6-c_7 v \beta_\delta (w) \quad \text{a.e. in} \quad Q_T,\\
\label{Equation-4-42}
\nabla u \cdot \bm{n}=\nabla w \cdot \bm{n}=0 \quad \text{a.e. on} \quad \Sigma_T,\\
\label{Equation-4-43}
(u(0),v(0),w(0))=(u_0,v_0,w_0) \quad \text{a.e. in} \quad \Omega.
\end{gather}
\indent
At first, for each $\delta \in (1,\infty)$ we give the definition of strong solutions to the approximate initial-boundary value problem 
$\mbox{(AP)}{}_{\text{{\tiny $\delta$}}}$ on $[0,T]$.
%%%%%
%%%%%
%%%%%
\begin{definition}\label{Definition-4-2}
A triplet $(u,v,w):=(u_{\text{{\tiny $\delta$}}},v_{\text{{\tiny $\delta$}}},w_{\text{{\tiny $\delta$}}})$ is called a strong solution to the approximate initial-boundary 
value problem $\mbox{(AP)}{}_{\text{{\tiny $\delta$}}}$ on $[0,T]$ if and only if the following properties are satisfied:
\begin{enumerate}\leftskip12pt
\item[(b1)] $(u,w) \in \left(W^{1,2}(0,T\,;L^2(\Omega)) \cap L^\infty (0,T\,;H^1(\Omega)) \cap L^2(0,T\,;H^2(\Omega))\right)^2$.\\[-0.3cm]
\item[(b2)] $v \in W^{1,2}(0,T\,;L^2(\Omega)) \cap L^\infty (0,T\,;H^1(\Omega))$.\\[-0.3cm]
\item[(b3)] The system \{\eqref{Equation-4-39}--\eqref{Equation-4-41}\} with the boundary conditions \eqref{Equation-4-42} is satisfied in the following 
variational sense for a.e. $t \in (0,T)$:
\begin{gather}
\label{Equation-4-44}
u'(t)-D_1 \Delta_N u(t)=\beta_{\text{{\tiny $\delta$}}} (u(t))\{c_1 v(t)-c_2 w(t)\} \quad \text{in} \quad L^2(\Omega),\\
\label{Equation-4-45}
v'(t)+(M_{\text{{\tiny $\delta,\!z(t)$}}})^{-1} v(t)=v(t)\{c_3 \beta_\delta (w(t))-c_4 \beta_\delta (u(t))\} \quad \text{in} \quad L^2(\Omega),\\
\label{Equation-4-46}
w'(t)-D_2 \Delta_N w(t)+c_5w(t)=c_6-c_7 v(t) \beta_\delta (w(t)) \quad \text{in} \quad L^2(\Omega).
\end{gather}
\item[(b4)] $(u(0),v(0),w(0))=(u_0,v_0,w_0)$ in $(L^2(\Omega))^3$.
\end{enumerate}
\end{definition}
%%%%%
%%%%%
%%%%%
The main purpose of this subsection is to show Proposition \ref{Proposition-7}, which guarantees the existence of nonnegative strong solutions to 
$\mbox{(AP)}{}_{\text{{\tiny $\delta$}}}$ on $[0,T]$.
%%%%%
%%%%%
%%%%%
\begin{proposition}\label{Proposition-7}
For every $\delta \in (1,\infty)$ the approximate initial-boundary value problem $\mbox{(AP)}{}_{\text{{\tiny $\delta$}}}$ admits at least 
one strong solution $(u_{\text{{\tiny $\delta$}}},v_{\text{{\tiny $\delta$}}},w_{\text{{\tiny $\delta$}}})$ on $[0,T]$ satisfying 
\begin{equation}\label{Equation-4-73}
u_{\text{{\tiny $\delta$}}}(x,t) \ge 0, \quad v_{\text{{\tiny $\delta$}}}(x,t) \ge 0, \quad w_{\text{{\tiny $\delta$}}}(x,t) \ge 0, \quad \text{a.e.}~(x,t) \in Q_T.
\end{equation}
\indent
Moreover, the following estimates hold:
\begin{enumerate}\leftskip6pt
\item[(1)] There exists a constant $C_{21}>0$, which depends on $\delta$, $T$, $\|u_0\|_{H^1(\Omega)}$, $\|v_0\|_{H^1(\Omega)}$ and $\|w_0\|_{H^1(\Omega)}$,
such that
\begin{equation*}
\int_0^T \|u_{\text{{\tiny $\delta$}}}'(t)\|_{L^2(\Omega)}^2dt+\left( \sup_{0 \le t \le T} \|u_{\text{{\tiny $\delta$}}}(t)\|_{H^1(\Omega)} \right)^2
+\int_0^T \|u_{\text{{\tiny $\delta$}}}(t)\|_{H^2(\Omega)}^2 dt \le C_{21}.
\end{equation*}
\item[(2)] There exists a constant $C_{22}>0$, which depends on $\delta$, $T$ and $\|v_0\|_{H^1(\Omega)}$, such that
\begin{equation*}
\int_0^T \|v_{\text{{\tiny $\delta$}}}'(t)\|_{L^2(\Omega)}^2dt+\left( \max_{0 \le t \le T} \|v_{\text{{\tiny $\delta$}}}(t)\|_{L^2(\Omega)} \right)^2 \le C_{22}.
\end{equation*}
\item[(3)] There exists a constant $C_{23}>0$, which depends on $\delta$, $T$, $\|v_0\|_{H^1(\Omega)}$ and $\|w_0\|_{H^1(\Omega)}$, such that
\begin{equation*}
\int_0^T \|w_{\text{{\tiny $\delta$}}}'(t)\|_{L^2(\Omega)}^2dt+\left( \sup_{0 \le t \le T} \|w_{\text{{\tiny $\delta$}}}(t)\|_{H^1(\Omega)} \right)^2
+\int_0^T \|w_{\text{{\tiny $\delta$}}}(t)\|_{H^2(\Omega)}^2 dt \le C_{23}.
\end{equation*}
\end{enumerate}
\end{proposition}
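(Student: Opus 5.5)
The plan is to pass to the limit $\varepsilon \downarrow 0$ in the nonnegative strong solutions $(u_{\delta,\varepsilon},v_{\delta,\varepsilon},w_{\delta,\varepsilon},z_{\delta,\varepsilon})$ of $\mbox{(AP)}_{\delta,\varepsilon}$ given by Proposition \ref{Proposition-6}, in the same way as in Subsection \ref{Subsection-4-1}. The uniform bounds (1)--(4) of Proposition \ref{Proposition-6} already give, along a sequence $\varepsilon_n\downarrow 0$:
\begin{itemize}
\item strong convergence in $C([0,T];L^2(\Omega))$ of $u_n,w_n$ to $u,w$, by Ascoli--Arzel\`a as in Lemma \ref{Lemma-2-7};
\item weak convergence of $u_n,w_n$ in $W^{1,2}(0,T;L^2(\Omega))\cap L^2(0,T;H^2(\Omega))$ and weak$^*$ convergence in $L^\infty(0,T;H^1(\Omega))$;
\item weak convergence of $v_n,z_n$ in $W^{1,2}(0,T;L^2(\Omega))$.
\end{itemize}
These bounds pass to the limit and give estimates (1)--(3), with $C_{21}:=C_{17}$, $C_{22}:=C_{18}$, $C_{23}:=C_{19}$. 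Nonnegativity \eqref{Equation-4-73} follows from \eqref{Equation-4-11} by a.e.\ convergence. The equations \eqref{Equation-4-44} and \eqref{Equation-4-46} follow verbatim from the argument for \eqref{Equation-4-7} and \eqref{Equation-4-9}, once we know $v_n\to v$ weakly in $L^2(0,T;L^2(\Omega))$.

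The real issue is \eqref{Equation-4-45}. It requires identifying the limit of the nonlocal coefficient $\mathcal{K}_\delta z_n$. It also requires the regularity $v\in L^\infty(0,T;H^1(\Omega))$ from (b2), which Proposition \ref{Proposition-6} does not provide. Since $z$ has disappeared from $\mbox{(AP)}_\delta$, the natural expectation is that the twisted metric generator collapses onto $v$. So in \eqref{Equation-4-45} we read $z=v$, i.e.\ $(M_{\delta,v(t)})^{-1}v=(\mathcal{K}_\delta v)v$. I would organize the proof in three steps.

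\emph{Step 1: an $H^1$ bound on $v_n$, uniform in $\varepsilon$.} Differentiate \eqref{Equation-4-8} in $x$:
\[
\nabla v'=-(\nabla\mathcal{K}_\delta z)\,v-(\mathcal{K}_\delta z)\nabla v+\nabla v\,\{c_3\beta_\delta(w)-c_4\beta_\delta(u)\}+v\,\{c_3\beta_\delta'(w)\nabla w-c_4\beta_\delta'(u)\nabla u\}.
\]
Then test with $\nabla v$, and also test \eqref{Equation-4-8} with $v$. The terms are handled as follows.
\begin{itemize}
\item By (c) of (A1), $|\nabla\mathcal{K}_\delta z|\le \delta K_3|\Omega|^{1/2}$.
\item The term $-(\mathcal{K}_\delta z)|\nabla v|^2$ has a good sign.
\item By \eqref{Equation-2-3} the middle term is bounded by $2\delta(c_3+c_4)|\nabla v|^2$.
\item For the last terms I use $\|v\nabla w\|_{L^2}\le \|v\|_{L^4}\|\nabla w\|_{L^4}\le (K_6)^2\|v\|_{H^1}\|w\|_{H^2}$, and similarly for $u$.
\end{itemize}
This yields
\[
\frac{d}{dt}\|v\|_{H^1}^2\le C(\delta)\bigl(1+\|u\|_{H^2}+\|w\|_{H^2}\bigr)\|v\|_{H^1}^2 .
\]
The coefficient belongs to $L^1(0,T)$ uniformly in $\varepsilon$ by (1) and (3) of Proposition \ref{Proposition-6}, so Gronwall gives $\sup_t\|v_n(t)\|_{H^1}\le C(\delta,T)$. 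To make the differentiation rigorous, I would do this estimate at the $\nu$-level and pass to the limit, or use difference quotients. I expect this to be the main obstacle.

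\emph{Step 2: $z_n-v_n\to 0$.} Set $\zeta_n:=z_n-v_n$. By \eqref{Equation-4-10}, $\zeta_n'=\varepsilon_n\Delta_N z_n$ and $\zeta_n(0)=0$. Testing with $\zeta_n$ gives
\[
\frac12\frac{d}{dt}\|\zeta_n\|_{L^2}^2=-\varepsilon_n\|\nabla\zeta_n\|^2-\varepsilon_n(\nabla v_n,\nabla\zeta_n)\le\frac{\varepsilon_n}{4}\|\nabla v_n\|^2 .
\]
Combined with Step 1, this yields $\max_t\|\zeta_n(t)\|_{L^2}^2\le C\varepsilon_nT\to0$.

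\emph{Step 3: passing to the limit in \eqref{Equation-4-8}.} By Step 1 and the $W^{1,2}$ bound, Ascoli--Arzel\`a gives $v_n\to v$ in $C([0,T];L^2(\Omega))$, and weakly$^*$ in $L^\infty(0,T;H^1(\Omega))$; this yields (b2). By Step 2, also $z_n\to v$ in $C([0,T];L^2(\Omega))$. By \eqref{Equation-2-2} and (b) of (A1),
\[
\|\mathcal{K}_\delta z_n-\mathcal{K}_\delta v\|_{L^\infty(Q_T)}\le K_2\max_t\|z_n-v\|_{L^2}\to0 .
\]
Hence $(\mathcal{K}_\delta z_n)v_n\to(\mathcal{K}_\delta v)v$ strongly in $L^2(Q_T)$. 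The reaction terms are treated with \eqref{Equation-4-17}--\eqref{Equation-4-25}; since $v_n$ now converges strongly, strong convergence of the products also holds. Testing \eqref{Equation-4-8} against $\xi\in L^2(0,T;L^2(\Omega))$ and letting $n\to\infty$ then gives \eqref{Equation-4-45} with $z=v$. The initial conditions (b4) follow from convergence in $C([0,T];L^2(\Omega))$.
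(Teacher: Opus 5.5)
Your proposal is correct, and its overall structure matches the paper's. Your Step~2 is exactly Lemma~\ref{Lemma-4-3}. Your Step~3 is the paper's passage to the limit in the proof of Proposition~\ref{Proposition-7}, including the identification $z_\delta=v_\delta$ and the reading of \eqref{Equation-4-45} with the coefficient $\mathcal{K}_\delta v_\delta$.

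You differ in Step~1, the $\varepsilon$-uniform bound on $\sup_t\|v_{\delta,\varepsilon}(t)\|_{H^1(\Omega)}$. The paper proceeds in two stages:
\begin{enumerate}
\item It proves an $L^\infty(0,T;L^4(\Omega))$ bound on $v_{\delta,\varepsilon}$ that is independent of both $\delta$ and $\varepsilon$ (Lemma~\ref{Lemma-4-1}). This uses $v'\le c_3vw$ and an $L^2$--$L^\infty$ smoothing estimate for the Neumann heat semigroup applied to $w$.
\item It writes $v_{\delta,\varepsilon}$ through the explicit exponential formula \eqref{Equation-4-54} and differentiates that formula in $x$. It then absorbs every term into $\frac18\|\nabla v_{\delta,\varepsilon}(t)\|^2_{\bm{L}^2(\Omega)}$ at each fixed $t$, with Gagliardo--Nirenberg controlling $\int_0^T\|\nabla w\|_{\bm{L}^4(\Omega)}^2dt$. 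No Gronwall argument is used.
\end{enumerate}

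You instead differentiate the equation and close a linear Gronwall inequality whose coefficient $1+\|u\|_{H^2(\Omega)}+\|w\|_{H^2(\Omega)}$ is bounded in $L^1(0,T)$. You use only $|\alpha_\delta|,|\beta_\delta|\le\delta$, (c) of (A1), and the $L^2(0,T;H^2(\Omega))$ bounds on $u,w$. For fixed $\delta$ this is shorter and makes Lemma~\ref{Lemma-4-1} unnecessary.

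The justification you suggest at the $\nu$-level does go through:
\begin{itemize}
\item Multiplying \eqref{Equation-3-2} by $\mathcal{K}_\delta z$ and testing with $-\Delta_N v$ produces the extra term $\nu\int_\Omega(\mathcal{K}_\delta z)|\Delta_N v|^2dx\ge0$ on the left-hand side, so it is harmless.
\item The remaining terms are handled by integration by parts, using $\nabla v\cdot\bm{n}=0$ and $g_\delta(u,v,w\,;t)\in H^1(\Omega)$.
\item The Gronwall coefficient is controlled uniformly in $\nu$ and $\varepsilon$ by Lemmas~\ref{Lemma-3-7} and \ref{Lemma-3-5}.
\item The bound passes to $\nu\downarrow0$ by weak$^*$ lower semicontinuity.
\end{itemize}

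The paper's route yields a $\delta$-independent $L^4$ bound, which it reuses as Lemma~\ref{Lemma-4-4} in the limit $\delta\uparrow\infty$; there your $\delta$-dependent constants would be of no help. Your route gives a simpler and more transparently rigorous argument at fixed $\delta$, which is all Proposition~\ref{Proposition-7} requires.
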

%%%%%
%%%%%
%%%%%
In order to show Proposition \ref{Proposition-7}, for each $\varepsilon \in (0,1)$ we consider the strong solution 
$(u_{\text{{\tiny $\delta,\!\varepsilon$}}},v_{\text{{\tiny $\delta,\!\varepsilon$}}},w_{\text{{\tiny $\delta,\!\varepsilon$}}},z_{\text{{\tiny $\delta,\!\varepsilon$}}})$
to the approximate initial-boundary value problem $\mbox{(AP)}{}_{\text{{\tiny $\delta,\!\varepsilon$}}}$, and take the limit procedure as 
$\varepsilon \downarrow 0$.
By comparing (a2) in Definition \ref{Definition-6-1} and (b2) in Definition \ref{Definition-4-2}, it follows that the regularity of $v_{\text{{\tiny $\delta$}}}$ is much 
stronger than that of $v_{\text{{\tiny $\delta,\!\varepsilon$}}}$.
That is, the first objective is to recover the regularity of $v_{\text{{\tiny $\delta$}}}$, which is lost by taking the limit $\nu_n \downarrow 0$ as $n \to \infty$ 
in the proof of Proposition \ref{Proposition-6} (cf. \eqref{Equation-4-12}).
Actually, without recovering the regularity of $v_{\text{{\tiny $\delta$}}}$, we cannot especially obtain the following convergence for the nonlocal term as 
$\varepsilon \downarrow 0$:
\begin{equation*}
\int_\Omega K(\cdot,y) \alpha_{\text{{\tiny $\delta$}}}(z_{\text{{\tiny $\delta,\!\varepsilon$}}}(y))dy \longrightarrow 
\int_\Omega K(\cdot,y) \alpha_{\text{{\tiny $\delta$}}}(v_{\text{{\tiny $\delta$}}}(y))dy \quad \text{in} \quad L^2(\Omega).
\end{equation*}
\indent
Firstly, we show Lemma \ref{Lemma-4-1} while considering the regularity of $v_{\text{{\tiny $\delta$}}}$.
%%%%%
%%%%%
%%%%%
\begin{lemma}\label{Lemma-4-1}
There exists a constant $C_{24}>0$, which depends on $\|v_0\|_{L^4(\Omega)}$, $\|w_0\|_{L^2(\Omega)}$ and $T$, such that the following uniform estimate 
is satisfied:
\begin{equation*}
\sup_{\delta \in (1,\infty),\,\varepsilon\,\in\,(0,1)} \left( \sup_{0 \le t \le T} \|v_{\text{{\tiny $\delta,\!\varepsilon$}}}(t)\|_{L^4(\Omega)} \right) \le C_{24}.
\end{equation*}
\end{lemma}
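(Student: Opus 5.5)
The plan is to get the bound pointwise from the ODE \eqref{Equation-4-8}, using the sign structure and a $\delta$-independent $L^\infty$-in-space, $L^1$-in-time control of $w_{\delta,\varepsilon}$.

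\textbf{Step 1: reduce to a linear differential inequality.} Write $(u,v,w,z):=(u_{\delta,\varepsilon},v_{\delta,\varepsilon},w_{\delta,\varepsilon},z_{\delta,\varepsilon})$ and use \eqref{Equation-4-11}. Since $u\ge 0$, \eqref{Equation-2-3} gives $\beta_\delta(u)\ge 0$. Also $\beta_\delta(w)\le w$ because $w\ge0$, and $\mathcal{K}_\delta z\ge 0$. Hence \eqref{Equation-4-8} yields
\begin{equation*}
v'\le c_3\,w\,v \quad\text{a.e. in } Q_T .
\end{equation*}
Because $v\in W^{1,2}(0,T;L^2(\Omega))$, for a.e.\ $x\in\Omega$ the map $t\mapsto v(x,t)$ is absolutely continuous, with $v(x,0)=v_0(x)$. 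Integrating this scalar inequality with $v\ge0$ gives
\begin{equation*}
0\le v(x,t)\le v_0(x)\exp\Bigl(c_3\int_0^t\|w(s)\|_{L^\infty(\Omega)}ds\Bigr).
\end{equation*}
Taking $L^4$ norms, $\|v(t)\|_{L^4(\Omega)}\le \|v_0\|_{L^4(\Omega)}\exp\bigl(c_3\int_0^T\|w(s)\|_{L^\infty(\Omega)}ds\bigr)$. This is finite since $H^1(\Omega)\hookrightarrow L^4(\Omega)$ for $N\le3$. It remains to bound $\int_0^T\|w\|_{L^\infty}ds$ independently of $\delta$ and $\varepsilon$.

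\textbf{Step 2: comparison for $w$.} Since $v\ge0$ and $\beta_\delta(w)\ge0$ for $w\ge0$, \eqref{Equation-4-9} gives $w'-D_2\Delta_N w+c_5w\le c_6$. Let $W$ solve the linear Neumann problem $W'-D_2\Delta_N W+c_5W=c_6$ with $W(0)=w_0$, so that
\begin{equation*}
W(t)=e^{-c_5t}e^{tD_2\Delta_N}w_0+\tfrac{c_6}{c_5}\bigl(1-e^{-c_5t}\bigr).
\end{equation*}
Testing the equation for $w-W$ by $(w-W)_+$, exactly as in the nonnegativity arguments of Proposition~\ref{Proposition-5}, gives $0\le w\le W$ a.e. Hence $\|w(t)\|_{L^\infty}\le \|e^{tD_2\Delta_N}w_0\|_{L^\infty}+c_6/c_5$.

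\textbf{Step 3 (main obstacle): time-integrability of the $L^\infty$ norm.} The datum $w_0$ is only in $H^1$, so there is no a priori $L^\infty$ bound. I would use the $L^2\to L^\infty$ smoothing of the Neumann heat semigroup on the bounded smooth domain:
\begin{equation*}
\|e^{tD_2\Delta_N}w_0\|_{L^\infty(\Omega)}\le C(1+t^{-N/4})\|w_0\|_{L^2(\Omega)}.
\end{equation*}
This can be obtained, for instance, from the Gaussian Neumann heat kernel bounds, or from \eqref{Equation-1-7}--\eqref{Equation-1-8} combined with analyticity of the semigroup. Since $N\le3$, we have $N/4<1$, so $t^{-N/4}$ is integrable near $0$. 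Therefore
\begin{equation*}
\int_0^T\|w(s)\|_{L^\infty}ds\le C(T,\|w_0\|_{L^2(\Omega)}),
\end{equation*}
with a constant independent of $\delta$ and $\varepsilon$. Substituting this into Step~1 gives the uniform constant $C_{24}$, depending only on $\|v_0\|_{L^4(\Omega)}$, $\|w_0\|_{L^2(\Omega)}$ and $T$.

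The delicate points are the justification of the comparison $w\le W$ for strong solutions and the precise form of the smoothing estimate. If the smoothing estimate proves awkward, a fallback is the weaker bound $t^{-3/4}$ obtained via $H^2\hookrightarrow L^\infty$ and interpolation; it is still integrable for $N\le 3$.
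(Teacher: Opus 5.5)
Your proposal is correct and follows essentially the same route as the paper. The paper also derives $v'\le c_3vw$ from the nonnegativity \eqref{Equation-4-11}, integrates it pointwise in time, bounds $w$ from above by the linear Neumann problem with source $c_6$, and uses the $L^2$--$L^\infty$ smoothing $C(1+t^{-N/4})\|w_0\|_{L^2(\Omega)}$, which is integrable because $N\le 3$. The differences are cosmetic: you keep the $c_5$ damping in the comparison function and prove $w\le W$ by testing with $(w-W)_+$, whereas the paper drops $c_5w$ and invokes the order-preserving property of the Neumann heat semigroup.
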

%%%%%
%%%%%
%%%%%
\begin{proof}[Proof.]
Throughout this proof, for simplicity we set $(u_{\text{{\tiny $\varepsilon$}}},v_{\text{{\tiny $\varepsilon$}}},w_{\text{{\tiny $\varepsilon$}}},z_{\text{{\tiny $\varepsilon$}}})
:=(u_{\text{{\tiny $\delta,\!\varepsilon$}}},v_{\text{{\tiny $\delta,\!\varepsilon$}}},w_{\text{{\tiny $\delta,\!\varepsilon$}}},z_{\text{{\tiny $\delta,\!\varepsilon$}}})$.
From \eqref{Equation-4-2} and \eqref{Equation-4-11} we obtain
\begin{equation}\label{Equation-4-48}
v_{\text{{\tiny $\varepsilon$}}}' \le c_3 v_{\text{{\tiny $\varepsilon$}}} w_{\text{{\tiny $\varepsilon$}}}, \quad \text{a.e. in} \quad Q_T, 
\end{equation}
by using the following inequality (cf. \eqref{Equation-2-3}):
\begin{equation*}
0 \le \beta_{\text{{\tiny $\delta$}}}(w_{\text{{\tiny $\varepsilon$}}}) \le w_{\text{{\tiny $\varepsilon$}}},\quad \text{a.e. in} \quad Q_T.
\end{equation*}
Applying the Gronwall lemma to \eqref{Equation-4-48}, we obtain 
\begin{equation}\label{Equation-4-49}
v_{\text{{\tiny $\varepsilon$}}} (x,t) \le v_0(x) \exp \left( c_3 \int_0^T w_{\text{{\tiny $\varepsilon$}}} (x,t)dt \right),\quad \text{a.e.}~x \in \Omega,~\forall t \in [0,T].
\end{equation}
Taking the fourth power on both sides of \eqref{Equation-4-49} and integrating the result over $\Omega$, we obtain 
\begin{equation}\label{Equation-4-50}
0 \le \int_\Omega |v_{\text{{\tiny $\varepsilon$}}} (t)|^4dx \le \int_\Omega |v_0|^4 
\exp \left(4c_3 \int_0^T w_{\text{{\tiny $\varepsilon$}}} (s)ds\right) dx,\quad \forall t \in [0,T]. 
\end{equation}
Since $v$ and $w$ are nonnegative, it follows from \eqref{Equation-4-3} that 
\[
w'_{\text{{\tiny $\varepsilon$}}} \le D_{2}\Delta w_{\text{{\tiny $\varepsilon$}}} + c_{6}, \quad \text{a.e. in} \quad Q_T,
\]
whereas by the comparison principle and the order-preserving property of the Neumann heat semigroup
$\left(e^{\tau D_{2}\Delta}\right)_{\tau\ge0}$ yields the following inequality holds for all $t \in [0,T]$:
\[
w_{\text{{\tiny $\varepsilon$}}}(\cdot,t)
\le e^{t D_{2}\Delta}w_0
+\int_0^t e^{(t-s)D_{2}\Delta}c_{6}\,ds, \quad \text{a.e. in} \quad \Omega.
\]
By the $L^2$--$L^\infty$ smoothing estimate for the Neumann heat semigroup, see \cite[Lemma 1.3]{Winkler}, there exists $C>0$,  which depends only on 
$\Omega$, such that the following inequality holds for all $t \in [0,T]$:
\begin{equation}\label{Equation-4-51}
w_{\text{{\tiny $\varepsilon$}}}(\cdot,t) \le C\left(1+t^{-\frac{N}{4}}\right)\|w_0\|_{L^2(\Omega)}+c_{6} t,
\quad \text{a.e. in} \quad \Omega.
\end{equation}
Inserting \eqref{Equation-4-51} into (\ref{Equation-4-50}) and using the fact that $n\leq 3$, we may integrate with respect to time $t$ to obtain
the following inequality for all $t \in [0,T]$:
\begin{align*}
\int_{\Omega}|v_{\text{{\tiny $\varepsilon$}}} (t)|^4dx &\le \int_\Omega |v_0|^4 
\exp \left(4c_3 \int_0^T w_{\text{{\tiny $\varepsilon$}}} (s)ds\right) dx\\
&\leq \exp\left(4c_{3}\int_{0}^{T}\left(C\left(1+t^{-\frac{N}{4}}\right)\|w_0\|_{L^2(\Omega)}
+c_{6} t\right)\right)\int_{\Omega}|v_{0}|^4\,dx \\
&= \exp\left(4c_{3}\left(C\left(T+\frac{4}{4-N}T^{\frac{4-N}{4}}\right)\|w_{0}\|_{L^2(\Omega)}+\frac{c_{6}T^2}{2}\right)\right)\int_{\Omega}|v_{0}|^4\,dx.
\end{align*}
Consequently, there exists $C_{24}>0$ such that 
\begin{equation*}
\sup_{0 \le t \le T} \int_\Omega |v_{\text{{\tiny $\varepsilon$}}} (x,t)|^4\,dx \le C_{24},
\end{equation*}
which implies that this lemma holds.
\end{proof}
%%%%%
%%%%%
%%%%%
\begin{lemma}\label{Lemma-4-2}
There exists a constant $C_{25}>0$, which depends on the following values:
\begin{equation*}
\delta, \quad T, \quad \|u_0\|_{H^1(\Omega)},\quad \|v_0\|_{H^1(\Omega)},\quad \|w_0\|_{H^1(\Omega)},
\end{equation*}
such that the following uniform estimate is satisfied:
\begin{equation*}
\sup_{\varepsilon \in (0,1)} \left\{ \int_0^T \|v_{\text{{\tiny $\delta,\!\varepsilon$}}}'(t)\|_{L^2(\Omega)}^2dt
+\left( \sup_{0 \le t \le T} \|v_{\text{{\tiny $\delta,\!\varepsilon$}}}(t)\|_{H^1(\Omega)} \right)^2 \right\} \le C_{25}.
\end{equation*}
\end{lemma}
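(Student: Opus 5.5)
The bound on $\int_0^T\|v_{\delta,\varepsilon}'(t)\|_{L^2(\Omega)}^2dt$ is already part of (2) of Proposition \ref{Proposition-6}, so only the $L^\infty(0,T;H^1(\Omega))$ bound has to be proved. The plan is to exploit that \eqref{Equation-4-8} is a pointwise linear ODE in $t$ for each fixed $x$. Set
\begin{equation*}
a_\varepsilon(x,t):=-(\mathcal{K}_\delta z_{\delta,\varepsilon})(x,t)+c_3\beta_\delta(w_{\delta,\varepsilon}(x,t))-c_4\beta_\delta(u_{\delta,\varepsilon}(x,t)).
\end{equation*}
Then $v_{\delta,\varepsilon}(x,t)=v_0(x)\exp\bigl(\int_0^t a_\varepsilon(x,s)ds\bigr)$. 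This representation also justifies taking spatial gradients, which is the main technical point: $v_{\delta,\varepsilon}$ was obtained from the limit $\nu_n\downarrow0$ and carries no a priori $H^1$ regularity.

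By \eqref{Equation-2-1}, \eqref{Equation-2-3} and \eqref{Equation-2-14}, we have $|a_\varepsilon|\le \delta K_2|\Omega|^{1/2}+\delta(c_3+c_4)=:A_\delta$. The gradient is
\begin{equation*}
\nabla a_\varepsilon=-\int_\Omega \nabla K(\cdot,y)\alpha_\delta(z_{\delta,\varepsilon}(y,t))dy+c_3\beta_\delta'(w_{\delta,\varepsilon})\nabla w_{\delta,\varepsilon}-c_4\beta_\delta'(u_{\delta,\varepsilon})\nabla u_{\delta,\varepsilon}.
\end{equation*}
The first term is bounded pointwise by $\delta K_3|\Omega|^{1/2}$, using (c) of (A1), \eqref{Equation-2-1} and Cauchy--Schwarz in $y$. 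The other two terms are handled with \eqref{Equation-2-5}, \eqref{Equation-2-105} and $\|\nabla\eta\|_{L^4}\le K_6\|\eta\|_{H^2}$, which give $\|\nabla a_\varepsilon(t)\|_{L^4(\Omega)}\le \delta K_3|\Omega|^{3/4}+K_6(c_3\|w_{\delta,\varepsilon}(t)\|_{H^2}+c_4\|u_{\delta,\varepsilon}(t)\|_{H^2})$. By (1) and (3) of Proposition \ref{Proposition-6}, this quantity is bounded in $L^2(0,T)$ uniformly in $\varepsilon$.

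Differentiating the representation gives
\begin{equation*}
\nabla v_{\delta,\varepsilon}(t)=\nabla v_0\, e^{\int_0^t a_\varepsilon ds}+v_{\delta,\varepsilon}(t)\int_0^t\nabla a_\varepsilon(s)ds .
\end{equation*}
The first term has $L^2$ norm at most $e^{A_\delta T}\|\nabla v_0\|_{L^2}$. For the second we apply H\"older with $L^4\cdot L^4$. Lemma \ref{Lemma-4-1} bounds $\|v_{\delta,\varepsilon}(t)\|_{L^4}\le C_{24}$ uniformly, and by Minkowski $\|\int_0^t\nabla a_\varepsilon ds\|_{L^4}\le T^{1/2}\|\nabla a_\varepsilon\|_{L^2(0,T;L^4)}$. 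Combined with the $L^2$ bound of (2) of Proposition \ref{Proposition-6}, this gives the required $\sup_t\|v_{\delta,\varepsilon}(t)\|_{H^1}^2\le C_{25}$.

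The rigor of the gradient formula is the main obstacle. I would first establish the formula for a.e. $x$ via Fubini and the $W^{1,1}$ chain rule in $x$, approximating $v_0$ in $H^1$ by smooth functions if needed and using that $a_\varepsilon\in L^2(0,T;W^{1,4}(\Omega))$. Only after that would I run the estimate above. An alternative route is a Gronwall argument on $\frac{d}{dt}\|\nabla v\|^2$, but it would need the same justification.
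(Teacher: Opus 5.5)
Your proposal is correct and follows essentially the same route as the paper. Both arguments write $v_{\delta,\varepsilon}$ as the explicit exponential solution of the pointwise ODE \eqref{Equation-4-8}, differentiate that formula in space, and close with $L^4\times L^4$ H\"older estimates driven by Lemma \ref{Lemma-4-1} and the $L^2(0,T;H^2(\Omega))$ bounds on $u_{\delta,\varepsilon}$ and $w_{\delta,\varepsilon}$ from Proposition \ref{Proposition-6}.

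The differences are cosmetic:
\begin{itemize}
\item you bound the $L^2$ norm of the gradient formula directly, where the paper tests with $\nabla v_{\delta,\varepsilon}$ and absorbs terms via Young;
\item you use $H^2\hookrightarrow W^{1,4}$ where the paper uses Gagliardo--Nirenberg;
\item you cite (2) of Proposition \ref{Proposition-6} for the $v'$ bound, where the paper re-derives it by testing \eqref{Equation-4-8} with $v'$.
\end{itemize}
You also justify the spatial differentiation, which the paper takes for granted.
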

%%%%%
%%%%%
%%%%%
\begin{proof}[Proof.]
We use the same notation in the proof of Lemma \ref{Lemma-4-1}.
Solving the ODE \eqref{Equation-4-2} with the initial condition $v(0)=v_0$ in \eqref{Equation-4-6}, we obtain
\begin{align}
\label{Equation-4-54}
v_{\varepsilon}(x,t)&=v_0(x) \exp\left(-\int_{0}^{t}\int_{\Omega}K(x,y)\alpha_{\text{{\tiny $\delta$}}}(z_{\varepsilon}(y,s))\,dyds
+c_{3}\int_{0}^{t}\beta_{\text{{\tiny $\delta$}}}(w_{\text{{\tiny $\varepsilon$}}}(x,s))\,ds\right.\\
\nonumber
&\hspace*{5cm}\left.-c_{4}\int_{0}^{t}\beta_{\text{{\tiny $\delta$}}}(u_{\varepsilon}(x,s))\,ds\right).
\end{align}
Differentiating \eqref{Equation-4-54} with respect to space and using (c) of (A1), we obtain
\begin{align}\label{Equation-4-55}
\nabla v_{\varepsilon}(x,t)&=v_{\varepsilon}(x,t) \left(\int_{0}^{t}\int_{\Omega} \alpha_{\text{{\tiny $\delta$}}}(z_{\varepsilon}(y,s)) \nabla K(x,y)\,dyds\right.\\
\nonumber
&\hspace*{1cm}\left.+c_{3}\int_{0}^{t} \beta_{\text{{\tiny $\delta$}}}'(w_{\text{{\tiny $\varepsilon$}}}(x,s)) \nabla w_{\text{{\tiny $\varepsilon$}}}(x,s)\,ds
-c_{4}\int_{0}^{t} \beta_{\text{{\tiny $\delta$}}}'(u_{\varepsilon}(x,s))\nabla u_{\varepsilon}(x,s)\,ds\right)\\
\nonumber
&\hspace*{0.5cm}+\exp\left(-\int_{0}^{t}\int_{\Omega}K(x,y)\alpha_{\text{{\tiny $\delta$}}}(z_{\varepsilon}(y,s))\,dyds
+c_{3}\int_{0}^{t}\beta_{\text{{\tiny $\delta$}}}(w_{\text{{\tiny $\varepsilon$}}}(x,s))\,ds\right.\\
\nonumber
&\hspace*{5cm}\left.-c_{4}\int_{0}^{t}\beta_{\text{{\tiny $\delta$}}}(u_{\varepsilon}(x,s))\,ds\right)\nabla v_{0}(x).
\end{align}
Multiplying \eqref{Equation-4-55} by $\nabla v_{\varepsilon}(x,t)$ and integrating its result over $\Omega$, from \eqref{Equation-2-1} and \eqref{Equation-2-5} we obtain 
the following inequality for all $t \in [0,T]$:
\begin{align}
\label{Equation-4-56}
\|\nabla v_{\varepsilon}(t)\|_{\bm{L}^2(\Omega)}^2&\le \int_{\Omega}|\nabla v_{0}||\nabla v_{\varepsilon}(t)|\,dx \cdot 
\exp\left(c_{3}\int_{0}^{t}\|w_{\text{{\tiny $\varepsilon$}}}(s)\|_{L^{\infty}(\Omega)}\,ds\right)\\
\nonumber
&\hspace*{1cm}+\int_{0}^{t} \int_{\Omega} |\nabla v_{\varepsilon}(t)| |v_{\varepsilon}(t)| \left(\int_\Omega |z_{\varepsilon}(y,s)| |\nabla K(x,y)| \,dy\right) dx ds\\
\nonumber
&\hspace*{1cm}+c_{3}\int_{0}^{t} \int_{\Omega} |\nabla w_{\text{{\tiny $\varepsilon$}}}(s)| |\nabla v_{\varepsilon}(t)| |v_{\varepsilon}(t)|\,dxds\\
\nonumber
&\hspace*{1cm}+c_{4}\int_{0}^{t} \int_{\Omega} |\nabla u_{\varepsilon}(s)| |\nabla v_{\varepsilon}(t)| |v_{\varepsilon}(t)|\,dxds.
\end{align}
\indent
We now estimate each term on the right-hand side of \eqref{Equation-4-56}.\\
\indent
Firstly, we consider the first term.
According to (3) in Proposition \ref{Proposition-6} and the continuous embedding $H^{2}(\Omega)\hookrightarrow L^{\infty}(\Omega)$ (cf. \eqref{Equation-1-7}), we have 
\begin{align}
\label{Equation-4-57}
&\,\int_{\Omega}|\nabla v_{0}||\nabla v_{\varepsilon}(t)|\,dx \cdot \exp\left(c_3 \int_{0}^{T}\|w_{\text{{\tiny $\varepsilon$}}} (t)\|_{L^{\infty}(\Omega)}\,dt\right)\\
\nonumber
\le&\,\|\nabla v_{0}\|_{\bm{L}^2(\Omega)}\|\nabla v_{\varepsilon}(t)\|_{\bm{L}^2(\Omega)}\exp \left( c_3 K_4 \int_0^T \|w(t)\|_{H^2(\Omega)}\,dt\right)\\
\nonumber
\le&\,\frac{1}{8}\|\nabla v_{\varepsilon}(t)\|_{\bm{L}^2(\Omega)}^2+2\|v_0\|_{H^1(\Omega)}^2 \exp \left( 2c_3 K_4 (T C_{19})^{\frac{1}{2}} \right),
\end{align}
where the constant $C_{19}>0$ is the same as in (3) in Proposition \ref{Proposition-6}.\\
\indent
For the second term, from (c) of (A1), (4) in Proposition \ref{Proposition-6} and Lemma \ref{Lemma-4-1} we obtain
\begin{align}
\label{Equation-4-58}
&\,\int_{0}^{t} \int_{\Omega} |v_{\varepsilon}(t)||\nabla v_{\varepsilon}(t)| \left( \int_\Omega |\nabla K(x,y)||z_{\varepsilon}(y,s)|\,dy\right) dxds\\
\nonumber
\le&\, \int_0^t \int_\Omega |v_{\varepsilon}(t)| |\nabla v_{\varepsilon}(t)| \left( \int_\Omega |\nabla K(x,y)|^2dx\right)^{\frac{1}{2}} 
\left( \int_\Omega |z_{\varepsilon}(s)|^2 dx \right)^{\frac{1}{2}}ds\\
\nonumber
\le&\, \|v_{\varepsilon}(t)\|_{L^2(\Omega)} \|\nabla v_{\varepsilon}(t)\|_{\bm{L}^2(\Omega)} \cdot 
\sup_{x\,\in\,\Omega} \left( \int_\Omega |\nabla K(x,y)|^2dx\right)^{\frac{1}{2}} \int_0^t \|z_{\varepsilon}(s)\|_{L^2(\Omega)}ds\\
\nonumber
\le&\,\frac{1}{8}\|\nabla v_{\varepsilon}(t)\|_{\bm{L}^2 (\Omega)}^{2}+2 C_{20} (TK_3 C_{24})^2 |\Omega|^{\frac{1}{2}}, \quad \forall t \in [0,T].
\end{align}
\indent
For the third term, by Lemma \ref{Lemma-4-1} and the continuous embedding $H^1(\Omega) \hookrightarrow L^4(\Omega)$ (cf. \eqref{Equation-2-105}), we obtain 
the following inequality for all $t \in [0,T]$;
\begin{align}
\label{Equation-4-59}
&\,c_{3}\int_{0}^{t}\int_\Omega |\nabla w_{\text{{\tiny $\varepsilon$}}}(x,s)| |\nabla v_{\varepsilon}(x,t)| |v_{\varepsilon}(x,t)|\,dxds\\
\nonumber
\le&\,c_{3}\|\nabla v_{\varepsilon}(t)\|_{\bm{L}^2(\Omega)} \|v_{\varepsilon}(t)\|_{L^{4}(\Omega)} \int_{0}^{t}\|\nabla w_{\text{{\tiny $\varepsilon$}}}(s)\|_{\bm{L}^4(\Omega)}\,ds\\
\nonumber
\le&\,\frac{1}{8}\|\nabla v_{\varepsilon}(t)\|^2_{\bm{L}^2(\Omega)}+2(c_3)^2 T \left( \sup_{0 \le t \le T} \|v_{\varepsilon}(t)\|_{L^4(\Omega)} \right)^{2}
\int_{0}^{T}\|\nabla w(t)\|_{\bm{L}^4(\Omega)}^{2}dt\\
\nonumber
\le&\,\frac{1}{8}\|\nabla v_{\varepsilon}(t)\|^2_{\bm{L}^2(\Omega)}+2(c_3 C_{24})^2 T \int_{0}^{T}\|\nabla w(t)\|_{\bm{L}^4(\Omega)}^{2}dt.
\end{align}
By the Gagliardo-Nirenberg inequality: there exist constants $K_7>0$ and $K_8>0$ such that 
\begin{equation*}
\|\nabla \eta \|_{\bm{L}^4(\Omega)}^{2} \le K_7 \|\Delta \eta \|_{L^{2}(\Omega)}^{\frac{N}{2}} 
\|\nabla \eta \|_{\bm{L}^2(\Omega)}^{\frac{4-N}{2}}+K_8 \|\nabla \eta\|_{\bm{L}^2(\Omega)}^{2}, \quad \forall \eta \in H^2(\Omega), 
\end{equation*}
it follows from (3) of Proposition \ref{Proposition-6} that the following inequality holds for all $t \in [0,T]$: 
\begin{align}
\label{Equation-4-60}
\|\nabla w_{\text{{\tiny $\varepsilon$}}}(t)\|_{\bm{L}^4(\Omega)}^{2} &\le K_7 \|\Delta w_{\text{{\tiny $\varepsilon$}}}(t)\|_{L^{2}(\Omega)}^{\frac{N}{2}} 
\|\nabla w_{\text{{\tiny $\varepsilon$}}}(t)\|_{\bm{L}^2(\Omega)}^{\frac{4-N}{2}}+K_8 \|\nabla w_{\text{{\tiny $\varepsilon$}}}(t)\|_{\bm{L}^2(\Omega)}^{2}\\
\nonumber
&\le \frac{K_7 N}{4} \|\Delta w_{\text{{\tiny $\varepsilon$}}}(t)\|_{L^2(\Omega)}^{2}+\left\{ \frac{K_7(4-N)}{4}+K_8\right\} \|\nabla w_{\text{{\tiny $\varepsilon$}}}(t)\|_{\bm{L}^2(\Omega)}^{2}\\
\nonumber
&\le \frac{K_7 N}{4} \|\Delta w_{\text{{\tiny $\varepsilon$}}}(t)\|_{L^2(\Omega)}^{2}+\left\{ \frac{K_7(4-N)}{4}+K_8\right\} C_{19}.
\end{align}
Using \eqref{Equation-4-60} and (3) of Proposition \ref{Proposition-6} again, we obtain 
\begin{gather}
\label{Equation-4-61}
\int_{0}^{T}\|\nabla w_{\text{{\tiny $\varepsilon$}}}(t)\|^2_{\bm{L}^{4}(\Omega)}dt \le C_{19} \left[ \frac{K_7 N}{4}+T\left\{ \frac{K_7(4-N)}{4}+K_8\right\} \right]. 
\end{gather}
Substituting \eqref{Equation-4-61} into \eqref{Equation-4-59} and using Lemma \ref{Lemma-4-1}, we see that there exists 
a constant $C_{\text{{\tiny $4,\!2,\!1$}}}>0$ such that the following inequality holds for all $t \in [0,T]$\,:
\begin{equation}\label{Equation-4-62}
c_{3}\int_{0}^{t}\int_\Omega |\nabla w_{\text{{\tiny $\varepsilon$}}}(x,s)| |\nabla v_{\varepsilon}(x,t)| |v_{\varepsilon}(x,t)|\,dxds 
\le \frac{1}{8} \|\nabla v_{\varepsilon} (t)\|^2_{\bm{L}^2(\Omega)}+C_{\text{{\tiny $4,\!2,\!1$}}},
\end{equation}
where the constant $C_{\text{{\tiny $4,\!2,\!1$}}}>0$ is given by 
\begin{equation*}
C_{\text{{\tiny $4,\!2,\!1$}}}:=2 (c_3 C_{24})^2 C_{19} T\left[ \frac{K_7 N}{4}+T\left\{ \frac{K_7(4-N)}{4}+K_8\right\} \right].
\end{equation*}
\indent
The fourth term is estimated analogously by repeating the similar argument to $w_{\text{{\tiny $\varepsilon$}}}$ for the function $u_{\varepsilon}$.
Then, we see that there exists a constant $C_{\text{{\tiny $4,\!2,\!2$}}}>0$ such that the following inequality holds for all $t \in [0,T]$:
\begin{equation}\label{Equation-4-63}
c_{4} \int_{0}^{t}\int_\Omega |\nabla u_{\varepsilon}(x,s)| |\nabla v_{\varepsilon}(x,t)| |v_{\varepsilon}(x,t)|\,dxds 
\le \frac{1}{8}\|\nabla v_{\varepsilon}(t)\|_{\bm{L}^{2}(\Omega)}^{2}+C_{\text{{\tiny $4,\!2,\!2$}}}.
\end{equation}
Substituting \eqref{Equation-4-57}, \eqref{Equation-4-58}, \eqref{Equation-4-62} and \eqref{Equation-4-63} into \eqref{Equation-4-56}, we obtain 
\begin{equation*}
\|\nabla v_{\varepsilon}(t)\|_{\bm{L}^2(\Omega)}^{2} \le \frac{1}{2}\|\nabla v_{\varepsilon}(t)\|_{\bm{L}^2(\Omega)}^{2}+C_{\text{{\tiny $4,\!2,\!3$}}},
\end{equation*}
that is,
\begin{equation}\label{Equation-4-64}
\sup_{0 \le t \le T} \|\nabla v_{\varepsilon}(t)\|_{\bm{L}^{2}(\Omega)} \le C_{\text{{\tiny $4,\!2,\!3$}}},
\end{equation}
where the constant $C_{\text{{\tiny $4,\!2,\!3$}}}>0$ is given by
\begin{equation*}
C_{\text{{\tiny $4,\!2,\!3$}}}:=2\|v_0\|_{H^1(\Omega)}^2 \exp \left( 2c_3 K_4 (T C_{19})^{\frac{1}{2}} \right)+2 C_{20} (TK_3 C_{24})^2 |\Omega|^{\frac{1}{2}}
+C_{\text{{\tiny $4,\!2,\!1$}}}+C_{\text{{\tiny $4,\!2,\!2$}}}.
\end{equation*}
\indent
Finally, we take the inner product in both sides of \eqref{Equation-4-8} in $L^2(\Omega)$ by $v_{\varepsilon}'(t)$.
Using the H\"{o}lder inequality, we obtain the following inequality for a.e. $t \in (0,T)$:
\begin{align*}
\|v_{\varepsilon}'(t)\|_{L^2(\Omega)}^2 &\le \int_\Omega \left( \int_\Omega K(x,y) \alpha_\delta (z_{\varepsilon}(y,t))dy \right) |v_{\varepsilon}(x,t)| |v_{\varepsilon}'(x,t)| dx\\
&\hspace*{1cm}+c_3 \int_\Omega |v_{\varepsilon}(t)| |w_{\text{{\tiny $\varepsilon$}}}(t)| |v_{\varepsilon}'(t)|dx
+c_4 \int_\Omega |v_{\varepsilon}(t)| |u_{\varepsilon}(t)| |v_{\varepsilon}'(t)|\,dx\\
&\le K_2 \|z_{\text{{\tiny $\varepsilon$}}} (t)\|_{L^2(\Omega)} \|v_{\text{{\tiny $\varepsilon$}}} (t)\|_{L^2(\Omega)} \|v_{\text{{\tiny $\varepsilon$}}}'(t)\|_{L^2(\Omega)}\\
&\hspace*{1cm}+\|v_{\varepsilon}(t)\|_{L^4(\Omega)} \|v_{\varepsilon}'(t)\|_{L^2(\Omega)} \left( c_3 \|w_{\text{{\tiny $\varepsilon$}}}(t)\|_{L^4(\Omega)}+
+c_4 \|u_{\varepsilon}(t)\|_{L^4(\Omega)} \right),
\end{align*}
which gives 
\begin{align}
\label{Equation-4-65}
\|v_{\varepsilon}'(t)\|_{L^2(\Omega)}^2 &\le 4K_2 \left( \sup_{0 \le t \le T} \|z_{\text{{\tiny $\varepsilon$}}} (t)\|_{L^2(\Omega)}\right)^2
\left( \sup_{0 \le t \le T} \|v_{\text{{\tiny $\varepsilon$}}} (t)\|_{L^2(\Omega)}\right)^2\\
\nonumber
&\hspace*{1cm}+4(c_3)^2 \left( \sup_{0 \le t \le T} \|v_{\text{{\tiny $\varepsilon$}}} (t)\|_{L^4(\Omega)}\right)^2 
\left( \sup_{0 \le t \le T} \|w_{\text{{\tiny $\varepsilon$}}} (t)\|_{L^4(\Omega)}\right)^2\\
\nonumber
&\hspace*{1cm}+4(c_4)^2 \left( \sup_{0 \le t \le T} \|v_{\text{{\tiny $\varepsilon$}}} (t)\|_{L^4(\Omega)}\right)^2 
\left( \sup_{0 \le t \le T} \|u_{\text{{\tiny $\varepsilon$}}} (t)\|_{L^4(\Omega)}\right)^2.
\end{align}
By the continuous embedding $H^1(\Omega) \hookrightarrow L^4(\Omega)$ (cf. \eqref{Equation-2-105}) again, from Proposition \ref{Proposition-6} 
and Lemma \ref{Lemma-4-1} with \eqref{Equation-4-65} we obtain
\begin{equation}\label{Equation-4-66}
\int_0^T \|v_{\text{{\tiny $\varepsilon$}}}'(t)\|_{L^2(\Omega)}^2 \le C_{\text{{\tiny $4,\!2,\!4$}}},
\end{equation}
where the constant $C_{\text{{\tiny $4,\!2,\!4$}}}>0$ is given by
\begin{equation*}
C_{\text{{\tiny $4,\!2,\!4$}}}:=4 (C_{24})^2 \left\{ K_2C_{20} |\Omega|+(c_3 K_6)^2 C_{19}+(c_4 K_6)^2 C_{17} \right\}.
\end{equation*}
The boundedness \eqref{Equation-4-64} and \eqref{Equation-4-66} yield the required uniform estimates in this lemma.
\end{proof}
%%%%%
%%%%%
%%%%%
In the next lemma, we consider the convergence of the sequences $\{v_{\text{{\tiny $\delta,\!\varepsilon_n$}}}\}_{n \in \mathbb{N}}$ and 
$\{z_{\text{{\tiny $\delta,\!\varepsilon_n$}}}\}_{n \in \mathbb{N}}$.
In essence, this lemma implies that there exists a sequence $\{\varepsilon_n\}_{n \in \mathbb{N}}$ with $\varepsilon \downarrow 0$ as $n \to \infty$ 
such that the corresponding subsequences $\{v_{\text{{\tiny $\delta,\!\varepsilon_n$}}}\}_{n \in \mathbb{N}}$ and 
$\{z_{\text{{\tiny $\delta,\!\varepsilon_n$}}}\}_{n \in \mathbb{N}}$ share the same limit.
As a result, the independent evolution of the metric generator $z_{\text{{\tiny $\delta,\!\varepsilon$}}}$ in the approximate initial-boundary value problem 
$\mbox{(AP)}{}_{\text{{\tiny $\delta,\!\varepsilon$}}}$ vanishes in $\mbox{(AP)}{}_{\text{{\tiny $\delta$}}}$, 
allowing us to identify the two limit functions in $\mbox{(AP)}{}_{\text{{\tiny $\delta$}}}$.
%%%%%
%%%%%
%%%%%
\begin{lemma}\label{Lemma-4-3}
There exist a sequence $\{\varepsilon_n\}_{n \in \mathbb{N}} \subset (0,1)$ and a nonnegative function
$v_{\text{{\tiny $\delta$}}} \in W^{1,2}(0,T\,;L^2(\Omega)) \cap L^\infty (0,T\,;H^1(\Omega))$ such that the following convergences hold as $n \to \infty$\,:
\begin{gather}
\label{Equation-4-67}
\varepsilon_n \downarrow 0,\\
\label{Equation-4-68}
z_{\text{{\tiny $\delta,\!\varepsilon_n$}}} \longrightarrow v_{\text{{\tiny $\delta$}}} \quad \text{in} \quad C([0,T]\,;L^2(\Omega)),\\
\label{Equation-4-69}
v_{\text{{\tiny $\delta,\!\varepsilon_n$}}} \longrightarrow v_{\text{{\tiny $\delta$}}} \quad \left\{
\begin{array}{l}
\text{in} \quad C([0,T]\,;L^2(\Omega)),\\[0.1cm]
\text{weakly in} \quad W^{1,2}(0,T\,;L^2(\Omega)),\\[0.1cm]
\text{weakly$^*$ in} \quad L^\infty (0,T\,;H^1(\Omega)).
\end{array}
\right.
\end{gather}
\end{lemma}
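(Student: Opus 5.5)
The plan is a compactness argument for $\{v_{\delta,\varepsilon}\}$, followed by showing that $z_{\delta,\varepsilon}-v_{\delta,\varepsilon}\to 0$.

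\emph{Step 1: compactness of $v_{\delta,\varepsilon}$.} By Lemma \ref{Lemma-4-2}, the family $\{v_{\delta,\varepsilon}\,;\,\varepsilon\in(0,1)\}$ is bounded in $W^{1,2}(0,T;L^2(\Omega))\cap L^\infty(0,T;H^1(\Omega))$. The bound on $v'$ gives equicontinuity in $C([0,T];L^2(\Omega))$. The $H^1$-bound, together with the compact embedding $H^1(\Omega)\hookrightarrow\hookrightarrow L^2(\Omega)$, gives pointwise relative compactness. Ascoli--Arzel\`a then yields a sequence $\varepsilon_n\downarrow0$ and a limit $v_\delta$ with all three convergences in \eqref{Equation-4-69}, exactly as in the proof of Lemma \ref{Lemma-2-7}.

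\emph{Step 2: properties of $v_\delta$.} Nonnegativity passes to the limit from \eqref{Equation-4-11}, since $L^2$ convergence gives a.e. convergence along a further subsequence. Weak and weak$^*$ lower semicontinuity places $v_\delta$ in $W^{1,2}(0,T;L^2(\Omega))\cap L^\infty(0,T;H^1(\Omega))$.

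\emph{Step 3: the key step, $z_{\delta,\varepsilon_n}\to v_\delta$.} Set $Y_\varepsilon:=z_{\delta,\varepsilon}-v_{\delta,\varepsilon}$. By \eqref{Equation-4-10}, $Y_\varepsilon'=\varepsilon\Delta_N z_{\delta,\varepsilon}$ with $Y_\varepsilon(0)=0$. A direct energy estimate tested by $Y_\varepsilon$ gives
\begin{equation*}
\frac12\frac{d}{dt}\|Y_\varepsilon\|_{L^2(\Omega)}^2=-\varepsilon(\nabla z_{\delta,\varepsilon},\nabla z_{\delta,\varepsilon}-\nabla v_{\delta,\varepsilon})_{\bm{L}^2(\Omega)}\le \frac{\varepsilon}{4}\|\nabla v_{\delta,\varepsilon}\|_{\bm{L}^2(\Omega)}^2 .
\end{equation*}
The inequality follows from Young's inequality, since $-|a|^2+a\cdot b\le |b|^2/4$. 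Integrating and using the uniform $H^1$-bound on $v_{\delta,\varepsilon}$ from Lemma \ref{Lemma-4-2}, we get
\begin{equation*}
\max_{0\le t\le T}\|Y_\varepsilon(t)\|_{L^2(\Omega)}^2\le \frac{\varepsilon T C_{25}}{2}\longrightarrow0 .
\end{equation*}
This uses the $H^1$ regularity of $v_{\delta,\varepsilon}$, which is why Lemma \ref{Lemma-4-2} was needed: Proposition \ref{Proposition-6} only gives $\varepsilon\|\nabla z\|^2$ bounds, not $\varepsilon$-independent ones. The only care needed is justifying the integration by parts. This is fine because $z_{\delta,\varepsilon}\in L^2(0,T;H^2(\Omega))$ satisfies the Neumann condition and $v_{\delta,\varepsilon}(t)\in H^1(\Omega)$, which is where Lemma \ref{Lemma-4-2} enters once more.

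\emph{Conclusion.} Combining Steps 1 and 3 gives \eqref{Equation-4-68} along the same sequence $\varepsilon_n$. I expect Step 3 to be the main point, and it is resolved by the regularity recovered in Lemma \ref{Lemma-4-2}.
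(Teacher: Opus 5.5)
Your proof is correct and follows essentially the same route as the paper. Both arguments test $\bigl(z_{\delta,\varepsilon}-v_{\delta,\varepsilon}\bigr)'=\varepsilon\Delta_N z_{\delta,\varepsilon}$ against $z_{\delta,\varepsilon}-v_{\delta,\varepsilon}$ and close the estimate with the $\varepsilon$-independent $H^1$ bound of Lemma~\ref{Lemma-4-2}; both then use Ascoli--Arzel\`a compactness of $\{v_{\delta,\varepsilon}\}$ and the triangle inequality. The differences are cosmetic: you extract the subsequence first, and you apply Young's inequality directly to $\nabla z_{\delta,\varepsilon}$ rather than splitting it as $\nabla(z_{\delta,\varepsilon}-v_{\delta,\varepsilon})+\nabla v_{\delta,\varepsilon}$, which changes only the constant.
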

%%%%%
%%%%%
%%%%%
\begin{proof}[Proof.]
Throughout this proof, for simplicity we set 
$(v_{\text{{\tiny $\varepsilon$}}},z_{\text{{\tiny $\varepsilon$}}}):=(v_{\text{{\tiny $\delta,\!\varepsilon$}}},z_{\text{{\tiny $\delta,\!\varepsilon$}}})$.
From \eqref{Equation-4-10} in (a3) and (a4) of Definition \ref{Definition-4-2} the pair $(v_{\text{{\tiny $\varepsilon$}}},z_{\text{{\tiny $\varepsilon$}}})$ satisfies 
\eqref{Equation-4-70} and \eqref{Equation-4-71}:
\begin{gather}
\label{Equation-4-70}
(z_{\text{{\tiny $\varepsilon$}}}-v_{\text{{\tiny $\varepsilon$}}})'(t)-\varepsilon \Delta_N z_{\text{{\tiny $\varepsilon$}}}(t)=0 
\quad \text{in} \quad L^2(\Omega), \quad \text{a.e.}~t \in (0,T),\\
\label{Equation-4-71}
(z_{\text{{\tiny $\varepsilon$}}}-v_{\text{{\tiny $\varepsilon$}}})(0)=0 \quad \text{in} \quad L^2(\Omega).
\end{gather}
Taking the inner product in both sides of \eqref{Equation-4-70} in $L^2(\Omega)$ by $(z_{\text{{\tiny $\varepsilon$}}}-v_{\text{{\tiny $\varepsilon$}}})(t)$,
we obtain the following equality for a.e. $t \in (0,T)$:
\begin{equation}\label{Equation-4-72}
\frac{1}{2} \frac{d}{dt} \|(z_{\text{{\tiny $\varepsilon$}}}-v_{\text{{\tiny $\varepsilon$}}})(t)\|_{L^2(\Omega)}^2
-\varepsilon (\Delta_N z_{\text{{\tiny $\varepsilon$}}}(t),(z_{\text{{\tiny $\varepsilon$}}}-v_{\text{{\tiny $\varepsilon$}}})(t))_{L^2(\Omega)}=0.
\end{equation}
By Proposition \ref{Proposition-6} and Lemma \ref{Lemma-4-2}, from \eqref{Equation-4-71} we obtain the following equality:
\begin{gather}
\label{Equation-4-73}
-\varepsilon (\Delta_N z_{\text{{\tiny $\varepsilon$}}}(t),z_{\text{{\tiny $\varepsilon$}}}-v_{\text{{\tiny $\varepsilon$}}})(t))_{L^2(\Omega)}
=\varepsilon \int_\Omega \nabla z_{\text{{\tiny $\varepsilon$}}}(t) \cdot \nabla (z_{\text{{\tiny $\varepsilon$}}}-v_{\text{{\tiny $\varepsilon$}}})(t)dx\\
\nonumber
=\varepsilon \|\nabla (z_{\text{{\tiny $\varepsilon$}}}-v_{\text{{\tiny $\varepsilon$}}})(t)\|_{\bm{L}^2(\Omega)}^2
+\varepsilon \int_\Omega \nabla v_{\text{{\tiny $\varepsilon$}}}(t) \cdot \nabla (z_{\text{{\tiny $\varepsilon$}}}-v_{\text{{\tiny $\varepsilon$}}})(t)dx,
\quad \text{a.e.}~t \in (0,T).
\end{gather}
Substituting \eqref{Equation-4-73} into \eqref{Equation-4-72}, we obtain 
\begin{equation*}
\frac{1}{2}\frac{d}{dt} \|(z_{\text{{\tiny $\varepsilon$}}}-v_{\text{{\tiny $\varepsilon$}}})(t)\|_{L^2(\Omega)}^2
+\varepsilon \|\nabla (z_{\text{{\tiny $\varepsilon$}}}-v_{\text{{\tiny $\varepsilon$}}})(t)\|_{\bm{L}^2(\Omega)}^2
=-\varepsilon \int_\Omega \nabla v_{\text{{\tiny $\varepsilon$}}}(t) \cdot \nabla (z_{\text{{\tiny $\varepsilon$}}}-v_{\text{{\tiny $\varepsilon$}}})(t)dx,
\end{equation*}
hence, the following inequality for a.e. $t \in (0,T)$:
\begin{equation}\label{Equation-4-74}
\frac{d}{dt} \|(z_{\text{{\tiny $\varepsilon$}}}-v_{\text{{\tiny $\varepsilon$}}})(t)\|_{L^2(\Omega)}^2
+\varepsilon \|\nabla (z_{\text{{\tiny $\varepsilon$}}}-v_{\text{{\tiny $\varepsilon$}}})(t)\|_{\bm{L}^2(\Omega)}^2
\le \varepsilon \|\nabla v_{\text{{\tiny $\varepsilon$}}}(t)\|_{\bm{L}^2(\Omega)}^2,
\end{equation}
Integrating \eqref{Equation-4-74} on any interval $[0,t] \subset [0,T]$ and using \eqref{Equation-4-71}, we obtain 
\begin{equation}\label{Equation-4-75}
\max_{0\,\le\,t\,\le\,T} \|(z_{\text{{\tiny $\varepsilon$}}}-v_{\text{{\tiny $\varepsilon$}}})(t)\|_{L^2(\Omega)}^2 
\le \varepsilon \int_0^T \|\nabla v_{\text{{\tiny $\varepsilon$}}} (t)\|_{\bm{L}^2(\Omega)}^2
\end{equation}
By Lemma \ref{Lemma-4-2}, we see from \eqref{Equation-4-75} that the following estimate holds for all $\varepsilon \in (0,1)$:
\begin{equation*}
\max_{0\,\le\,t\,\le\,T} \|(z_{\text{{\tiny $\varepsilon$}}}-v_{\text{{\tiny $\varepsilon$}}})(t)\|_{L^2(\Omega)}^2 \le \varepsilon T C_{25},
\end{equation*}
which implies 
\begin{equation}\label{Equation-4-76}
z_{\text{{\tiny $\varepsilon$}}}-v_{\text{{\tiny $\varepsilon$}}} \longrightarrow 0 \quad \text{in} \quad C([0,T]\,;L^2(\Omega))
\quad \text{as} \quad \varepsilon \downarrow 0.
\end{equation}
\indent
On the other hand, we see from Lemma \ref{Lemma-4-2} that there exists a sequence $\{\varepsilon_n\}_{n \in \mathbb{N}} \subset (0,1)$ and a nonnegative function 
$v:=v_{\text{{\tiny $\delta$}}} \in W^{1,2}(0,T\,;L^2(\Omega)) \cap L^\infty(0,T\,;H^1(\Omega))$ such that the required convergences \eqref{Equation-4-67} 
and \eqref{Equation-4-69} holds as $n \to \infty$.\\
\indent
Since we have the following inequality for all $n \in \mathbb{N}$ and $t \in [0,T]$:
\begin{equation*}
\|(z_{\varepsilon_n}-v)(t)\|_{L^2(\Omega)} \le \|(z_{\varepsilon_n}-v_{\varepsilon_n})(t)\|_{L^2(\Omega)}+\|(v_{\varepsilon_n}-v)(t)\|_{L^2(\Omega)},
\end{equation*}
the convergences \eqref{Equation-4-67}, \eqref{Equation-4-69} and \eqref{Equation-4-76} yield the required convergence \eqref{Equation-4-68} as $n \to \infty$.
This concludes that this lemma holds.
\end{proof}
%%%%%
%%%%%
%%%%%
Now we are in a position to show Proposition \ref{Proposition-7}.
%%%%%
%%%%%
%%%%%
\begin{proof}[Proof of Proposition \ref{Proposition-7}.]
Let the sequence $\{\varepsilon_n\}_{n \in \mathbb{N}}$ and the function $v_{\text{{\tiny $\delta$}}}$ be the same ones as in Lemma \ref{Lemma-4-3}, and 
set $v_n:=v_{\text{{\tiny $\delta,\!\varepsilon_n$}}}$ for all $n \in \mathbb{N}$ for simplicity.
By Proposition \ref{Proposition-6}, there exist a subsequence of $\{\varepsilon_n\}_{n \in \mathbb{N}}$, which is denoted the same notation, and 
a triplet $(u_{\text{{\tiny $\delta$}}},w_{\text{{\tiny $\delta$}}},z_{\text{{\tiny $\delta$}}})$ such that the following convergences hold as $n \to \infty$:
\begin{align}
\label{Equation-4-77}
&u_n:=u_{\text{{\tiny $\delta,\!\varepsilon_n$}}} \longrightarrow u_{\text{{\tiny $\delta$}}} \quad \left\{
\begin{array}{l}
\text{a.e. in} \quad Q_T,\\[0.1cm]
\text{in} \quad C([0,T]\,;L^2(\Omega)),\\[0.1cm]
\text{weakly in} \quad W^{1,2}(0,T\,;L^2(\Omega)),\\[0.1cm]
\text{weakly$^*$ in} \quad L^\infty (0,T\,;H^1(\Omega)),\\[0.1cm]
\text{weakly in} \quad L^2(0,T\,;H^2(\Omega)),
\end{array}
\right.\\
\label{Equation-4-78}
&w_n:=w_{\text{{\tiny $\delta,\!\varepsilon_n$}}} \longrightarrow w_{\text{{\tiny $\delta$}}} \quad \left\{
\begin{array}{l}
\text{a.e. in} \quad Q_T,\\[0.1cm]
\text{in} \quad C([0,T]\,;L^2(\Omega)),\\[0.1cm]
\text{weakly in} \quad W^{1,2}(0,T\,;L^2(\Omega)),\\[0.1cm]
\text{weakly$^*$ in} \quad L^\infty(0,T\,;H^1(\Omega)),\\[0.1cm]
\text{weakly in} \quad L^2(0,T\,;H^2(\Omega)),
\end{array}
\right.\\
\label{Equation-4-79}
&z_n:=z_{\text{{\tiny $\delta,\!\varepsilon_n$}}} \longrightarrow z_{\text{{\tiny $\delta$}}} \quad \left\{
\begin{array}{l}
\text{weakly$^*$ in} \quad L^\infty (0,T\,;L^2(\Omega)),\\[0.1cm]
\text{weakly in} \quad W^{1,2}((0,T)\,;L^2(\Omega)).
\end{array}
\right.
\end{align}
Using Lemma \ref{Lemma-4-3} and repeating the similar argument to the proof of Proposition \ref{Proposition-6} due to 
\eqref{Equation-4-69}, \eqref{Equation-4-77} and \eqref{Equation-4-78} (cf. (cf. \eqref{Equation-4-14}), \eqref{Equation-4-13} and \eqref{Equation-4-15}, respectively),
it follows that the limit triplet $(u_{\text{{\tiny $\delta$}}},v_{\text{{\tiny $\delta$}}},w_{\text{{\tiny $\delta$}}})$ satisfies \eqref{Equation-4-44} and \eqref{Equation-4-46} 
with the initial condition $(u_{\text{{\tiny $\delta$}}}(0),v_{\text{{\tiny $\delta$}}}(0),w_{\text{{\tiny $\delta$}}}(0))=(u_0,v_0,w_0)$.\\
\indent
Next, we investigate the function $z_{\text{{\tiny $\delta$}}}$.
By Proposition \ref{Proposition-6}, from \eqref{Equation-4-10} we obtain the following inequality for all $\eta \in L^2(0,T\,;H^1(\Omega))$:
\begin{gather*}
\,\left| \int_0^T (z_n'(t)-v_n'(t),\eta (t))_{L^2(\Omega)}dt \right|
=\varepsilon_n \left| \int_0^T (\nabla z_n(t),\nabla \eta (t))_{\bm{L}^2(\Omega)}dt \right|\\
\le \varepsilon_n^{\frac{1}{2}} \left( \varepsilon_n \int_0^T \|\nabla z_n(t)\|_{\bm{L}^2(\Omega)}^2dt\right)^{\frac{1}{2}}
\left( \int_0^T \|\nabla \eta (t)\|_{\bm{L}^2(\Omega)}^2dt \right)^{\frac{1}{2}}\\
\le \left( \varepsilon_n T C_{20} \right)^{\frac{1}{2}} \left( \int_0^T \|\nabla \eta (t)\|_{\bm{L}^2(\Omega)}^2dt \right)^{\frac{1}{2}},
\end{gather*}
which implies 
\begin{equation}\label{Equation-4-80}
\lim_{n \to \infty} \int_0^T (z_n'(t)-v_n'(t),\eta (t))_{L^2(\Omega)}dt=0.
\end{equation}
Since the sequence $\{z_n'-v_n'\}_{n \in \mathbb{N}}$ is bounded in $L^2(0,T\,;L^2(\Omega))$ and 
$H^1(\Omega)$ is dense in $L^2(\Omega)$, it follows that \eqref{Equation-4-80} is valid for all $\eta \in L^2(0,T\,;L^2(\Omega))$, that is, the 
following convergence holds as $n \to \infty$:
\begin{equation}\label{Equation-4-81}
z_n'-v_n' \longrightarrow 0 \quad \text{weakly in} \quad L^2(0,T\,;L^2(\Omega)).
\end{equation}
From \eqref{Equation-4-69}, \eqref{Equation-4-79} and \eqref{Equation-4-81} we obtain $z'=v'$ in $L^2(0,T\,;L^2(\Omega))$, hence, by using the initial condition 
$v_{\text{{\tiny $\delta$}}}(0)=z_{\text{{\tiny $\delta$}}}(0)=v_0$
\begin{equation}\label{Equation-4-82}
z_{\text{{\tiny $\delta$}}}(t)=v_0+\int_0^t z_{\text{{\tiny $\delta$}}}'(s)ds=v_0+\int_0^t v_{\text{{\tiny $\delta$}}}'(s)ds=v_{\text{{\tiny $\delta$}}}(t)
\quad \text{in} \quad L^2(\Omega),\quad \forall t \in [0,T].
\end{equation}
The equation \eqref{Equation-4-82} implies that the metric generator $z_{\text{{\tiny $\delta$}}}$ coincides with the function $v_{\text{{\tiny $\delta$}}}$ in the 
limit system as $\varepsilon_n \downarrow 0$ and the evolution equation \eqref{Equation-4-10} vanishes in $\mbox{(AP)}{}_{\text{{\tiny $\delta$}}}$.\\
\indent
In the rest of this proof, we consider the function $v_{\text{{\tiny $\delta$}}}$.
At first, it follow from \eqref{Equation-4-45} that for each $n \in \mathbb{N}$ the following equality holds for all $\xi \in L^2(0,T\,;L^2(\Omega))$:
\begin{gather}\label{Equation-4-83}
\int_0^T (v_n'(t),\xi (t))_{L^2(\Omega)}dt+\int_0^T \int_\Omega \left(\int_\Omega K(x,y) 
\alpha_{\text{{\tiny $\delta$}}} (z_n(y,t))dy \right) v_n(t) \xi (t)dxdt\\
\nonumber
=\int_0^T \int_\Omega v_n(t) \left\{ c_3 \beta_{\text{{\tiny $\delta$}}}(w_n(t))-
c_4 \beta_{\text{{\tiny $\delta$}}}(u_n(t)) \right\} \xi (t)dxdt.
\end{gather}
From \eqref{Equation-4-69} we obtain 
\begin{equation}\label{Equation-4-84}
\lim_{n \to \infty} \int_0^T (v_n'(t),\xi (t))_{L^2(\Omega)}dt=\int_0^T (v_{\text{{\tiny $\delta$}}}'(t),\xi (t))_{L^2(\Omega)}dt.
\end{equation}
While using \eqref{Equation-4-69}, \eqref{Equation-4-77} and \eqref{Equation-4-78}, we repeat the similar argument to the proof of \eqref{Equation-4-7},
\eqref{Equation-4-9} and \eqref{Equation-4-10} of (a3) in Definition \ref{Definition-6-1}.
Then, we obtain 
\begin{align}
\label{Equation-4-85}
&\lim_{n \to \infty} \int_0^T \int_\Omega v_n(t) \left\{ c_3 \beta_{\text{{\tiny $\delta$}}}(w_n(t))-
c_4 \beta_{\text{{\tiny $\delta$}}}(u_n(t)) \right\} \xi (t)dxdt\\
\nonumber
&\hspace*{1cm}=\int_0^T \int_\Omega v_{\text{{\tiny $\delta$}}}(t) \left\{ c_3 \beta_{\text{{\tiny $\delta$}}}(w_{\text{{\tiny $\delta$}}}(t))-
c_4 \beta_{\text{{\tiny $\delta$}}}(u_{\text{{\tiny $\delta$}}}(t)) \right\} \xi (t)dxdt.
\end{align}
Since we obtain the following inequalities: from (c) of (A1) and \eqref{Equation-2-1}
\begin{align*}
&\,\left| \int_0^T \int_\Omega \left(\int_\Omega K(x,y)\alpha_{\text{{\tiny $\delta$}}} (z_n(y,t))dy \right) 
\left\{ v_n(t)-v_{\text{{\tiny $\delta$}}}(t) \right\} \xi (t)dxdt \right|\\
\le&\,\delta |\Omega|^{\frac{1}{2}} \sup_{x \in \Omega} \left( \int_\Omega |K(x,y)|^2dy \right)^{\frac{1}{2}} 
\int_0^T \int_\Omega |v_n(t)-v_{\text{{\tiny $\delta$}}}(t)| |\xi (t)|\,dxdt\\
\le&\,\delta (T |\Omega|)^{\frac{1}{2}} K_2 \left( \max_{0 \le t \le T} \|v_n(t)-v_{\text{{\tiny $\delta$}}}(t)\|_{L^2(\Omega)}\right)
\left( \int_0^T \|\xi (t)\|_{L^2(\Omega)}^2dt \right)^{\frac{1}{2}},
\end{align*}
and from \eqref{Equation-2-2} with (c) of (A3) again
\begin{align*}
&\,\left| \int_0^T \int_\Omega \left(\int_\Omega K(x,y)\left\{ \alpha_{\text{{\tiny $\delta$}}} (z_n(y,t))
-\alpha_{\text{{\tiny $\delta$}}} (v_{\text{{\tiny $\delta$}}}(y,t)) \right\}dy \right) v_{\text{{\tiny $\delta$}}}(t) \xi (t)dxdt \right|\\
\le&\,\int_0^T \int_\Omega \left( \int_\Omega |K(x,y)|^2dy \right)^{\frac{1}{2}} 
\|z_n(t)-z_{\text{{\tiny $\delta$}}}(t)\|_{L^2(\Omega)} |v_{\text{{\tiny $\delta$}}}(t)| |\xi (t)|\,dxdt\\
\le&\,\delta K_2 \left( \max_{0 \le t \le T} \|z_n(t)-v_{\text{{\tiny $\delta$}}}(t)\|_{L^2(\Omega)}\right)
\left( \int_0^T \|v_{\text{{\tiny $\delta$}}}(t)\|_{L^2(\Omega)}^2dt \right)^{\frac{1}{2}} \left( \int_0^T \|\xi (t)\|_{L^2(\Omega)}^2dt \right)^{\frac{1}{2}}.
\end{align*}
from \eqref{Equation-4-68} and \eqref{Equation-4-69} we obtain
\begin{align}\label{Equation-4-86}
&\lim_{n \to \infty} \int_0^T \int_\Omega \left(\int_\Omega K(x,y) 
\alpha_{\text{{\tiny $\delta$}}} (z_n(y,t))dy \right) v_n(t) \xi (t)dxdt\\
\nonumber
&\hspace*{1cm}=\int_0^T \int_\Omega \left(\int_\Omega K(x,y) 
\alpha_{\text{{\tiny $\delta$}}} (v_{\text{{\tiny $\delta$}}}(y,t))dy \right) v_{\text{{\tiny $\delta$}}}(t) \xi (t)dxdt.
\end{align}
Taking the limit $n \to \infty$ in \eqref{Equation-4-83} and using \eqref{Equation-4-84}--\eqref{Equation-4-86}, it follows that the following equality 
holds for all $\xi \in L^2(0,T\,;L^2(\Omega))$:
\begin{gather*}
\int_0^T (v_{\text{{\tiny $\delta$}}}'(t),\xi (t))_{L^2(\Omega)}dt+\int_0^T \int_\Omega \left(\int_\Omega K(x,y) 
\alpha_{\text{{\tiny $\delta$}}} (v_{\text{{\tiny $\delta$}}}(y,t))dy \right) v_{\text{{\tiny $\delta$}}}(t) \xi (t)dxdt\\
=\int_0^T \int_\Omega v_{\text{{\tiny $\delta$}}}(t) \left\{ c_3 \beta_{\text{{\tiny $\delta$}}}(w_{\text{{\tiny $\delta$}}}(t))-
c_4 \beta_{\text{{\tiny $\delta$}}}(u_{\text{{\tiny $\delta$}}}(t)) \right\} \xi (t)dxdt,
\end{gather*}
which implies that \eqref{Equation-4-45} holds, and complete the proof of Proposition \ref{Proposition-7}.
\end{proof}
%%%%%%%%%%%%%%%%%%%%%%%%%%%%%%%%%%%%%%%%%%%%%%%%%%%%%%%%%%%%%%%%%%%%%%%%%%%%%%%%%%%%%%%%%%%%%%%%%%%%%%%%%%%%%
%%%%%%%%%%%%%%%%%%%%%%%%%%%%%%%%%%%%%%%%%%%%%%%%%%%%%%%%%%%%%%%%%%%%%%%%%%%%%%%%%%%%%%%%%%%%%%%%%%%%%%%%%%%%%
%%%%%%%%%%%%%%%%%%%%%%%%%%%%%%%%%%%%%%%%%%%%%%%%%%%%%%%%%%%%%%%%%%%%%%%%%%%%%%%%%%%%%%%%%%%%%%%%%%%%%%%%%%%%%	
\subsection{Proof of Theorem \ref{Theorem-1-1}}\label{Subsection-4-3}
%%%%%%%%%%%%%%%%%%%%%%%%%%%%%%%%%%%%%%%%%%%%%%%%%%%%%%%%%%%%%%%%%%%%%%%%%%%%%%%%%%%%%%%%%%%%%%%%%%%%%%%%%%%%%
%%%%%%%%%%%%%%%%%%%%%%%%%%%%%%%%%%%%%%%%%%%%%%%%%%%%%%%%%%%%%%%%%%%%%%%%%%%%%%%%%%%%%%%%%%%%%%%%%%%%%%%%%%%%%
%%%%%%%%%%%%%%%%%%%%%%%%%%%%%%%%%%%%%%%%%%%%%%%%%%%%%%%%%%%%%%%%%%%%%%%%%%%%%%%%%%%%%%%%%%%%%%%%%%%%%%%%%%%%%
The main objective is to show Theorem \ref{Theorem-1-1}.
By Proposition \ref{Proposition-7}, for every finite time $T>0$ the approximate initial-boundary value problem $\text{(AP)}{}_{\text{{\tiny $\delta$}}}$ has a strong 
solution $(u_{\text{{\tiny $\delta$}}},v_{\text{{\tiny $\delta$}}},w_{\text{{\tiny $\delta$}}})$ on $[0,T]$.
As you see from Proposition \ref{Proposition-7}, we have not obtained any uniform estimates for the family 
$\{(u_{\text{{\tiny $\delta$}}},v_{\text{{\tiny $\delta$}}},w_{\text{{\tiny $\delta$}}})\,;\,\delta \in (1,\infty)\}$ of solutions to $\text{(AP)}{}_{\text{{\tiny $\delta$}}}$ 
on $[0,T]$, except, the following lemma, which is directly obtained by Lemma \ref{Lemma-4-1}.
%%%%%
%%%%%
%%%%%
\begin{lemma}\label{Lemma-4-4}
The following uniform estimate holds: 
\begin{equation*}
\sup_{\delta \in (1,\infty)} \left( \sup_{0 \le t \le T} \|v_{\text{{\tiny $\delta$}}}(t)\|_{L^4(\Omega)} \right) \le C_{24},
\end{equation*}
where the constant $C_{24}>0$ is the same as in Lemma \ref{Lemma-4-1}.
\end{lemma}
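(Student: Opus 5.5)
The natural route is to pass the uniform bound of Lemma \ref{Lemma-4-1} to the limit along the sequence $\varepsilon_n\downarrow 0$ constructed in Lemma \ref{Lemma-4-3}, using weak lower semicontinuity of the $L^4(\Omega)$-norm. The key observation is that the constant $C_{24}$ in Lemma \ref{Lemma-4-1} depends only on $\|v_0\|_{L^4(\Omega)}$, $\|w_0\|_{L^2(\Omega)}$, $T$, $\Omega$ and the coefficients $c_3,c_6$. It is therefore independent of both $\delta\in(1,\infty)$ and $\varepsilon\in(0,1)$. Note that $\|v_0\|_{L^4(\Omega)}<\infty$ by (A2) and the embedding $H^1(\Omega)\hookrightarrow L^4(\Omega)$ (cf. \eqref{Equation-2-105}), since $N\le 3$.

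First, I fix $\delta\in(1,\infty)$ and take $\{\varepsilon_n\}_{n\in\mathbb{N}}$ and $v_\delta$ as in Lemma \ref{Lemma-4-3}. Then \eqref{Equation-4-69} gives $v_{\delta,\varepsilon_n}(t)\to v_\delta(t)$ in $L^2(\Omega)$ for every $t\in[0,T]$, since the convergence holds in $C([0,T];L^2(\Omega))$. Second, I fix $t\in[0,T]$. By Lemma \ref{Lemma-4-1}, the sequence $\{v_{\delta,\varepsilon_n}(t)\}_{n}$ is bounded in $L^4(\Omega)$ by $C_{24}$. Since $L^4(\Omega)$ is reflexive, some subsequence converges weakly in $L^4(\Omega)$. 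Its weak limit must equal $v_\delta(t)$, because the strong $L^2$-limit identifies it: test against $\varphi\in L^\infty(\Omega)\subset L^{4/3}(\Omega)\cap L^2(\Omega)$. Weak lower semicontinuity of the norm then gives
\begin{equation*}
\|v_\delta(t)\|_{L^4(\Omega)}\le \liminf_{n\to\infty}\|v_{\delta,\varepsilon_n}(t)\|_{L^4(\Omega)}\le C_{24}.
\end{equation*}
Alternatively, Fatou's lemma applies directly to an a.e.-convergent subsequence extracted from the strong $L^2$ convergence.

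Third, I take the supremum over $t\in[0,T]$ and then over $\delta\in(1,\infty)$. This yields the claim, since $C_{24}$ does not depend on $\delta$.

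The only real obstacle is confirming that $C_{24}$ is truly $\delta$-independent. Looking at the proof of Lemma \ref{Lemma-4-1}, it uses only $0\le\beta_\delta(w)\le w$, the nonnegativity \eqref{Equation-4-11}, the comparison $w'\le D_2\Delta w+c_6$ together with the heat-semigroup smoothing bound \eqref{Equation-4-51}, and the integrability of $t^{-N/4}$ for $N\le3$. None of these involves $\delta$ or $\varepsilon$, so the bound carries over unchanged to $v_\delta$.
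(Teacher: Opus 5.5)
Your proposal is correct and follows essentially the paper's argument. The paper only remarks that Lemma~\ref{Lemma-4-4} is ``directly obtained by Lemma~\ref{Lemma-4-1}''. You supply the omitted step: by the construction in Proposition~\ref{Proposition-7}, $v_\delta$ is the $C([0,T];L^2(\Omega))$-limit of the sequence $v_{\delta,\varepsilon_n}$ from Lemma~\ref{Lemma-4-3}, so the $\delta$- and $\varepsilon$-independent bound $C_{24}$ passes to each $v_\delta(t)$ by weak lower semicontinuity of the $L^4(\Omega)$-norm, or by Fatou's lemma.
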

%%%%%
%%%%%
%%%%%
First of all, we show Lemma \ref{Lemma-4-5}.
%%%%%
%%%%%
%%%%%
\begin{lemma}\label{Lemma-4-5}
There exists a constant $C_{26}>0$, which depends on $T$, $\|w_0\|_{H^1(\Omega)}$ and $\|u_0\|_{H^1(\Omega)}$, such that the following uniform estimate holds:
\begin{equation*}
\sup_{\delta \in (1,\infty)} \left\{ \left(\sup_{0 \le t\le T} \|u_{\text{{\tiny $\delta$}}}(t)\|_{L^4(\Omega)}\right)+
\left(\sup_{0 \le t\le T} \|w_{\text{{\tiny $\delta$}}}(t)\|_{L^4(\Omega)}\right) \right\} \le C_{26}.
\end{equation*}
\end{lemma}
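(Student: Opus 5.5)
Since every component is nonnegative (Proposition \ref{Proposition-7}), I will estimate $w_{\text{{\tiny $\delta$}}}$ first and then use it to control $u_{\text{{\tiny $\delta$}}}$, keeping all constants free of $\delta$. The only facts about the truncations I will use are $0\le\beta_{\text{{\tiny $\delta$}}}(r)\le r$ for $r\ge0$ (cf. \eqref{Equation-2-3}) and the uniform $L^4$ bound on $v_{\text{{\tiny $\delta$}}}$ from Lemma \ref{Lemma-4-4}.

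\textbf{Step 1: the bound for $w_{\text{{\tiny $\delta$}}}$.} Because $v_{\text{{\tiny $\delta$}}}\beta_{\text{{\tiny $\delta$}}}(w_{\text{{\tiny $\delta$}}})\ge0$, equation \eqref{Equation-4-46} gives
\[
w_{\text{{\tiny $\delta$}}}'-D_2\Delta_N w_{\text{{\tiny $\delta$}}}+c_5w_{\text{{\tiny $\delta$}}}\le c_6 .
\]
I will test this with $w_{\text{{\tiny $\delta$}}}^3\ge0$. For $N\le3$ the test function is admissible, since $w_{\text{{\tiny $\delta$}}}\in L^2(0,T;H^2(\Omega))$ and $H^2\hookrightarrow L^\infty$. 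The diffusion term contributes $3D_2\int_\Omega w_{\text{{\tiny $\delta$}}}^2|\nabla w_{\text{{\tiny $\delta$}}}|^2\,dx\ge0$. This leaves
\[
\frac14\frac{d}{dt}\|w_{\text{{\tiny $\delta$}}}\|_{L^4(\Omega)}^4\le c_6\int_\Omega w_{\text{{\tiny $\delta$}}}^3\,dx\le c_6|\Omega|^{1/4}\|w_{\text{{\tiny $\delta$}}}\|_{L^4(\Omega)}^3 .
\]
Integrating in time then yields $\|w_{\text{{\tiny $\delta$}}}(t)\|_{L^4(\Omega)}\le\|w_0\|_{L^4(\Omega)}+c_6|\Omega|^{1/4}T$. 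The right-hand side is bounded by $\|w_0\|_{H^1(\Omega)}$ through \eqref{Equation-2-105}, independently of $\delta$. An alternative route is the comparison argument already used for \eqref{Equation-4-51}.

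\textbf{Step 2: the bound for $u_{\text{{\tiny $\delta$}}}$.} From \eqref{Equation-4-44} and nonnegativity,
\[
u_{\text{{\tiny $\delta$}}}'-D_1\Delta_N u_{\text{{\tiny $\delta$}}}\le c_1\beta_{\text{{\tiny $\delta$}}}(u_{\text{{\tiny $\delta$}}})v_{\text{{\tiny $\delta$}}}\le c_1u_{\text{{\tiny $\delta$}}}v_{\text{{\tiny $\delta$}}} .
\]
Testing with $u_{\text{{\tiny $\delta$}}}^3$ gives
\[
\frac14\frac{d}{dt}\|u_{\text{{\tiny $\delta$}}}\|_{L^4(\Omega)}^4+\frac{3D_1}{4}\|\nabla(u_{\text{{\tiny $\delta$}}}^2)\|_{\bm L^2(\Omega)}^2\le c_1\int_\Omega u_{\text{{\tiny $\delta$}}}^4v_{\text{{\tiny $\delta$}}}\,dx .
\]
This step is the main obstacle: only $\sup_t\|v_{\text{{\tiny $\delta$}}}\|_{L^4(\Omega)}\le C_{24}$ is available uniformly in $\delta$. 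To handle it, I set $\phi=u_{\text{{\tiny $\delta$}}}^2$ and apply H\"older:
\[
\int_\Omega \phi^2v_{\text{{\tiny $\delta$}}}\,dx\le\|v_{\text{{\tiny $\delta$}}}\|_{L^4(\Omega)}\|\phi\|_{L^{8/3}(\Omega)}^2 .
\]
By Gagliardo--Nirenberg, for $N\le3$ we have
\[
\|\phi\|_{L^{8/3}(\Omega)}^2\le C\|\phi\|_{H^1(\Omega)}^{2\theta}\|\phi\|_{L^2(\Omega)}^{2(1-\theta)},\qquad \theta=\tfrac{3N}{8}<1 .
\]
Young's inequality then absorbs the $\nabla\phi$ part into the dissipation and leaves
\[
\frac{d}{dt}\|u_{\text{{\tiny $\delta$}}}\|_{L^4(\Omega)}^4\le C(C_{24})\|u_{\text{{\tiny $\delta$}}}\|_{L^4(\Omega)}^4 ,
\]
where the constant depends on $C_{24}$, $D_1$, $c_1$ and $\Omega$ but not on $\delta$. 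Gronwall then gives $\sup_t\|u_{\text{{\tiny $\delta$}}}(t)\|_{L^4(\Omega)}\le \|u_0\|_{L^4(\Omega)}e^{CT}$, which is controlled by $\|u_0\|_{H^1(\Omega)}$. If $\theta<1$ were to fail in some case, I would instead test with a lower power first and bootstrap; for $N\le 3$ I expect the direct argument to work.

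Combining Steps 1 and 2 gives $C_{26}$.
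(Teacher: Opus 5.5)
Your argument is essentially the paper's proof. In both, you test the $w$-equation with $w_\delta^3$ and the $u$-equation with $u_\delta^3$, feed in the uniform $L^4$ bound of Lemma~\ref{Lemma-4-4}, apply Gagliardo--Nirenberg to $u_\delta^2$, absorb into $\|\nabla(u_\delta^2)\|_{\bm{L}^2(\Omega)}^2$ by Young, and close with Gronwall. The paper differs only cosmetically: it splits $\int_\Omega u_\delta^4 v_\delta\,dx\le|\Omega|^{1/4}\|v_\delta\|_{L^4(\Omega)}\|u_\delta^2\|_{L^4(\Omega)}^2$ (Gagliardo--Nirenberg exponent $N/4$), and it keeps $c_5\int_\Omega w_\delta^4\,dx$ to get a $T$-independent bound for $w_\delta$.

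There is one slip to fix. Gagliardo--Nirenberg into $L^{8/3}(\Omega)$ has exponent $\theta=N/8$, from $\tfrac38=\tfrac12-\tfrac{\theta}{N}$, not $3N/8$. Your value equals $9/8>1$ for $N=3$, so the absorption step would fail as you wrote it. With the correct $\theta\le 3/8$ the step goes through.
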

%%%%%
%%%%%
%%%%%
\begin{proof}[Proof.]
For simplicity we set $(u,v,w):=(u_{\text{{\tiny $\delta$}}},v_{\text{{\tiny $\delta$}}},w_{\text{{\tiny $\delta$}}})$.
Testing (\ref{Equation-4-41}) with $w^3$ and then integrating by parts over $\Omega$ yields the following inequality for a.e. $t \in (0,T)$:
\begin{equation*}
\frac{1}{4}\frac{d}{dt}\int_{\Omega} w^4\,dx +3D_{2}\int_{\Omega}w^2|\nabla w|^2\,dx+c_{5}\int_{\Omega} w^4\,dx
=c_{6}\int_{\Omega}w^3\,dx-c_{7}\int_{\Omega} v\beta_{\delta}(w)w^{3}\,dx,
\end{equation*}
while by dropping the nonnegative second  term on the left-hand side of previous identity and the last negative term on the right-hand side, we infer 
\begin{equation}\label{REV100}
\frac{1}{4}\frac{d}{dt}\int_{\Omega} w^4+c_{5}\int_{\Omega} w^4\,dx \leq c_{6}\int_{\Omega}w^3\,dx.
\end{equation}
Then, an application of Young's inequality ensures that
\begin{equation}\label{REV101} 
c_{6}\int_{\Omega}w^3\,dx\leq \frac{c_{5}}{2}\int_{\Omega} w^4\,dx+\frac{1}{4} \left( \frac{3}{2c_5} \right)^3 (c_6)^4 |\Omega|.
\end{equation}
Combining \eqref{REV101} with \eqref{REV100} gives 
\begin{equation*}
\frac{d}{dt}\int_{\Omega} w^4\,dx+2c_{5}\int_{\Omega} w^4\,dx\leq \left( \frac{3}{2c_5} \right)^3 (c_6)^4 |\Omega|,
\end{equation*}
whence, by an ODE comparison principle, we arrive at
\begin{equation}\label{Equation-4-87}
\|w(\cdot, t)\|^{4}_{L^4(\Omega)}\leq C_{\text{{\tiny $4,\!5,\!1$}}},\quad\mbox{for all}\; t\in [0,T],
\end{equation}
where the constant $C_{\text{{\tiny $4,\!5,\!1$}}}>0$ is given by
\begin{equation*}
C_{\text{{\tiny $4,\!5,\!1$}}}:=\sup\left\{\|w_{0}\|^{4}_{L^4(\Omega)}, \frac{1}{2c_{5}} \left( \frac{3}{2c_5} \right)^3 (c_6)^4 |\Omega \right\}.
\end{equation*}
\indent
Next, we consider the function $u$.
Since $u \in L^\infty (0,T\,;H^1(\Omega))$ and $H^1(\Omega) \hookrightarrow L^6(\Omega)$, we have $\{u(t)\}^3 \in L^\infty (0,T\,;L^2(\Omega))$,
which enables us to take the inner product in both sides of \eqref{Equation-4-39} in $L^2(\Omega)$ by $\{u(t)\}^3$.
From \eqref{Equation-2-3} we obtain the following inequality for a.e. $t \in (0,T)$:
\begin{equation}\label{Equation-4-88}
\frac{1}{4} \frac{d}{dt} \|u_{\text{{\tiny $\delta$}}}(t)\|_{L^4(\Omega)}^4-D_1\int_\Omega (\Delta u_{\text{{\tiny $\delta$}}}(t)) \{u_{\text{{\tiny $\delta$}}}(t)\}^3dx
\le c_1 \int_\Omega \{u_{\text{{\tiny $\delta$}}}(t)\}^4 v_{\text{{\tiny $\delta$}}}(t)dx.
\end{equation}
Since we have the following inequality:
\begin{align}
\label{Equation-4-89}
&\,-D_1\int_\Omega (\Delta_N u_{\text{{\tiny $\delta$}}}(t)) \{u_{\text{{\tiny $\delta$}}}(t)\}^3dx
=3D_1\int_\Omega \{u_{\text{{\tiny $\delta$}}}(t)\}^2 |\nabla (u_{\text{{\tiny $\delta$}}}(t)|^2dx\\
\nonumber
&\,\hspace*{2cm}=\frac{3D_1}{4}\int_\Omega |\nabla (u_{\text{{\tiny $\delta$}}}(t))^2|^2dx \le D_1 \|\Delta_N u(t)\|_{L^2(\Omega)} \|u(t)\|_{L^6(\Omega)}^3 < \infty
\end{align}
and by using the generalized H\"{o}lder inequality and Lemma \ref{Lemma-4-4}
\begin{align}\label{Equation-4-90}
\int_\Omega \{u_{\text{{\tiny $\delta$}}}(t)\}^4 v_{\text{{\tiny $\delta$}}}(t)dx &\le |\Omega|^{\frac{1}{4}} \|v_{\text{{\tiny $\delta$}}}(t)\|_{L^4(\Omega)} 
\left( \int_\Omega |u_{\text{{\tiny $\delta$}}}(t)|^8dx\right)^{\frac{1}{2}}\\
\nonumber
&\le C_{24} |\Omega|^{\frac{1}{4}} \left( \int_\Omega |u_{\text{{\tiny $\delta$}}}(t)|^8dx\right)^{\frac{1}{2}},
\end{align}
we substitute \eqref{Equation-4-89} and \eqref{Equation-4-90} into \eqref{Equation-4-88}.
Then, we obtain
\begin{equation}\label{Equation-4-91}
\frac{d}{dt} \|u(t)\|_{L^4(\Omega)}^4+3D_1 \|\nabla (u(t))^2\|_{\bm{L}^2(\Omega)}^2 \le 4c_{1} C_{24} |\Omega| \|u(t)\|_{L^8(\Omega)}^4,\quad \text{a.e.}~t \in (0,T).
\end{equation}
Subsequently, applying the following Gagliardo-Nirenberg inequality (cf. \eqref{Equation-4-60}): there exist constants $K_9>0$ and $K_{10}>0$ such that
\begin{equation*}
\|\eta \|^2_{L^4(\Omega)} \le K_9 \|\nabla \eta \|^{\frac{N}{2}}_{\bm{L}^2(\Omega)}
\|\eta \|^{\frac{4-N}{2}}_{L^2(\Omega)}+K_{10}\|\eta \|_{L^2(\Omega)}^{2}, \quad \forall \eta \in H^1(\Omega),
\end{equation*} 
we obtain
\begin{align}\label{Equation-4-92}
\|u(t)\|_{L^8(\Omega)}^4&=\|(u(t))^2\|_{L^4(\Omega)}^2\\
\nonumber
&\le K_9 \|\nabla (u(t))^2\|^{\frac{N}{2}}_{L^2(\Omega)}\|(u(t))^2\|^{\frac{4-N}{2}}_{L^2(\Omega)}+K_{10}\|(u(t))^2\|_{L^2(\Omega)}^{2}\\
\nonumber
&= K_9 \|\nabla (u(t))^2\|^{\frac{N}{2}}_{L^2(\Omega)}\|u(t)\|_{L^4(\Omega)}^{4-N}+K_{10}\|u(t)\|_{L^4(\Omega)}^{4},
\end{align}
as well as 
\begin{equation}\label{Equation-4-93}
\|\nabla (u(t))^2\|^{\frac{N}{2}}_{\bm{L}^2(\Omega)} \|u(t)\|^{4-N}_{L^4(\Omega)} 
\le \hat{\mu} \|\nabla (u(t))^2\|_{\bm{L}^2(\Omega)}^2+\frac{4-N}{4} \left(\frac{N}{4\hat{\mu}}\right)^{\frac{N}{4-N}} \|u(t)\|_{L^4(\Omega)}^4,
\end{equation}
where the number $\hat{\mu}>0$ is given by 
\begin{equation*}
\hat{\mu}:=\frac{D_1}{2c_1 K_9 C_{24} |\Omega|^{\frac{1}{4}}}.
\end{equation*}
Combining (\ref{Equation-4-92}) with (\ref{Equation-4-93}) and substituting its result into \eqref{Equation-4-91}, we obtain 
\begin{equation}\label{Equation-4-94}
\frac{d}{dt}\|u(t)\|_{L^4(\Omega)}^4+D_1 \|\nabla (u(t))^2\|_{\bm{L}^2(\Omega)}^2 \le C_{\text{{\tiny $4,\!5,\!2$}}} \|u(t)\|_{L^4(\Omega)}^4, \quad \text{a.e.}~t \in (0,T),
\end{equation}
where the constant $C_{\text{{\tiny $4,\!5,\!2$}}}>0$ is given by
\begin{equation*}
C_{\text{{\tiny $4,\!5,\!2$}}}:=4c_1 C_{24} |\Omega|^{\frac{1}{4}} \left\{ \frac{K_9(4-N)}{4} \left(\frac{N}{4\hat{\mu}}\right)^{\frac{N}{4-N}}+K_{10} \right\}.
\end{equation*}
Applying the Gronwall lemma to \eqref{Equation-4-94}, we obtain the following boundedness:
\begin{equation}\label{Equation-4-95}
\left( \sup_{0 \le t \le T} \|u(t)\|_{L^4(\Omega)} \right)^4 \le \|u_0\|_{L^2(\Omega)}^2 \exp \left( {\color{red}C_{\text{{\tiny $4,\!5,\!2$}}}}T \right).
\end{equation}
\indent
As a result of \eqref{Equation-4-87} and \eqref{Equation-4-95}, it follows that the required uniform estimate holds.
Hence, the proof is completed.
\end{proof}
%%%%%
%%%%%
%%%%%
By Lemmas \ref{Lemma-4-4} and \ref{Lemma-4-5}, we show the uniform estimates which enable us to proceed the limit procedure as $\delta \uparrow \infty$
in the following lemmas.
%%%%%
%%%%%
%%%%%
\begin{lemma}\label{Lemma-4-6}
There exists a constant $C_{27}>0$, which depends on the following values:
\begin{equation*}
T, \quad \|u_0\|_{H^1(\Omega)}, \quad \|v_0\|_{H^1(\Omega)}, \quad \|w_0\|_{H^1 (\Omega)},
\end{equation*}
such that the following uniform estimate holds:
\begin{align*}
&\sup_{\delta \in (1,\infty)} \left\{ \left( \sup_{0 \le t \le T} \|u_{\text{{\tiny $\delta$}}}(t)\|_{H^1(\Omega)}\right)^2+
\int_0^T \|u_{\text{{\tiny $\delta$}}}(t)\|_{H^2(\Omega)}^2dt\right.\\
&\hspace*{2.5cm}\left.+\left( \sup_{0 \le t \le T} \|w_{\text{{\tiny $\delta$}}}(t)\|_{H^1(\Omega)}\right)^2+
\int_0^T \|w_{\text{{\tiny $\delta$}}}(t)\|_{H^2(\Omega)}^2dt\right\} \le C_{27}.
\end{align*}
\end{lemma}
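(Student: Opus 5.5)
We will establish $\delta$-independent $H^1$–$H^2$ bounds for $w_\delta$ first and then for $u_\delta$. The $\delta$-uniform $L^4$ bounds of Lemmas \ref{Lemma-4-4} and \ref{Lemma-4-5} are the only input, together with the structural properties \eqref{Equation-2-3} and \eqref{Equation-2-5} of $\beta_\delta$ and the nonnegativity \eqref{Equation-4-73}.

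\textbf{Bound for $w_\delta$.} We test \eqref{Equation-4-46} with $w_\delta'(t)$, exactly as in the derivation of \eqref{Equation-2-91}. The only difference is the reaction term: instead of bounding $|\beta_\delta(w_\delta)|\le\delta$, we use $0\le \beta_\delta(w_\delta)\le w_\delta$. By the generalized H\"older inequality,
\begin{equation*}
c_7\int_\Omega v_\delta \beta_\delta(w_\delta)|w_\delta'|\,dx \le c_7\|v_\delta(t)\|_{L^4(\Omega)}\|w_\delta(t)\|_{L^4(\Omega)}\|w_\delta'(t)\|_{L^2(\Omega)} \le c_7 C_{24}C_{26}\|w_\delta'(t)\|_{L^2(\Omega)}.
\end{equation*}
Applying Young's inequality and integrating in time gives a $\delta$-free bound on $\int_0^T\|w_\delta'\|_{L^2(\Omega)}^2dt+\sup_t\|w_\delta(t)\|_{H^1(\Omega)}^2$. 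We then read off $\Delta_N w_\delta$ from \eqref{Equation-4-46}, noting that the right-hand side $v_\delta\beta_\delta(w_\delta)$ is bounded in $L^\infty(0,T;L^2(\Omega))$ by the same H\"older estimate. The Neumann regularity \eqref{Equation-1-8} then yields the $L^2(0,T;H^2(\Omega))$ bound, as in \eqref{Equation-2-94}.

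\textbf{Bound for $u_\delta$.} We argue the same way with \eqref{Equation-4-44}, tested by $u_\delta'(t)$. The reaction term is controlled by
\begin{equation*}
\int_\Omega |\beta_\delta(u_\delta)|\,(c_1v_\delta+c_2w_\delta)\,|u_\delta'|\,dx \le \|u_\delta\|_{L^4(\Omega)}\bigl(c_1\|v_\delta\|_{L^4(\Omega)}+c_2\|w_\delta\|_{L^4(\Omega)}\bigr)\|u_\delta'\|_{L^2(\Omega)},
\end{equation*}
using $|\beta_\delta(r)|\le |r|$. The factors in parentheses are uniformly bounded by $C_{24}$ and $C_{26}$ from Lemmas \ref{Lemma-4-4} and \ref{Lemma-4-5}. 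Young's inequality and integration in time then give uniform bounds on $\int_0^T\|u_\delta'\|_{L^2(\Omega)}^2dt$ and $\sup_t\|\nabla u_\delta(t)\|_{\bm{L}^2(\Omega)}^2$. The $L^2(\Omega)$ part of $u_\delta$ is already controlled by Lemma \ref{Lemma-4-5}, since $\|\cdot\|_{L^2}\le|\Omega|^{1/4}\|\cdot\|_{L^4}$. Elliptic regularity \eqref{Equation-1-8} applied to $D_1\Delta_N u_\delta = u_\delta'-\beta_\delta(u_\delta)(c_1v_\delta-c_2w_\delta)$ then gives the $L^2(0,T;H^2(\Omega))$ bound.

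\textbf{Main obstacle.} The delicate point is making sure that every reaction term is estimated only through products of $L^4$ norms, so that no constant of the form $\delta$ (from \eqref{Equation-2-1} or \eqref{Equation-2-3}) enters. Two points need care. First, the products $v_\delta w_\delta$ and $v_\delta u_\delta$ must land in $L^2(\Omega)$, which is exactly what the $L^4\times L^4$ H\"older estimate provides for $N\le3$. Second, the time integrations must be justified at the regularity level of Definition \ref{Definition-4-2}. If the $L^4$ route for $u_\delta$ turned out to be insufficient, I would instead bound $\|u_\delta\|_{L^\infty(\Omega)}$ using the $H^2$ embedding \eqref{Equation-1-7} combined with a Gronwall argument in $\int_0^t\|u_\delta\|_{H^2(\Omega)}^2ds$. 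I expect this not to be necessary.
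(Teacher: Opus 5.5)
Your proof is correct and follows essentially the paper's argument. Both rest only on the $\delta$-uniform $L^4$ bounds of Lemmas~\ref{Lemma-4-4} and~\ref{Lemma-4-5}, the bound $|\beta_\delta(r)|\le|r|$ from \eqref{Equation-2-3}, the $L^4\times L^4$ H\"older estimate for the reaction terms, and the Neumann regularity \eqref{Equation-1-8}. The only difference is the multiplier. The paper tests \eqref{Equation-4-44} and \eqref{Equation-4-46} with $-\Delta_N u_\delta$ and $-\Delta_N w_\delta$, which gives $\int_0^T\|\Delta_N\cdot\|_{L^2(\Omega)}^2dt$ directly and leaves the time-derivative bounds to Lemma~\ref{Lemma-4-7}. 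Your choice of $u_\delta'$ and $w_\delta'$ instead yields the bounds of Lemma~\ref{Lemma-4-7} at the same time and recovers $\Delta_N u_\delta$ and $\Delta_N w_\delta$ from the equations.
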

%%%%%
%%%%%
%%%%%
\begin{proof}[Proof.]
For simplicity, we also set $(u,v,w):=(u_{\text{{\tiny $\delta$}}},v_{\text{{\tiny $\delta$}}},w_{\text{{\tiny $\delta$}}})$.
Since $u \in L^2(0,T\,;H^2(\Omega))$, we take the inner product in both sides of \eqref{Equation-4-44} in $L^2(\Omega)$ by $-\Delta_N u(t)$.
Then, we obtain the following inequality for a.e. $t \in (0,T)$: 
\begin{align}
\label{Equation-4-96}
&\frac{1}{2}\frac{d}{dt} \|\nabla u(t)\|_{\bm{L}^2(\Omega)}^2+D_{1}\|\Delta_N u(t)\|_{L^2(\Omega)}^2\\
\nonumber
&\hspace*{1cm}=-c_1 \int_\Omega \beta_{\text{{\tiny $\delta$}}}(u(t)) v(t) \Delta_N u(t)dx+c_2 \int_\Omega \beta_{\text{{\tiny $\delta$}}}(u(t)) w(t) \Delta_N u(t)dx.
\end{align}
By Lemmas \ref{Lemma-4-4} and \ref{Lemma-4-5}, from \eqref{Equation-2-3} we obtain
\begin{align}
\label{Equation-4-97}
&\,-c_1 \int_\Omega \beta_{\text{{\tiny $\delta$}}}(u(t)) v(t) \Delta_N u(t)dx+c_{2}\int_\Omega \beta_{\text{{\tiny $\delta$}}}(u(t)) w(t) \Delta_N u(t)dx\\
\nonumber
\le&\,\|u(t)\|_{L^4 (\Omega)} \left( c_1 \|v(t)\|_{L^4 (\Omega)}+c_2 \|w(t)\|_{L^4 (\Omega)} \right)\|\Delta_N u(t)\|_{L^2 (\Omega)}\\
\nonumber
\le&\,\frac{D_1}{2} \|\Delta_N u(t)\|_{L^2 (\Omega)}^2+\frac{1}{2D_1} \left\{ C_{26} (c_1 C_{24}+c_2 C_{26}) \right\}^2.
\end{align}
Combining \eqref{Equation-4-97} with (\ref{Equation-4-96}) and integrating over any time interval $[0,t] \subset [0,T]$, 
we obtain the following inequality for all $t \in [0,T]$:
\begin{align*}
&\|\nabla u(t)\|_{\bm{L}^2(\Omega)}^2+D_1 \int_0^t \|\Delta_N u(s)\|_{L^2(\Omega)}^2 ds\\
&\hspace*{1cm}\le \|u_0\|_{H^1(\Omega)}^2+\frac{T}{D_1} \left\{ C_{26} (c_1 C_{24}+c_2 C_{26}) \right\}^2,
\end{align*}
which implies that the following estimate holds:
\begin{equation}\label{Equation-4-98}
\sup_{0 \le t \le T} \|\nabla u(t)\|_{\bm{L}^2(\Omega)}^2+\int_0^T \|\Delta_N u(t)\|_{L^2(\Omega)}^2 dt \le C_{\text{{\tiny $4,\!8,\!1$}}},
\end{equation}
where the constant $C_{\text{{\tiny $4,\!8,\!1$}}}>0$ is given by
\begin{equation*}
C_{\text{{\tiny $4,\!8,\!1$}}}:=\left(1+\frac{1}{D_1}\right) \left[ \|u_0\|_{H^1(\Omega)}^2+\frac{T}{D_1} \left\{ C_{26} (c_1 C_{24}+c_2 C_{26}) \right\}^2\right].
\end{equation*}
By Lemma \ref{Lemma-4-5} again, from \eqref{Equation-4-98} we obtain
\begin{equation}\label{Equation-4-99}
\left( \sup_{0 \le t \le T} \|u(t)\|_{H^1(\Omega)} \right)^2 \le |\Omega|^{\frac{1}{2}}(C_{26})^2+C_{\text{{\tiny $4,\!8,\!1$}}}.
\end{equation}
Moreover, we apply the Neumann elliptic regularity \eqref{Equation-1-8} and use Lemma \ref{Lemma-4-5} with \eqref{Equation-4-98} again.
Then, we obtain 
\begin{align}
\label{Equation-4-100}
\int_0^T \|u(t)\|_{H^2(\Omega)}^2 dt &\le 2(K_5)^2 \left\{ \int_0^T \|\Delta_N u(t)\|_{L^2(\Omega)}^2 dt+\int_0^T \|u(t)\|_{L^2(\Omega)}^2 dt \right\}\\
\nonumber
&\le 2(K_5)^2 \left\{ C_{\text{{\tiny $4,\!8,\!1$}}}+T |\Omega|^{\frac{1}{2}} (C_{26})^2 \right\}.
\end{align}
\indent
Secondly, we consider the function $w$.
Since $w \in L^2(0,T\,;H^2(\Omega))$, we take the inner product in both sides of \eqref{Equation-4-46} in $L^2(\Omega)$ by $-\Delta_N w(t)$.
Then, we obtain the following inequality for a.e. $t \in (0,T)$: 
\begin{align*}
&\,\frac{1}{2}\frac{d}{dt} \|\nabla w(t)\|_{\bm{L}^2(\Omega)}^2+D_{2}\|\Delta_N w(t)\|_{L^2(\Omega)}^2\\
=&\,c_5 \int_\Omega w(t) \Delta_N w(t)dx-c_6 \int_\Omega \Delta_N w(t)dx+c_7 \int_\Omega v(t) \beta_{\text{{\tiny $\delta$}}}(w(t)) \Delta_N w(t)dx\\
\le&\,c_5 \|w(t)\|_{L^2(\Omega)} \|\Delta_N w(t)\|_{L^2(\Omega)}+c_6 |\Omega|^{\frac{1}{2}} \|\Delta_N w(t)\|_{L^2(\Omega)}\\
&\,\hspace*{3cm}+c_7 \|v(t)\|_{L^4(\Omega)} \|w(t))\|_{L^4(\Omega)}\|\Delta_N w(t)\|_{L^2(\Omega)}.
\end{align*}
By Lemmas \ref{Lemma-4-4} and \ref{Lemma-4-5}, we repeat the similar arguments to the derivations of \eqref{Equation-4-99} and \eqref{Equation-4-100}.
Then, we see that there exists a constant $C_{\text{{\tiny $4,\!8,\!2$}}}>0$ such that 
\begin{equation}\label{Equation-4-101}
\left( \sup_{0 \le t \le T} \|w(t)\|_{H^1(\Omega)}\right)^2+\int_0^T \|w(t)\|_{H^2(\Omega)}^2dt \le C_{\text{{\tiny $4,\!8,\!2$}}}. 
\end{equation}
\indent
As a result of \eqref{Equation-4-99}--\eqref{Equation-4-101}, we obtain the required uniform estimate in this lemma.
The proof is completed.
\end{proof}
%%%%%
%%%%%
%%%%%
\begin{lemma}\label{Lemma-4-7}
There exists a constant $C_{28}>0$, which depends on the following values:
\begin{equation*}
T, \quad \|u_0\|_{H^1(\Omega)}, \quad \|v_0\|_{H^1(\Omega)}, \quad \|w_0\|_{H^1 (\Omega)},
\end{equation*}

such that the following uniform estimate holds:
\begin{align*}
&\sup_{\delta \in (1,\infty)} \left( \int_0^T \|u_{\text{{\tiny $\delta$}}}'(t)\|_{L^2(\Omega)}^2dt+
\int_0^T \|w_{\text{{\tiny $\delta$}}}'(t)\|_{L^2(\Omega)}^2dt\right) \le C_{28}.
\end{align*}
\end{lemma}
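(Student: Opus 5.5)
The plan is to read off $u_{\text{{\tiny $\delta$}}}'$ and $w_{\text{{\tiny $\delta$}}}'$ directly from the equations \eqref{Equation-4-44} and \eqref{Equation-4-46}. Each right-hand side will then be estimated in $L^2(0,T\,;L^2(\Omega))$ by means of the $\delta$-uniform bounds already in hand, namely Lemmas \ref{Lemma-4-4}, \ref{Lemma-4-5} and \ref{Lemma-4-6}. No new energy identity is needed; the point is that every term appearing in the equations is already controlled uniformly in $\delta$.

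\textbf{The function $u_{\text{{\tiny $\delta$}}}$.} For a.e. $t\in(0,T)$, equation \eqref{Equation-4-44} gives
\begin{equation*}
\|u'(t)\|_{L^2(\Omega)} \le D_1\|\Delta_N u(t)\|_{L^2(\Omega)}+\|\beta_{\text{{\tiny $\delta$}}}(u(t))\{c_1v(t)-c_2w(t)\}\|_{L^2(\Omega)}.
\end{equation*}
The first term is handled by \eqref{Equation-4-98}, or equivalently Lemma \ref{Lemma-4-6}, which bounds $\int_0^T\|\Delta_N u\|_{L^2(\Omega)}^2dt$ independently of $\delta$. For the reaction term we use $|\beta_{\text{{\tiny $\delta$}}}(u)|\le|u|$ from \eqref{Equation-2-3}, followed by H\"older's inequality in $L^4\times L^4$:
\begin{equation*}
\|\beta_{\text{{\tiny $\delta$}}}(u)(c_1v-c_2w)\|_{L^2(\Omega)}\le \|u(t)\|_{L^4(\Omega)}\left(c_1\|v(t)\|_{L^4(\Omega)}+c_2\|w(t)\|_{L^4(\Omega)}\right)\le C_{26}(c_1C_{24}+c_2C_{26}).
\end{equation*}
Squaring and integrating over $(0,T)$ then yields a bound on $\int_0^T\|u_{\text{{\tiny $\delta$}}}'\|_{L^2(\Omega)}^2dt$ that does not depend on $\delta$.

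\textbf{The function $w_{\text{{\tiny $\delta$}}}$.} The same argument applies to \eqref{Equation-4-46}. The quantities $D_2\|\Delta_N w\|_{L^2(\Omega)}$ and $c_5\|w\|_{L^2(\Omega)}$ are controlled in $L^2(0,T)$ by Lemma \ref{Lemma-4-6}, together with the Neumann regularity \eqref{Equation-1-8} or simply $\|\Delta_N w\|_{L^2}\le C\|w\|_{H^2}$. The constant term contributes $c_6|\Omega|^{1/2}$. For the absorption term, $|\beta_{\text{{\tiny $\delta$}}}(w)|\le|w|$ together with Lemmas \ref{Lemma-4-4}--\ref{Lemma-4-5} gives $c_7\|v\|_{L^4}\|w\|_{L^4}\le c_7C_{24}C_{26}$. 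Applying $(a_1+\dots+a_4)^2\le 4\sum a_i^2$ and integrating in time gives the bound for $w$.

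Adding the two estimates defines $C_{28}$, which depends only on $T$ and the $H^1$ norms of the initial data, because $C_{24}$, $C_{26}$ and $C_{27}$ do. The one point to check carefully is that the $L^4$ bounds are genuinely $\delta$-uniform, since every other step is a direct substitution. Lemma \ref{Lemma-4-4} inherits the bound of Lemma \ref{Lemma-4-1} through the limit $\varepsilon\downarrow0$ by weak lower semicontinuity of the $L^4$ norm. Lemma \ref{Lemma-4-5} does not involve $\delta$ at all. Both may therefore be taken as given.
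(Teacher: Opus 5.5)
Your proposal is correct and is essentially the paper's proof. Like the paper, you read $u_\delta'$ and $w_\delta'$ off \eqref{Equation-4-44} and \eqref{Equation-4-46}, bound the products $\beta_\delta(u)v$, $\beta_\delta(u)w$, $v\beta_\delta(w)$ in $L^2(\Omega)$ by $L^4\times L^4$ H\"older with Lemmas \ref{Lemma-4-4}--\ref{Lemma-4-5} (the paper's \eqref{Equation-4-102}), and control the Laplacian and remaining terms by Lemma \ref{Lemma-4-6}. One small correction: for $\Delta_N w$ you only need the trivial bound $\|\Delta_N w\|_{L^2(\Omega)}\le \sqrt{N}\,\|w\|_{H^2(\Omega)}$; \eqref{Equation-1-8} runs in the opposite direction and is not what is used here.
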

%%%%%
%%%%%
%%%%%
\begin{proof}[Proof.]
For simplicity we set $(u,v,w):=(u_{\text{{\tiny $\delta$}}},v_{\text{{\tiny $\delta$}}},w_{\text{{\tiny $\delta$}}})$.
By Lemmas \ref{Lemma-4-4} and \ref{Lemma-4-5} with \eqref{Equation-2-3} and \eqref{Equation-4-11}, we obtain the following estimates:
\begin{equation}\label{Equation-4-102}
\left\{
\begin{array}{l}
\|\beta_{\text{{\tiny $\delta$}}} (u(t)) v(t)\|_{L^2(\Omega)}^2 \le \|u(t)\|_{L^4(\Omega)}^2 \|v(t)\|_{L^4(\Omega)}^2 \le (C_{24} C_{26})^2,\\[0.2cm]
\|\beta_{\text{{\tiny $\delta$}}} (u(t)) w(t)\|_{L^2(\Omega)}^2 \le \|u(t)\|_{L^4(\Omega)}^2 \|w(t)\|_{L^4(\Omega)}^2 \le (C_{26})^4,\\[0.2cm]
\|v(t) \beta_{\text{{\tiny $\delta$}}} (w(t))\|_{L^2(\Omega)}^2 \le \|v(t)\|_{L^4(\Omega)}^2 \|w(t)\|_{L^4(\Omega)}^2 \le (C_{24} C_{26})^2.
\end{array}
\right.
\end{equation}
\indent
At first, from \eqref{Equation-4-44} we obtain 
\begin{gather}
\label{Equation-4-103}
\int_0^T \|u'(t)\|_{L^2(\Omega)}^2dt \le 3(D_1)^2 \int_0^T \|\Delta_N u(t)\|_{L^2(\Omega)}^2dt\\
\nonumber
+3(c_1)^2 \int_0^T \|u(t) v(t)\|_{L^2(\Omega)}^2dt+3(c_2)^2 \int_0^T \|u(t) w(t)\|_{L^2(\Omega)}^2dt.
\end{gather}
Combining Lemma \ref{Lemma-4-6} with \eqref{Equation-4-102} and \eqref{Equation-4-103}, we obtain the following estimate:
\begin{equation}\label{Equation-4-104}
\int_0^T \|u'(t)\|_{L^2(\Omega)}^2dt \le C_{\text{{\tiny $4,\!7,\!1$}}},
\end{equation}
where the constant $C_{\text{{\tiny $4,\!7,\!1$}}}>0$ is given by
\begin{equation*}
C_{\text{{\tiny $4,\!7,\!1$}}}:=3C_{27}(D_1)^2+3(c_1 C_{24} C_{26})^2+3(c_2)^2 (C_{26})^4.
\end{equation*}
\indent
Next, from \eqref{Equation-4-46} we obtain 
\begin{gather}
\label{Equation-4-105}
\int_0^T \|w'(t)\|_{L^2(\Omega)}^2dt \le (2D_1)^2 \int_0^T D_1\|\Delta_N w(t)\|_{L^2(\Omega)}^2dt\\
\nonumber
+(2c_5)^2 \int_0^T \|w(t)\|_{L^2(\Omega)}^2dt+(2c_7)^2 \int_0^T \|v(t) w(t)\|_{L^2(\Omega)}^2dt\\
\nonumber
+(2c_2)^2 \int_0^T \|u(t) w(t)\|_{L^2(\Omega)}^2dt+(2c_6)^2T|\Omega|.
\end{gather}
Combining Lemma \ref{Lemma-4-6} again with \eqref{Equation-4-102} and \eqref{Equation-4-105}, we obtain the following estimate:
\begin{equation}\label{Equation-4-106}
\int_0^T \|w'(t)\|_{L^2(\Omega)}^2dt \le C_{\text{{\tiny $4,\!7,\!2$}}},
\end{equation}
where the constant $C_{\text{{\tiny $4,\!7,\!2$}}}>0$ is given by
\begin{equation*}
C_{\text{{\tiny $4,\!7,\!2$}}}:=C_{27}(2D_2)^2+(2c_5 C_{26})^2 T |\Omega|^{\frac{1}{2}}+(2c_2 C_{24} C_{26})^2+(2c_6)^2 T |\Omega|.
\end{equation*}
\indent
As a result of \eqref{Equation-4-104} and \eqref{Equation-4-106}, we obtain the required uniform estimate in this lemma.
The proof is completed.
\end{proof}
%%%%%
%%%%%
%%%%%
\begin{lemma}\label{Lemma-4-8}
There exists a constant $C_{29}>0$, which depends on the following values:
\begin{equation*}
 T, \quad \|u_0\|_{H^1(\Omega)}, \quad \|v_0\|_{H^1(\Omega)}, \quad \|w_0\|_{H^1(\Omega)},
\end{equation*}
such that the following uniform estimate holds:
\begin{equation*}
\sup_{\delta \in (1,\infty)} \left\{ \|v_{\text{{\tiny $\delta$}}}'(t)\|_{L^\infty (0,T\,;L^2(\Omega))}+
\left( \sup_{0 \le t \le T} \|v_{\text{{\tiny $\delta$}}}(t)\|_{H^1(\Omega)} \right)^2 \right\} \le C_{29}.
\end{equation*}
\end{lemma}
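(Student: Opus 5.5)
We need a bound on $\|v_\delta'\|_{L^\infty(0,T;L^2)}$ and on $\sup_t\|v_\delta(t)\|_{H^1}$ that does not depend on $\delta$. The plan follows the pointwise ODE argument of Lemma \ref{Lemma-4-2}, with the $z_\varepsilon$-dependence replaced by $v_\delta$ itself (cf. \eqref{Equation-4-82}). Every $\delta$-dependent constant will be replaced by the $\delta$-uniform bounds of Lemmas \ref{Lemma-4-4}--\ref{Lemma-4-7}.

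Write $(u,v,w):=(u_{\delta},v_{\delta},w_{\delta})$. Solving \eqref{Equation-4-45} pointwise gives the representation
\begin{equation*}
v(x,t)=v_0(x)\exp\bigl(E(x,t)\bigr),\qquad E(x,t):=-\int_0^t (\mathcal{K}_{\delta}v)(x,s)\,ds+\int_0^t\{c_3\beta_{\delta}(w)-c_4\beta_{\delta}(u)\}(x,s)\,ds.
\end{equation*}
We have $\mathcal{K}_{\delta}v\ge 0$, and by \eqref{Equation-4-11} and \eqref{Equation-2-3} $0\le\beta_{\delta}(w)\le w$ and $\beta_{\delta}(u)\ge0$. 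Hence $E(x,t)\le c_3\int_0^T\|w(s)\|_{L^\infty}ds$. By \eqref{Equation-1-7} and Lemma \ref{Lemma-4-6}, $\int_0^T\|w\|_{H^2}ds\le (TC_{27})^{1/2}$, so $e^{E}\le \Lambda:=\exp(c_3K_4(TC_{27})^{1/2})$, uniformly in $\delta$.

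For the gradient, differentiate the representation as in \eqref{Equation-4-55}:
\begin{equation*}
\nabla v=e^{E}\nabla v_0+v\Bigl(-\int_0^t\!\!\int_\Omega\nabla K(x,y)\alpha_{\delta}(v(y,s))\,dy\,ds+c_3\int_0^t\beta_{\delta}'(w)\nabla w\,ds-c_4\int_0^t\beta_{\delta}'(u)\nabla u\,ds\Bigr).
\end{equation*}
We bound $\|\nabla v(t)\|_{\bm{L}^2}$ term by term.

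The first term is at most $\Lambda\|v_0\|_{H^1}$.

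The nonlocal term is where the $\delta$-dependence of Lemma \ref{Lemma-4-2} must be removed. The bound $\alpha_{\delta}\le\delta$ is useless here. Instead we use $|\alpha_{\delta}(r)|\le |r|+1$, which holds since $\delta^{-1}<1$, together with (c) of (A1). This bounds the inner integral pointwise in $x$ by $K_3(\|v(s)\|_{L^2}+|\Omega|^{1/2})$, which is uniformly bounded by Lemma \ref{Lemma-4-4}. The term is therefore dominated in $L^2$ by $CT\|v(t)\|_{L^2}$.

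The $w$ and $u$ terms use $0\le\beta_{\delta}'\le1$ from \eqref{Equation-2-5}. Hölder with Lemma \ref{Lemma-4-4} gives
\begin{equation*}
\|v(t)\|_{L^4}\int_0^t\|\nabla w(s)\|_{\bm{L}^4}\,ds\le C_{24}T^{1/2}\Bigl(\int_0^T\|\nabla w\|_{\bm{L}^4}^2ds\Bigr)^{1/2},
\end{equation*}
and likewise for $u$. The right-hand side is bounded through the Gagliardo--Nirenberg estimate \eqref{Equation-4-60}, now fed by the $\delta$-uniform $L^\infty H^1\cap L^2H^2$ bounds of Lemma \ref{Lemma-4-6}.

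Unlike \eqref{Equation-4-56}, no absorption step is needed, because the factor multiplying $v$ does not involve $\nabla v$. Summing the four bounds gives $\sup_t\|\nabla v(t)\|_{\bm{L}^2}\le C$. Adding the $L^2$ bound from Lemma \ref{Lemma-4-4} yields the $H^1$ part of the statement.

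For $v'$, read off \eqref{Equation-4-45} pointwise in time. We have $|\mathcal{K}_{\delta}v|\le K_2(\|v\|_{L^2}+|\Omega|^{1/2})$ in $L^\infty$, hence
\begin{equation*}
\|v'(t)\|_{L^2}\le C\|v(t)\|_{L^2}+\|v(t)\|_{L^4}\bigl(c_3\|w(t)\|_{L^4}+c_4\|u(t)\|_{L^4}\bigr).
\end{equation*}
By Lemmas \ref{Lemma-4-4} and \ref{Lemma-4-5}, this is bounded for every $t$ independently of $\delta$, which gives the $L^\infty(0,T;L^2(\Omega))$ bound.

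The main obstacle is the uniform-in-$\delta$ control of the nonlocal gradient term and of the exponential factor. The first is handled by replacing $\alpha_{\delta}\le\delta$ with the linear-growth bound $|\alpha_{\delta}(r)|\le|r|+1$. The second relies on the $L^1(0,T;L^\infty)$ bound for $w$, which is where Lemma \ref{Lemma-4-6} is essential. If the $H^2$ route to $L^\infty$ fails, the fallback is the heat-semigroup comparison \eqref{Equation-4-51}, which is already independent of $\delta$.
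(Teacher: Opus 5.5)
Your proposal is correct and follows essentially the same route as the paper: the pointwise representation \eqref{Equation-4-107}, a four-term splitting of $\nabla v_\delta$ bounded through the $\delta$-uniform Lemmas \ref{Lemma-4-4} and \ref{Lemma-4-6}, and then $v_\delta'$ read off from \eqref{Equation-4-45} using Lemmas \ref{Lemma-4-4}--\ref{Lemma-4-5}. Taking $L^2$ norms directly instead of testing with $\nabla v_\delta$ and absorbing is only cosmetic, and your bound $|\alpha_\delta(r)|\le|r|+1$ is the right way to handle the nonlocal term, since the paper's replacement of $\alpha_\delta(v_\delta)$ by $v_\delta$ ignores that $\alpha_\delta\ge\delta^{-1}$.
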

%%%%%
%%%%%
%%%%%
\begin{proof}[Proof.]
We show this lemma by using the similar argument of Lemma \ref{Lemma-4-2}.
For simplicity we set $(u,v,w):=(u_{\text{{\tiny $\delta$}}},v_{\text{{\tiny $\delta$}}},w_{\text{{\tiny $\delta$}}})$, and repeat the similar argument to the proof of 
Lemma \ref{Lemma-4-2}, in which $z_{\text{{\tiny $\varepsilon$}}}$ is replaced by $v$.\\
We see from \eqref{Equation-4-45} and the initial condition $v(0)=v_0$ that the following equality holds for a.e. $(x,t) \in Q_T$, which is compared with 
\eqref{Equation-4-54}:
\begin{align}
\label{Equation-4-107}
v(x,t)&=v_0(x) \exp \left( -\int_0^t \int_{\Omega}K(x,y)\alpha_{\text{{\tiny $\delta$}}}(v(x,s))dyds\right.\\
\nonumber
&\hspace*{3cm}\left.+c_3 \int_0^t \beta_{\text{{\tiny $\delta$}}}(w(x,s))ds-c_4 \int_0^t \beta_{\text{{\tiny $\delta$}}}(u(x,s))ds \right).
\end{align}
\indent
Now, we differentiate (\ref{Equation-4-107}) with respect to space and use (c) of (A1).
Then, we obtain 
\begin{equation}\label{Equation-4-108}
\nabla v(x,t)=F_1(x,t)+F_2(x,t)+F_3(x,t)+F_4(x,t),
\end{equation}
where the functions $F_j: \Omega \times [0,T] \rightarrow \mathbb{R}^N~(j=1,2,3,4)$ are defined as follows:
\begin{align*}
F_1(x,t)&:=\exp\left(-\int_0^t \int_{\Omega}K(x,y)\alpha_{\text{{\tiny $\delta$}}}(v(y,s))\,dyds+c_3 \int_0^t \beta_{\text{{\tiny $\delta$}}}(w(x,s))\,ds\right.\\
&\hspace*{3cm}\left.-c_4 \int_0^t \beta_{\text{{\tiny $\delta$}}}(u(x,s))\,ds\right) \nabla v_0(x),\\
F_2(x,t)&:=v(x,t) \int_{0}^{t}\int_{\Omega} \nabla K(x,y)\alpha_{\text{{\tiny $\delta$}}}(v(x,s))\,dyds,\\
F_3(x,t)&:=c_3 v(x,t) \int_0^t \beta_{\text{{\tiny $\delta$}}}'(w(x,s)) \nabla w(x,s)\,ds,\\
F_4(x,t)&:=-c_4 v(x,t) \int_0^t \beta_{\text{{\tiny $\delta$}}}'(u(x,s)) \nabla u(x,s)\,ds.
\end{align*}
We multiply both sides of \eqref{Equation-4-108} by $\nabla v(t)$ and integrate the result over $\Omega$.
In the following argument, we use \eqref{Equation-2-1}, \eqref{Equation-2-3} and \eqref{Equation-2-5} repeatedly.\\
\indent
First, using Lemma \ref{Lemma-4-6} and the continuous embedding $H^{2}(\Omega)\hookrightarrow L^{\infty}(\Omega)$ (cf. \eqref{Equation-1-7}), we have 
the following inequality for all $t \in [0,T]$:
\begin{align}
\label{Equation-4-109}
\int_\Omega F_1(t) \cdot \nabla v(t)dx&\le \int_\Omega |\nabla v_0(x)| |\nabla v(x,t)| \exp \left( c_3 \int_0^t w(x,s)ds \right)dx\\
\nonumber
&\le \|\nabla v_0\|_{\bm{L}^2(\Omega)} \|\nabla v(t)\|_{\bm{L}^2(\Omega)} \exp \left( c_3 \int_0^T \|w(t)\|_{L^\infty (\Omega)}dt \right)\\
\nonumber
&\le \|\nabla v_0\|_{\bm{L}^2(\Omega)} \|\nabla v(t)\|_{\bm{L}^2(\Omega)} 
\exp \left( c_3 K_4 T^{\frac{1}{2}} \left( \int_0^T \|w(t)\|_{H^2(\Omega)}^2dt \right)^{\frac{1}{2}} \right)\\
\nonumber
&\le \frac{1}{8} \|\nabla v(t)\|_{\bm{L}^2(\Omega)}^2+2\|v_0\|_{H^1(\Omega)}^2 \exp \left( 2 c_3 K_4 (T C_{27})^{\frac{1}{2}} \right),
\end{align}
which is the same argumentation as the derivation of \eqref{Equation-4-57}.\\
\indent
Secondly, from (c) of (A3) and Lemma \ref{Lemma-4-4} we obtain 
\begin{align}
\label{Equation-4-110}
\int_\Omega F_2(t) \cdot \nabla v(t)dx&\le \int_0^t \int_\Omega |\nabla v(x,t)| |v(x,t)| \left( \int_\Omega |\nabla K(x,y)| |v(y,s)|\,dy \right) dxds\\
\nonumber
&\le \int_0^t \int_\Omega |\nabla v(x,t)| |v(x,t)| \left( \int_\Omega |\nabla K(x,y)|^2dy\right)^{\frac{1}{2}} \|v(s)\|_{L^2(\Omega)}dxds\\
\nonumber
&\le K_3 |\Omega| \|v(t)\|_{L^4(\Omega)} \|\nabla v(t)\|_{\bm{L}^2(\Omega)} \int_0^t \|v(s)\|_{L^4(\Omega)}ds\\
\nonumber
&\le \frac{1}{8} \|\nabla v(t)\|_{\bm{L}^2(\Omega)}^2+2(K_3 T |\Omega|)^2 (C_{24})^4,
\end{align}
which should be compared with the derivation of \eqref{Equation-4-58}.\\
\indent
Thirdly, for the rest terms we repeat the same arguments to the derivations of \eqref{Equation-4-62} and \eqref{Equation-4-63}.
Then, it follows that there exists a constant $C_{\text{{\tiny $4,\!8,\!1$}}}>0$ such that 
\begin{equation}\label{Equation-4-111}
\int_\Omega F_3(t) \cdot \nabla v(t)dx+\int_\Omega F_4(t) \cdot \nabla v(t)dx \le \frac{1}{4} \|\nabla v(t)\|_{\bm{L}^2(\Omega)}^2+C_{\text{{\tiny $4,\!8,\!1$}}}.
\end{equation}
\indent
Now, we combine all estimates \eqref{Equation-4-109}--\eqref{Equation-4-111}, and obtain 
\begin{equation*}
\|\nabla v(t)\|_{\bm{L}^2(\Omega)}^2=\sum_{j=1}^4 \int_\Omega F_j (t) \cdot \nabla v(t)dx 
\le \frac{1}{2} \|\nabla v(t)\|_{\bm{L}^2(\Omega)}^2+C_{\text{{\tiny $4,\!8,\!2$}}}, \quad \forall t \in [0,T],
\end{equation*}
where the constant $C_{\text{{\tiny $4,\!8,\!2$}}}>0$ is given by
\begin{equation*}
C_{\text{{\tiny $4,\!8,\!2$}}}:=2\|v_0\|_{H^1(\Omega)}^2 \exp \left( 2 c_3 K_4 (T C_{27})^{\frac{1}{2}} \right)+2(K_3 T |\Omega|)^2 (C_{24})^4+C_{\text{{\tiny $4,\!8,\!1$}}},
\end{equation*}
and thus
\begin{equation}\label{Equation-4-112}
\sup_{0 \le t \le T} \|\nabla v(t)\|_{\bm{L}^2(\Omega)}^2 \le C_{\text{{\tiny $4,\!8,\!2$}}}.
\end{equation}
\indent
In the rest of this proof, we estimate $v'$ in $L^2(0,T\,;L^2(\Omega))$.
We take the inner product in both sides of \eqref{Equation-4-45} in $L^2(\Omega)$ by $v'(t)$.
Since $(u,v,w) \in (L^\infty (0,T\,;L^4(\Omega)))^3$, we can apply the generalized H\"{o}lder inequality and obtain the following estimates:
\begin{align*}
&|(v(t)\beta_{\text{{\tiny $\delta$}}}(w(t)),v'(t))_{L^2(\Omega)}| \le \frac{1}{4c_3} \|v'(t)\|_{L^2(\Omega)}^2+c_3\|v(t)\|_{L^4(\Omega)}^2 \|w(t)\|_{L^4(\Omega)}^2,\\
&|(v(t)\beta_{\text{{\tiny $\delta$}}}(u(t)),v'(t))_{L^2(\Omega)}| \le \frac{1}{4c_4} \|v'(t)\|_{L^2(\Omega)}^2+c_4\|v(t)\|_{L^4(\Omega)}^2 \|u(t)\|_{L^4(\Omega)}^2,
\end{align*}
and from (c) of (A1)
\begin{align*}
\left((M_{\text{{\tiny $\delta,\!v(t)$}}})^{-1}v(t),v'(t)\right)_{L^2(\Omega)} &\le \int_\Omega \left(\int_\Omega K(x,y)v(y,t)dy\right) v(x,t) v'(x,t)dx\\
&\le \left\{ \sup_{x \in \Omega} \left(\int_\Omega |K(x,y)|^2dy \right)^{\frac{1}{2}}\right\} \|v(t)\|_{L^2(\Omega)}^2 \|v'(t)\|_{L^2(\Omega)}\\
&\le \frac{1}{4}\|v'(t)\|_{L^2(\Omega)}^2+|\Omega| (K_3)^2 \|v(t)\|_{L^4(\Omega)}^4.
\end{align*}
Combining all inequalities in the above with Lemmas \ref{Lemma-4-4} and \ref{Lemma-4-5}, we obtain the following inequality for a.e. $t \in (0,T)$:
\begin{align*}
\|v'(t)\|_{L^2(\Omega)}^2 &\le \frac{3}{4}\|v'(t)\|_{L^2(\Omega)}^2+(c_3)^2\|v(t)\|_{L^4(\Omega)}^2 \|w(t)\|_{L^4(\Omega)}^2\\
&\hspace*{1cm}+(c_4)^2\|v(t)\|_{L^4(\Omega)}^2 \|u(t)\|_{L^4(\Omega)}^2+|\Omega| (K_3)^2 \|v(t)\|_{L^4(\Omega)}^4\\
&\le \frac{3}{4}\|v'(t)\|_{L^2(\Omega)}^2+(c_3 C_{24} C_{26})^2+(c_4 C_{24} C_{26})^2+|\Omega| (K_3)^2 (C_{24})^4,
\end{align*}
which yields 
\begin{equation}\label{Equation-4-113}
\|v'(t)\|_{L^2(\Omega)}^2 \le 4 \left\{ (c_3 C_{24} C_{26})^2+(c_4 C_{24} C_{26})^2+|\Omega| (K_3)^2 (C_{24})^4 \right\}.
\end{equation}
\indent
Finally, from \eqref{Equation-4-112} and \eqref{Equation-4-113} the required uniform estimate is obtained, and the proof is completed.
\end{proof}
%%%%%
%%%%%
%%%%%
Now we are in a position to show Theorem \ref{Theorem-1-1}
%%%%%
%%%%%
%%%%%
\begin{proof}[Proof of the nonnegativity and (s1)--(s4) and (s7) in Definition \ref{Definition-1-1}.]
By Lemmas \ref{Lemma-4-4}--\ref{Lemma-4-8}, we see that there exists a sequence $\{\delta_n\}_{n \in \mathbb{N}}$ and a triplet $(u,v,w)$ such that 
the following convergences hold as $n \to \infty$:
\begin{gather}
\label{Equation-4-114}
\delta_n \downarrow 0,\\
\label{Equation-4-115}
u_n:=u_{\text{{\tiny $\delta_n$}}} \longrightarrow u \quad \left\{
\begin{array}{l}
\text{a.e. in} \quad Q_T,\\[0.1cm]
\text{in} \quad C([0,T]\,;L^2(\Omega)),\\[0.1cm]
\text{weakly in} \quad W^{1,2}(0,T\,;L^2(\Omega)),\\[0.1cm]
\text{weakly$^*$ in} \quad L^\infty (0,T\,;H^1(\Omega)),\\[0.1cm]
\text{weakly in} \quad L^2(0,T\,;H^2(\Omega)),
\end{array}
\right.\\
\label{Equation-4-116}
v_n:=v_{\text{{\tiny $\delta_n$}}} \longrightarrow v \quad \left\{
\begin{array}{l}
\text{a.e. in} \quad Q_T,\\[0.1cm]
\text{in} \quad C([0,T]\,;L^2(\Omega)),\\[0.1cm]
\text{weakly$^*$ in} \quad W^{1,\infty}(0,T\,;L^2(\Omega)),\\[0.1cm]
\text{weakly$^*$ in} \quad L^\infty (0,T\,;H^1(\Omega)),
\end{array}
\right.\\[0.1cm]
\label{Equation-4-117}
w_n:=w_{\text{{\tiny $\delta_n$}}} \longrightarrow w \quad \left\{
\begin{array}{l}
\text{a.e. in} \quad Q_T,\\[0.1cm]
\text{in} \quad C([0,T]\,;L^2(\Omega)),\\[0.1cm]
\text{weakly in} \quad W^{1,2}(0,T\,;L^2(\Omega)),\\[0.1cm]
\text{weakly$^*$ in} \quad L^\infty (0,T\,;H^1(\Omega)),\\[0.1cm]
\text{weakly in} \quad L^2(0,T\,;H^2(\Omega)),
\end{array}
\right.
\end{gather}
which implies that the limit triplet $(u,v,w)$ satisfies the regularity properties (s1)--(s3) and the initial conditions (s7) with the nonnegativities by 
combining with \eqref{Equation-4-73}: 
\begin{equation*}
u(x,t) \ge 0, \quad v(x,t) \ge 0, \quad w(x,t) \ge 0,\quad \text{a.e.}~(x,t) \in Q_T.
\end{equation*}
In order to show the property (s4), we use \eqref{Equation-2-3} and \eqref{Equation-2-4}.
For every $n \in \mathbb{N}$ we obtain that the following equality holds for all $\xi \in L^2(0,T\,;L^2(\Omega))$:
\begin{align}
\label{Equation-4-118}
&\int_0^T (u_n'(t),\xi (t))_{L^2(\Omega)}dt-D_1\int_0^T (\Delta_N u_n(t),\xi (t))_{L^2(\Omega)}dt\\
\nonumber
&\hspace*{0.5cm}=c_1 \int_0^T (\beta_{\text{{\tiny $\delta_n$}}}(u_n(t))v_n(t),\xi (t))_{L^2(\Omega)}dt
-c_2 \int_0^T (\beta_{\text{{\tiny $\delta_n$}}}(u_n(t))w_n(t),\xi (t))_{L^2(\Omega)}dt.
\end{align}
In this proof, we use the interpolation inequality:
\begin{equation}\label{Equation-4-119}
\|\eta\|_{L^4(\Omega)} \le \|\eta\|_{L^2(\Omega)}^{\frac{1}{4}} \|\eta\|_{L^6(\Omega)}^{\frac{3}{4}},\quad \forall \eta \in L^6(\Omega),
\end{equation}
and the continuous embedding $H^1(\Omega) \hookrightarrow L^6(\Omega)$: there exists a constant $K_{11}>0$ such that
\begin{equation}\label{Equation-4-120}
\|\eta\|_{L^6(\Omega)} \le K_{11} \|\eta\|_{H^1(\Omega)},\quad \forall \eta \in H^1(\Omega).
\end{equation}
Combining \eqref{Equation-4-115} with \eqref{Equation-4-119} and \eqref{Equation-4-120} and using Lemma \ref{Lemma-4-6}, we obtain 
\begin{align*}
&\,\sup_{0 \le t \le T} \|u_n(t)-u(t)\|_{L^4(\Omega)}\\
\le&\,(K_{11})^{\frac{3}{4}} \left( \max_{0 \le t \le T} \|u_n(t)-u(t)\|_{L^2(\Omega)} \right)^{\frac{1}{4}}
\left( \sup_{0 \le t \le T} \|u_n(t)-u(t)\|_{H^1(\Omega)} \right)^{\frac{3}{4}}\\
\le&\,(K_{11})^{\frac{3}{4}} \left\{ (C_{27})^{\frac{1}{2}}+\sup_{0 \le t \le T} \|u(t)\|_{H^1(\Omega)} \right\}^{\frac{3}{4}}
\left( \max_{0 \le t \le T} \|u_n(t)-u(t)\|_{L^2(\Omega)} \right)^{\frac{1}{4}},
\end{align*}
which implies 
\begin{equation}\label{Equation-4-121}
u_n \longrightarrow u \quad \text{in} \quad L^\infty (0,T\,;L^4(\Omega)) \quad \text{as} \quad n \to \infty.
\end{equation}
Similarly, we obtain the following convergences as $n \to \infty$:
\begin{equation}\label{Equation-4-122}
(v_n,w_n) \longrightarrow (v,w) \quad \text{in} \quad \left( L^\infty (0,T\,;L^4(\Omega)) \right)^2 \quad \text{as} \quad n \to \infty.
\end{equation}
\indent
Next, we use the generalized H\"{o}lder inequality, repeatedly.
Firstly, from Lemma \ref{Lemma-4-5} we obtain the following inequality for all $n \in \mathbb{N}$:
\begin{align*}
&\,\left| \int_0^T \int_\Omega \{v_n(t)-v(t)\} \beta_{\text{{\tiny $\delta_n$}}}(u_n(t)) \xi (t) dxdt \right|\\
\le&\,\int_0^T \|v_n(t)-v(t)\|_{L^4(\Omega)} \|u_n(t)\|_{L^4(\Omega)} \|\xi (t)\|_{L^2(\Omega)}dt\\
\le&\,T^{\frac{1}{2}} \left( \sup_{0 \le t \le T} \|u_n(t)\|_{L^4(\Omega)} \right) \left( \sup_{0 \le t \le T} \|v_n(t)-v(t)\|_{L^4(\Omega)} \right)
\left( \int_0^T \|\xi (t)\|_{L^2(\Omega)}^2dt \right)^{\frac{1}{2}}\\
\le&\,C_{26} T^{\frac{1}{2}} \left( \sup_{0 \le t \le T} \|v_n(t)-v(t)\|_{L^4(\Omega)} \right) \left( \int_0^T \|\xi (t)\|_{L^2(\Omega)}^2dt \right)^{\frac{1}{2}},
\end{align*}
which implies the following convergence by using \eqref{Equation-4-122}:
\begin{equation}\label{Equation-4-123}
\lim_{n \to \infty} \int_0^T \int_\Omega \{v_n(t)-v(t)\} \beta_{\text{{\tiny $\delta_n$}}}(u_n(t)) \xi (t) dxdt=0.
\end{equation}
Secondly, from Lemma \ref{Lemma-4-4} we obtain the following inequality for all $n \in \mathbb{N}$:
\begin{align*}
&\,\left| \int_0^T \int_\Omega v(t) \left\{ \beta_{\text{{\tiny $\delta_n$}}}(u_n(t))-\beta_{\text{{\tiny $\delta_n$}}}(u(t))\right\} \xi (t) dxdt \right|\\
\le&\,\int_0^T \|v(t)\|_{L^4(\Omega)} \|u_n(t)-u(t)\|_{L^4(\Omega)} \|\xi (t)\|_{L^2(\Omega)}dt\\
\le&\,T^{\frac{1}{2}} \left( \sup_{0 \le t \le T} \|v(t)\|_{L^4(\Omega)} \right) \left( \sup_{0 \le t \le T} \|u_n(t)-u(t)\|_{L^4(\Omega)} \right)
\left( \int_0^T \|\xi (t)\|_{L^2(\Omega)}^2dt \right)^{\frac{1}{2}}\\
\le&\,C_{24} T^{\frac{1}{2}} \left( \sup_{0 \le t \le T} \|u_n(t)-u(t)\|_{L^4(\Omega)} \right) \left( \int_0^T \|\xi (t)\|_{L^2(\Omega)}^2dt \right)^{\frac{1}{2}},
\end{align*}
which implies the following convergence by using \eqref{Equation-4-121}:
\begin{equation}\label{Equation-4-124}
\lim_{n \to \infty} \int_0^T \int_\Omega v(t) \left\{ \beta_{\text{{\tiny $\delta_n$}}}(u_n(t))-\beta_{\text{{\tiny $\delta_n$}}}(u(t))\right\} \xi (t) dxdt=0.
\end{equation}
Thirdly, since by Lemmas \ref{Lemma-4-4} and \ref{Lemma-4-5} we have not only
\begin{equation*}
\left| v \left\{ \beta_{\text{{\tiny $\delta_n$}}}(u)-u \right\} \xi \right| \le 2 |u| |v| |\xi|,\quad \text{a.e. in} \quad Q_T,
\end{equation*}
but also 
\begin{align*}
&\,\int_0^T \int_\Omega |u(t)| |v(t)| |\xi (t)| dxdt \le \int_0^T \|u(t)\|_{L^4(\Omega)} \|v(t)\|_{L^4(\Omega)} \|\xi (t)\|_{L^2(\Omega)}dt\\
\le&\,T^{\frac{1}{2}}\left( \sup_{0 \le t \le T} \|u(t)\|_{L^4(\Omega)}\right) \left( \sup_{0 \le t \le T} \|v(t)\|_{L^4(\Omega)}\right)
\left( \int_0^T \|\xi (t)\|_{L^2(\Omega)}^2dt \right)^{\frac{1}{2}}\\
\le&\,C_{24} C_{26} T^{\frac{1}{2}} \left( \int_0^T \|\xi (t)\|_{L^2(\Omega)}^2dt \right)^{\frac{1}{2}} < \infty,
\end{align*}
which guarantees $|u| |v| |\xi| \in L^1(Q_T)$, we apply the Lebesgue dominated convergence theorem and obtain 
\begin{equation}\label{Equation-4-125}
\lim_{n \to \infty} \int_0^T \int_\Omega v(t) \left\{ \beta_{\text{{\tiny $\delta_n$}}}(u(t))-u(t) \right\} \xi (t)dxdt=0
\end{equation}
by using the following convergence:
\begin{equation*}
\beta_{\text{{\tiny $\delta_n$}}}(u) \longrightarrow u \quad \text{a.e. in} \quad Q_T.
\end{equation*}
Using \eqref{Equation-4-123}--\eqref{Equation-4-125}, we obtain 
\begin{equation}\label{Equation-4-126}
\lim_{n \to \infty} \int_0^T (\beta_{\text{{\tiny $\delta_n$}}}(u_n(t))v_n(t),\xi (t))_{L^2(\Omega)}dt=\int_0^T (u(t) v(t),\xi (t))_{L^2(\Omega)}dt.
\end{equation}
\indent
Similarly, by replacing $v_n$ and $v$ with $w_n$ and $w$, respectively, in the argument of the derivation of \eqref{Equation-4-126}, we obtain
\begin{equation}\label{Equation-4-127}
\lim_{n \to \infty} \int_0^T (\beta_{\text{{\tiny $\delta_n$}}}(u_n(t))w_n(t),\xi (t))_{L^2(\Omega)}dt=\int_0^T (u(t) w(t),\xi (t))_{L^2(\Omega)}dt.
\end{equation}
\indent
Finally, we take the limit $n \to \infty$ in the both sides of \eqref{Equation-4-118}, and use \eqref{Equation-4-114}--\eqref{Equation-4-117},
\eqref{Equation-4-126} and \eqref{Equation-4-127}.
Then, we obtain the following equality for all $\xi \in L^2(0,T\,;L^2(\Omega))$:
\begin{align*}
&\int_0^T (u'(t),\xi (t))_{L^2(\Omega)}dt-D_1\int_0^T (\Delta_N u(t),\xi (t))_{L^2(\Omega)}dt\\
&\hspace*{0.5cm}=c_1 \int_0^T (u(t) v(t),\xi (t))_{L^2(\Omega)}dt-c_2 \int_0^T (u(t) w(t),\xi (t))_{L^2(\Omega)}dt,
\end{align*}
which implies that the property (s4) in Definition \ref{Definition-1-1} is satisfied.
\end{proof}
%%%%%
%%%%%
%%%%%
\begin{proof}[Proof of (s5) in Theorem \ref{Theorem-1-1}.]
From \eqref{Equation-4-45} in (b3) of Definition \ref{Definition-4-2}, for every $n \in \mathbb{N}$ we obtain that the following equality holds for all 
$\xi \in L^2(0,T\,;L^2(\Omega))$:
\begin{align}
\label{Equation-4-128}
&\int_0^T (v_n'(t),\xi (t))_{L^2(\Omega)}dt+\int_\Omega \left( \int_\Omega K(x,y)\alpha_{\text{{\tiny $\delta_n$}}}(v_n(y,t))dy \right) v_n(x,t) \xi (x,t)dxdt\\
\nonumber
&\hspace*{0.5cm}=c_3 \int_0^T (v_n(t)\beta_{\text{{\tiny $\delta_n$}}}(w_n(t)),\xi (t))_{L^2(\Omega)}dt
-c_4 \int_0^T (\beta_{\text{{\tiny $\delta_n$}}}(u_n(t))v_n(t),\xi (t))_{L^2(\Omega)}dt.
\end{align}
\indent
First of all, we obtain the following convergence by repeating the similar argumentation in the proof of (s4) in Definition \ref{Definition-1-1}:
\begin{equation}
\label{Equation-4-129}
\lim_{n \to \infty} \int_0^T (v_n(t)\beta_{\text{{\tiny $\delta_n$}}}(w_n(t)),\xi (t))_{L^2(\Omega)}dt=\int_0^T (v(t) w(t),\xi (t))_{L^2(\Omega)}dt.\\
\end{equation}
\indent
Secondly, we use the following equality for all $n \in \mathbb{N}$:
\begin{align}
\label{Equation-4-130}
&\,\int_0^T \int_\Omega \left( \int_\Omega K(x,y)\alpha_{\text{{\tiny $\delta_n$}}}(v_n(y,t))dy \right) v_n(x,t) \xi (x,t)dxdt\\
\nonumber
&\,\hspace*{1cm}-\int_0^T \int_\Omega \left( \int_\Omega K(x,y) v(y,t)dy \right) v(x,t) \xi (x,t)dxdt\\
\nonumber
=&\,\int_0^T \int_\Omega \left( \int_\Omega K(x,y)\alpha_{\text{{\tiny $\delta_n$}}}(v_n(y,t))dy \right) \{v_n(x,t)-v(x,t)\} \xi (x,t)dxdt\\
\nonumber
&\,\hspace*{1cm}+\int_0^T \int_\Omega \left( \int_\Omega K(x,y) \left\{ \alpha_{\text{{\tiny $\delta_n$}}}(v_n(y,t))
-\alpha_{\text{{\tiny $\delta_n$}}}(v(y,t)) \right\} dy \right) v(x,t) \xi (x,t)dxdt\\
\nonumber
&\,\hspace*{1cm}+\int_0^T \int_\Omega \left( \int_\Omega K(x,y) \left\{ \alpha_{\text{{\tiny $\delta_n$}}}(v(y,t))-v(y,t) \right\} dy \right) v(x,t) \xi (x,t)dxdt.
\end{align}
By Lemmas \ref{Lemma-4-4}, for the first term in the right-hand side of \eqref{Equation-4-130} we obtain 
\begin{align*}
&\,\left| \int_0^T \int_\Omega \left( \int_\Omega K(x,y)\alpha_{\text{{\tiny $\delta_n$}}}(v_n(y,t))dy \right) \{v_n(x,t)-v(x,t)\} \xi (x,t)dxdt \right|\\
\le&\,\int_0^T \int_\Omega \left( \int_\Omega |K(x,y)|^2dy \right)^{\frac{1}{2}} \|v_n(t)\|_{L^2(\Omega) } |v_n(x,t)-v(x,t)| |\xi (x,t)|dxdt\\
\le&\,K_3 \left( \max_{0 \le t \le T} \|v_n(t)\|_{L^2(\Omega)} \right) \left( \int_0^T \|v_n(t)-v(t)\|_{L^2(\Omega)}^2dt \right)^{\frac{1}{2}} 
\left( \int_0^T \|\xi (t)\|_{L^2(\Omega)}^2dt\right)^{\frac{1}{2}}\\
\le&\,K_3 (C_{24})^2 (T |\Omega|)^{\frac{1}{2}} \left(\max_{0 \le t \le T} \|v_n(t)-v(t)\|_{L^2(\Omega)} \right) 
\left( \int_0^T \|\xi (t)\|_{L^2(\Omega)}^2dt\right)^{\frac{1}{2}},
\end{align*}
which yields the following convergence:
\begin{equation}\label{Equation-4-131}
\lim_{n \to \infty} \int_0^T \int_\Omega \left( \int_\Omega K(x,y)\alpha_{\text{{\tiny $\delta_n$}}}(v_n(y,t))dy \right) \{v_n(x,t)-v(x,t)\} \xi (x,t)dxdt=0.
\end{equation}
For the second term in the right-hand side of \eqref{Equation-4-130} we obtain 
\begin{align*}
&\,\left| \int_0^T \int_\Omega \left( \int_\Omega K(x,y) \left\{ \alpha_{\text{{\tiny $\delta_n$}}}(v_n(y,t))-\alpha_{\text{{\tiny $\delta_n$}}}(v(y,t)) \right\}dy \right)
v(x,t) \xi (x,t)dxdt \right|\\
\le&\,\int_0^T \int_\Omega \left( \int_\Omega |K(x,y)|^2dy \right)^{\frac{1}{2}} \|v_n(t)-v(t)\|_{L^2(\Omega) } |v(x,t)| |\xi (x,t)|dxdt\\
\le&\,K_3 \left( \max_{0 \le t \le T} \|v_n(t)-v(t)\|_{L^2(\Omega)} \right) \left( \int_0^T \|v(t)\|_{L^2(\Omega)}^2dt \right)^{\frac{1}{2}} 
\left( \int_0^T \|\xi (t)\|_{L^2(\Omega)}^2dt\right)^{\frac{1}{2}},
\end{align*}
which yields the following convergence:
\begin{equation}\label{Equation-4-132}
\lim_{n \to \infty} \int_0^T \int_\Omega \left( \int_\Omega K(x,y) \left\{ \alpha_{\text{{\tiny $\delta_n$}}}(v_n(y,t))
-\alpha_{\text{{\tiny $\delta_n$}}}(v(y,t)) \right\}dy \right) v(x,t) \xi (x,t)dxdt=0.
\end{equation}
For the third term in the right-hand side of \eqref{Equation-4-130} we obtain the following inequality for a.e. $(x,t) \in Q_T$: 
\begin{gather*}
\left| \left( \int_\Omega K(x,y) \left\{ \alpha_{\text{{\tiny $\delta_n$}}}(v(y,t))-v(y,t) \right\} dy \right) v(x,t) \xi (x,t) \right|\\
\le 2\left| \left( \int_\Omega K(x,y) |v(y,t)|dy \right) v(x,t) \xi (x,t) \right| \le 2K_3 \|v(t)\|_{L^2(\Omega)} |v(x,t)| |\xi (x,t)|
\end{gather*}
and $2K_3 \|v\|_{L^2(\Omega)} |v| |\xi | \in L^1(Q_T)$.
Applying the Lebesgue dominated convergence theorem, we obtain 
\begin{equation}\label{Equation-4-133}
\lim_{n \to \infty} \int_0^T \int_\Omega\left( \int_\Omega K(x,y) \left\{ \alpha_{\text{{\tiny $\delta_n$}}}(v(y,t))-v(y,t) \right\} dy \right) v(x,t) \xi (x,t) dxdt=0
\end{equation}
by using the following convergence:
\begin{equation*}
\alpha_{\text{{\tiny $\delta_n$}}}(v) \longrightarrow v \quad \text{a.e. in} \quad Q_T.
\end{equation*}
Using \eqref{Equation-4-131}--\eqref{Equation-4-133}, we obtain
\begin{align}
\label{Equation-4-134}
&\,\lim_{n \to \infty} \int_0^T \int_\Omega \left( \int_\Omega K(x,y)\alpha_{\text{{\tiny $\delta_n$}}}(v_n(y,t))dy \right) v_n(x,t) \xi (x,t)dxdt\\
\nonumber
&\,\hspace*{1cm}=\int_0^T \int_\Omega \left( \int_\Omega K(x,y) v(y,t)dy \right) v(x,t) \xi (x,t)dxdt.
\end{align}
\indent
Finally, we take the limit $n \to \infty$ in the both sides of \eqref{Equation-4-128}, and use \eqref{Equation-4-114}--\eqref{Equation-4-117},
\eqref{Equation-4-126} and \eqref{Equation-4-134}.
Then, we obtain the following equality for all $\xi \in L^2(0,T\,;L^2(\Omega))$:
\begin{gather*}
\int_0^T (v'(t),\xi (t))_{L^2(\Omega)}dt+\int_\Omega \left( \int_\Omega K(x,y) v(y,t)dy \right) v(x,t) \xi (x,t)dxdt\\
\nonumber
=c_3 \int_0^T (v(t) w(t),\xi (t))_{L^2(\Omega)}dt-c_4 \int_0^T (u(t) v(t),\xi (t))_{L^2(\Omega)}dt,
\end{gather*}
which implies that the property (s5) in Definition \ref{Definition-1-1} is satisfied.
\end{proof}
%%%%%
%%%%%
%%%%%
\begin{proof}[Proof of (s6) in Theorem \ref{Theorem-1-1}.]
For every $n \in \mathbb{N}$ we obtain that the following equality holds for all $\xi \in L^2(0,T\,;L^2(\Omega))$:
\begin{gather}
\label{Equation-4-135}
\int_0^T (w_n'(t),\xi (t))_{L^2(\Omega)}dt-D_2\int_0^T (\Delta_N w_n(t),\xi (t))_{L^2(\Omega)}dt\\
\nonumber
+c_5\int_0^T (w_n(t),\xi (t))_{L^2(\Omega)}dt=c_6 \int_0^T (1,\xi (t))_{L^2(\Omega)}dt\\
\nonumber
-c_7 \int_0^T (v_n(t) \beta_{\text{{\tiny $\delta_n$}}}(w_n(t)),\xi (t))_{L^2(\Omega)}dt.
\end{gather}
We take the limit $n \to \infty$ in the both sides of \eqref{Equation-4-135}, and use \eqref{Equation-4-114}--\eqref{Equation-4-117} and \eqref{Equation-4-130}.
Then, we obtain the following equality for all $\xi \in L^2(0,T\,;L^2(\Omega))$:
\begin{gather*}
\int_0^T (w'(t)-D_2\Delta_N w(t)+c_5w(t),\xi (t))_{L^2(\Omega)}dt=\int_0^T (c_6 \cdot 1-c_7 v(t) w(t),\xi (t))_{L^2(\Omega)}dt,
\end{gather*}
which implies that the property (s6) in Theorem \ref{Theorem-1-1} is satisfied.
\end{proof}
%%%%%
%%%%%
%%%%%
\begin{remark}\label{Remark-4-1}
We assume that there exist constants $C_*>0$ and $C^*>0$ such that the following property is satisfied for all $t \in [0,T]$:
\begin{equation*}
C_* \le \int_\Omega K(x,y) v(y,t)dx \le C^*.
\end{equation*}
Under this assumption, we can consider the state-dependent spaces $\{L^2(v(t))\,;\,0 \le t \le T\}$, where $L^2(v(t))$ denotes a twisted space of $L^2(\Omega)$ 
endowed with the following inner product for each $t \in [0,T]$, which is analogous to \eqref{Equation-2-18}:
\begin{equation*}
(v_1,v_2)_{v(t)}:=\int_\Omega \left( \int_\Omega K(x,y) v(y,t)dy \right)^{-1} v_1 v_2 dx,\quad \forall v_1,v_2 \in L^2(\Omega).
\end{equation*}
Moreover, we denote by $\Phi_0$ the continuous convex function on $L^2(v(t))$ defined by
\begin{equation*}
\Phi (\bar{v}):=\frac{1}{2} \int_\Omega |\bar{v}(x)|^2dx,\quad \forall \bar{v} \in L^2(\Omega)=L^2(\bar{v}(t)).
\end{equation*}
Then, the perturbed evolution equation in (s5) of Definition \ref{Definition-1-1} is equivalent to the following perturbed twisted gradient flow:
\begin{gather*}
v'(t)+v^*(t)=v(t) \{c_3 w(t)-c_4 u(t)\} \quad \text{in} \quad L^2(v(t)),\quad \text{a.e.}~t \in (0,T),\\
v^*(t) \in \partial_{\text{{\tiny $v(t)$}}} \Phi (v(t)),\quad \text{a.e.}~t \in (0,T).
\end{gather*}
This perturbed twisted gradient flow is of particular interest since the state-dependent spaces $L^2(v(t))$ depend on the state $v(t)$ itself.
\end{remark}
%%===========================================================================================%%
%% If you are submitting to one of the Nature Portfolio journals, using the eJP submission   %%
%% system, please include the references within the manuscript file itself. You may do this  %%
%% by copying the reference list from your .bbl file, paste it into the main manuscript .tex %%
%% file, and delete the associated \verb+\bibliography+ commands.                            %%
%%===========================================================================================%%
\vspace{1cm}
\section*{Declarations}
\textbf{Conflict of interest} We declare that we have no conflict of interest.\\

%%% if required, the content of .bbl file can be included here once bbl is generated
%%% \input sn-article.bbl
\end{document}